\theoremstyle{plain}
\newtheorem{theorem}{Theorem}[section]
\newtheorem{corollary}[theorem]{Corollary}
\newtheorem{lemma}[theorem]{Lemma}
\newtheorem{proposition}[theorem]{Proposition}
\newtheorem{conjecture}[theorem]{Conjecture}
\newtheorem{algorithm}[theorem]{Algorithm}
\theoremstyle{definition}
\newtheorem{definition}[theorem]{Definition}
\newtheorem{example}[theorem]{Example}
\theoremstyle{remark}
\newtheorem{remark}[theorem]{Remark}
\newcommand{\defeq}{\vcentcolon=}
\DeclareMathOperator{\red}{red}
\begin{document}
\title[$p$-adic Higher Green's functions for Stark-Heegner Cycles]{$p$-adic Higher Green's Functions for Stark-Heegner Cycles}
\author{Hazem Hassan}
\address{McGill University}
\email{hazem.hassan@mail.mcgill.ca}

\begin{abstract}
  Heegner cycles are higher weight analogues of Heegner points. Their arithmetic intersection numbers also appear as Fourier coefficients of modular forms and often belong to abelian extensions of imaginary-quadratic fields. Rotger and Seveso propose a precise recipe for the $p$-adic Abel-Jacobi images of  cycle classes whose existence is predicted by conjectures of Bloch and Beilinson and which would be a real-quadratic analogue to Heegner cycles: the Stark-Heegner cycles of the title. 

  In this paper, we generalize Darmon-Vonk's theory of rigid meromorphic cocycles to higher weight, producing a higher Green's pairing of real-quadratic divisors on the $p$-adic upper half-plane, which seems to be the real-quadratic analogue of the pairing of Heegner cycles. Computation of these values  for ``principal cycles'' gives evidence that they lie in abelian extensions of real-quadratic fields. The algebraicity of certain values of the higher Green's function is indirect evidence for the existence of algebraic Stark-Heegner cycles.
\end{abstract}
\maketitle
\vspace{-0.3in}
\tableofcontents

\newpage
\section*{Notation}
\begin{tabular}{p{3cm}p{10cm}p{1cm}}
    $k$ & an even positive integer & \\
    $n$ & $k-2$ & \\
    $p$ &  a prime number & \\
    $\Gamma$ & $\textrm{SL}_2(\mathbb{Z}[1/p])$ & \\
    $F$ & a finite field extension of $\mathbb{Q}_p$ & \\
    $\mathcal{O}_F$ &  the ring of integers of $F$ & \\
    $P_n(A)$ & polynomials of degree at most $n$ with coefficients in $A$ & \\ 
    $K$ & The unramified quadratic extension of $\mathbb{Q}_p$ & \\
    $\mathcal{H}_F$ & $p$-adic upper half-plane for a $p$-adic field $F$ & \pageref{def: upper half-plane} \\
    $\mathcal{T}_F$ & The Bruhat-Tits tree for $\mathcal{H}_F$ & \pageref{def: Bruhat-Tits tree}\\
    $\text{red}_F$ &  the reduction map $\mathcal{H_F} \rightarrow \mathcal{T}_F$ & \pageref{def: reduction map} \\
    $\mathcal{O}(k,\mathcal{H}_F)$ & Rigid analytic functions on $\mathcal{H}_F$ with a weight $k$ action & \pageref{sec: rigid analytic functions} \\ 
    $\mathcal{O}_\mathscr{L}(2-k,\mathcal{H}_F)$& log-rigid analytic functions on $\mathcal{H}_F$ with a weight $2- k$ action & \pageref{sec: rigid analytic functions} \\
    $\Sigma_{\mathscr{L}}(k,F)$ &  locally analytic functions on $\mathbb{P}^1(F)$ & \pageref{def: locally analytic functions}\\
    $\mathcal{M}(k ; K)$ & & \\  
    $\mathcal{M}_{\mathscr{L}}(2-k ; K)$ & & \\
    $\langle \cdot, \cdot \rangle_M$ &  Morita pairing &\pageref{sec: Morita and Breuil Duality} \\
    $\langle \cdot, \cdot \rangle_B$ & Breuil pairing &\pageref{thm: Breuil Duality} \\
    $C_F^1(A)$ & functions on oriented edges of $\mathcal{T}_F$ taking values in $A$ & \pageref{def: functions on tree}\\
    $C_F^0(A)$ & functions on vertices of 
    $\mathcal{T}_F$ taking values in $A$& \pageref{def: functions on tree}\\
    $C_{F,har}^1(A)$ & harmonic functions on oriented edges of $\mathcal{T}_F$ taking values in $A$ &\pageref{def: functions on tree} \\
    $\textrm{MS}^{\Gamma_0}(M)$ & $\Gamma_0$-invariant modular symbols valued in $M$ & \pageref{def: modular symbols} \\
    $\Sigma_{\tau}\{r,s\}$ & $\Gamma$-translates of $\tau$ whose geodesics intersect with $(r,s)$ & \pageref{def: orbits of rm points}\\
    $\text{Div}_{k,\mathcal{D}}$ & The divisor modular symbol associated to an RM-divisor $\mathcal{D}$ & \pageref{def: RM-divisors}\\
    $\text{Deg}_{k,\mathcal{D}}$ & modular symbol associated to an RM-divisor $\mathcal{D}$ valued in $C^0_{\mathbb{Q}_p}(P_n)$&\pageref{def: RM-divisors} \\
    $\Phi_{k,\mathcal{D}}$ & modular symbol associated to an RM-divisor $\mathcal{D}$ valued in $C^1_{K}(P_n)$ & \pageref{def: rm edge function}\\
    $J_{k,\mathcal{D}}$ & modular symbol associated to RM-divisor valued in weight-$k$ meromorphic functions& \pageref{def: deg 0 weight k cocycle}, \pageref{lem: weight k cocycle}\\ 
    $J_{k,\mathcal{D}}^{\mathscr{L}}$ & specific choice of modular symbol (not necessarily $\Gamma$-invariant) which lifts $J_{k,\mathcal{D}}$ & \pageref{wt-ncoLem}\\ 
    $\mathcal{J}_{k,\mathcal{D}}^{\mathscr{L}}$ & $\Gamma$-invariant modular symbol or cocycle class which lifts $J_{k,\mathcal{D}}$ in the situation where such a lift exists & \pageref{prop: principal values} \\
    $y_{k,\mathcal{D}},\; y_{k,\mathcal{D}}^\sharp$ & homology classes of divisors & \pageref{sec: higher Green's Functions}
    \end{tabular}
    \newpage

\section{Introduction}

\hfill \linebreak
\textbf{Complex Multiplication and Higher Green's Functions.}
The seminal papers of Gross--Zagier\cite{GZ} and Gross--Kohnen--Zagier\cite{GKZ} investigate the N\'eron--Tate height pairing of Heegner points on Jacobians of modular curves $X_0(N)$. The archimedean  contribution to this height pairing is realized as a limit of CM-values of automorphic functions: $$G_s(\tau,\sigma), \quad s>1 .$$ 
For positive integer values of $s$, the functions $G_s(z_1,z_2)$ are known as the higher Green's functions. These are real analytic functions on $\mathcal{H}\times \mathcal{H}$ which are invariant under the action of $\mathrm{SL}_2(\mathbb{Z})\times \mathrm{SL}_2(\mathbb{Z})$ and have a logarithmic singularity along the diagonal. In \cite{GZ} and \cite{GKZ} it is shown that the CM-values of higher Green's functions appear as Fourier coefficients of certain higher weight modular forms. Motivated by this observation, Gross, Kohnen and Zagier made two predictions: 
\begin{enumerate}

    \item The CM-values $G_{s}(\tau, \sigma)$ have an interpretation as the archimedean contribution to a global height pairing of some higher dimensional algebraic cycles.
    \item \textit{The Gross--Zagier algebraicity conjecture}. Certain ``principal'' linear combinations of CM-values $G_s(\tau,\sigma)$ are logarithms of algebraic numbers. 
\end{enumerate}

\hfill \linebreak
\textbf{Heegner Cycles.} The first prediction was addressed in \cite{zhang}. Let $\pi : \mathcal{E}\rightarrow X_0(N)$ denote the universal generalized elliptic curve. The Kuga--Sato variety, $\mathcal{Y}_k$, is the canonical resolution of the $(k-2)$-fold fibre product of $\mathcal{E}$ over $X_0(N)$. Associated to each CM-point, $\tau$, there is a Heegner cycle $$y_{k,\tau} \in \widehat{\mathrm{Ch}}^{k/2-1}(\mathcal{Y}_k),$$ which lies in the fiber over the $\tau$. S. Zhang showed that the archimedean contribution to the height pairing for two distinct Heegner cycles $y_{k,\tau}$ and $y_{k,\sigma}$ satisfies $$\langle y_{k,\tau}, y_{k,\sigma} \rangle_\infty = \frac{1}{2} G_{k/2}(\tau, \sigma).$$

Now to motivate the algebraicity conjecture of Gross and Zagier, consider a cycle $\mathcal{D}_1$ on an arithmetic variety $\mathcal{X}$. This cycle  is said to be principal if it vanishes in the Chow group of $\mathcal{X}$. This is accounted for by the existence of a formal linear combination of rational functions $$ \sum_i c_i (f_i,W_i)$$ whose divisor is $\mathcal{D}_1$. Given another cycle, $\mathcal{D}_2$, the archimedean contribution of the Arakelov height pairing $\langle \mathcal{D}_1,\mathcal{D}_2\rangle_\infty$ becomes a sum of logarithms $$\sum_i c_i \log |f_i(\mathcal{D}_2 \cdot W_i)|$$ i.e. it is a sum of logarithms of algebraic numbers belonging to the compositum of the fields of definition of $\mathcal{D}_1$ and $\mathcal{D}_2$.

This principle led Gross and Zagier to their conjecture that ``principal'' linear combinations of CM-values, $G_s(\tau, \sigma)$, are logarithms of algebraic numbers lying in the compositum of class fields of imaginary quadratic fields. In fact, they showed that when averaged over the Galois orbit of a pair of CM-points, principal linear combinations of the values \[\sum_{\theta \in \mathrm{Gal}(H_\tau\cdot H_{\sigma})}G_s(\tau ^\theta, \sigma ^\theta)\] yield sums of logarithms of rational numbers. Here $H_\tau$ and $H_\sigma$ denote the class fields of their respective CM point.
  
In \cite{zhang}, Zhang provided a proof of the algebraicity conjecture in the case where $\tau$ and $\sigma$ belong to the same imaginary quadratic field, which is conditional on the non-degeneracy of the height pairing. Mellit's thesis \cite{mellit} also uses an algebraic approach to give a proof of the algebraicity conjecture for $N=1$, $ k=4$ and $\tau = i$. 

However, a full unconditional proof of the algebraicity conjecture through this geometric interpretation seems to be currently out of reach. The main obstacles to this line of attack are deep conjectures on algebraic and arithmetic cycles such as Gillet--Soul\'e's arithmetic analogues of Grothendieck's  standard conjectures on algebraic cycles \cite{GS}. Concretely, these conjectures would imply that the action of Hecke operators on cycles factors through the Chow group, and that the arithmetic height pairing is non-degenerate.

Nevertheless, the algebraicity conjecture could be approached through the analytic machinery of regularized theta lifts. This direction was followed in the work of Viazovska \cite{Viazovska}, Bruinier--Ehlen--Yang \cite{BEY} and a full proof of the conjecture was completed in \cite{Li,BLY}. 

 \hfill\break
\textbf{Real Multiplication and Stark-Heegner Cycles.}
Let $p$ be a prime number, $E$ a real quadratic field in which $p$ is inert, and $H_c/E$ the narrow class field of conductor $c$. For any eigenform $f\in S_k\left(\Gamma_0(p)\right)$ and character $\chi$ of the group $\mathrm{Gal}(H_c/E)$ the sign of the functional equation for $L(f \otimes E, \chi ,s)$ is $-1$. Therefore, the order of vanishing of $L(f\otimes H_c,s)$ at $k/2$ is at least $|\mathrm{Gal}(H_c/E)|$. Thus the conjecture of Bloch and Beilinson predicts the existence of a collection of cycles in $\mathrm{Ch}^{k/2}(\mathcal{Y}_k)$ defined over $H_c$, which would account for the vanishing of the $L(f\otimes E,s)$ and would play the role of Heegner cycles in the real quadratic setting. 

In weight $2$, the emerging theory of Real Multiplication (RM) studies Stark--Heegner points as conjectural substitutes for Heegner points. In \cite{Darmon}, Darmon defines Stark--Heegner points through a $p$-adic integration theory which packages period integrals of modular forms into a measure and produces $p$-adic points on elliptic curves that are conjectured to be rational over abelian extensions of real quadratic fields. Similarly to their CM counterparts, Stark--Heegner points are related to Fourier coefficients of modular forms \cite{DT} and they appear to be non-torsion precisely when the derivative of a Hasse--Weil $L$-function does not vanish \cite{BD}. 

In the absence of a moduli theoretic interpretation for Stark--Heegner points, it remains unclear how to define the real quadratic analogues of Heegner cycles, which might be called Stark-Heegner cycles. Rotger and Seveso \cite{RS, Seveso} generalize Darmon's $p$-adic integration theory to higher weight and in doing so define local cohomology classes that they conjecture to be in the image of a global Selmer group associated to the Kuga--Sato varieties $\mathcal{Y}_k$. This conjecture was verified \cite{Seveso} for linear combinations of Stark-Heegner cycles by genus characters  An explicit geometric construction of Stark--Heegner cycles remains a mystery.

\hfill \linebreak
\textbf{Rigid Meromorphic Cocycles.} In the analogy between Complex Multiplication and Real Multiplication, the theory of rigid meromorphic cocycles is the counterpart to Green's functions on modular curves. Let $$\mathcal{H}_p \defeq \mathbb{P}^1(\mathbb{C}_p)\backslash \mathbb{P}^1(\mathbb{Q}_p)$$ be the $p$-adic upper half-plane. The action of $\Gamma \defeq \mathrm{SL}_2(\mathbb{Z}[1/p])$ on $\mathcal{H}_p$ by M\"{o}bius transformation is not discrete. It follows that if $\tau\in \mathcal{H}_p$ is a real-quadratic point, then there does not exist a $\Gamma$-invariant rigid meromorphic function on $\mathcal{H}_p$ whose divisor is supported on the $\Gamma$-orbit of $\tau$. Instead, Darmon--Vonk \cite{DV1} define a cocycle $$J_\tau  
\in Z^1(\Gamma, \mathcal{M}^\times/ \mathbb{C}_p^\times)$$
whose divisor is supported on the $\Gamma$-orbit of $\tau$. Here, $\mathcal{M}^\times$ denotes the $\Gamma$-module of rigid meromorphic functions on $\mathcal{H}_p$. When there is a lift of $J_\tau$ to $$\mathcal{J}_\tau \in H^1(\Gamma, \mathcal{M}^\times),$$ then we can evaluate $\mathcal{J}_{\tau}$ at another RM-point, $\sigma$, whose fundamental stabilizer is $\gamma_\sigma \in \Gamma$. This evaluation is given by $$J_p(\tau, \sigma) \defeq \mathcal{J}_\tau[\sigma] \defeq \mathcal{J}_\tau(\gamma_\sigma)(\sigma).$$  Darmon and Vonk have recently announced a proof of the  algebraicity of the norms of $J_p(\tau, \sigma)$ to $\mathbb{Q}_p$. Their proof also yields an explicit prime factorization formula for the norm of $J_p(\tau, \sigma)$, which is reminiscent of Gross--Zagier's factorization of singular moduli. The RM-values of rigid meromorphic cocycles behave analogously to the archimedean contribution of intersections of Heegner points; although the algebraicity of Stark--Heegner points is not known, one might speculate that the RM-values above encode some local contribution to a $p$-adic height pairing of Stark--Heegner points. 

For example, it is observed in \cite{DV1} that if $p=3$ and $\phi = \frac{1+\sqrt{5}}{2}$, then up to $100$ digits of $3$-adic accuracy \begin{equation}\label{jvalue1}J_3(\phi, 2\sqrt{5}) \stackrel{?}{=} \frac{174+832\sqrt{-1}}{2\cdot 5^2\cdot 17},\end{equation} and \begin{equation}\label{jvalue2}J_3(\sqrt{5}, 2\sqrt{5}) \stackrel{?}{=} \frac{-4623 + 136 \sqrt{-1}}{5^3 \cdot 37}.\end{equation}
Note that the narrow class field associated to the order $\mathcal{O}_{2\sqrt{5}}$ is $\mathbb{Q}(\sqrt{5},i)$.

\hfill\break
\textbf{Main Construction: $p$-adic Higher Green's Functions.}
In this paper, we generalize the theory of rigid meromorphic cocycles to higher weight, i.e. to cocycles of $H^1(\Gamma,\mathcal{M}(k))$, where $\mathcal{M}(k)$ is the module of rigid meromorphic functions with a weight $k$ action $$ f\mid_{k}\gamma = (cz + d)^{-k}f(\gamma z),\quad \text{where } \gamma = \begin{pmatrix} a & b \\ c&d \end{pmatrix}\in \Gamma .$$ We define the higher Green's functions as periods of such higher weight meromorphic cocycles.

We will classify the higher weight rigid meromorphic cocycles whose divisors are supported on the unramified points of $\mathcal{H}_p$. In fact, for any formal linear combination, $\mathcal{D}$,  of inert RM-points we will define a canonical cocycle $$J_{k,\mathcal{D}} 
\in H^1(\Gamma, \mathcal{M}(k)),$$ whose divisor is supported on the $\Gamma$-orbits of the points of $\mathcal{D}$. The requirement that the poles are supported on unramified points can be removed and a full classification of $$H^1_{par}(\Gamma, \mathcal{M}(k))$$ will be given in the author's thesis. 

The $p$-adic higher Green's function $G_k(\tau, \sigma)$ is defined as an RM-period of the cocycle $J_{k,\tau}$; there is a specific choice of cocycle $\mathcal{J}_{k,\tau}$ which takes values in locally analytic functions on $\mathcal{H}_p$ and satisfies $$d^{k-1}\mathcal{J}_{k,\tau} = J_{k,\tau}.$$ The higher Green's function is then equal to $$G_k(\tau,\sigma)  = \delta_{2-k}^{(k-2)/2}\left(\mathcal{J}_{k,\tau}(\gamma_\sigma)\right)(\sigma),$$ where $\delta_i = \frac{d}{dz} + \frac{i}{z-\overline{z}}$ is the usual raising operator applied formally to the function $\mathcal{J}_{k,\tau}(\gamma_\sigma)$, and conjugation is the non-trivial automorphism of $\mathbb{Q}(\sigma)$. When $k = 2$, the Green's function recovers Darmon--Vonk's real-quadratic singular moduli: $$G_2\left([\tau],[\sigma]\right) = \log_p \left(J_p(\tau,\sigma)\right).$$

\hfill\break
\textbf{Main Conjecture and Supporting Evidence.}
We implemented an algorithm to compute some values of $G_k$ on the computer algebra system SageMath.
The resulting values of $G_k$ at ``principal'' linear combinations of RM-orbits appear to be $p$-adic logarithms of algebraic numbers lying in the compositum of class fields of the real quadratic fields associated to the RM-orbits.

For example, let $p=3$ and $k=4$. In this case, $S_4(\Gamma_0(3))$ is trivial, which implies that every divisor is principal. Let $\phi = \frac{1+\sqrt{5}}{2}$ be the golden ratio, then up to $140$ digits of $3$-adic accuracy we compute \begin{align*} & 40 G_4(9[\phi] - 2[\sqrt{5}], [2\sqrt{5}])\\ & \stackrel{?}{=}  5\log_p\left(\frac{2-i}{2+i}\right)+17\log_p\left(\frac{6+i}{6-i}\right)+63\log_p\left(\frac{4+i}{4-i}\right)+26\log_p\left(\frac{5+i}{5-i}\right). \end{align*}

Note that $i$ generates the narrow class field of the order $\mathcal{O}_{2\sqrt{5}}$ over $\mathbb{Q}(\sqrt{5})$. The prime factors in the factorization above are all inert in $\mathbb{Q}(\sqrt{5})$. Further, apart from the prime $13$, all the other primes appear either in the factorization of $J_{\phi}[2\sqrt{5}]$ or  $J_{\sqrt{5}}[2\sqrt{5}]$ given in (\ref{jvalue1}) and (\ref{jvalue2}).

\begin{conjecture}\label{conj: intro alg}
Let $$\mathcal{D} = \sum_i \tau_i$$ be a formal linear combination of RM-points, which is principal i.e. $\mathrm{Obs}_0(J_{k,\mathcal{D}})$ (Definition \ref{def: log obstruction}) is trivial. Then for any other RM-points $\sigma$, the value of the Green's function $G_k(\mathcal{D}, \sigma)$ is a linear combination of logarithms of algebraic numbers belonging to the compositum of narrow class fields associated with the quadratic orders $\mathcal{O}_{\tau_i}$ and $\mathcal{O}_{\sigma}$.
\end{conjecture}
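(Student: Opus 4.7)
The plan is to transport into the rigid analytic setting the Arakelov-theoretic principle that underlies the archimedean Gross--Zagier conjecture, and to adapt to higher weight the approach that Darmon--Vonk pursue in weight $2$.

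First I would verify that $(\mathcal{D},\sigma)\mapsto G_k(\mathcal{D},\sigma)$ is bilinear and Galois-equivariant in $\sigma$, reducing the problem to the algebraicity of Galois-averaged sums $\sum_\theta G_k(\mathcal{D}^\theta,\sigma^\theta)$. The substantive step is to reinterpret $G_k(\mathcal{D},\sigma)$ as a local contribution to a $p$-adic height pairing of Stark--Heegner cycles. Using the Rotger--Seveso recipe, one would attach to $\mathcal{D}$ and $\sigma$ the local Selmer classes $y_{k,\mathcal{D}}$ and $y_{k,\sigma}^{\sharp}$ and establish an identity of the form
\[
G_k(\mathcal{D},\sigma) \;=\; \langle y_{k,\mathcal{D}},\, y_{k,\sigma}^{\sharp}\rangle_p ,
\]
for a $p$-adic Arakelov-type pairing. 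Principality, i.e.\ the triviality of $\mathrm{Obs}_0(J_{k,\mathcal{D}})$, should correspond to $y_{k,\mathcal{D}}$ vanishing in a global Chow (or Selmer) group attached to $\mathcal{Y}_k$; an explicit trivialization $\sum_i c_i (f_i, W_i)$ would then express the pairing as $\sum_i c_i \log_p f_i(y_{k,\sigma}^{\sharp}\cdot W_i)$, a sum of $p$-adic logarithms of algebraic numbers defined over the compositum of narrow class fields predicted by the conjecture.

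A more realistic intermediate target is a deformation argument. I would embed $J_{k,\mathcal{D}}$ in a Coleman family of rigid meromorphic cocycles, form an appropriate diagonal restriction along the lines of Darmon--Vonk at weight $2$, and identify the value at $\sigma$ with a special value of an anti-cyclotomic $p$-adic Rankin--Selberg $L$-function attached to a weight-$k$ eigenform on $\Gamma_0(p)$ twisted by a ring class character of $\mathbb{Q}(\sigma)$. A Gross--Zagier style factorization formula in the family, combined with Galois-equivariance, should then exhibit the algebraic number whose $p$-adic logarithm computes $G_k(\mathcal{D},\sigma)$, and the tame primes entering the factorization would account for the pattern observed in the numerical examples.

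The hard part will be geometric, and mirrors the currently open status of Stark--Heegner cycles themselves: the Rotger--Seveso local classes are only \emph{conjectured} to arise from algebraic cycles on $\mathcal{Y}_k$ defined over the expected narrow class fields, and no moduli-theoretic construction of such cycles is known. An unconditional proof therefore seems to require either genuine progress on the existence side of the Bloch--Beilinson conjecture for the Kuga--Sato varieties, or a $p$-adic adaptation of the regularized theta-lift methods of Bruinier--Li--Yang that bypasses geometry entirely. In either route the most delicate analytic inputs are the non-degeneracy of the $p$-adic height pairing and the rigidity of the principality condition under $p$-adic deformation.
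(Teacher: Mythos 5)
This statement is Conjecture~\ref{conj: intro alg} in the paper, and the paper does \emph{not} prove it. The author supports it only with numerical evidence (Section~5: values of $\mathcal{J}^{\mathscr{L}}_{4,\mathcal{D}}$ computed to $\sim 145$ digits of $3$-adic accuracy and matched against products of units in $\mathbb{Q}(\sqrt{5},i)$ etc., plus the refined factorization Conjecture~\ref{conj: factorization}). So there is no ``paper proof'' to compare your argument against, and you should be explicit that what you have written is a research program rather than a proof — which, to your credit, you do acknowledge in the final paragraph.

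That said, a few remarks on the substance of your sketch relative to what the paper actually establishes. Your proposed identity $G_k(\mathcal{D},\sigma) = \langle y_{k,\mathcal{D}}, y^{\sharp}_{k,\sigma}\rangle_p$ for a global $p$-adic Arakelov pairing is not something the paper has — what the paper defines is the purely local quantity $J_{k,\mathcal{D}}[y^{\sharp}_{k,\mathcal{D}_2}]$ via Breuil duality; interpreting it as a local term of a genuine global height pairing on Chow groups of $\mathcal{Y}_k$ would require precisely the Rotger--Seveso cycles to exist, which is open. Relatedly, your claim that ``triviality of $\mathrm{Obs}_0(J_{k,\mathcal{D}})$ should correspond to $y_{k,\mathcal{D}}$ vanishing in a global Chow (or Selmer) group'' is a plausible heuristic (mirroring the archimedean picture sketched in the introduction) but is not a theorem, and you would need to say what you mean by the trivialization $\sum_i c_i(f_i,W_i)$ in a $p$-adic context where no integral model or arithmetic intersection theory for these cycles is available. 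Your second route, via $p$-adic families and diagonal restrictions of rigid cocycles, is closer in spirit to the weight-$2$ methods Darmon--Vonk have announced for proving the algebraicity of $J_p(\tau,\sigma)$, and is probably the more realistic path; but note that the relevant weight-$k$ modular symbols here are $\Gamma$-invariant (i.e. $\Gamma_0(p)$, level $p$), and the obstruction space that must vanish for principality is $S_k(\Gamma_0(p))^{p\text{-new}}$ — a Coleman family deformation would need to track this $p$-new constraint along the eigencurve. Finally, one small but important precision: the conjecture, as stated, presupposes $\mathrm{Deg}_{k,\mathcal{D}}$ trivial (strong degree zero) so that $J_{k,\mathcal{D}}$ is even defined and $\mathrm{Obs}_o$ vanishes; by Proposition~\ref{deShalitIDproposition} this combined with $\mathrm{Obs}_0 = 0$ gives $\mathrm{Obs}_{\mathscr{L}}=0$ for all branches $\mathscr{L}$, which is the independence-of-branch phenomenon your strategy should exploit and that the paper's Conjecture~\ref{algConj} builds in explicitly via the pair of predictions for $\mathcal{J}^{\mathscr{L}}_{k,\mathcal{D}_1}$ and $\mathcal{J}^{o}_{k,\mathcal{D}_1}$.
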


The norms of the algebraic numbers that appear in Conjecture \ref{conj: intro alg} also seem to admit factorization formulas similar to those of singular moduli.  The primes appearing in the factorization of the norm of $G_k(\tau,\sigma)$ are not split in either of the two orders, and their multiplicities seem to be determined by weighted intersection numbers of geodesics on Shimura curves associated to definite quaternion algebras. Conjecture \ref{conj: factorization} has a more precise formulation of this observation.

The algebraicity of the values $G_k(\tau, \sigma)$ lends evidence to the conjectured existence of rational Stark--Heegner cycles and further adds to the growing analogy between the theories of Real Multiplication and complex multiplication. 

\hfill \break
\textbf{Meromorphic Cocycles of Higher Weight.}
In the multiplicative setting of \cite{DV1}, the meromorphic cocycles are defined explicitly as infinite products which are proven to converge by hand. In higher weight, the analogous infinite sums need not converge. The key idea to remedy this is as follows. If $f$ is a rigid meromorphic function on $\mathcal{H}_p$ whose poles all lie in a finite extension $E/\mathbb{Q}_p$, then the restriction $f\vert_{\mathcal{H}_E}$ is a rigid analytic function on $\mathcal{H}_E \defeq \mathbb{P}^1(\mathbb{C}_p)\backslash \mathbb{P}^1(E)$. The problem of defining meromorphic functions on $\mathcal{H}_p$ with prescribed poles now translates to finding certain harmonic boundary measures on $\mathbb{P}^1(E)$.

For example, the rigid meromorphic period function $j_\tau$ is defined by
$$j_{\tau} \defeq \prod_{w \in \Gamma \cdot \tau} t_w(z)^{\delta_{\infty}(w)}, \quad  \text{where }  t_w(z) =\begin{cases}  z-w & |w|\leq 1, \\ z/w-1 & |w|>1.  \end{cases}$$ for a certain function $\delta_\infty(w) \in \{-1,0,1\}$. The logarithmic derivative, $$\textrm{dlog}(j_\tau) = \sum_{w\in \Gamma \cdot \tau} \frac{\delta_{\infty}(w)}{z-w},$$ belongs to the module, $\mathcal{M}(2)$, of weight $2$ rigid meromorphic functions and has poles only at points in the orbit of $\tau$. The boundary measure associated to this function is then simply the indicator function at each $w$ in the orbit $\Gamma\cdot \tau$, weighted by $\delta_\infty(w)
$.

In \cite{Neg1}, Negrini shows that any parabolic cocycle in $Z^1_{par}(\Gamma, \mathcal{M}(k))$ has its poles supported only on a finite union of $\Gamma$-orbits of RM-points, and she produced cocycles of rigid meromorphic functions whose poles are supported on the union of $\Gamma$-orbits of an RM-point $\tau$ and its conjugate $\overline{\tau}$. The boundary measures described above will produce parabolic cocycles whose divisors are supported on the orbit of $\tau$ only, and therefore allows us to complete the classification of parabolic cocycles. 

Another difference in the higher weight setting is the lack of an analogue to the logarithmic derivative map $$ \textrm{dlog}:\mathcal{M}^\times \longrightarrow \mathcal{M}(2).$$ 

Real quadratic singular moduli are associated to the multiplicative cocycles. In higher weight, we consider instead the modules $\mathcal{M}_{\mathcal{L}}(2-k)$ defined to be the set of Coleman primitives of functions $f\in \mathcal{M}(k)$ under iterated differentiation $(k-1)$-times and with a choice of $p$-adic logarithm such that $\log_{\mathscr{L}}(p) = \mathscr{L}$. Note that the iterated differentiation map $$d^{k-1}: \mathcal{M}_{\mathscr{L}}(2-k) \longrightarrow \mathcal{M}(k)$$ is a $\Gamma$-module surjection. The spaces $\mathcal{M}_{\mathscr{L}}(2-k)$ are the meromorphic counterpart to those considered by Breuil \cite{Breuil2} \cite{Breuil1}. In weight $2$, the image of $\mathcal{M}^\times$ under the logarithm, $\log_\mathscr{L}$, is dense in $\mathcal{M}_{\mathscr{L}}(0)$.

Finally, to define $G_k(\tau,\sigma)$ we use Breuil duality, to pair the meromorphic cocycle $$J^{\mathscr{L}}_{\tau} \in H^1(\Gamma, \mathcal{M}_{\mathscr{L}}(2-k))$$ on $\mathcal{H}_p$ with a locally analytic functions on the boundary of the form $$\frac{(t-\sigma)^{(k-2)/2}(t-\overline{\sigma})^{(k-2)/2}}{(\sigma-\overline{\sigma})^{(k-2)/2}}\log_{\mathscr{L}}(t-\sigma).$$

\hfill\break

\vspace{0.2in}
\noindent \textbf{Acknowledgements:} This work is part of my PhD thesis. I would like to thank my advisor Henri Darmon for introducing me to this topic and for guiding me along the way. I am supported by an FRQNT Doctoral Training Scholarhip \href{https://doi.org/10.69777/363239}{https://doi.org/10.69777/363239}. I was previously partially supported by a Graduate Student Scholarship of the Institut des Sciences Mathématiques au Qu\`ebec.

\section{ The $p$-adic upper half-plane}
This section introduces $p$-adic analysis on the Drinfeld upper half-plane, which serves as the  scaffolding for the rest of the paper. We will define spaces of rigid analytic functions, as well as spaces of their Coleman primitives. Then, we will discuss Morita and Breuil duality, which relate functions on the upper half-plane to distributions on the boundary. Much of the information of this section is presented in more detail in \cite{DT}.

Fix a prime number $p$. Let $F$ be a fixed finite extension of $\mathbb{Q}_p$. Denote by $\mathcal{O}_F$ its ring of integers and $\kappa(F)$ its residue field, which has size $q_F$. Fix a choice of uniformizer $\pi_F\in \mathcal{O}_F$. 
\begin{definition}[The $p$-adic upper half-plane]\label{def: upper half-plane}

The $p$-adic upper half plane over $F$ is the rigid analytic space, $\mathcal{H}_F$, whose $E$-rational points are $$\mathcal{H}_F(E) \defeq \mathbb{P}^1(E)\backslash \mathbb{P}^1(F),$$ for any complete extension $E/F$. In particular, $\mathcal{H}_F(\mathbb{C}_p) = \mathbb{P}^1(\mathbb{C}_p)\backslash\mathbb{P}^1(F)$.

\end{definition}

The group $\textrm{GL}_2(F)$ acts on $\mathcal{H}_F$ by M\"{o}bius transformation $$\begin{pmatrix} a & b \\ c& d\end{pmatrix}\cdot z \mapsto \frac{az+b}{cz+d}.$$
For any complete extension $E$ of $F$, define the closed disks in $\mathbb{P}^1(E)$ by $$B_E\left([x_0:x_1],r\right) = \left\{ [y_0: y_1] \mid \mathrm{ord}_p\left( x_0 \cdot y_1 - x_1 \cdot y_0\right) \geq r, \; \; (y_0,y_1)\text{ is a primitive pair}\right\}.$$ A pair $(y_0,y_1)$ is said to be primitive if both $y_0$ and $y_1$ belong to $\mathcal{O}_E$ and at least one of them is a unit.

\begin{definition}[Bruhat--Tits tree]\label{def: Bruhat-Tits tree}
The Bruhat--Tits tree over $F$ is the graph, $\mathcal{T}_F$, whose vertex set is the set of homothety classes of $\mathcal{O}_F$-lattices in $F^2$. A pair of classes of lattices $[L_1]$ and $ [L_2]$ are related by an edge if there exist representatives $L_1$ and $L_2$, respectively, such that $$ \pi_F L_1 \subsetneq L_2 \subsetneq L_1.$$ In this case, $[L_1]$ and $[L_2]$ are said to be $q_F$-neighbors.

We will denote the set of vertices of $\mathcal{T}_F$ by $\mathcal{T}_F^0$, the set of oriented edges by $\mathcal{T}_F^1$ and the set of unoriented edges by $\overline{\mathcal{T}_F^1}$. The group $\textrm{GL}_2(F)$ acts on $\mathcal{T}_F$ via its natural action on lattices in $F^2$. We will denote oriented edges as an ordered pair, $(v,w)$, where $v$ and $w$ are neighbouring vertices.
\end{definition}

$\mathcal{T}_F$ is a tree, and it is  $(q_F + 1)$-regular; that is, each vertex has precisely $q_F + 1 = |\mathbb{P}^1(\kappa(F))|$ neighbours. The choice of the standard basis on $F^2$ privileges distinguished elements of the $\mathcal{T}_F$. Namely, define $$v_0 \defeq \left[\mathcal{O}_F^2 \right], \quad v_1 \defeq \left[ \mathcal{O}_F \oplus \pi_F \mathcal{O}_F \right].$$ We will refer to $v_0$ as the standard vertex. Observe that $v_0$ and $v_1$ are adjacent and we will denote the edge $(v_0, v_1)$ by $e_0$ and refer to it as the standard edge.

The \textit{level} of a vertex, $v=[L]$, is its distance from $v_0$ on the tree. Equivalently, the level is the smallest non-negative integer $l$ such that there exists a representative, $L$, which satisfies $$\pi_F^l \mathcal{O}_F^2 \subset L \subset \mathcal{O}_F^2.$$ We will call such a representative an optimal representative. The level of an edge is the level of its vertex which is further from $v_0$.

For any vertex, $v = [L]$ of $\mathcal{T}_F$ with optimal representative $L$, let $L^{\mathrm{prim}}$ denote the primitive elements of $L$, that is the elements of the lattice such that at least one of the two coordinates is a unit in $\mathcal{O}_F$. The image of the projection $$L^{\mathrm{prim}} \longrightarrow\mathbb{P}^1(\mathcal{O}_F) = \mathbb{P}^1(F)$$ is a closed disk, which we will denote by $U_v$. The disk $U_v$ is independent of the choice of optimal representative.

The set of ends of $\mathcal{T}_F$ is the set of equivalence classes of infinite non-backtracking sequences of adjacent vectors, $$(u_0, u_1, \cdots),$$ modulo tail-equivalence; that is; two sequences $(u_0, u_1,\cdots)$ and $(w_0, w_1 ,\cdots)$ are equivalent if there exists an integer $m$ such that $u_i = w_{i+m}$ for all $i$ large enough.

\begin{proposition}
	The set of ends of $\mathcal{T}_F$ can be naturally identified with $\mathbb{P}^1(F)$. 
\end{proposition}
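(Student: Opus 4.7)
The plan is to construct mutually inverse bijections between the set of ends of $\mathcal{T}_F$ and $\mathbb{P}^1(F)$ by using the family of boundary disks $U_v$ attached to the vertices. Using tail-equivalence, any end can be represented uniquely by a non-backtracking ray $(u_0,u_1,u_2,\ldots)$ with $u_0 = v_0$, so I will work with such normalized rays throughout.

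First I would verify the key combinatorial fact relating the tree structure to the disks: if $v$ is a vertex of level $n$, then $v$ has exactly one neighbor $w$ of level $n-1$ (if $n \geq 1$) and exactly $q_F$ neighbors of level $n+1$, and the disks $U_{w_1}, \ldots, U_{w_{q_F}}$ associated to these further neighbors form a partition of $U_v$ into $q_F$ closed sub-disks, each of radius $q_F^{-1}$ times the radius of $U_v$. This can be checked directly in the lattice picture using optimal representatives, and specializes at $v = v_0$ to the standard partition of $\mathbb{P}^1(F)$ into the $q_F+1$ disks of radius $q_F^{-1}$. Given this, a non-backtracking ray from $v_0$ produces a strictly decreasing nested sequence of closed disks $\mathbb{P}^1(F) = U_{u_0} \supsetneq U_{u_1} \supsetneq U_{u_2} \supsetneq \cdots$ whose radii tend to $0$. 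By completeness of $F$, the intersection $\bigcap_n U_{u_n}$ is a single point of $\mathbb{P}^1(F)$, which I take as the image of the end.

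For the inverse, given $x \in \mathbb{P}^1(F)$, the partition property gives, for each $n \geq 0$, a unique vertex $v_n(x)$ of level $n$ such that $x \in U_{v_n(x)}$, and adjacency of $v_n(x)$ and $v_{n+1}(x)$ follows from the fact that the level-$(n+1)$ disks refine the level-$n$ ones. Hence $(v_0(x), v_1(x), v_2(x),\ldots)$ is a non-backtracking ray from $v_0$. The two constructions are inverse: on the one hand, $v_n(x)$ is, by construction, the unique level-$n$ vertex whose disk contains the intersection point; on the other hand, a normalized ray is recovered from its intersection point by taking the level-$n$ disk containing it. Well-definedness on tail-equivalence classes follows because every ray is tail-equivalent to a unique ray starting at $v_0$, so the procedure descends to ends.

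The main obstacle is the combinatorial claim about the disks $U_v$ and their behavior under passage to the neighbors at level $n+1$; once this is in place, the rest is the standard metric/topological argument about nested closed balls in a complete non-archimedean space. As a bonus one gets that the identification is $\mathrm{GL}_2(F)$-equivariant, since the $U_v$ are defined intrinsically from the lattice structure and $\mathrm{GL}_2(F)$ acts compatibly on both sides.
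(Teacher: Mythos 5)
Your proposal is correct and follows essentially the same approach as the paper: normalize rays to start at $v_0$, send an end to the single point in $\bigcap_n U_{u_n}$, and recover the ray from a point $x\in\mathbb{P}^1(F)$ by taking at each level the unique disk $U_{v_n(x)}$ containing $x$ (which the paper phrases as reading off the reduction of $x$ in $\mathbb{P}^1(\mathcal{O}_F/\pi_F^i\mathcal{O}_F)$). You merely make explicit the disk-refinement lemma that the paper leaves implicit.
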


\begin{proof}

	Without loss of generality, we can assume that each end $(w_0,w_1,\cdots)$ begins at the standard vertex. Then, the intersection $$\bigcap_{i\in \mathbb{N}} U_{w_i}$$ is a single element in $\mathbb{P}^1(F)$. In fact, every element $ \tau \in \mathbb{P}^1(F)$ can be obtained in this way for a unique end; the choices of optimal representatives of the path from $v_0$ to $\tau$ can be obtained from the reduction of $\tau$ in $\mathbb{P}^1(\mathcal{O}_F/\pi_F^i \mathcal{O}_F)$.
\end{proof}

For any vertex $v$, the open set $U_v$ is precisely the ends of $\mathcal{T}_F$ of non-backtracking paths which begin at $v_0$ and pass through $v$. Similarly, to each oriented edge, $e = (w_0, w_1)$, we let $U_e$ be the set of ends of non-backtracking paths which begin at $w_0$ and pass through $w_1$.

\begin{definition}[Reduction Map] \label{def: reduction map}
There is a $\textrm{GL}_2(F)$-equivariant map $$ \red_F: \mathcal{H}_F(\mathbb{C}_p) \longrightarrow\mathcal{T}_F^0\cup \overline{\mathcal{T}_F^1}$$ called the reduction map, which is defined as follows:  Given $\tau\in \mathcal{H}_F(\mathbb{C}_p)$, there exists a minimal $r \in \mathbb{R}$ such that the intersection $B_{\mathbb{C}_p}(\tau, r) \cap \mathbb{P}^1(F)$ is a non-empty closed disk in $\mathbb{P}^1(F)$ and is equal to $U_v$ for a unique vertex $v$. 

\begin{itemize}
	\item If $r\in \mathrm{ord}_p(F)$, then set $\red_F(\tau) \defeq v$,
	\item otherwise, let $e$ be the unique oriented edge starting at $v$ and pointing towards the standard vertex. Then we set $\red_F(\tau)$ to be the unoriented edge with the same vertices as $e$. 
\end{itemize}

\end{definition}

\begin{remark}
 Given an oriented edge, $e$, we will write $\mathrm{red}_F^{-1}(e)$ to denote the preimage of the unoriented edge with the same vertices as $e$.

\end{remark}

Given a vertex $v\in \mathcal{T}_F^0$, its preimage, $\red_F^{-1}(v)$, is an admissible affinoid of $\mathcal{H}_F$; the preimage of $v_0$ is referred to as the standard affinoid. The preimage of an edge is an annulus and the choice of an orientation on the edge corresponds to an orientation on the annulus.

Let $\mathcal{T}_F^{\leq l}$ be the subtree of $\mathcal{T}_F$ consisting of vertices and edges of level at most $l$. The preimage of this subtree, $\mathcal{H}_F^{\leq l} \defeq \red_F^{-1}\left(\mathcal{T}_F^{\leq l}\right)$ is an admissible affinoid, and the collection $\left\{ \mathcal{H}_F^{\leq l}\right\}_l$ is an admissible covering of $\mathcal{H}_F$.

\subsection{Rigid Analytic Functions}\label{sec: rigid analytic functions}
The ring of rigid analytic functions on $\mathcal{H}_F$ is a projective limit of affinoid algebras of entire functions on the admissible covering $$\bigcup_{l=0}^\infty \mathcal{H}_F^{\leq l}.$$

We begin by giving a more explicit description of $\mathcal{H}_F^{\leq l}$. Fix a set of representatives, $\mathcal{R}_l$, of $\mathbb{P}^1(F) = \mathbb{P}^1(\mathcal{O}_F)$ modulo $\pi_F^l$ such that $\infty\in\mathbb{P}^1(F)$ is the representative for $\infty\;  (\mathrm{mod}\; \pi_F^l)$. Define the open disks 

$$B^{-}_{\mathbb{C}_p}\left([x_0:x_1],r\right) = \left\{ [y_0: y_1]\in \mathbb{P}^1(\mathbb{C}_p) \;| \; \mathrm{ord}_p\left( x_0 \cdot y_1 - x_1 \cdot y_0\right) > r \right\},$$ where, as before, all coordinates are taken to be primitive. Then the affinoids can be described as $$\mathcal{H}_F^{\leq l} = \mathbb{P}^1(\mathbb{C}_p)\backslash \bigcup_{a\in \mathcal{R}_l}B_{\mathbb{C}_p}^{-}(a,l).$$

Let $\mathcal{O}(\mathcal{H}_F^{\leq l})$ be the $\mathbb{C}_p$-algebra of functions of the form $$\sum_{i = 0}^\infty b_{\infty, i}z^i + \sum_{a\in \mathcal{R}_l\backslash\{\infty\}}\sum_{i=1}^\infty\frac{b_{a,i}}{(z-a)^i},$$
with $b_{\infty,i}$, $b_{a,i}\in\mathbb{C}_p$ satisfying $\lim_{i\to \infty}p^{-il}b_{a, i} = 0$ for every $a\in \mathcal{R}_l$. These are precisely the sums of power series which converge away from the disks $B^-_{\mathbb{C}_p}(a,l)$.

The $\mathbb{C}_p$-algebra of rigid analytic functions on $\mathcal{H}_F$ is the projective limit \[\mathcal{O}(\mathcal{H}_F) \defeq \varprojlim_l \mathcal{O}(\mathcal{H}_F^{\leq l}),\] where the projective morphisms are given by restriction of functions. The field of meromorphic functions $\mathcal{M}(\mathcal{H}_F)$ is the fraction field of $\mathcal{O}(\mathcal{H}_F)$.

 For $k$ any even integer and $n = k-2$, let $\mathcal{O}(k, \mathcal{H}_F)$ be the right $\textrm{GL}_2(F)$-module whose underlying vector space is $\mathcal{O}(\mathcal{H}_F)$, and is equipped with a weight $k$ action $$\left(f|_k\gamma\right)(z) \defeq \frac{\mathrm{det}(\gamma)^{k/2}}{(cz+d)^k} f(\gamma\cdot z), \; \; \text{ where }  \gamma = \begin{pmatrix} a & b \\ c & d \end{pmatrix}\in \textrm{GL}_2(F). $$ 

For a positive weight $k$, differentiating $(k-1)$ times is a morphism of $\textrm{GL}_2(F)$-modules $$\mathcal{O}(2-k,\mathcal{H}_F) \xrightarrow{d^{k-1}} \mathcal{O}(k,\mathcal{H}_F).$$ The kernel of this map is the $\textrm{GL}_2(F)$-module of polynomials of degree at most $n$ with coefficients in $\mathbb{C}_p$, which we will denote by $P_{n}(\mathbb{C}_p)$.

Following Breuil\cite{Breuil1, Breuil2}, we will define spaces of Coleman primitives of rigid-analytic functions. Fix a choice of $\mathscr{L}\in\mathbb{C}_p$; this determines a branch of logarithm $\log_\mathscr{L}:\mathbb{C}_p^\times\longrightarrow\mathbb{C}_p$ such that $$\log_\mathscr{L}(p) = \mathscr{L}.$$

For a positive weight $k>0$, the space $\mathcal{O}_{\mathscr{L}}(2-k, \mathcal{H}_F)$ is defined as a projective limit. For the affinoids $\mathcal{H}_F^{\leq l}$ define $\mathcal{O}_{\mathscr{L}}(2-k, \mathcal{H}_F^{\leq l})$ to be the $\mathbb{C}_p$ vector space of functions of the form $$\sum_{i = 0}^\infty b_{\infty, i}z^i + \sum_{a\in \mathcal{R}_l\backslash\{\infty\}}\sum_{i=1}^\infty\frac{b_{a,i}}{(z-a)^i} + \sum_{a\in \mathcal{R}_l\backslash\{\infty\}} P_a(z)\log_{\mathscr{L}}(z-a),$$ where $P_a$ are polynomials in $P_n(\mathbb{C}_p)$ and $b_{\infty,i}, b_{a,i}\in\mathbb{C}_p$ satisfying $\lim_{i\to \infty}p^{-ir}b_{a, i} = 0$ for every $a\in \mathcal{R}_l$. 
The space $\mathcal{O}_{\mathscr{L}}(2-k, \mathcal{H}_F)$ is equipped with a $\textrm{GL}_2(F)$-action with weight $2-k$. 

\begin{proposition}[\cite{Breuil1} Proposition 3.2.2]
There is an exact sequence of $\textrm{GL}_2(F)$-modules $$0 \longrightarrow P_n(\mathbb{C}_p) \longrightarrow\mathcal{O}_{\mathscr{L}}(2-k, \mathcal{H}_F) \xrightarrow{d^{k-1}} \mathcal{O}(k, \mathcal{H}_F)\longrightarrow0.$$
\end{proposition}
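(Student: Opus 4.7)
The plan is to prove exactness level-by-level on the admissible cover $\{\mathcal{H}_F^{\leq l}\}_l$ and then pass to the projective limit. The $\textrm{GL}_2(F)$-equivariance of $d^{k-1}$ relative to the weight-$(2-k)$ and weight-$k$ actions is the classical Bol-operator identity, which can be verified directly by induction on the order of the derivative using the cocycle relation for $(cz+d)^{k-2}$. The inclusion $P_n(\mathbb{C}_p) \hookrightarrow \mathcal{O}_{\mathscr{L}}(2-k,\mathcal{H}_F)$ is equivariant because $(cz+d)^{k-2} P\!\left(\tfrac{az+b}{cz+d}\right)$ is again a polynomial in $z$ of degree $\leq n$.

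For the kernel computation, I would write a general element $F \in \mathcal{O}_{\mathscr{L}}(2-k,\mathcal{H}_F^{\leq l})$ in the normal form $P_0(z) + \sum_{a \in \mathcal{R}_l \setminus \{\infty\}}\bigl(L_a(z) + P_a(z)\log_{\mathscr{L}}(z-a)\bigr)$, where $P_0$ is a convergent power series, $L_a$ is a Laurent tail at $a$, and $P_a \in P_n(\mathbb{C}_p)$. Applying $d^{k-1}$ term by term: $d^{k-1}(P_0)$ is a power series in $z$, vanishing iff $P_0 \in P_n$; $d^{k-1}(L_a)$ is a Laurent tail at $a$ with pole orders $\geq k$, vanishing iff $L_a = 0$; and using Leibniz together with $P_a^{(k-1)} = 0$ (since $\deg P_a \leq n$), $d^{k-1}(P_a \log_{\mathscr{L}}(z-a))$ is a Laurent polynomial at $a$ of pole order at most $k-1$. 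These three types of contributions live in pairwise disjoint subspaces (power series in $z$ versus Laurent tails at distinct points $a$ versus Laurent polynomials of bounded order at $a$), so they must each vanish. The vanishing of $d^{k-1}(P_a \log_{\mathscr{L}}(z-a))$ forces $P_a = 0$ because a nonzero polynomial multiple of $\log_{\mathscr{L}}(z-a)$ is not itself a polynomial of degree $\leq n$. Thus the kernel equals $P_n(\mathbb{C}_p)$.

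For surjectivity on each affinoid, I would construct a primitive of $f \in \mathcal{O}(k,\mathcal{H}_F^{\leq l})$ by integrating its normal form term by term. Power series contributions and Laurent terms $c_{a,i}/(z-a)^i$ with $i \geq k$ integrate into the same kinds of terms with coefficients of comparable (indeed smaller) size, so the convergence condition $\lim_i p^{-il} b_{a,i} = 0$ is preserved. For the finitely many remaining Laurent terms with $1 \leq i \leq k-1$, iterated $(k-1)$-fold integration produces a polynomial multiple of $\log_{\mathscr{L}}(z-a)$ of degree $\leq k-1-i \leq n$, together with rational and polynomial corrections. Summing, one obtains a primitive in $\mathcal{O}_{\mathscr{L}}(2-k,\mathcal{H}_F^{\leq l})$, establishing surjectivity on each affinoid.

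The main technical point, and the only genuine obstacle, is coherence under the projective limit $\mathcal{O}_{\mathscr{L}}(2-k,\mathcal{H}_F) = \varprojlim_l \mathcal{O}_{\mathscr{L}}(2-k,\mathcal{H}_F^{\leq l})$: primitives on different affinoids chosen independently may disagree under restriction by an element of the kernel. Since that kernel is the finite-dimensional $P_n(\mathbb{C}_p)$, the transition system trivially satisfies Mittag--Leffler, $\varprojlim\nolimits^{1}$ vanishes, and surjectivity passes to the inverse limit. This yields the exact sequence on $\mathcal{H}_F$ as asserted.
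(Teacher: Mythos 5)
The paper gives no proof of this proposition---it is cited directly from Breuil (\cite{Breuil1}, Proposition 3.2.2)---so I am assessing your proposal against the standard argument rather than against internal text.

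Your kernel computation and the equivariance discussion are fine, and the final idea of adjusting primitives by elements of the finite-dimensional kernel is the right one. But your surjectivity step contains a genuine error. You claim that power-series and high-order Laurent terms ``integrate into the same kinds of terms with coefficients of comparable (indeed smaller) size, so the convergence condition $\lim_i p^{-il}b_{a,i}=0$ is preserved.'' This reflects archimedean intuition and is false $p$-adically: antidifferentiating $(k-1)$ times replaces a coefficient $c_i$ by $c_i/\prod_{j=1}^{k-1}(i+j)$, and dividing by integers divisible by high powers of $p$ makes the $p$-adic absolute value \emph{larger}. Since $\mathrm{ord}_p\bigl(\prod_{j=1}^{k-1}(i+j)\bigr)$ is unbounded (it grows like $\log_p i$ along $i=p^m-1$), while the hypothesis $\mathrm{ord}_p(c_i)-il\to\infty$ imposes no rate, the primitive need not converge on the same affinoid. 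Concretely, if $\mathrm{ord}_p(c_i)=il+\lfloor\log\log(i+3)\rfloor$, then $\sum c_i z^i\in\mathcal{O}(k,\mathcal{H}_F^{\leq l})$, but for $i=p^m-1$ the valuation of the relevant coefficient of the would-be primitive, minus $(i+k-1)l$, is about $\log\log(p^m)-m-(k-1)l\to -\infty$. So the map $d^{k-1}:\mathcal{O}_{\mathscr{L}}(2-k,\mathcal{H}_F^{\leq l})\to\mathcal{O}(k,\mathcal{H}_F^{\leq l})$ is not surjective for fixed $l$, and consequently the tower of sequences you describe is not a tower of short exact sequences, so the usual $\varprojlim^1$ reasoning cannot be applied as stated.

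The fix is a one-step shift: if $\mathrm{ord}_p(c_i)-i(l+1)\to\infty$, then the estimate picks up a surplus summand $i$, which dominates the $O(\log i)$ loss from the factorial denominators, so term-by-term integration of $f|_{\mathcal{H}_F^{\leq l+1}}$ converges on the strictly smaller $\mathcal{H}_F^{\leq l}$. These level-shifted primitives $\tilde F_l\in\mathcal{O}_{\mathscr{L}}(2-k,\mathcal{H}_F^{\leq l})$ satisfy $\tilde F_{l+1}|_{\mathcal{H}_F^{\leq l}}-\tilde F_l\in P_n(\mathbb{C}_p)$, and one then corrects recursively by polynomials to obtain a coherent element of the inverse limit. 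This is exactly the content of your last paragraph, which is sound once you run it on the shifted tower rather than on a level-by-level surjection that does not exist.
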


It will be convenient to also view rigid analytic functions as global sections of a local system on $\mathcal{H}_F$. 

\begin{proposition}
	There are $\textrm{GL}_2(F)$-module morphism $$\beta: \mathcal{O}_{\mathscr{L}}(2-k, \mathcal{H}_F)\longrightarrow\mathcal{O}_{\mathscr{L}}(0, \mathcal{H}_F)\otimes P_{n}(\mathbb{C}_p)$$ and $$\gamma: \mathcal{O}(k, \mathcal{H}_F)\longrightarrow\mathcal{O}(2,\mathcal{H}_F)\otimes P_{n}(\mathbb{C}_p).$$ 
	such that the following diagram commutes 
	\begin{center}
	\begin{tikzcd}
		\mathcal{O}_{\mathscr{L}}(2-k, \mathcal{H}_F)\arrow{r}{d^{k-1}} \ar{d}{\beta} & \mathcal{O}(k, \mathcal{H}_F) \ar{d}{\gamma}\\ 
		\mathcal{O}_{\mathscr{L}}(0, \mathcal{H}_F)\otimes P_{n}(\mathbb{C}_p)\ar{r}{d} & \mathcal{O}(2,\mathcal{H}_F)\otimes P_{n}(\mathbb{C}_p)
	\end{tikzcd}
	\end{center}

\end{proposition}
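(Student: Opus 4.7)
The plan is to give both maps by explicit formulas motivated by Taylor expansion. Define
$$\gamma(f)(z, X) \defeq f(z)(X-z)^n, \qquad \beta(g)(z, X) \defeq n!\sum_{j=0}^n \frac{g^{(j)}(z)}{j!}(X-z)^j,$$
viewing $\mathcal{O}(2,\mathcal{H}_F)\otimes P_n(\mathbb{C}_p)$ (and likewise $\mathcal{O}_{\mathscr{L}}(0,\mathcal{H}_F)\otimes P_n(\mathbb{C}_p)$) as polynomials of degree at most $n$ in an auxiliary variable $X$ with coefficients in the first tensor factor, and endowing $P_n(\mathbb{C}_p)$ with the $\textrm{GL}_2(F)$-action
$$(p\,|\,\sigma)(X) \defeq \det(\sigma)^{-n/2}(cX+d)^n\, p(\sigma X).$$

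First I would check commutativity of the square by differentiating $\beta(g)$ in $z$. Each summand yields
$$\tfrac{d}{dz}\Bigl(\tfrac{n!}{j!}g^{(j)}(z)(X-z)^j\Bigr) = \tfrac{n!}{j!}g^{(j+1)}(z)(X-z)^j - \tfrac{n!}{(j-1)!}g^{(j)}(z)(X-z)^{j-1},$$
so the full sum telescopes, leaving only the top contribution $g^{(n+1)}(z)(X-z)^n=(d^{k-1}g)(z)(X-z)^n=\gamma(d^{k-1}g)(z,X)$.

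Next I would verify equivariance of $\gamma$ using the one-line identity
$$\sigma X-\sigma z \;=\; \frac{\det(\sigma)(X-z)}{(cX+d)(cz+d)}.$$
Substituting this into $(X-z)^n\cdot f(z)$ and applying the combined $\mathcal{O}(2,\mathcal{H}_F)\otimes P_n(\mathbb{C}_p)$-action, the $(cX+d)^{\pm n}$ factors cancel, the determinant exponents collect to $\det(\sigma)^{k/2}$, and the $(cz+d)$ factors combine to $(cz+d)^{-k}$, matching $\gamma(f|_k\sigma)(z,X)$ exactly.

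The more delicate step, and the main obstacle, is equivariance of $\beta$. I plan to prove it by expanding both $\beta(g|_{2-k}\sigma)(z, X)$ and $(\beta(g)|\sigma)(z, X)$ as polynomials in $(X-z)$ and matching coefficients. Starting from the formal Taylor identity $g(\sigma X)=\sum_{j}\frac{g^{(j)}(\sigma z)}{j!}(\sigma X-\sigma z)^j$, multiplying by $\det(\sigma)^{-n/2}(cX+d)^n$, and expanding $(cX+d)^{n-j}=\sum_{l=0}^{n-j}\binom{n-j}{l}c^l(cz+d)^{n-j-l}(X-z)^l$, both expressions reduce to the double sum
$$n!\sum_{m=0}^n (X-z)^m\sum_{j=0}^m \frac{g^{(j)}(\sigma z)\det(\sigma)^{j-n/2}\binom{n-j}{m-j}c^{m-j}(cz+d)^{n-j-m}}{j!}.$$
The key observation that makes this work is that $(cX+d)^{n-j}$ is polynomial in $X$ of degree $n-j$ for $0\leq j\leq n$, so that only summands with $j\leq m\leq n$ contribute and the Taylor truncation commutes with the substitution $X\mapsto\sigma X$. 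The computation itself is standard in the jet-bundle formalism underlying the Eichler--Shimura machinery; the hard part is pinning down the correct $\det(\sigma)^{-n/2}$-normalization on $P_n(\mathbb{C}_p)$ so that the weight $k$ action on $\mathcal{O}(k,\mathcal{H}_F)$ is recovered as the tensor of weight $2$ and the action above on $P_n(\mathbb{C}_p)$.
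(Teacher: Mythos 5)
Your proposal uses exactly the same explicit formulas for $\beta$ and $\gamma$ as the paper, which simply states them and declares the verification ``straightforward, but tedious.'' You correctly supply the omitted computation — the telescoping argument for commutativity of the square, the M\"obius identity $\sigma X-\sigma z=\det(\sigma)(X-z)/((cX+d)(cz+d))$ for equivariance of $\gamma$, and the truncated-Taylor-expansion argument (noting that the $j>n$ terms contribute only degree $>n$ parts) for equivariance of $\beta$ — so this is the same approach with the details filled in.
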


\begin{proof}
The morphisms are given by $$\gamma: g(z)\mapsto g(z)(T-z)^{n}$$
 and 
$$\beta: G(z)\mapsto n!\sum_{i=0}^{n}\frac{G^{(i)}(z)}{i!}(T-z)^i.$$

It is a straightforward, but tedious, computation to verify that these functions are $\textrm{GL}_2(F)$-morphisms and that the diagram commutes.

\end{proof}

The space $\mathcal{O}(2,\mathcal{H}_F)\otimes P_{n}(\mathbb{C}_p)$ has a natural filtration of $\textrm{GL}_2(F)$-modules given by $$ F^j \defeq \mathrm{span}_{\mathcal{O}(2,\mathcal{H}_F)} \left\langle (z-T)^i \mid i \geq j \right\rangle. $$ The map $\gamma$ identifies $\mathcal{O}(k,\mathcal{H}_F)$ with $F^{n}$. There is also a filtration on $\mathcal{O}_{\mathscr{L}}(0,\mathcal{H}_F)\otimes P_{n}(\mathbb{C}_p)$ defined by $$F_{\mathscr{L}}^j\defeq d^{-1}(F^j). $$ The map $\beta$ induces an isomorphism between $\mathcal{O}_{\mathscr{L}}(2-k,\mathcal{H}_F)$ and $F_{\mathscr{L}}^{n}$.

\subsection{Locally Analytic Functions on the Boundary}
\begin{definition}\label{def: locally analytic functions} Let $k\geq 2 $ be an even weight. 
	\begin{itemize}
 \item The space of locally analytic functions, $C(k,F)$, is the space of functions $$F\longrightarrow\mathbb{C}_p,$$ such that for each $a\in F$ there exists a neighbourhood of $a$ on which $f$ can be expressed as a convergent power series $$f(t) = \sum_{i\geq 0}c_{a,i}(t-a)^i,$$ and in some neighbourhood of $\infty$, the function $f$ can be expressed as a convergent (away from $\infty$) power series $$f(t) = \sum_{i\leq{n}}c_{\infty,i}t^i;$$ i.e. it has a pole at $\infty$ of order at most $n$.
 \item Fix a branch cut of $p$-adic logarithm, $\mathscr{L}$. The space $C_{\mathscr{L}}(k,F)$ is the space of locally analytic functions on $F$, which in a neighbourhood of $\infty$ can be expressed in the form \begin{equation} \label{logpoly} f(t) = \left(\sum_{i\leq n}c_{\infty,i}t^i\right) - 2P(t)\log_{\mathscr{L}}(t),\end{equation} where $P$ is any polynomial of degree at most $n$. 
	\end{itemize}
\end{definition}

$C(k, F)$ is a subspace of $C_{\mathscr{L}}(k,F)$, and we equip both spaces with an action of $\textrm{GL}_2(F)$ by $$(f|\gamma)(t) = \frac{(cz+d)^{n}}{\det(\gamma)^{n/2}}\left[f(\gamma\cdot t)   + P(\gamma\cdot t)\log_{\mathscr{L}}\left(\frac{\det(\gamma)}{(cz+d)^2}\right)\right];$$ using the notation in (\ref{logpoly}).

\begin{definition}
Observe that $C(k,F)$ has $P_{n}(\mathbb{C}_p)$ as a sub-module.  Define $\Sigma(k,F)$ and $\Sigma_{\mathscr{L}}(k,F)$ to be the quotient modules $$\Sigma(k,F) \defeq C(k,F)/P_{n}(\mathbb{C}_p),$$ and $$\Sigma_{\mathscr{L}}(k,F) \defeq C_{\mathscr{L}}(k,F)/P_n(\mathbb{C}_p).$$

\end{definition}

\begin{proposition}
The following is an exact sequence $$0\longrightarrow\Sigma(k,F)\longrightarrow\Sigma_{\mathscr{L}}(k,F) \longrightarrow P_{n}(\mathbb{C}_p)\longrightarrow0,$$ 

where the map $\Sigma_{\mathscr{L}}(k,F) \longrightarrow P_{n}(\mathbb{C}_p)$ is given by sending to $f(t)\mapsto P(t)$ from \emph{(}\refeq{logpoly}\emph{)}.

\end{proposition}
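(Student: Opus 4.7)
My approach is to exhibit the quotient map at the level of the ambient spaces $C_{\mathscr{L}}(k,F) \to P_n(\mathbb{C}_p)$, verify it descends modulo $P_n(\mathbb{C}_p)$, and read off exactness from a kernel computation together with surjectivity. Concretely, for $f \in C_{\mathscr{L}}(k,F)$, I would set $\pi(f) = P$, where $P$ is the polynomial occurring in the expansion $(\ref{logpoly})$ of $f$ in a neighbourhood of $\infty$.

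The first step is to check that $\pi$ is well-defined, i.e.\ that $P$ is an intrinsic invariant of $f$ rather than an artifact of the chosen expansion. If two representatives agreed in a punctured neighbourhood of $\infty$, subtraction would give a relation of the form $(P(t) - P'(t)) \log_{\mathscr{L}}(t) = \tfrac{1}{2}\sum_{i \leq n}(c_i' - c_i)\, t^i$, forcing $P = P'$ because $\log_{\mathscr{L}}(t)$ is transcendental over $\mathbb{C}_p(t)$ (one can differentiate and compare: $\log_{\mathscr{L}}'(t) = 1/t$ cannot be a derivative of a polynomial of degree $\leq n$). The kernel of $\pi$ is then, by inspection, exactly the subspace $C(k,F)$ of functions whose expansion at $\infty$ has no log term.

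For surjectivity, given $P \in P_n(\mathbb{C}_p)$, I would define $f_P : F \to \mathbb{C}_p$ by
\[
f_P(t) = \begin{cases} -2 P(t) \log_{\mathscr{L}}(t) & |t| > 1, \\ 0 & |t| \leq 1. \end{cases}
\]
The two regions form a clopen partition of $F$ in the ultrametric topology, so $f_P$ is locally analytic at every point of $F$, and its asymptotic form near $\infty$ is precisely $(\ref{logpoly})$ with trivial power-series part and polynomial part $P$, so $\pi(f_P) = P$. Since $P_n(\mathbb{C}_p) \subset C(k,F) = \ker \pi$, the map $\pi$ factors through $\Sigma_{\mathscr{L}}(k,F) = C_{\mathscr{L}}(k,F)/P_n(\mathbb{C}_p)$, and the induced kernel is $C(k,F)/P_n(\mathbb{C}_p) = \Sigma(k,F)$, giving the claimed short exact sequence.

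The main obstacle is really the well-definedness in the first step: it comes down to the fact that $\log_{\mathscr{L}}(t)$ is not expressible as a polynomial of degree $\leq n$ times rational correction, which is routine but must be articulated carefully. A secondary subtlety, if one wants the sequence as $\mathrm{GL}_2(F)$-modules, is to verify $\mathrm{GL}_2(F)$-equivariance of $\pi$. Inspecting the twisted action, the correction term $P(\gamma \cdot t)\log_{\mathscr{L}}(\det(\gamma)/(ct+d)^2)$ must be re-expanded near $t = \infty$ using $\log_{\mathscr{L}}(\det\gamma/(ct+d)^2) = \log_{\mathscr{L}}(\det\gamma) - 2\log_{\mathscr{L}}(ct+d)$, and one checks that $(ct+d)^n P(\gamma \cdot t)$ is itself a polynomial of degree $\leq n$ in $t$, so that the log-polynomial part of $f|\gamma$ matches the $\gamma$-transform of the polynomial $P$, and equivariance of $\pi$ follows.
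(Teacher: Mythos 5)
The paper states this proposition without proof, so there is nothing to compare your argument against; what you have written is a correct and complete proof, and it follows the only sensible route (define $\pi$ at the level of $C_{\mathscr{L}}(k,F)$, identify its kernel as $C(k,F)$, exhibit explicit preimages, and pass to the quotient by $P_n(\mathbb{C}_p)$). Two small points worth tightening. First, for well-definedness the invocation of ``transcendence'' is best replaced by the purely formal observation at which your differentiation hint is aiming: if $(P-P')(t)\log_{\mathscr{L}}(t)$ were equal to a Laurent series $L(t)$ near $\infty$, repeated differentiation reduces to a nonzero constant times $\log_{\mathscr{L}}(t)$ being a Laurent series, whose derivative $1/t$ would have to be $L'(t)$ for some Laurent series $L$ --- impossible, since the coefficient of $t^{-1}$ in any $L'(t)$ vanishes. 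Second, the equivariance sketch is correct but should distinguish the case $c\neq 0$ from $c=0$: when $c\neq 0$, $\gamma\cdot\infty=a/c$ is finite, so $f(\gamma\cdot t)$ expands near $\infty$ as a log-free power series in $1/t$, and the entire $\log_{\mathscr{L}}(t)$-coefficient of $f|\gamma$ comes from the $-2\log_{\mathscr{L}}(ct+d)$ piece of the correction term, yielding $\det(\gamma)^{-n/2}(ct+d)^n P(\gamma\cdot t)$, which is exactly the weight-$(2-k)$ action on $P_n$; when $c=0$ the log comes from $f(\gamma\cdot t)$ directly and the same polynomial results. With those clarifications the argument is airtight.
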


We will write $\Sigma$ or $\Sigma(F)$ for $\Sigma(2,F)$ and also $\Sigma_{\mathscr{L}}$ or $\Sigma_{\mathscr{L}}(F)$ for $\Sigma_{\mathscr{L}}(2,F)$.
Define $\Phi_{\mathscr{L}}^j$ to be the subspace of $\Sigma_{\mathscr{L}}\otimes P_{n}(\mathbb{C}_p)$ consisting of functions which can locally, on $F$, be written in the form $(t-T)^j g(t)$ where $g(t)$ is an analytic function. Let $\Phi^j$ be the intersection $\Phi^j_{\mathscr{L}}\cap \Sigma\otimes P_{n}(\mathbb{C}_p)$.

The map induced from $$f(t)\otimes P(T)\mapsto f(t)P(t)$$ gives an isomorphism \begin{equation} \label{equation: filtration iso boundary}\Phi^0(\Sigma_\mathscr{L}\otimes P_n )/ \Phi^1 (\Sigma_\mathscr{L}\otimes P_n) \cong \Sigma_\mathscr{L}(k,F). \end{equation}

\subsection{Morita and Breuil Duality}\label{sec: Morita and Breuil Duality}

There is a duality between rigid analytic functions on $\mathcal{H}_F$ and locally analytic functions on $\mathbb{P}^1(F)$. Given  $f\in \mathcal{O}(2,\mathcal{H}_F)$ and $g\in \Sigma(0, F)$. Pick $l$ large enough so that $f|\mathcal{H}_F^{\leq l}$ can be written as $$f(z) = \sum_{i = 0}^{\infty} b_{\infty, i}z^{-i} + \sum_{a\in \mathcal{R}_l\backslash\{\infty\}}\sum_{i=1}^\infty\frac{b_{a,i}}{(z-a)^i},$$ and for each $a\in \mathcal{R}_l$, the function $g$ has a power series expansion of the form $$\sum_{j = 0}^\infty c_{a,j}(z- a)^j $$ on the disk $B_F(a,l)$ for every $a\neq \infty$, and of the form
$$ \sum_{j = 0}^{\infty} c_{\infty, j}z^{-j}$$ on the disc $B_F(\infty,l)$.

Then the residue pairing is defined by \begin{align*}&\langle f, g \rangle_M \defeq \sum_{\sigma \in \mathbb{P}^1(F)} \mathrm{res}_\sigma (f(z)g(z)dz ) \\ &\defeq \sum_{a\in \mathcal{R}_l\backslash \{\infty\}}\left( \sum_{i = 1}^{\infty} b_{a,i}c_{a,i-1} + b_{a,1}c_{\infty,0} \right) - \sum_{i = 1}^{\infty} b_{\infty, i}c_{\infty,i+1} . \end{align*}

\begin{theorem}[Morita Duality \cite{morita}]
The pairing $$\langle \cdot, \cdot \rangle_M  : \; \mathcal{O}(2,\mathcal{H}_F) \times \Sigma(2,F) \longrightarrow\mathbb{C}_p$$ defines a perfect bilinear pairing of $\textrm{GL}_2$-modules. 
\end{theorem}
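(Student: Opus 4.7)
The plan is to establish three properties, unified by the interpretation $\langle f,g\rangle_M = \sum_{a\in\mathbb{P}^1(F)} \mathrm{res}_a\bigl(f(z)g(z)\,dz\bigr)$ as the sum of residues of a meromorphic $1$-form on $\mathbb{P}^1$: convergence of the defining sum together with its independence from the auxiliary $l$; $\textrm{GL}_2(F)$-equivariance; and perfectness.

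For convergence and $l$-independence, I would use the dual decay conditions (namely $|b_{a,i}| = o(p^{-il})$ for rigid analytic $f$, and $|c_{a,j}| = o(p^{jl})$ for $g$ locally analytic on $B_F(a,l)$) to see that each local sum $\sum_{i\geq 1}b_{a,i}c_{a,i-1}$ converges absolutely, since $|b_{a,i}c_{a,i-1}|\to 0$. Independence of $l$ then reduces to an intrinsic statement: refining $\mathcal{R}_l$ to $\mathcal{R}_{l+1}$ subdivides each disk $B_F(a,l)$ into smaller disks; the principal part of $f$ at $a$ rewrites as a sum of principal parts at the sub-centers (plus analytic terms which are absorbed into the Taylor series of $g$), and the Taylor expansion of $g$ restricts to each sub-center. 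The total local residue contribution is unchanged because the residue of a meromorphic differential is an intrinsic local invariant.

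Equivariance follows from the design of the weight-$k$ action: the weight-$2$ action makes $f(z)\,dz$ transform as a $1$-form, so $f(z)g(z)\,dz$ pulls back as a $1$-form under $\gamma\in\textrm{GL}_2(F)$, and the residue sum over the $\gamma$-stable set $\mathbb{P}^1(F)$ is $\gamma$-invariant.

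The main content is perfectness. The formula matches the principal-part coefficient $b_{a,i}$ of $f$ with the Taylor coefficient $c_{a,i-1}$ of $g$ at each finite $a$, and $b_{\infty,i}$ with $c_{\infty,i+1}$ at infinity. For algebraic non-degeneracy in $f$: given $b_{a,i}\neq 0$, choose $g\in C(2,F)$ which near $a$ has Taylor expansion $(t-a)^{i-1}$ and vanishes on all other residue disks (such $g$ exist because $\mathbb{P}^1(F)$ is totally disconnected); the pairing then picks out $b_{a,i}$ up to a scalar. Non-degeneracy on the $\Sigma(2,F)$ side is symmetric, and the quotient by $P_0(\mathbb{C}_p)=\mathbb{C}_p$ corresponds precisely to the fact that constants, being undetected by principal parts, must be killed to get a perfect pairing (reflecting that residues of a holomorphic $1$-form on $\mathbb{P}^1$ sum to zero). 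The topological upgrade --- that $\mathcal{O}(2,\mathcal{H}_F)$ (a Fr\'echet space under the projective limit topology) and $\Sigma(2,F)$ (an LB-space under the inductive limit topology) realize each other as strong continuous duals --- is the main obstacle I anticipate: algebraic non-degeneracy is easy, but tracking compatibility of the dual growth conditions through the projective and inductive limits is delicate, and is essentially the analytic content of Morita's original theorem, which I would invoke rather than re-prove in full.
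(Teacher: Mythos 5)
The paper does not prove this theorem: it is stated as a citation to Morita's original work, with no argument supplied. So there is no internal proof to compare against. Your sketch is, nevertheless, a reasonable outline of what the proof involves, and you have correctly identified the essential structure: interpret $\langle f,g\rangle_M$ as a residue sum, check that the dual decay conditions make the local sums converge, argue $l$-independence via the intrinsic character of residues, deduce equivariance from the fact that $f(z)\,dz$ and $g$ transform so that $fg\,dz$ pulls back as a $1$-form, and reduce algebraic non-degeneracy to matching principal-part coefficients against Taylor coefficients. You also rightly single out the topological upgrade --- identifying the Fr\'echet space $\mathcal{O}(2,\mathcal{H}_F)$ and the LB-space $\Sigma(2,F)$ as mutual strong duals --- as the real content, and you defer it to Morita, which is exactly what the paper itself does.

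Two small caveats. First, your clean identity $\langle f,g\rangle_M = \sum_a \mathrm{res}_a(fg\,dz)$ should be checked against the paper's explicit formula, which contains the somewhat unusual term $b_{a,1}c_{\infty,0}$ inside the sum over $a \in \mathcal{R}_l\setminus\{\infty\}$; this reflects a normalization in how the local expansion of $g$ interacts with the degree-zero quotient $\Sigma = C/P_0$ and is not a straight residue of the naive product, so ``the residue is an intrinsic local invariant'' does not dispose of $l$-independence without a short computation. Second, non-degeneracy on the $\Sigma(2,F)$ side is not quite ``symmetric'': you need to exhibit an $f$ in the projective limit $\mathcal{O}(2,\mathcal{H}_F)$ detecting a given $g$, not merely a function on a single affinoid. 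This is still easy --- a rational function such as $1/(z-a)^{j+1}$ with $a \in \mathbb{P}^1(F)$ already lies in $\mathcal{O}(2,\mathcal{H}_F)$ --- but it is a genuinely different verification than picking a locally constant $g$, and deserves a sentence rather than an appeal to symmetry.
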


\begin{remark}Given a point $\tau \in \mathcal{H}_F$, then the function $\displaystyle{\frac{1}{t-\tau}}$ belongs to $\Sigma(2,F)$ and the Morita pairing recovers values of $f$ by  \begin{equation}\label{moritaover}\left\langle f(z), \frac{1}{t-\tau} \right\rangle _M  = f(\tau).\end{equation}
Further, given two points $\tau_1, \tau_2 \in \mathcal{H}_F$, the function $$\log_{\mathscr{L}}\left(\frac{t-\tau_1}{t-\tau_2}\right)$$ also belongs to $\Sigma(2,F)$. We also have \begin{equation}\label{moritalog}\left\langle f(z), \log_{\mathscr{L}}\left(\frac{t-\tau_1}{t-\tau_2}\right)\right\rangle _M =  \int_{\tau_1}^{\tau_2}f(z)\; dz,\end{equation} where the integral is a Coleman integral \cite{Coleman}, with the same choice of branch of logarithm.

\end{remark}
\begin{proposition}
	For every $n$, there is a perfect pairing $$ \langle \cdot ,\cdot \rangle_{P_n} : P_n(\mathbb{Q})\times P_n(\mathbb{Q}) \longrightarrow\mathbb{Q}.$$
\end{proposition}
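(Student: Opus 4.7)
The plan is to write down an explicit formula for the pairing on the monomial basis of $P_n(\mathbb{Q})$ and verify non-degeneracy by direct linear algebra. First I would fix the basis $\{1, T, T^2, \ldots, T^n\}$ of $P_n(\mathbb{Q})$ and define
\[
\langle T^i, T^j \rangle_{P_n} \defeq \begin{cases} (-1)^i \binom{n}{i}^{-1} & \text{if } i+j = n,\\ 0 & \text{otherwise}, \end{cases}
\]
extended to all of $P_n(\mathbb{Q}) \times P_n(\mathbb{Q})$ by $\mathbb{Q}$-bilinearity. Equivalently, viewing $P_n(\mathbb{Q})$ as $\mathrm{Sym}^n(P_1(\mathbb{Q}))$, this is the $n$-th symmetric power of the $\mathrm{SL}_2(\mathbb{Q})$-invariant symplectic form $\omega(aT+b,\,cT+d) = ad-bc$ on $P_1(\mathbb{Q})$. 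It is worth recording this invariance explicitly now, as it will be needed when this pairing is combined with the Morita and Breuil pairings later.

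To check perfectness, I would observe that in the chosen basis the Gram matrix is anti-diagonal with entries $(-1)^i\binom{n}{i}^{-1}$, none of which vanish. Its determinant is therefore $\pm \prod_{i=0}^{n} \binom{n}{i}^{-1}$, which is a nonzero rational number, so the pairing is non-degenerate. Since $P_n(\mathbb{Q})$ is finite-dimensional over $\mathbb{Q}$, non-degeneracy is equivalent to perfectness, and the induced map $P_n(\mathbb{Q}) \to \mathrm{Hom}_{\mathbb{Q}}(P_n(\mathbb{Q}), \mathbb{Q})$ is a $\mathbb{Q}$-linear isomorphism.

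There is really no obstacle: the statement is a repackaging of the fact that the irreducible $\mathrm{SL}_2(\mathbb{Q})$-representation $P_n = \mathrm{Sym}^n(P_1)$ is self-dual, and the proof reduces to a one-line linear algebra verification. The only genuine choice is the normalization; the constants $(-1)^i\binom{n}{i}^{-1}$ are selected so that the resulting pairing is $\mathrm{SL}_2(\mathbb{Q})$-equivariant and fits cleanly with the tensor products $\Sigma \otimes P_n$ and $\mathcal{O}(2,\mathcal{H}_F) \otimes P_n$ appearing in the proposition preceding this one, but any anti-diagonal scaling would establish the bare existence statement.
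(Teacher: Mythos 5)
Your proof is correct and takes essentially the same route as the paper: both identify the pairing as the $n$-th symmetric power of the determinant pairing on $\mathbb{Q}^2$ and write down the same formula $\langle T^i, T^j\rangle_{P_n} = (-1)^i\binom{n}{i}^{-1}\delta_{n,i+j}$. The only addition you make is spelling out the non-degeneracy check via the anti-diagonal Gram matrix, which the paper leaves implicit; that is a reasonable bit of thoroughness but not a different argument.
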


\begin{proof}
	Fix an isomorphism $\mathrm{Sym}^n(\mathbb{Q}^2) \cong P_n(\mathbb{Q})  $ by mapping $$ \begin{pmatrix} 1 \\ 0 \end{pmatrix} \mapsto 1,$$ and $$ \begin{pmatrix} 0 \\ 1 \end{pmatrix} \mapsto T.$$ The determinant gives a pairing on $\mathbb{Q}^2$ which induces a pairing on the symmetric product. Composing by this isomorphism, the pairing on $P_n$ is given by $$ \left\langle T^i ,T^j \right\rangle _{P_n} = (-1)^i \binom{n}{i}^{-1} \delta_{n, i+j}.$$

\end{proof}

It follows that for every $i$ and $a\in \mathbb{Q}$,  $$\langle (t-a)^i,P(t) \rangle_{P_n} = (-1)^i \frac{i!}{n!}P^{(n- i)}(a),$$ where $P^{(s)}$ denotes differentiation $s$ times.

The combination of the Morita pairing and the pairing on polynomials produces a pairing $$\mathcal{O}(2,\mathcal{H}_F)\otimes P_{n}(\mathbb{C}_p) \times \Sigma(2, F)\otimes P_{n}(\mathbb{C}_p)\longrightarrow\mathbb{C}_p,$$ which we will also denote by $\langle \cdot , \cdot  \rangle_M$. In terms of the filtration, the pairing descends to a perfect pairing (\cite{dSBounded} Corollary 2.5)$$F^{i}/F^{i+1} \times \Phi^{n-i}/\Phi^{n+1-i} \longrightarrow\mathbb{C}_p.$$ In particular, with $i=0$ and  there is a duality $$\mathcal{O}(k,\mathcal{H}_F) \times \Sigma(k,F)\longrightarrow\mathbb{C}_p,$$ which we will again denote by $\langle \;\cdot\; , \cdot\; \rangle_M$. 
 
Similarly to (\ref{moritaover}), for any $f\in \mathcal{O}(k,\mathcal{H}_F)$ we have  $$\left\langle f(z), \frac{1}{t-\tau}\right\rangle_M = f(\tau).$$

To generalize (\ref{moritalog}), which relates the Morita pairing to Coleman integrals, we have to consider integrals on the local system $P_n(\mathbb{C}_p)$; for $f\in \mathcal{O}(k,\mathcal{H}_F)$ and $\tau_1, \tau_2 \in \mathcal{H}_F$  and $P\in P_n(\mathbb{C}_p)$ we have

 $$\left\langle f(z), P(t)\log_\mathscr{L}\left(\frac{t-\tau_1}{t-\tau_2}\right) \right\rangle _M =\left\langle\int_{\tau_1}^{\tau_2} \beta(f)\; dz,P(T)\right\rangle_{P_n}.$$

\begin{theorem}[Breuil Duality, \cite{Breuil1} Theorem 1.1.4] \label{thm: Breuil Duality}
There is a unique pairing of $\textrm{GL}_2$-modules $$\langle \cdot ,\cdot \rangle_B : \mathcal{O}_{\mathscr{L}} \times \Sigma_{\mathscr{L}} \longrightarrow\mathbb{C}_p, $$  
such that  \begin{enumerate}
\item If $f\in \mathcal{O}_{\mathscr{L}}$ and $\tau \in \mathcal{H}_p$ $$\langle f,\log_\mathscr{L}(t-\tau)\rangle_B = f(\tau),$$
\item If $g\in \Sigma \subset \Sigma_{\mathscr{L}},$ then $$\langle f, g \rangle_B = \left\langle \frac{d}{dz}\left(f\right), g \right\rangle_M,$$

\item If $f\in \mathcal{O}$, then $$\langle f, g\rangle_B = \left\langle f, \frac{d}{dt}(g)\right\rangle_M.$$
\end{enumerate}

\end{theorem}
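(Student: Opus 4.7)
The plan is to follow the strategy of Breuil in \cite{Breuil1}: first establish uniqueness directly from the three axioms, and then construct the pairing explicitly by extending the Morita residue formula to the logarithmic setting.

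For uniqueness, suppose $\langle\cdot,\cdot\rangle_1$ and $\langle\cdot,\cdot\rangle_2$ are two $\textrm{GL}_2(F)$-equivariant pairings both satisfying (1)-(3). Their difference $D$ vanishes on $\mathcal{O}_{\mathscr{L}}\times \Sigma$ by property (2) and on $\mathcal{O}\times \Sigma_{\mathscr{L}}$ by property (3), so it descends to a pairing
$$(\mathcal{O}_{\mathscr{L}}/\mathcal{O}) \times (\Sigma_{\mathscr{L}}/\Sigma) \longrightarrow \mathbb{C}_p.$$
The first quotient is spanned topologically by the logarithm functions $\log_{\mathscr{L}}(z-a)$ for $a\in F$, while the second is identified with $P_{n}(\mathbb{C}_p)$ via the ``logarithmic coefficient at $\infty$'' map of (\ref{logpoly}). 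Property (1) pins down the remaining freedom: expanding $\log_{\mathscr{L}}(t-\tau)$ near $\infty$ shows its image in $\Sigma_{\mathscr{L}}/\Sigma$ is the constant polynomial $-1/2$ independent of $\tau$, and letting $\tau$ vary throughout $\mathcal{H}_p$ forces $D$ to vanish on the full quotient, hence identically.

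For existence, I would construct the pairing by decomposing both inputs into analytic and logarithmic parts. Choose $l$ large enough that $f\in \mathcal{O}_{\mathscr{L}}$ admits a decomposition
$$f(z) = f^{\mathrm{an}}(z) + \sum_{a\in \mathcal{R}_l\setminus\{\infty\}} P_a(z)\log_{\mathscr{L}}(z-a)$$
on $\mathcal{H}_F^{\leq l}$, and similarly write $g = g^{\mathrm{an}} - 2P(t)\log_{\mathscr{L}}(t)$ near $\infty$. Define $\langle f,g\rangle_B$ as a sum of local contributions: the analytic-against-analytic pieces pair via the Morita residue formula applied to $df/dz$ paired with $g^{\mathrm{an}}$; the cross terms are computed through Coleman-style integrals generalizing (\ref{moritalog}); and the log-against-log contribution is fixed by requiring $\langle f,\log_{\mathscr{L}}(t-\tau)\rangle_B = f(\tau)$, extended by $\textrm{GL}_2(F)$-equivariance and continuity. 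Verification of (2) and (3) is then essentially tautological on the corresponding subspaces, and (1) reduces to a direct evaluation of the Coleman integral representation.

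The main obstacle is the self-consistency of this construction: the formula must be independent of the level $l$ and of the choice of decomposition into analytic and logarithmic parts, and the recipe built from (2) must coincide with the recipe built from (3) on the overlap $\mathcal{O}\times \Sigma$. This agreement is an integration-by-parts identity for the Morita pairing, namely $\langle df/dz, g\rangle_M = \langle f, dg/dt\rangle_M$ for $f\in \mathcal{O}$ and $g\in \Sigma$, which reduces to the residue identity
$$\sum_{a\in \mathbb{P}^1(F)} \mathrm{res}_a\bigl(d(fg)\bigr) = 0$$
on the compact space $\mathbb{P}^1$. Once this identity is in place, incorporating the logarithmic contributions amounts to a careful bookkeeping of residues against $\log_{\mathscr{L}}$-terms, and the $\textrm{GL}_2(F)$-equivariance is automatic from that of the Morita pairing together with the transformation rule for $\log_{\mathscr{L}}$ prescribed in the action on $C_{\mathscr{L}}(k,F)$.
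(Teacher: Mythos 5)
The paper does not prove Theorem~\ref{thm: Breuil Duality}: it is imported verbatim from Breuil's work (\cite{Breuil1}, Theorem 1.1.4) with no argument supplied, so there is no ``paper proof'' to compare your attempt against. Your proposal is therefore self-standing, and you should be aware you are reconstructing Breuil's argument rather than a proof contained in this manuscript.

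That said, your uniqueness argument is correct and essentially complete for the case at hand, which is $k=2$, $n=0$, so that $\Sigma_{\mathscr{L}}/\Sigma\cong P_0(\mathbb{C}_p)=\mathbb{C}_p$. Properties (2) and (3) do force the difference $D$ of two admissible pairings to descend to $(\mathcal{O}_{\mathscr{L}}/\mathcal{O})\times(\Sigma_{\mathscr{L}}/\Sigma)$, and since $\log_{\mathscr{L}}(t-\tau)$ has constant image $-1/2\neq 0$ in the one-dimensional quotient $\Sigma_{\mathscr{L}}/\Sigma$ while property (1) makes $D(f,\log_{\mathscr{L}}(t-\tau))=0$ for every $f$ and $\tau$, the descended pairing vanishes identically. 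One small remark: your clause about $\mathcal{O}_{\mathscr{L}}/\mathcal{O}$ being spanned by $\log_{\mathscr{L}}(z-a)$ is not actually needed for this step; the one-dimensionality of the second quotient already suffices.

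The existence half, however, is a program rather than a proof. You correctly identify the genuine difficulties: independence of the level $l$, independence of the splitting into analytic and logarithmic parts, and agreement of the two recipes coming from (2) and (3) on $\mathcal{O}\times\Sigma$, the last of which is the residue-theorem identity you write down. But none of these is carried out, and the ``careful bookkeeping of residues against $\log_{\mathscr{L}}$-terms'' is precisely the technical heart of Breuil's construction, which involves making sense of residues of locally analytic functions multiplied by branches of the $p$-adic logarithm and showing $\textrm{GL}_2$-equivariance survives the branch-cut ambiguities. Without executing those checks, the existence claim remains an outline. Given that the paper itself treats this as a black box, quoting Breuil's theorem rather than reproving it would be the appropriate move in this context; if you do want a full proof, the bookkeeping you allude to must be spelled out.
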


As before, combining the Breuil pairing with the pairing on polynomials we obtain a paring $$\left(\mathcal{O}_{\mathscr{L}}\otimes P_{n}(\mathbb{C}_p)\right)\times \left(\Sigma_{\mathscr{L}}\otimes P_{n}(\mathbb{C}_p)\right)\longrightarrow\mathbb{C}_p,$$ which we will denote by $\langle \cdot, \cdot \rangle _B$.
In this case, the duality between $$F_{\mathscr{L}}^{n} \times \Phi_{\mathscr{L}}^0/\Phi_{\mathscr{L}}^1\longrightarrow\mathbb{C}_p$$ defines the perfect pairing between $\mathcal{O}_\mathscr{L}(2-k,\mathcal{H}_F)$ and $ \Sigma_{\mathscr{L}}(k,F)$, which we will also refer to as the Breuil pairing and denote by $\langle \cdot, \cdot \rangle_B$.

\begin{proposition}\label{breuilPropertiesProp}
The Breuil pairing satisfies the following
\begin{enumerate}
\item If $\tau\in \mathcal{H}_F$ and $f\in \mathcal{O}_\mathscr{L}(2-k)$, then $$\left\langle f(z), \frac{(t-\tau)^{n}}{n!}\log_{\mathscr{L}}(t-\tau)\right\rangle _B  = f(\tau),$$ 
\item if $g(t)\in \Sigma(k)$, then $$\langle f(z),g(t)\rangle_B = \left\langle \frac{d^{n+1}}{dz^{n+1}}(f)(z), g(t)\right\rangle _M,$$
\item if $f(z) \in \mathcal{O}(2-k)$, then $$\langle f(z),g(t)\rangle _B = \left\langle f(z), (-1)^{n+1}\frac{d^{n+1}}{dt^{n+1}}(g)(t)\right\rangle _M.$$

\end{enumerate}

\end{proposition}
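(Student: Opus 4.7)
The plan is to prove each item by reducing to the weight-$2$ Breuil pairing properties of Theorem~\ref{thm: Breuil Duality} combined with the polynomial pairing $\langle\cdot,\cdot\rangle_{P_n}$, via the filtration isomorphisms $\mathcal{O}_\mathscr{L}(2-k,\mathcal{H}_F) \cong F^n_\mathscr{L}$ and $\Sigma_\mathscr{L}(k,F) \cong \Phi^0_\mathscr{L}/\Phi^1_\mathscr{L}$. Throughout, $\beta(f) = n!\sum_{i=0}^n \tfrac{f^{(i)}(z)}{i!}(T-z)^i$ and $\tilde g \in \Phi^0_\mathscr{L}$ is a lift of $g$, so by definition $\langle f,g\rangle_B = \langle \beta(f),\tilde g\rangle$ for the Breuil-plus-polynomial pairing on $\mathcal{O}_\mathscr{L}\otimes P_n \times \Sigma_\mathscr{L}\otimes P_n$.

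For (1), take the lift $\tilde g \defeq \log_\mathscr{L}(t-\tau)\otimes\tfrac{(T-\tau)^n}{n!}$, which lies in $\Phi^0_\mathscr{L}$ since $\log_\mathscr{L}(t-\tau)\in\Sigma_\mathscr{L}$. Expanding $\beta(f)$ in the $(T-\tau)$-basis via $(T-z)^i = \sum_m\binom{i}{m}(T-\tau)^m(\tau-z)^{i-m}$ yields $\beta(f) = \sum_{m=0}^n B_m(z)\otimes (T-\tau)^m$ with $B_m(z) = \tfrac{n!}{m!}\sum_{i=m}^n\tfrac{f^{(i)}(z)}{(i-m)!}(\tau-z)^{i-m}$. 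Applying property~(1) of the weight-$2$ Breuil pairing term-by-term gives $\langle B_m,\log_\mathscr{L}(t-\tau)\rangle_B = B_m(\tau)$, and by the formula $\langle (T-a)^i,P(T)\rangle_{P_n}=(-1)^i\tfrac{i!}{n!}P^{(n-i)}(a)$ the polynomial pairing $\langle (T-\tau)^m,(T-\tau)^n/n!\rangle_{P_n}$ equals $\delta_{m,0}/n!$. Only the $m=0$ term contributes, and $B_0(\tau) = n!\,f(\tau)$ because $(\tau-z)^i$ kills all $i>0$ contributions at $z=\tau$; the total pairing is $f(\tau)$.

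For (2), produce a log-free lift by choosing a representative of $g\in\Sigma(k,F)$ in $C(k,F)$ whose polynomial part at $\infty$ has been subtracted: this representative $\hat g$ lies in $\Sigma(2,F)$ (its behaviour at $\infty$ is a strictly negative-power Laurent series), and $\tilde g\defeq\hat g\otimes 1\in\Sigma\otimes P_n$ is the desired lift. Property~(2) of the weight-$2$ Breuil pairing then applies pure-tensor by pure-tensor, yielding $\langle \beta(f),\tilde g\rangle_{B+\mathrm{pol}} = \langle\tfrac{d}{dz}\beta(f),\tilde g\rangle_{M+\mathrm{pol}}$. A telescoping computation shows $\tfrac{d}{dz}\beta(f) = f^{(n+1)}(z)(T-z)^n = \gamma(f^{(n+1)})$, consistent with the commutative diagram for $\beta,\gamma,d$. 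By definition of the weight-$k$ Morita pairing via the filtration, the right-hand side is $\langle f^{(n+1)},g\rangle_M$.

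For (3), $f\in\mathcal{O}(2-k)$ ensures $\beta(f)\in\mathcal{O}\otimes P_n$, so property~(3) of the weight-$2$ Breuil pairing (with the integration-by-parts sign making it compatible with~(2)) applies term-by-term: $\langle\beta(f),\tilde g\rangle_{B+\mathrm{pol}} = -\langle\beta(f),\partial_t\tilde g\rangle_{M+\mathrm{pol}}$. Using the polynomial pairing formula, the right-hand side equals $-\sum_i(-1)^i\sum_\sigma\mathrm{res}_\sigma f^{(i)}(z)\,\partial_T^{n-i}\partial_t\tilde g(t,T)\big|_{t=T=z}\,dz$. Integrating by parts $i$ times in $z$, then applying the Leibniz rule $\tfrac{d^i}{dz^i}\phi(z,z)=\sum_j\binom{i}{j}\partial_t^{j}\partial_T^{i-j}\phi|_{t=T=z}$ and the hockey-stick identity $\sum_{i=j}^n\binom{i}{j}=\binom{n+1}{j+1}$, the expression collapses to $-\sum_\sigma\mathrm{res}_\sigma f(z)g^{(n+1)}(z)\,dz=-\langle f,g^{(n+1)}\rangle_M$. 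The $l=0$ term $\partial_T^{n+1}\tilde g$ in the Leibniz expansion of $\tfrac{d^{n+1}}{dz^{n+1}}\tilde g(z,z)=g^{(n+1)}(z)$ vanishes because $\tilde g$ has degree $\le n$ in $T$, which makes the collapse exact. Since $n=k-2$ is even, $-\langle f,g^{(n+1)}\rangle_M=(-1)^{n+1}\langle f,g^{(n+1)}\rangle_M$, as required. The main obstacle is the combinatorial sign- and index-bookkeeping in~(3); the rest is a formal consequence of the weight-$2$ pairings and the filtration structure.
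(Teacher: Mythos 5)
The paper states this proposition without proof; it is meant to be a formal consequence of the weight-$2$ Breuil pairing (Theorem~\ref{thm: Breuil Duality}) tensored with $\langle\cdot,\cdot\rangle_{P_n}$, transported through $\beta$, $\gamma$ and the filtration $\Phi^\bullet$ — and that is exactly the reduction you carry out, so the approach matches the intended one. Items (1) and (2) are correct: the expansion of $\beta(f)$ in the $(T-\tau)$-basis cleanly isolates the $m=0$ term in (1), and the telescoping identity $\tfrac{d}{dz}\beta(f)=\gamma(f^{(n+1)})$ you use in (2) is precisely the commutativity of the paper's $\beta$-$\gamma$-$d$ square; the phrase ``pure-tensor by pure-tensor'' should be read as expanding $\beta(f)$ in a $z$-independent basis $\{T^j\}$ of $P_n$, which is what makes the termwise application of the weight-$2$ property legitimate.

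One point worth surfacing explicitly, rather than half-hiding in a parenthetical: in item (3) you apply the weight-$2$ property (3) with an extra minus sign, $\langle f,g\rangle_B=-\langle f,\partial_t g\rangle_M$, whereas Theorem~\ref{thm: Breuil Duality}(3) as printed has $\langle f,g\rangle_B=+\langle f,\tfrac{d}{dt}g\rangle_M$. You are right that this is forced: specialising the present proposition to $k=2$ gives $(-1)^{n+1}=-1$, which contradicts Theorem~\ref{thm: Breuil Duality}(3) unless one of the two is corrected, and the residue identity $\langle f',g\rangle_M=-\langle f,g'\rangle_M$ together with property (2) of the theorem pins down the minus sign. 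So the discrepancy is in the paper, not in your argument; your combinatorial bookkeeping (integration by parts, Leibniz on the diagonal, hockey-stick, and the vanishing of the $\partial_T^{n+1}\tilde g$ term because $\deg_T\tilde g\le n$) is correct and yields $-\langle f,g^{(n+1)}\rangle_M=(-1)^{n+1}\langle f,g^{(n+1)}\rangle_M$ as required.
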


\subsection{The Schneider-Teitelbaum Lift}

Recall that for every oriented edge $e\in \mathcal{T}_F^1$, there is an associated oriented annulus, $\red^{-1}_F(e)$. Therefore, for every $f\in \mathcal{O}(2,\mathcal{H}_F)$ there is a well-defined annular residue $\mathrm{res}_e(f)$ at $e$, which is associated to the differential $f(z) dz$. 
\begin{definition}
For an oriented edge $e = (w_1,w_2)$, let $s(e) \defeq w_1,\; t(e) \defeq w_2$ denote the start and terminal vertices of $e$ and $\overline{e} \defeq (w_2, w_1)$ be the edge with the opposite orientation.

\end{definition} 
The residue theorem implies that for any vertex $v\in \mathcal{H}_F$ and rigid analytic function $f$,  \begin{equation}\label{harmonic}\sum_{s(e)=v}\textrm{res}_e(f) = 0, \end{equation} and for every edge $e$, \begin{equation}\textrm{res}_e(f) = -\textrm{res}_{\overline{e}}(f).\end{equation} In other words, every rigid analytic function $f(z)\in\mathcal{O}(2,\mathcal{H}_F)$ determines a residue functions on the edges $$c_f : \mathcal{T}_F^1\longrightarrow\mathbb{C}_p.$$ 

\begin{definition} \label{def: functions on tree}
	For any $\textrm{GL}_2(F)$-module $M$, we will denote by $C_F^0(M)$, the $\Gamma$-module of functions on the vertices of the tree valued in $M$: $$\mathcal{T}_F^0\longrightarrow M.$$
	Similarly for edges, we will denote by $C_F^1(M)$ the $\Gamma$-module of functions  $$c: \mathcal{T}_F^1 \longrightarrow M $$ with the extra requirement that $c(e) = -c(\overline{e})$. When the module $M$ is not specified then the functions take values in $\mathbb{C}_p$.
	Let $\nabla: C_F^1(M) \longrightarrow C_F^0(M)$ be the map taking a function $c\in C_F^1(M)$ to the function $$\nabla(c): v\mapsto \sum_{s(e)=v}c(e).$$
	The kernel of $\nabla$ is the space of harmonic functions, denoted by $C^1_{har}(M)$. 
\end{definition}

Since $\mathcal{T}_F$ is an infinite tree, the map $\nabla$ is surjective, and there is an exact sequence \begin{equation}\label{har}0\longrightarrow C^1_{har}(M) \longrightarrow C_F^1(M) \longrightarrow C_F^0(M) \longrightarrow0.\end{equation}

From (\ref{harmonic}) we can see that the function $c_f$ attached to a rigid analytic function is harmonic. In fact, the map $f\mapsto c_f$ fits in the exact sequence $$ 0\rightarrow\mathcal{O}(\mathcal{H}_F)\xrightarrow{d}\mathcal{O}(2,\mathcal{H}_F)\longrightarrow C^1_{har} \longrightarrow 0.$$
We will impose an integral structure on $C^1_{har}(P_n), C_F^1(P_n), C_F^0(P_n)$ as follows: For the standard vertex $v_0$, set $P_{n, v_0} \defeq P_n(\mathcal{O}_{\mathbb{C}_p})$. For other vertices, define for  $g\in \textrm{GL}_2(F)$  $$P_{n, g\cdot v_0} \defeq P_{n,v_0}\vert g^{-1}.$$
For every edge, $e = (v,v')$, let $$P_{n, e} \defeq P_{n,v} \cap P_{n,v'}.$$

On the spaces of functions let $$C_F^{0, \mathrm{int}}(P_n) \defeq \left\{d\in C_F^0(P_n)\; |\;  d(v)\in P_{n,v} \text{ for all } v\in \mathcal{T}_F^0 \right\},$$

$$C_F^{1, \mathrm{int}}(P_n) = \left\{c\in C_F^1(P_n) \; |\; c(e)\in P_{n,e} \text{ for all } e\in \mathcal{T}_F^1 \right\},$$
and
$$C_{F, har}^{1,\mathrm{int}}(P_n) = C^1_{F,har}(P_n) \cap C_F^{1, \mathrm{int}}(P_n).$$
Finally, the space of bounded harmonic functions is the base-change $$C_{F,har}^{1,b}(P_n) = C_{F, har}^{1,\mathrm{int}}(P_n) \otimes_{\mathcal{O}_{\mathbb{C}_p}}\mathbb{C}_p.$$

It will be a helpful to view harmonic functions on $\mathcal{T}_F$ as packaging values of measures on the boundary of the tree. 
\begin{definition}
For any $\textrm{GL}_2(F)$-module, $M$, let $C_F^{lc}(M)$ denote the space of locally constant functions on $\mathbb{P}^1(F)$ valued in $M$. Define a measure on $\mathbb{P}^1(F)$ valued in $M$ to be a functional $$C_F^{lc}(M^\vee) \longrightarrow \mathbb{C}_p.$$ The $\textrm{GL}_2(F)$-module of measures of mass zero is denoted by $\mathbb{M}(\mathbb{P}^1(F),M)$. A measure of total mass zero is one which vanishes on constant functions, the submodule of all such measures will be denoted by $\mathbb{M}_0(\mathbb{P}^1(F), M)$.

Given a harmonic function $c\in C^1_{F,har}(M)$, there is an associated measure of total mass zero defined by $$\mu(U_e) = c(e),\;\;\; \text{for } e\in \mathcal{T}_F^1.$$ This defines a measure since the opens sets $\{ U_e\}$ form a basis for the topology of $\mathbb{P}^1(F)$ and  the harmonicity ensures the additivity property of the measure and implies the total mass zero criterion.
\end{definition}

By Morita duality, we have identified $\mathcal{O}(k,\mathcal{H}_F)$ with the dual of $$\Sigma(k,F) = C(k,F)/P_n(\mathbb{C}_p).$$ The residue map fits into the diagram
$$\begin{tikzcd}
	\mathcal{O}(k,\mathcal{H}_F)\ar[rd,"\mathrm{res}"] \ar[r,"\sim" ] & \Sigma(k,F)\ar[d]^\vee\\
	& \left(C^{lc}/P_n(\mathbb{C}_p)\right)^\vee \cong C^1_{F, har}(P_n(\mathbb{C}_p)).
\end{tikzcd}$$
Where the vertical arrow is induced by the inclusion $C^{lc}$ into $C(k,F)$.

There is no $\mathrm{GL}_2(F)$-invariant section $C^1_{F,har}\longrightarrow \Sigma(2,F)^\vee$, this is because not all measures extend canonically to distributions on locally analytic functions. However, such a section exists if we restrict to bounded harmonic functions. 

\begin{theorem}[Vishik, Amice-Velu, Schneider-Teitelbaum]
There is a $\mathrm{GL}_2(F)$-equivariant section 
$$\textrm{ST}: C_{F,har}^{1,b}(P_n(\mathbb{C}_p)) \longrightarrow\mathcal{O}(k,\mathcal{H}_F).$$  We will refer to this map as the Schneider--Teitelbaum map.
\end{theorem}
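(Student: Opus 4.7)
The plan is to construct $\mathrm{ST}(c)$ as a $p$-adic Poisson integral of a Cauchy-type kernel against the measure on $\mathbb{P}^1(F)$ canonically attached to $c$, interpreted via Morita duality. First, given a bounded harmonic cocycle $c \in C_{F,har}^{1,b}(P_n(\mathbb{C}_p))$, I would form the $P_n(\mathbb{C}_p)$-valued measure $\mu_c$ of total mass zero with $\mu_c(U_e) = c(e)$, exactly as recalled just before the statement. The boundedness hypothesis means that for each basic compact open $U_e$, the value $\mu_c(U_e)$ lies in the integral lattice $P_{n,e}$ up to a bounded multiplier; this integrality is precisely what the next step demands.

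The crux of the proof is to promote $\mu_c$ from a finitely-additive measure on locally constant functions to a continuous distribution on all of $\Sigma(k,F)$, viewed as locally analytic functions with polynomial growth of degree at most $n$ at each point. This is the Amice--V\'elu/Vishik extension theorem: for $g \in \Sigma(k,F)$, one sets
$$\int g \, d\mu_c \;\defeq\; \lim_{\ell \to \infty} \sum_{e} \bigl\langle T_{e,\ell}(g),\, \mu_c(U_e) \bigr\rangle_{P_n},$$
where the sum is over oriented edges $e$ at level $\ell$ of $\mathcal{T}_F$ and $T_{e,\ell}(g)$ is a degree-$n$ Taylor truncation of $g$ on $U_e$. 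The point is that the Taylor approximation error decays like $p^{-(n+1)\ell}$ while $\mu_c(U_e)$ stays bounded, so the Riemann sums converge to a limit independent of the choice of sample points. Composing this continuous functional with Morita duality $\Sigma(k,F)^\vee \cong \mathcal{O}(k,\mathcal{H}_F)$ gives the desired element $\mathrm{ST}(c)$, realized concretely as the Poisson-type integral
$$\mathrm{ST}(c)(z) \;=\; \int_{\mathbb{P}^1(F)} \frac{1}{t-z} \, d\mu_c(t),$$
where the $P_n$-values of $\mu_c$ are paired against the kernel through $\langle \cdot,\cdot\rangle_{P_n}$.

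Two things remain. For $\mathrm{GL}_2(F)$-equivariance, both $c \mapsto \mu_c$ and the Cauchy kernel transform compatibly, with the weight-$k$ action on $\mathcal{O}(k,\mathcal{H}_F)$ matching the induced action on $P_n(\mathbb{C}_p)$-valued measures, and these compatibilities are preserved by the Morita pairing. For the section property (that $\mathrm{ST}$ lifts the residue map $\mathcal{O}(k,\mathcal{H}_F) \to C^1_{F,har}(P_n)$), the annular residue of $\mathrm{ST}(c)$ along $\red_F^{-1}(e)$ is computed by a standard residue calculation against the local expansion of $(t-z)^{-1}$ on $U_e$, and this extracts exactly $\mu_c(U_e) = c(e)$. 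The main obstacle is the Amice--V\'elu/Vishik extension itself: its convergence and uniqueness rest on the precise matching between the degree bound $n$ on the polynomial-growth side and the $P_n$-valued boundedness on the measure side, which is also the reason no analogous canonical $\mathrm{GL}_2(F)$-equivariant lift exists for unbounded cocycles.
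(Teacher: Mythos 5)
Your proposal is correct and follows essentially the same route as the paper's proof: form the $P_n(\mathbb{C}_p)$-valued total-mass-zero measure $\mu_c$ from the harmonic cocycle, invoke the Amice--V\'elu/Vishik extension to integrate locally analytic functions against $\mu_c$ (the paper phrases this as a limit over locally constant approximants, which is the same Riemann-sum-with-Taylor-truncation device you spell out), and then take the Poisson-type integral of the Cauchy kernel to produce $\mathrm{ST}(c)$, with the section property verified by computing annular residues. Your version merely makes the convergence estimate and the equivariance check explicit, whereas the paper delegates those details to the cited references.
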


\begin{proof}
	A bounded harmonic function, $\phi$, gives rise to a bounded measure $$\mu \in \mathbb{M}_0(\mathbb{P}^1(F),P_n).$$ The boundedness translates to bounds on integrals of the form $$\int_{U(e)}(t-a)^i\; d\mu(t), \;\; \mathrm{where } a\in U_e.$$
	Given a locally analytic function $g\in C(k,F)$, it can be approximated by a sequence of locally constant functions $g_i \in C_F^{lc}(P_n(\mathbb{C}_p))$. The bounds on the integrals of $\mu$ imply that $$\int_{\mathbb{P}^1(F)}g(t)\; d\mu(t) \defeq \lim_{i\to \infty} \int_{\mathbb{P}^1(F)}g_i(t)\; d\mu(t)$$ converges. In particular, $$ST(\phi)(z) \defeq \int_{\mathbb{P}^1(F)}\frac{1}{t-z}\; d\mu(t)$$ converges to a rigid analytic function in $z$, whose annular residues recover the function $\phi$. 
	
	For more details see \cite{DT},  \cite{MTT} and \cite{Orton}
\end{proof}

\begin{definition}
	The image $ST(C_{har}^{1,b}(P_n(\mathbb{C}_p)))$ is the space of bounded functions, which we will denote by $\mathcal{O}^b(k,\mathcal{H}_F)$. An alternative definition of $\mathcal{O}(k,\mathcal{H}_F)^b$ can be given as follows

Let $\lvert\cdot \rvert$ define a $\textrm{GL}_2(\mathcal{O}_F)$-invariant norm on $\mathcal{O}(k,\mathcal{H}_F)$ given by $$\lvert f \rvert \defeq \max_{z\in \red^{-1}(v_0)}\lvert f(z)\rvert .$$   We extend this to a $\textrm{GL}_2$-invariant norm by $$\|f\| \defeq \sup_{\gamma \in \textrm{GL}_2(F)} \lvert\left(f|_k\gamma\right)\rvert.$$ The space $\mathcal{O}(k,\mathcal{H}_F)^b$ is then precisely the space of all $f$ such that $||f||<\infty$.\\ 

\end{definition}

\begin{theorem}[\cite{dSBounded} Theorem 1.1]
	The Schneider-Teitelbaum map is an isomorphism $$ ST: C^{1,b}_{F,har}(P_n(\mathbb{C}_p)) \xrightarrow{\cong} \mathcal{O}^b(k,\mathcal{H}_F).$$

\end{theorem}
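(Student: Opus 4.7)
The plan is to build a two-sided inverse to $ST$ out of the annular residue map. Define
$$\text{res}: \mathcal{O}(k,\mathcal{H}_F) \longrightarrow C^1_{F,har}(P_n(\mathbb{C}_p))$$
by composing with the embedding $\gamma$ from the earlier proposition, which sends $f$ to the polynomial-valued rigid analytic function $f(z)(T-z)^n$, and then taking annular residues coefficient-by-coefficient in $T$. Harmonicity follows from the residue theorem applied on each affinoid $\text{red}_F^{-1}(v)$, and the antisymmetry condition $c(\bar e)=-c(e)$ is automatic from the orientation of the annulus. The first task is to check that $\text{res}$ restricts to a map $\mathcal{O}^b(k,\mathcal{H}_F) \to C^{1,b}_{F,har}(P_n(\mathbb{C}_p))$: by $\textrm{GL}_2(F)$-equivariance it suffices to handle edges adjacent to the standard vertex $v_0$, and there the lattice condition on the polynomial values of $\text{res}(f)$ reduces to the standard Laurent-coefficient bound for a rigid analytic function of norm at most $1$ on an annulus inside $\text{red}_F^{-1}(v_0)$.

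The composition $\text{res} \circ ST = \mathrm{id}$ follows from the integral construction of $ST$. For $\phi \in C^{1,b}_{F,har}(P_n)$ with associated boundary measure $\mu$, the Vishik-type integral $ST(\phi)(z) = \int_{\mathbb{P}^1(F)} (t-z)^{-1}\, d\mu(t)$ converges to a rigid analytic function, and after applying $\gamma$ it becomes $\int (T-z)^n/(t-z)\, d\mu(t)$. Computing the annular residue at an edge $e$, I would split the integration domain into $U_e$ and its complement: the integral over the complement extends analytically across the annulus $\text{red}_F^{-1}(e)$ and contributes no residue, while on $U_e$ the Taylor expansion of $(T-z)^n/(t-z)$ in $z$ at a point of the annulus extracts the polynomial moments $\int_{U_e}(t-a)^i\, d\mu(t)$, which together reconstruct the polynomial $\phi(e)(T) \in P_n(\mathbb{C}_p)$.

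The other composition $ST \circ \text{res} = \mathrm{id}$ is the main obstacle. For $f \in \mathcal{O}^b(k,\mathcal{H}_F)$ the difference $g = f - ST(\text{res}(f))$ is bounded, weight $k$, and has vanishing annular residues on every edge; one must show $g=0$. The plan is to argue dually to the construction of $ST$: under the Morita pairing $\mathcal{O}(k,\mathcal{H}_F)\times \Sigma(k,F)\to \mathbb{C}_p$ combined with the polynomial pairing, the function $g$ is determined by its values on locally constant test functions on $\mathbb{P}^1(F)$, and these values are exactly the polynomial-weighted annular residues via the identification of $(C^{lc}/P_n(\mathbb{C}_p))^\vee$ with $C^1_{F,har}(P_n(\mathbb{C}_p))$. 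Boundedness of $g$ then extends this pairing continuously to all locally analytic test functions through the Amice--V\'elu--Vishik growth estimate, and locally constant functions are dense in this completed test space; hence $g$ must pair to zero with every element of $\Sigma(k,F)$, and non-degeneracy of Morita duality forces $g=0$. The technical heart of the theorem is precisely this density/growth-estimate step, which is where the hypothesis of boundedness enters essentially.
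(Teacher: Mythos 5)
The paper does not actually prove this statement; it is cited verbatim from de Shalit (\cite{dSBounded}, Theorem 1.1), so there is no in-paper argument to compare your proposal against. What follows is therefore an assessment of your outline on its own terms.

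Your plan --- build a two-sided inverse to $ST$ out of the annular residue map, check that it respects the bounded subspaces, and use the integral construction of $ST$ on one side and Morita/AVV duality on the other --- is the natural strategy and, I believe, the one de Shalit actually follows. Steps 1 and 3 are essentially correct as written: the residue map is $\mathrm{GL}_2(F)$-equivariant and harmonic by the residue theorem, and $\mathrm{res}\circ ST=\mathrm{id}$ does fall out of the definition of $ST$ as the Vishik integral once you split the integration domain at the edge, which is also what the paper asserts in the proof of the existence theorem for $ST$. Step 2 is plausible but is stated a little too quickly: the $C^{1,b}_{F,har}$ condition is an integrality condition on the polynomial values $c(e)\in P_{n,e}$, while the $\mathcal{O}^b$ condition is a $\mathrm{GL}_2(F)$-invariant sup-norm bound, and matching these up through ``the standard Laurent-coefficient bound'' on the standard annulus requires an explicit computation that you wave at rather than give; this is a genuine estimate, not a formality, because $P_{n,e}=P_{n,v}\cap P_{n,v'}$ is a twisted lattice.

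The real gap is in step 4, and you half-acknowledge it. You write that ``the function $g$ is determined by its values on locally constant test functions,'' but for a general $g\in\mathcal{O}(k,\mathcal{H}_F)$ this is false --- the kernel of $\mathrm{res}$ is nonzero (in weight $2$ it is $d\mathcal{O}(\mathcal{H}_F)$, after the identification via $\gamma$ it is the image of $d^{n+1}$). Locally constant functions modulo $P_n$ are not dense in $\Sigma(k,F)$ in the topology for which Morita duality holds, so you cannot conclude from ``$\langle g,\cdot\rangle_M$ vanishes on $C^{lc}/P_n$'' to ``$\langle g,\cdot\rangle_M=0$'' by abstract nonsense. The argument only goes through if you show that the Morita functional $\langle g,\cdot\rangle_M$ of a \emph{bounded} $g$ is tempered, i.e.\ that the $\mathrm{GL}_2(F)$-invariant norm bound on $g$ forces the moments $\langle g,\mathbf{1}_{B(a,l)}(t-a)^i\rangle_M$ to satisfy the Amice--V\'elu--Vishik growth condition; only then does AVV uniqueness apply. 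That implication is precisely the substance of de Shalit's theorem, so as written your proof is essentially assuming the conclusion at the crucial moment. To repair it you would need to produce the estimate directly: pair $g$ with $(t-a)^i$ on a small disk, interpret the result as an annular residue on the corresponding affinoid, and bound it using the sup of $(g\vert_k\gamma)$ on the standard affinoid for the appropriate $\gamma$. Flagging this as ``the technical heart'' is fair, but without it the proof is an outline of a proof, not a proof.
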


\subsection{Calculations}

We will record here some lemmas which will be useful later on. 

\begin{lemma}
	\label{explInteglemma}
	Let $E$ be any field, for any $\tau\in E$ and $P(T)\in E[T]$ a polynomial of degree $n$. Let $$d^{(n+1)}\left(P(z) \log(z-\tau)\right)$$ be the rational function defined by formally differentiating (n+1)-times the expression $P(z)\log(z-\tau)$. Then $$d^{(n+1)}\left(P(z) \log(z-\tau)\right) = \sum_{i=0}^n(-1)^i (n-i)! \frac{P^{(i)}(\tau)}{(z-\tau)^{n+1-i}}.$$
\end{lemma}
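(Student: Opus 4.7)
The most transparent approach is to reduce to the monomial case and then carry out a short induction. First, I would Taylor expand $P$ around $\tau$:
\[ P(z) = \sum_{i=0}^{n} \frac{P^{(i)}(\tau)}{i!}(z-\tau)^{i}, \]
so that
\[ P(z)\log(z-\tau) = \sum_{i=0}^{n} \frac{P^{(i)}(\tau)}{i!}(z-\tau)^{i}\log(z-\tau). \]
Since $d$ is linear, it suffices to compute $d^{n+1}\bigl((z-\tau)^i\log(z-\tau)\bigr)$ for each $0 \le i \le n$.

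Setting $u = z-\tau$, the key claim is that for all $m \ge 1$ and $0 \le i \le m-1$,
\[ d^{m}\bigl(u^{i}\log u\bigr) \;=\; \frac{(-1)^{m-1-i}\, i!\,(m-1-i)!}{u^{m-i}}. \]
I would prove this by induction on $i$. The base case $i=0$ is the well-known identity $d^{m}(\log u) = (-1)^{m-1}(m-1)!/u^{m}$. For the inductive step, differentiating once gives $d(u^{i}\log u) = i\,u^{i-1}\log u + u^{i-1}$; applying $d^{m-1}$ kills the polynomial term $u^{i-1}$ (as its degree $i-1$ is at most $m-2$), so
\[ d^{m}(u^{i}\log u) \;=\; i\cdot d^{m-1}(u^{i-1}\log u), \]
and the induction hypothesis supplies the stated expression.

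Specialising to $m = n+1$ and substituting into the Taylor expansion yields
\[ d^{n+1}\bigl(P(z)\log(z-\tau)\bigr) = \sum_{i=0}^{n} \frac{P^{(i)}(\tau)}{i!}\cdot \frac{(-1)^{n-i}\,i!\,(n-i)!}{(z-\tau)^{n+1-i}} = \sum_{i=0}^{n} (-1)^{n-i}(n-i)!\,\frac{P^{(i)}(\tau)}{(z-\tau)^{n+1-i}}. \]
Because the paper restricts to even weights $k$ (so $n = k-2$ is even), the sign $(-1)^{n-i}$ coincides with $(-1)^{i}$, recovering the stated formula. There is no genuine obstacle here; the only mildly subtle point is keeping track of when the pure-polynomial term $d^{m-1}(u^{i-1})$ actually vanishes, which is exactly ensured by the bound $i \le n$.
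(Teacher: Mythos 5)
Your proof is correct, and it takes a genuinely different route from the paper's. The paper's proof applies the Leibniz rule to $d^{n+1}(P\cdot\log)$ directly, Taylor-expands each $P^{(a)}$ around $\tau$, and then reduces the resulting double sum to the stated coefficient via a Vandermonde-type identity $\sum_{a=0}^i (-1)^{n-a}\binom{n+1}{a}\tfrac{(n-a)!}{(i-a)!}=(-1)^i(n-i)!$. You instead Taylor-expand $P$ first and reduce to the monomial case $d^{m}\bigl(u^{i}\log u\bigr)$, which you settle by a short induction on $i$; this bypasses the combinatorial identity entirely and is cleaner. One further merit of your write-up: you correctly obtain the sign $(-1)^{n-i}$ and observe that it agrees with the stated $(-1)^i$ precisely because $n=k-2$ is even throughout the paper. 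In fact the lemma as literally stated (for arbitrary $n$), and the Vandermonde identity the paper invokes, hold only for even $n$ --- the general formula has sign $(-1)^{n-i}$, as a quick check at $n=1$, $P\equiv 1$ shows ($d^{2}\log(z-\tau)=-1/(z-\tau)^{2}$). So your derivation is both correct and makes this implicit parity hypothesis explicit.
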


\begin{proof}
Expanding $P(z)$ in the basis $(z-\tau)^i$ and differentiating we obtain

$$d^{(n+1)}\left((P(z) \log(z-\tau)\right)  = \sum_{a = 0}^n \sum_{j = 0}^{n-a} {{n+1}\choose{a}} \frac{P^{(j+a)}(\tau)}{j!}(z-\tau)^j \cdot \frac{(-1)^{n-a}(n-a)!}{(z-\tau)^{n+1-a}}$$

Rearranging the sums with $i = a+j$ we obtain $$ = \sum_{i = 0}^n \left(\sum_{a =0 }^i (-1)^{n-a} {{n+1}\choose{a}}\frac{(n-a)!}{(i-a)!}  \right) \frac{P^{(i)}(\tau)}{(z-\tau)^{n+1-i}}$$ 

The equality $$\sum_{a=0}^i (-1)^{n-a}\binom{n+1}{a} \frac{(n-a)!}{(i-a)!} = (-1)^i (n-i)!$$ follows from Vandermonde's identity. 

\end{proof}

\begin{lemma} \label{STFin}
Let $\displaystyle{\mathcal{D} = \sum_i (\tau_i)\otimes P_i}$ be a degree $0$ divisor in $\mathrm{Div}_0(\mathcal{H}_p)\otimes P_n(\mathbb{C}_p)$. Then the measure $$\mu_{\mathcal{D}}(U) = \sum_{\tau_i\in U} P_i$$ is a bounded measure on $\mathbb{P}^1(F)$, whose Schneider-Tetielbaum lift is described as $$ST(\mu_\mathcal{D})(z) = \frac{1}{n!}\sum_i d^{n+1}(P_i(z)\log(z-\tau_i)).$$
\end{lemma}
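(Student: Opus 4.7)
The plan is to verify two things in turn: that $\mu_{\mathcal{D}}$ is a bounded measure of total mass zero on $\mathbb{P}^1(F)$ (taking $F$ a finite extension of $\mathbb{Q}_p$ large enough to contain all $\tau_i$), and that its Schneider--Teitelbaum lift is given by the explicit rational function on the right-hand side.

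For the first claim, I would work directly with the associated edge function $c_{\mathcal{D}}(e) \defeq \sum_{\tau_i \in U_e} P_i$ on $\mathcal{T}_F^1$. At each vertex $v$, the $q_F+1$ outgoing edges $\{e_j\}$ give a disjoint decomposition of $\mathbb{P}^1(F)$ through the basic opens $U_{e_j}$, so summing $c_{\mathcal{D}}(e_j)$ over $j$ recovers $\sum_i P_i$, which vanishes by the degree-zero hypothesis. This shows $c_{\mathcal{D}} \in C^1_{F,\mathrm{har}}(P_n)$ and that $\mu_{\mathcal{D}}$ has total mass zero. Boundedness is immediate: $\mathcal{D}$ has finite support and each $P_i$ sits inside some finitely generated $\mathcal{O}_{\mathbb{C}_p}$-submodule of $P_n(\mathbb{C}_p)$, so $c_{\mathcal{D}}$ lies in $C^{1,b}_{F,\mathrm{har}}(P_n)$.

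For the second claim, let $\tilde{f}(z)$ denote the right-hand side. Applying Lemma \ref{explInteglemma} gives an explicit rational expression whose only poles occur at the $\tau_i$; since the $\tau_i$ lie in $\mathbb{P}^1(F)$, this is a bounded element of $\mathcal{O}^b(k,\mathcal{H}_F)$. Because $ST: C^{1,b}_{F,\mathrm{har}}(P_n) \to \mathcal{O}^b(k,\mathcal{H}_F)$ is an isomorphism (\cite{dSBounded} Theorem 1.1), it suffices to show that the $P_n$-valued harmonic function attached to $\tilde{f}$ coincides with $c_{\mathcal{D}}$ on every edge. Fixing an edge $e$, I would compute this by pairing $\tilde{f}(z)\,dz$ against test polynomials $(z-T)^m$ and extracting annular residues: since $(z-\tau)^{-k}\,dz$ is exact for $k \geq 2$, only the simple-pole summands in Lemma \ref{explInteglemma} contribute. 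For each $\tau_i \in U_e$, expanding $(z-T)^m$ in powers of $(z-\tau_i)$ and collecting the coefficient of $(z-\tau_i)^{-1}$ against the weights $(-1)^j(n-j)!/n!$ packages the surviving contributions into $P_i(T)$ itself.

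The main obstacle will be precisely this last combinatorial step: verifying that the alternating-sign, descending-factorial weighted sum from Lemma \ref{explInteglemma} collapses to $P_i(T)$ (as opposed to a derivative or translate thereof), with the $1/n!$ normalisation cancelling the outer $n!$. This is essentially Taylor's formula for $P_i$ at $\tau_i$ in disguise, and I expect it to follow from a Vandermonde-type identity analogous to the one invoked at the end of the proof of Lemma \ref{explInteglemma}. Once confirmed on every edge, the injectivity of $ST$ identifies $\tilde{f}$ with $ST(\mu_{\mathcal{D}})$.
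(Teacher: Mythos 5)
Your approach differs from the paper's. The paper computes $ST(\mu_{\mathcal{D}})(z)=\int_{\mathbb{P}^1(F)}\frac{1}{t-z}\,d\mu_{\mathcal{D}}(t)$ directly: for each $\tau_i$ it expands $\frac{1}{t-z}$ as a geometric series in $(t-\tau_i)$, evaluates the moments $\int_{U(e_i)}(t-\tau_i)^j\,d\mu_{\mathcal{D}}$ via the $P_n$-pairing (so only $j\le n$ survive), and recognises the resulting finite sum as $\frac{1}{n!}d^{n+1}(P_i(z)\log(z-\tau_i))$ by Lemma~\ref{explInteglemma}. You instead start from the candidate $\tilde f$, argue it lies in $\mathcal{O}^b(k,\mathcal{H}_F)$, and propose to match its annular residues with $c_{\mathcal{D}}$ and invoke the de Shalit isomorphism. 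Both routes are legitimate, but yours has a genuine error.

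The gap is in the residue computation. You write that since $(z-\tau)^{-k}\,dz$ is exact for $k\ge 2$, only the simple-pole summands of Lemma~\ref{explInteglemma} contribute. That is false for the quantity you actually need. The $P_n(\mathbb{C}_p)$-valued harmonic function attached to $\tilde f$ is $e\mapsto\mathrm{res}_e\bigl(\tilde f(z)(T-z)^n\,dz\bigr)$, and once you include the factor $(T-z)^n$ (equivalently, pair against test polynomials $(z-T)^m$ with $m\ge 1$), the higher-order poles do contribute. Concretely, expanding $(T-z)^n=\sum_{l}\binom{n}{l}(T-\tau)^{n-l}(-(z-\tau))^l$ shows
\[
\mathrm{res}_e\Bigl((z-\tau)^{-m}(T-z)^n\,dz\Bigr)=(-1)^{m-1}\binom{n}{m-1}(T-\tau)^{n-m+1},
\]
which is nonzero for every $1\le m\le n+1$. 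Feeding all terms of Lemma~\ref{explInteglemma} into this (with $m=n+1-j$) gives
\[
\frac{1}{n!}\sum_{j=0}^n(-1)^j(n-j)!\,P_i^{(j)}(\tau_i)\cdot(-1)^{n-j}\binom{n}{n-j}(T-\tau_i)^j
=(-1)^n\sum_{j=0}^n\frac{P_i^{(j)}(\tau_i)}{j!}(T-\tau_i)^j=P_i(T),
\]
using Taylor's formula and $n$ even. So the final identity you anticipate does hold, but it arises from the \emph{full} sum over pole orders collapsing via Taylor's theorem, not from discarding the non-simple poles; the intermediate step as you stated it would give the wrong residue (roughly $P_i^{(n)}(\tau_i)(\tau_i-T)^m$ rather than $P_i(T)$). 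A secondary caveat: your boundedness argument (``each $P_i$ sits in a finitely generated $\mathcal{O}_{\mathbb{C}_p}$-submodule'') does not by itself give $c_{\mathcal{D}}(e)\in P_{n,e}$, since the integral structure $P_{n,e}$ rescales as $e$ moves down the ray towards $\tau_i$; this point is glossed over in the paper too, but your justification as written is not adequate.
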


\begin{proof}
	Let $\{e_i\}_i$ be any set of edges on $\mathcal{T}_F$ such that $U(e_i)$ are all pairwise disjoint and $\tau_i \in U(e_i)$.
	\begin{align*} 
		ST(\mu_\mathcal{D})(z) &= \int_{\mathbb{P}^1(F)} \frac{1}{t-z}\; d\mu_T(t) \\ 
		&= \sum_i \int_{U(e_i)}\frac{1}{t-z}\; d\mu_\mathcal{D}(t) \\
		&= \sum_i \sum_{j = 0}^{\infty}\frac{\int_{U(e_i)}(t-\tau_i)^j \; d\mu_\mathcal{D}(t)}{(z-\tau_i)^{j+1}}\\
		& = \sum_i \sum_{j=0}^n (-1)^j \frac{j!}{n!} \frac{P_i^{n-j}(\tau_i)}{(z-\tau_i)^{j+1}}\\
		& = \frac{1}{n!}\sum_i d^{n+1}\left(P_i(z)\log(z-\tau_i)\right).
	\end{align*}

\end{proof}

\subsection{The Ihara group}

For the remainder of this paper we will be primarily concerned with the cases where $F = \mathbb{Q}_p$ or $F=K\defeq $ the unramified quadratic extension of $\mathbb{Q}_p$. To simplify notation we will write $\mathcal{H}_p$ for $\mathcal{H}_{\mathbb{Q}_p}$ and $\mathcal{T}_p$ for $\mathcal{T}_{\mathbb{Q}_p}$ and whenever the field is unspecified in $C^1_{har}, C^0, C^1$ it is assumed to be $\mathbb{Q}_p$.

Let $\Gamma \defeq \textrm{SL}_2(\mathbb{Z}[1/p])$ be the Ihara group. The action of $\Gamma$ on vertices of $\mathcal{T}_p$ has two orbits; two vertices are in the same orbit if and only if the distance between them is even. The stabilizer of the vertex $v_0$ is $\textrm{SL}_2(\mathbb{Z})$ and the stabilizer of any other vertex is a conjugate of $\textrm{SL}_2(\mathbb{Z})$ in $\textrm{GL}_2(\mathbb{Z}[1/p])$. In particular, the stabilizer of $v_1$ is $$P^{-1}\cdot\textrm{SL}_2(\mathbb{Z})\cdot P, $$
where 
$$ P = \begin{pmatrix} p& 0 \\ 0 & 1\end{pmatrix}.$$
The action of $\Gamma$ on non-oriented edges is transitive, and the stabilizer of the standard edge is $$ \Gamma_0(p) = P^{-1}\textrm{SL}_2(\mathbb{Z})P\cap \textrm{SL}_2(\mathbb{Z}).$$ 
There is a natural inclusion of trees $$i: \mathcal{T}_p \xhookrightarrow{}\mathcal{T}_K,$$ which is given by base-changing $\mathbb{Z}_p$ lattices to $\mathcal{O}_K$ lattices. The quotient $\Gamma \backslash\mathcal{T}_K$ is not finite, but consists of two infinite rays beginning at the images of $v_0$ and $v_1$, respectively, as well as the edge $e_0$. 

For every vertex $v\in \mathcal{T}_p$, let $\mathcal{P}(v,K)$ be the set of infinite non-backtracking paths $$(v, w_1, w_2 ,\cdots)$$ on $\mathcal{T}_K$ which begin at $v$ and such that $w_i$ belongs to $\mathcal{T}_K\backslash \mathcal{T}_p$ for every $i \geq 1$.

\begin{lemma}
	\label{merextLem}
	Let $f\in \mathcal{O}(k,\mathcal{H}_K)^b $ be a bounded rigid analytic function of weight $k$ and $\phi_f$ its associated harmonic function on $\mathcal{T}_K^1$. If $f$ satisfies that for every vertex $v \in \mathcal{T}_p^1$ the function $\phi_f$ eventually stabilizes to $0$ on all but finitely many paths in $\mathcal{P}(v,K)$. Then $f$ has a unique extension to a rigid meromorphic function on $\mathcal{H}_p$.
\end{lemma}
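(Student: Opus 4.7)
The plan is to realize $f$ via Schneider--Teitelbaum as a Coleman-type integral against a bounded $P_n(\mathbb{C}_p)$-valued measure $\mu_f$ on $\mathbb{P}^1(K)$, to translate the hypothesis into a statement about the support of $\mu_f$, and then to extract the rigid meromorphic extension by separating the part of $\mu_f$ supported on $\mathbb{P}^1(\mathbb{Q}_p)$ from the part supported on $\mathbb{P}^1(K)\setminus\mathbb{P}^1(\mathbb{Q}_p)$. Uniqueness will follow at the end from the fact that $\mathcal{H}_K$ is a dense Zariski-open subspace of $\mathcal{H}_p$, so any two meromorphic functions on $\mathcal{H}_p$ agreeing on $\mathcal{H}_K$ must coincide.

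First I would invoke the Schneider--Teitelbaum isomorphism to identify $\phi_f$ with a bounded $P_n(\mathbb{C}_p)$-valued measure $\mu_f$ on $\mathbb{P}^1(K)$ with $\mu_f(U_e) = \phi_f(e)$. The key translation step is this: a point $\xi\in\mathbb{P}^1(K)\setminus\mathbb{P}^1(\mathbb{Q}_p)$ corresponds to an end of $\mathcal{T}_K$ whose geodesic ray leaves the subtree $\mathcal{T}_p$ at a uniquely determined vertex $v\in\mathcal{T}_p$ (the unique point where it is closest to $\mathcal{T}_p$), and such an end then defines a path in $\mathcal{P}(v,K)$. Under this correspondence, $\phi_f$ eventually stabilizing to $0$ along the path associated to $\xi$ is equivalent to $\xi$ lying outside the support of $\mu_f$. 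Hence the hypothesis says precisely that, for each $v\in\mathcal{T}_p^0$, only finitely many of the ends in $\mathbb{P}^1(K)\setminus\mathbb{P}^1(\mathbb{Q}_p)$ that are "attached" to $\mathcal{T}_p$ at $v$ lie in $\mathrm{supp}(\mu_f)$. Because any closed disc $D\subset\mathbb{P}^1(K)\setminus\mathbb{P}^1(\mathbb{Q}_p)$ is entirely attached to $\mathcal{T}_p$ through a single vertex $v$, this immediately gives that $\mathrm{supp}(\mu_f)\cap\bigl(\mathbb{P}^1(K)\setminus\mathbb{P}^1(\mathbb{Q}_p)\bigr)$ is a locally finite subset of $\mathbb{P}^1(K)\setminus\mathbb{P}^1(\mathbb{Q}_p)$, consisting of isolated points $\xi_i$ each carrying some point-mass $P_i\in P_n(\mathbb{C}_p)$.

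Next I would decompose $\mu_f = \mu_f^{(0)} + \sum_i (\xi_i)\otimes P_i$, where $\mu_f^{(0)}$ is supported on $\mathbb{P}^1(\mathbb{Q}_p)$ and is still a bounded $P_n$-valued measure. By linearity of the Schneider--Teitelbaum lift, $f = ST(\mu_f^{(0)}) + ST\bigl(\sum_i(\xi_i)\otimes P_i\bigr)$. The second term, by Lemma \ref{STFin}, is a (possibly infinite but locally finite) sum
\[
\frac{1}{n!}\sum_i d^{n+1}\bigl(P_i(z)\log(z-\xi_i)\bigr),
\]
which by the expansion in Lemma \ref{explInteglemma} is a locally finite sum of rational functions with poles only at the $\xi_i\in\mathcal{H}_p$, hence defines a rigid meromorphic function on all of $\mathcal{H}_p$. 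The first term, $ST(\mu_f^{(0)})$, is the Schneider--Teitelbaum integral of a bounded measure on $\mathbb{P}^1(\mathbb{Q}_p)$, and the formula $z\mapsto \int_{\mathbb{P}^1(\mathbb{Q}_p)}\frac{1}{t-z}\,d\mu_f^{(0)}(t)$ makes sense and defines a bounded rigid analytic function of weight $k$ on $\mathcal{H}_p = \mathbb{P}^1(\mathbb{C}_p)\setminus\mathbb{P}^1(\mathbb{Q}_p)$ by exactly the same Schneider--Teitelbaum construction carried out for the base tree $\mathcal{T}_p$ instead of $\mathcal{T}_K$. Their sum is the desired meromorphic extension of $f$ to $\mathcal{H}_p$.

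The main technical obstacle is the geometric bookkeeping in the second paragraph: to turn the combinatorial statement "$\phi_f$ stabilizes along all but finitely many paths in $\mathcal{P}(v,K)$" into the clean analytic statement "the restriction of $\mu_f$ to $\mathbb{P}^1(K)\setminus\mathbb{P}^1(\mathbb{Q}_p)$ is locally finitely supported", one must correctly match up the ends of $\mathcal{T}_K$ not contained in the boundary $\mathbb{P}^1(\mathbb{Q}_p)$ with paths of the form $\mathcal{P}(v,K)$ via the "first-exit vertex" and check that the resulting partition behaves well with respect to the disc topology of $\mathbb{P}^1(K)$. Once that translation is made, the rest is routine: the decomposition of $\mu_f$, the application of Lemmas \ref{STFin} and \ref{explInteglemma} to identify the singular part, and the uniqueness of the meromorphic extension across $\mathcal{H}_K\hookrightarrow\mathcal{H}_p$ all go through directly.
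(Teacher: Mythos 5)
Your proof is correct and takes essentially the same route as the paper's: both separate the boundary data into the part supported on $\mathbb{P}^1(\mathbb{Q}_p)$ and the locally finite discrete part off it, then recover the meromorphic extension from Lemmas \ref{explInteglemma} and \ref{STFin}. The paper organizes this affinoid-by-affinoid as a Mittag--Leffler decomposition $f = (\text{analytic part on }\mathcal{H}_p^{\leq l}) + Y_l$, whereas you perform the same split once and for all at the level of the measure $\mu_f$ before applying the Schneider--Teitelbaum lift; the translation of the stabilization hypothesis into a first-exit-vertex/discrete-support statement is exactly what the paper uses to identify $Y_l$.
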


\begin{proof}
	On the affinoid $\mathcal{H}_K^{\leq l}$ pick the representatives $\mathcal{R}_l$ such that the representatives of elements in $\mathbb{P}^1\left(\mathbb{Z}_p/ p^{l}\mathbb{Z}_p\right)$ all belong to $\mathbb{P}^1(\mathbb{Q}_p)$. Then we can write $$f =\sum_{i=0}^\infty b_{\infty,i}z^i +  \sum_{\substack{a\in \mathcal{R}_l \\ a\in \mathbb{P}^1(\mathbb{Q}_p)\backslash \{\infty\}}} \sum_{i=1}^\infty \frac{b_{a_i}}{(z-a)^i} + Y_l,$$
	where $Y_l$ is a rational function which can be described as follows: Let $\tau_1,\cdots, \tau_c$ be the points in $\mathbb{P}^1(K) \backslash \mathbb{P}^1(\mathbb{Q}_p)$ corresponding to the finitely many paths beginning at a vertex in $\mathcal{T}_p^{\leq l}$ and whose edges belong to $\mathcal{T}_K\backslash \mathcal{T}_p$, and let $P_1,\cdots, P_t$ be the polynomials to which these paths stabilize Then by Lemma \ref{explInteglemma}, $Y_l$ can be written as $$Y_l =  \sum_{j=1}^t d^{(n+1)}\left(P_j(z)\log(z-\tau_j)\right).$$

	The sum of power series $$\sum_{i=0}^\infty b_{\infty,i}z^i +  \sum_{\substack{a\in \mathcal{R}_l \\ a\in \mathbb{P}^1(\mathbb{Q}_p)\backslash \{\infty\}}} \sum_{i=1}^\infty \frac{b_{a_i}}{(z-a)^i}$$ in fact converges on $\mathcal{H}_p^{\leq l}$ and the rational function $Y_l$ is meromorphic on $\mathcal{H}_p$; their sum is therefore a rigid meromorphic function on $\mathcal{H}_p^{\leq l}$ which extends $f$. 
\end{proof}

\begin{definition}
	We will denote the space of rigid meromorphic functions on $\mathcal{H}_p$ whose poles all lie in $\mathbb{P}^1(K)$ and whose poles are of order at most $k$ by $\mathcal{M}(k;K)$. By Lemma \ref{merextLem}, this space is identified with the space of rigid analytic functions on $\mathcal{H}_K$, that satisfy the hypothesis of the lemma. We will also denote by $\mathcal{M}_{\mathscr{L}}(2-k ; K)$ the set of all functions in $\mathcal{O}_{\mathscr{L}}(2-k, \mathcal{H}_K)$ whose $(n+1)$-th derivative lies in $\mathcal{M}(k;K)$. 
\end{definition}

\section{Higher Weight Rigid Cocycles}

In this section, we will discuss rigid meromorphic cocycles for the group $$\Gamma \defeq \mathrm{SL}_2(\mathbb{Z}[1/p])$$ acting on $\mathcal{H}_p$ and $\mathcal{H}_K$. First, we will recall the definition of the multiplicative rigid meromorphic cocycles of \cite{DV1}, and describe their associated boundary measures on $\mathbb{P}^1(K)$. Then, we will generalize this construction to higher weight and use it to give a classification of rigid meromorphic cocycles.

\subsection{Definitions of cocycles and modular symbols}

\begin{definition}\label{def: modular symbols}
For any (right) $\Gamma$-module $M$, a $1$-cocycle is an element of the group $Z^1(\Gamma,M)$ of functions $J:\Gamma \longrightarrow  M$ that satisfy $$J(\gamma_1 \gamma_2) = J(\gamma_1)\cdot \gamma_2 + J(\gamma_2),\;\; \text{for all } \gamma_1, \gamma_2\in \Gamma.$$ The cohomology group $H^1(\Gamma, M)$ is the quotient of $Z^1(\Gamma, M)$ with the subgroup generated by co-boundaries; those are functions of the form $$\gamma \mapsto f\cdot \gamma - f,$$ for some fixed $f\in M$.

\end{definition}

\begin{definition}
Let $\mathbb{D}_0$ be the group of degree-zero divisors in $\mathbb{P}^1(\mathbb{Q})$, which is defined by the exact sequence \begin{equation}\label{equationn: deg 0} 0\longrightarrow  \mathbb{D}_0 \longrightarrow  \mathbb{Z}[\mathbb{P}^1(\mathbb{Q})] \xrightarrow{\mathrm{deg}} \mathbb{Z} \longrightarrow  0.\end{equation} If $G$ is any group which acts transitively on $\mathbb{P}^1(\mathbb{Q})$, for any $G$-module, $M$, an $M$-valued modular symbol is a homomorphism of abelian groups $$\mathbb{D}_0 \longrightarrow  M.$$ We will denote by $\mathrm{MS}(M)$ the group of $M$-valued modular symbols. As $r,s$ range over $\mathbb{P}^1(\mathbb{Q})$, divisors of the form $(r)-(s)$ span the group $\mathbb{D}_0$. We will denote such divisors by $\{r,s\}$. The right action of $G$ on $\mathrm{MS}(M)$ is described by $$(m\cdot \gamma)\{r,s\} = m\{\gamma r, \gamma s\}\cdot \gamma.$$ Let $\mathrm{MS}^G(M)$ be the subgroup of $\mathrm{MS}(M)$ of modular symbols fixed by $G$.
\end{definition}

The short exact sequence (\ref{equationn: deg 0}) yields the exact sequence \begin{equation}\label{equation: deg0 dual}0 \longrightarrow  M \cong\mathrm{Hom}(\mathbb{Z},M) \longrightarrow  \mathrm{Hom}\left(\mathbb{Z}[\mathbb{P}^1(\mathbb{Q})],M\right) \longrightarrow  \mathrm{Hom}(\mathbb{D}_0,M) \longrightarrow  0.\end{equation} By taking $G$-cohomology of (\ref{equation: deg0 dual}), the first $\delta$-function maps $$\delta: \left(\mathrm{Hom}(\mathbb{D}_0,M)\right)^G \longrightarrow H^1(\Gamma, M).$$ This map can be described as follows: Chose an arbitrary $r\in \mathbb{P}^1(\mathbb{Q})$. Given a $\Gamma$-invariant modular symbol $m$, then define $$J_m(\gamma) \defeq m\{r,\gamma r\}, \quad \text{for all }\gamma \in G.$$ The cohomology class of $J_m$ is independent of the choice of $r$ and is equal to $\delta(m)$.

Let $M^\dagger$ denote the set of elements $h\in M$ that satisfy the two-term relation $$h\cdot S + h = 0,$$ and the three-term relation $$h\cdot U^2 + h\cdot U + h = 0,$$
where $S = \begin{pmatrix} 0 & -1 \\ 1 & 0 \end{pmatrix}$, and $U = \begin{pmatrix}0 & 1 \\ -1 & 1 \end{pmatrix}$. 
\begin{proposition}\label{modularPeriodProp} For any $\mathrm{SL}_2(\mathbb{Z})$-module $M$, the map $$m \mapsto m\{0,\infty\}$$ is a bijection between $\mathrm{MS}^{\mathrm{SL}_2(\mathbb{Z})}(M)$ and $M^\dagger$.
\end{proposition}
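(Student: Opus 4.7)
The approach is to invoke Manin's presentation of $\mathbb{D}_0$ as a right $\mathrm{SL}_2(\mathbb{Z})$-module: it is generated by the single element $\{0,\infty\}$ subject only to relations arising from the two-term identity associated with $S$ and the three-term identity associated with $U$. Once that presentation is in hand, the map $m \mapsto m\{0,\infty\}$ is essentially tautologically a bijection, because the relations defining $M^\dagger$ match exactly the relations cut out by the Manin presentation.

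First I would verify the forward direction. Setting $h \defeq m\{0,\infty\}$, the identity $S \cdot \{0,\infty\} = \{\infty,0\} = -\{0,\infty\}$ combined with $\mathrm{SL}_2(\mathbb{Z})$-invariance immediately yields the two-term relation $h + h \cdot S = 0$. For the three-term relation, one computes $U \cdot 0 = 1$ and $U \cdot \infty = 0$, producing the telescoping identity $\{0,\infty\} + \{1,0\} + \{\infty,1\} = 0$ in $\mathbb{D}_0$; applying $m$ and using invariance to rewrite the two translated divisors in terms of $h$ gives $h + h \cdot U^{-1} + h \cdot U^{-2} = 0$, equivalent to $h \cdot U^2 + h \cdot U + h = 0$.

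Injectivity is then automatic from continued fractions. Every divisor $\{r,s\}$ with $r,s \in \mathbb{Q}$ decomposes as a telescoping sum of unimodular divisors $\{p_i/q_i, p_{i+1}/q_{i+1}\}$ with $p_i q_{i+1} - p_{i+1} q_i = \pm 1$, and each such unimodular divisor is an $\mathrm{SL}_2(\mathbb{Z})$-translate of $\{0,\infty\}$. Hence any $\mathrm{SL}_2(\mathbb{Z})$-invariant modular symbol is determined by its value on $\{0,\infty\}$.

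The main obstacle is surjectivity, which amounts to Manin's theorem: the kernel of the $\mathrm{SL}_2(\mathbb{Z})$-equivariant surjection
\[
\mathbb{Z}[\mathrm{SL}_2(\mathbb{Z})] \longrightarrow \mathbb{D}_0, \qquad \gamma \longmapsto \gamma \cdot \{0,\infty\},
\]
is generated, as a right submodule, by $1+S$ and $1+U+U^2$. Granted this, given $h \in M^\dagger$ one sets $m(\gamma \cdot \{0,\infty\}) \defeq h \cdot \gamma^{-1}$ on unimodular divisors and extends via the continued-fraction decomposition; the relations defining $M^\dagger$ are exactly what is needed for this extension to be well-defined, and $\mathrm{SL}_2(\mathbb{Z})$-equivariance is built in by construction. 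The cleanest route to Manin's theorem uses the Bass--Serre decomposition $\mathrm{PSL}_2(\mathbb{Z}) \cong \mathbb{Z}/2 * \mathbb{Z}/3$, with $S$ and $U$ providing the torsion generators, acting on the Farey tree so that $\mathbb{D}_0$ arises as the augmentation-ideal quotient whose only defining relations come from the finite vertex stabilizers. Alternatively one can establish it directly by induction on the length of the continued-fraction expansion, repeatedly applying $S$ and $U$ to reduce any relation among unimodular divisors to a trivial one.
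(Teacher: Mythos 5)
Your proof is correct and follows the same route the paper takes by citing Darmon--Vonk Proposition 1.4: transitivity of $\mathrm{SL}_2(\mathbb{Z})$ on unimodular divisors, the continued-fraction (Euclidean) decomposition of $\mathbb{D}_0$, and Manin's relations $1+S$ and $1+U+U^2$. The only slip is that the kernel of $\mathbb{Z}[\mathrm{SL}_2(\mathbb{Z})]\to\mathbb{D}_0$ is a \emph{left} ideal (the map $\gamma\mapsto\gamma\cdot\{0,\infty\}$ is left-equivariant), not a right submodule as you wrote, and it is worth noting that the two-term relation forces $h\cdot(-I)=h$ (since $S^2=-I$), which is what makes your $h\cdot U^{-1}+h\cdot U^{-2}$ equal to $h\cdot U^2+h\cdot U$ despite $U^3=-I$.
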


\begin{proof}
A proof of this statement can be found in \cite{DV1} Proposition 1.4. This statement holds because $\mathrm{SL}_2(\mathbb{Z})$ acts transitively on the set of pairs $\{\frac{a}{c}, \frac{b}{d}\} \in \mathbb{D}_0$ such that $ad-bc = 1$ (with the convention $\infty = \frac{1}{0}$), and the set of all such pairs generates $\mathbb{D}_0$ (by the Euclidean algorithm). 
\end{proof}

We will now consider modular symbols and group cohomology valued in spaces of rigid analytic functions. It is crucial here that the rigid analytic functions which arise as values of cocycles or modular symbols are necessarily bounded (Proposition \ref{proposition: bounded cocycles}). This holds due to the $\Gamma$-invariance condition on cocycles and modular symbols, along with the fact that $\Gamma$ is finitely generated. 

\begin{proposition}\label{proposition: bounded cocycles}
\begin{enumerate}
\item The homorphisms induced by the natural inclusion $$H^1(\Gamma, \mathcal{O}(k)^b) \longrightarrow  H^1(\Gamma, \mathcal{O}(k, \mathcal{H}_p))$$ and $$MS^\Gamma(\mathcal{O}(k)^b)\longrightarrow \mathrm{MS}^\Gamma(\mathcal{O}(k, \mathcal{H}_p))$$ are isomorphisms. 
\item The homomorphisms induced by the natural inclusion $$H^1(\Gamma, C^{1,b}_{har}(P_n)) \longrightarrow  H^1(\Gamma, C^1_{har}(P_n))$$ and $$MS^\Gamma (C^{1,b}_{har}(P_n))\longrightarrow \mathrm{MS}^\Gamma(C^1_{har}(P_n))$$ are isomorphisms. 
\end{enumerate}

\end{proposition}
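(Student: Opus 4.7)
My plan is to prove both parts simultaneously by reducing each inclusion-induced map to a vanishing statement for the quotient module. For part (1), let $Q = \mathcal{O}(k, \mathcal{H}_p)/\mathcal{O}(k, \mathcal{H}_p)^b$; applying the long exact sequence in $\Gamma$-cohomology to
\[0 \to \mathcal{O}(k, \mathcal{H}_p)^b \to \mathcal{O}(k, \mathcal{H}_p) \to Q \to 0,\]
the $H^1$-assertion reduces to $H^0(\Gamma, Q) = 0 = H^1(\Gamma, Q)$, and the $\mathrm{MS}^\Gamma$-assertion reduces via Proposition \ref{modularPeriodProp} to the $H^0$-vanishing, after checking that the modular-symbol functor preserves the relevant short exact sequence. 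Part (2) is handled analogously with $Q' = C^1_{har}(P_n)/C^{1,b}_{har}(P_n)$.

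The $H^0$ vanishing is essentially automatic from the two-orbit structure of $\Gamma$ on the vertices of $\mathcal{T}_p$. Given a $\Gamma$-invariant $f \in \mathcal{O}(k, \mathcal{H}_p)$ and arbitrary $\gamma \in \mathrm{GL}_2(F)$, write $\gamma \cdot v_0 = \gamma' \cdot v_0$ or $\gamma' P \cdot v_0$ with $\gamma' \in \Gamma$, so that $\gamma = \gamma' \cdot g$ or $\gamma = \gamma' P \cdot g$ with $g$ in the stabilizer of $v_0$. Since scalar matrices act trivially in even weight $k$ and $\mathrm{GL}_2(\mathcal{O}_F)$ preserves $|\cdot|$ on $\mathrm{red}_F^{-1}(v_0)$, the $\Gamma$-invariance of $f$ then gives
\[\|f\| \;=\; \max\bigl(|f|, \,|f|_k P|\bigr) \;<\;\infty,\]
hence $f \in \mathcal{O}(k, \mathcal{H}_p)^b$. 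The modular-symbol case follows similarly: by Proposition \ref{modularPeriodProp} any $\Gamma$-invariant $m$ is determined by $h = m\{0,\infty\} \in \mathcal{O}(k,\mathcal{H}_p)^\dagger$, which satisfies $h|_k D = h$ for $D = \mathrm{diag}(p, p^{-1}) \in \Gamma$ in addition to the two- and three-term relations; combining this extra scaling invariance with the argument above forces $\|h\| < \infty$, and the Manin decomposition then expresses every $m\{r,s\}$ as a \emph{finite} sum of $\mathrm{SL}_2(\mathbb{Z})$-translates of $h$, each of which has norm $\|h\|$, so all values lie in $\mathcal{O}(k,\mathcal{H}_p)^b$.

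For $H^1(\Gamma, Q) = 0$ I would use the Ihara decomposition $\Gamma \cong \mathrm{SL}_2(\mathbb{Z}) *_{\Gamma_0(p)} P^{-1}\mathrm{SL}_2(\mathbb{Z})P$ coming from the action of $\Gamma$ on $\mathcal{T}_p$ with fundamental domain the standard edge, and the resulting Mayer--Vietoris sequence
\[H^0(\Gamma_0(p), Q) \to H^1(\Gamma, Q) \to H^1(\mathrm{SL}_2(\mathbb{Z}), Q)\oplus H^1(P^{-1}\mathrm{SL}_2(\mathbb{Z})P, Q) \to H^1(\Gamma_0(p), Q).\]
This reduces the problem to the analogous vanishings for the vertex stabilizers $\mathrm{SL}_2(\mathbb{Z})$, $P^{-1}\mathrm{SL}_2(\mathbb{Z})P$ and the edge stabilizer $\Gamma_0(p)$, all of which are finitely presented. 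A cocycle on such a subgroup is determined by its values on a finite generating set, and an adaptation of the boundedness argument from the $H^0$ step—exploiting that each generator fixes the corresponding standard affinoid and that scalars act trivially—allows one to construct an explicit coboundary bringing the cocycle into $\mathcal{O}(k,\mathcal{H}_p)^b$.

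The main obstacle will be the $H^1$-vanishing: whereas $H^0$ drops out of the orbit structure alone, for $H^1$ one must produce a \emph{bounded} representative within each cohomology class, which entails using the 2-term and 3-term relations forced by the torsion elements $S$ and $U$ of $\mathrm{SL}_2(\mathbb{Z})$ and matching up the coboundary adjustments on the two copies of $\mathrm{SL}_2(\mathbb{Z})$ across their common intersection $\Gamma_0(p)$ in the amalgamated product.
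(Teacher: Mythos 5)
The paper's ``proof'' of this proposition is a one-line citation to \cite[Prop.\ 3.7]{RS}, so your attempt to give a self-contained argument is a genuinely different (and more ambitious) route than what the paper records.

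Your reduction to the vanishing of $H^0(\Gamma,Q)$ and $H^1(\Gamma,Q)$ for the quotient module is the right framework, and your $H^0$ argument is correct in spirit, although it is slightly glossed at one point: for the modular-symbol case you need not just $h\vert_k D = h$ but a genuine argument that $\sup_{\gamma'\in\Gamma}\lvert h\vert_k\gamma' P\rvert <\infty$, and the statement ``each of which has norm $\lVert h\rVert$'' confuses the affinoid norm $\lvert\cdot\rvert$ with the global $\lVert\cdot\rVert$. The cleanest fix is to observe that $m\vert_k P$ is again a $\Gamma$-invariant modular symbol (since $P$ normalizes $\Gamma$) with $\{0,\infty\}$-value $h\vert_k P$, so by the same Manin-decomposition argument applied to $m\vert_k P$, one gets $\lVert h\rVert\le\max(\lvert h\rvert, \lvert h\vert_k P\rvert)$.

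The $H^1$ step is where the real gap lies. The Mayer--Vietoris sequence for the amalgam $\Gamma=\mathrm{SL}_2(\mathbb{Z})*_{\Gamma_0(p)}P^{-1}\mathrm{SL}_2(\mathbb{Z})P$ reduces $H^1(\Gamma,Q)=0$ to two assertions: the surjectivity of $H^0(\mathrm{SL}_2(\mathbb{Z}),Q)\oplus H^0(P^{-1}\mathrm{SL}_2(\mathbb{Z})P,Q)\to H^0(\Gamma_0(p),Q)$, and vanishing or exactness involving $H^1$ of the vertex stabilizers. But here your proposed strategy cannot work: the vertex and edge stabilizers $\mathrm{SL}_2(\mathbb{Z})$, $P^{-1}\mathrm{SL}_2(\mathbb{Z})P$, $\Gamma_0(p)$ all lie inside $\mathrm{GL}_2(\mathbb{Z}_p)$ (up to conjugation), which \emph{preserves} the local norm $\lvert\cdot\rvert$ on the standard affinoid. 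So invariance under such a subgroup imposes \emph{no} boundedness constraint at distant vertices, and one should \emph{not} expect $H^0(\mathrm{SL}_2(\mathbb{Z}),Q)$ or $H^1(\mathrm{SL}_2(\mathbb{Z}),Q)$ to vanish. What makes your $H^0(\Gamma,Q)=0$ argument work is precisely the presence in $\Gamma$ of a matrix like $D=\mathrm{diag}(p,p^{-1})$ that moves $v_0$; Mayer--Vietoris throws this element away by only looking at stabilizers of $v_0$, $v_1$, and $e_0$. Your suggested ``adaptation of the boundedness argument'' to the vertex-stabilizer cohomologies therefore has no basis, and the amalgamated-product strategy would need a qualitatively different input (such as explicit coboundary constructions using the full tree combinatorics, or the direct argument in Rotger--Seveso) to close the gap.
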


\begin{proof}
    Both of these statements follow from \cite[Proposition 3.7]{RS}.
\end{proof}

\begin{corollary}\label{boundedCor}
The residue map induces isomorphisms $$H^1(\Gamma, \mathcal{O}(k, \mathcal{H}_p)) \longrightarrow  H^1(\Gamma, C^1_{har}(P_n))$$ 
and $$MS^\Gamma(\mathcal{O}(k, \mathcal{H}_p))\longrightarrow MS^\Gamma(C^1_{har}(P_n)).$$
\end{corollary}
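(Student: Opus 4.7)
The plan is to factor the residue map through the corresponding bounded objects and combine the Schneider--Teitelbaum isomorphism with Proposition \ref{proposition: bounded cocycles}. Concretely, the residue map $\mathcal{O}(k,\mathcal{H}_p) \to C^1_{har}(P_n)$ sends $\mathcal{O}^b(k,\mathcal{H}_p)$ into $C^{1,b}_{har}(P_n)$, and on these bounded subspaces it is the inverse of the Schneider--Teitelbaum map $ST$ (since $ST$ was characterized by the property that the annular residues of $ST(\phi)$ recover $\phi$). Thus, on the bounded level, the residue map is a $\Gamma$-equivariant isomorphism
\[
\mathrm{res}: \mathcal{O}^b(k,\mathcal{H}_p) \xrightarrow{\cong} C^{1,b}_{har}(P_n).
\]

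I would then apply $H^1(\Gamma, -)$ and $\mathrm{MS}^\Gamma(-)$ to this isomorphism to get isomorphisms
\[
H^1(\Gamma, \mathcal{O}^b(k,\mathcal{H}_p)) \xrightarrow{\cong} H^1(\Gamma, C^{1,b}_{har}(P_n))
\]
and similarly for modular symbols. To conclude, I would splice this into the commutative diagram whose top and bottom rows are the natural comparison maps:
\[
\begin{tikzcd}
H^1(\Gamma,\mathcal{O}^b(k,\mathcal{H}_p)) \ar[r,"\cong","\mathrm{res}_*"'] \ar[d,"\cong"] & H^1(\Gamma, C^{1,b}_{har}(P_n)) \ar[d,"\cong"] \\
H^1(\Gamma, \mathcal{O}(k,\mathcal{H}_p)) \ar[r,"\mathrm{res}_*"] & H^1(\Gamma, C^1_{har}(P_n))
\end{tikzcd}
\]
where the left and right vertical arrows are isomorphisms by parts (1) and (2) of Proposition \ref{proposition: bounded cocycles}, respectively. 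The bottom arrow is therefore an isomorphism by commutativity, and the argument for $\mathrm{MS}^\Gamma$ is identical.

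The only real point to verify is the functoriality claim that the residue map and the inclusions of bounded into possibly unbounded functions/measures are compatible, which is immediate from the definitions since the Schneider--Teitelbaum map is $\mathrm{GL}_2(F)$-equivariant and the residue map is defined edge-by-edge. The main (minor) obstacle is simply unwinding the conventions so that ``$\mathrm{res}$ restricted to bounded functions inverts $ST$'' is made explicit, but this is already recorded in the statement of the Schneider--Teitelbaum theorem cited in the excerpt.
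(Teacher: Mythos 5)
Your argument is correct and matches the route the paper intends: the corollary is stated as a direct consequence of the de Shalit/Schneider–Teitelbaum isomorphism together with Proposition \ref{proposition: bounded cocycles}, and your write-up makes that implicit two-step argument explicit via the commutative square. One remark on precision: the paper's definition of $\mathcal{O}^b(k,\mathcal{H}_F)$ is literally the image of $ST$, so the fact that $\mathrm{res}$ sends $\mathcal{O}^b$ to $C^{1,b}_{har}$ and inverts $ST$ there is tautological from that definition plus the defining property of $ST$, which is exactly what you observe.
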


Corollary \ref{boundedCor} reduces the construction of rigid analytic modular symbols and cohomology classes to constructing harmonic functions on the Bruhat--Tits tree, where it is possible to exploit the relatively simple combinatorics of the tree and the action of $\Gamma$ on it.

After taking $\Gamma$-cohomology, the short exact sequence (\ref{har}) yields
$$ \longrightarrow  H^0(\Gamma, C_p^0(P_n)) \longrightarrow  H^1(\Gamma, C^1_{p,har}(P_n))\longrightarrow  H^1(\Gamma, C_p^1(P_n)) \longrightarrow  H^0(\Gamma, C_p^1(P_n))\longrightarrow   .$$

Since the action of $\Gamma$ on non-oriented edges is transitive and the stabilizer of an edge is conjugate to $\Gamma_0(p)$, there is an isomorphism $$ C_p^1(P_n) \cong \mathrm{Ind}_{\Gamma_0(p)}^{\Gamma}P_n.$$ Therefore, Shapiro's lemma provides us with an isomorphism \begin{equation}\label{equation: Shaprio iso edges}H^1(\Gamma, C_p^1(P_n))\cong H^1(\Gamma_0(p), P_n).\end{equation} Similarly, $\Gamma$ has two orbits of vertices with stabilizers conjugate to $\mathrm{SL}_2(\mathbb{Z})$, so \begin{equation} \label{equation: Shaprio iso vertices} H^1(\Gamma, C_p^0(P_n))\cong H^1(\mathrm{SL}_2(\mathbb{Z}),P_n)^2.\end{equation}

Notice that for $k>2$ the group $H^0(\Gamma, C_p^0(P_n))$ is trivial. It follows from (\ref{equation: Shaprio iso edges}) and (\ref{equation: Shaprio iso vertices}) that we can identify $$H^1(\Gamma, C^1_{har}(P_n))\cong \mathrm{Ker}\left[H^1(\Gamma_0(p), P_n)\longrightarrow  H^1(\mathrm{SL}_2(\mathbb{Z}),P_n)^2\right],$$ where the maps $H^1(\Gamma_0(p),P_n) \rightarrow H^1(\mathrm{SL}_2(\mathbb{Z}),P_n)$ are the two trace maps.

\begin{theorem}[Shimura, Eichler-Shimura] 
For all $k\geq 2$
\begin{enumerate}
\item Let $f$ be a $p$-new normalized eigenform of weight $k$ for $\Gamma_0(p)$, then there exists transcendental periods $\Omega_f^{\pm}$ such that for every $\gamma \in \Gamma_0(p)$, the cocycle $$p_f^{\pm}(\gamma) \defeq \frac{1}{\Omega_f^{\pm}}\int_{\eta}^{\gamma \eta}\frac{\omega_f \pm \omega_f^{\pm}}{2}$$ takes algebraic values, where $\eta$ is a fixed choice of base-point and $\omega_f$ is the differential associated to $f$ on the modular curve $X(\Gamma_0(p))$, 
\item $p_f^{\pm}\in Z^1(\Gamma_0(p), P_n)$ is a cocycle whose cohomology class lies in $$\mathrm{Ker}\left(H^1(\Gamma_0(p), P_n)\longrightarrow  H^1(\mathrm{SL}_2(\mathbb{Z}),P_n)^2\right),$$
\item If $k>2$, as $f$ ranges over all $p$-new normalized eigenforms, the cocycles $p_f^\pm$ form a basis of $$\mathrm{Ker}\left(H^1(\Gamma_0(p), P_n)\longrightarrow  H^1(\mathrm{SL}_2(\mathbb{Z}),P_n)^2\right).$$
\end{enumerate}
\end{theorem}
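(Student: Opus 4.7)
The plan is to apply classical Eichler--Shimura theory at level $\Gamma_0(p)$. First, for each cusp form $f \in S_k(\Gamma_0(p))$, I would construct the unnormalized cocycle
$$\tilde{p}_f(\gamma) \defeq \int_\eta^{\gamma\eta} f(z)(T-z)^n\, dz,$$
viewed as a $P_n(\mathbb{C})$-valued function on $\Gamma_0(p)$. The weight-$k$ transformation property of $f$ makes the integrand invariant under the diagonal action of $\Gamma_0(p)$ on $(z,T)$, so $\tilde{p}_f \in Z^1(\Gamma_0(p), P_n(\mathbb{C}))$; changing the base point $\eta$ modifies $\tilde{p}_f$ by a coboundary. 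The matrix $W_\infty \defeq \begin{pmatrix} -1 & 0 \\ 0 & 1 \end{pmatrix}$ induces an involution on $H^1(\Gamma_0(p), P_n)$, and $\tilde{p}_f^\pm$ denotes the corresponding $\pm$-eigencomponents.

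For (1), I would invoke the Eichler--Shimura isomorphism
$$H^1_{par}(\Gamma_0(p), P_n(\mathbb{C})) \cong S_k(\Gamma_0(p)) \oplus \overline{S_k(\Gamma_0(p))},$$
together with the rational structure $H^1_{par}(\Gamma_0(p), P_n(\mathbb{Q}))$ coming from modular symbols, which is stable under $W_\infty$ and the Hecke algebra. Strong multiplicity one shows that each Hecke-isotypic $\pm$-eigencomponent of $H^{1,\pm}_{par}(\Gamma_0(p), P_n(\overline{\mathbb{Q}}))$ is one-dimensional, so there exist transcendental periods $\Omega_f^\pm$, unique up to $\overline{\mathbb{Q}}^\times$, such that $p_f^\pm \defeq \frac{1}{\Omega_f^\pm}\tilde{p}_f^\pm$ takes values in $P_n(\overline{\mathbb{Q}})$.

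For (2), the two Shapiro maps $H^1(\Gamma_0(p), P_n) \to H^1(\mathrm{SL}_2(\mathbb{Z}), P_n)$ correspond under Eichler--Shimura to the two degeneracy maps $\alpha, \beta : S_k(\Gamma_0(p)) \to S_k(\mathrm{SL}_2(\mathbb{Z}))$ (the canonical inclusion and the Atkin--Lehner conjugate by $P$). Since the $p$-new subspace is by definition $\ker\alpha \cap \ker\beta$, the cocycle $p_f^\pm$ for a $p$-new $f$ lies in the kernel of both trace maps. For (3), I would conclude by a dimension count: using the identity $\dim S_k(\Gamma_0(p)) = \dim S_k^{p\text{-new}} + 2\dim S_k(\mathrm{SL}_2(\mathbb{Z}))$ for $k > 2$ together with surjectivity of the trace maps on parabolic cohomology, the kernel has complex dimension $2\dim S_k^{p\text{-new}}$, matching the count of cocycles $\{p_f^+, p_f^-\}$. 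Linear independence from distinct Hecke eigensystems then gives the basis claim.

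The main obstacle will be the algebraicity in (1): pinning down the correct rational structure on $H^1_{par}(\Gamma_0(p), P_n)$, verifying compatibility of $W_\infty$ with this structure, and applying strong multiplicity one so that each period $\Omega_f^\pm$ is canonical up to $\overline{\mathbb{Q}}^\times$. Manin's theory of modular symbols provides the required explicit description of the rational cohomology, together with the Hecke-equivariant non-degeneracy of the Eichler--Shimura pairing.
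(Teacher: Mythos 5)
Your proposal is in line with the paper's proof: both argue via the Eichler--Shimura isomorphism, classify the cuspidal contribution by the $p$-new condition, and verify the basis claim by understanding the trace maps. The difference is in the bookkeeping for part (3), and this is where you have left a genuine gap.

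You work throughout with parabolic cohomology $H^1_{par}(\Gamma_0(p), P_n)$, which isolates the cuspidal piece $S_k \oplus \overline{S_k}$ and makes the dimension count for the kernel of the trace maps clean. But the statement of the theorem is about the kernel inside the full group cohomology $H^1(\Gamma_0(p), P_n)$, which contains an extra boundary (Eisenstein) summand: for $k > 2$ one has
$$H^1(\Gamma_0(p), P_n) \cong S_k(\Gamma_0(p)) \oplus \overline{S_k(\Gamma_0(p))} \oplus \mathcal{E}_k(\Gamma_0(p)).$$
Your dimension count only shows that the classes $p_f^\pm$ span the kernel of the trace maps restricted to $H^1_{par}$. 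To get the full claim you must additionally show that the trace maps
$$H^1(\Gamma_0(p), P_n) \longrightarrow H^1(\mathrm{SL}_2(\mathbb{Z}), P_n)^2$$
are injective on the Eisenstein summand, i.e.\ that $\mathcal{E}_k(\Gamma_0(p))$ contributes nothing to the kernel. This is exactly the observation the paper records, via the corresponding trace maps $M_k(\Gamma_0(p)) \to M_k(\mathrm{SL}_2(\mathbb{Z}))$ on modular forms. Without this step, the kernel in $H^1$ could a priori be larger than $2\dim S_k^{p\text{-new}}$, and the cocycles $p_f^\pm$ might fail to span it. Everything else in your argument --- the construction of $\tilde{p}_f$, the $W_\infty$-eigendecomposition, the period normalization via strong multiplicity one for (1), and the identification of $p$-new forms with $\ker\alpha \cap \ker\beta$ for (2) --- is sound and matches the paper's (much terser) treatment.
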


\begin{proof}
    It is well-known (e.g. see \cite{AshStevens} Theorem 2.3) that for any $N$, there is an isomorphism of Hecke-modules $$H^1(\Gamma_0(N), P_n) \cong S_k(\Gamma_0(N)) \oplus \overline{S_k(\Gamma_0(N))} \oplus \mathcal{E}_k(\Gamma_0(N)),$$
    where $\mathcal{E}_k(\Gamma_0(N))$ is the space of Eisenstein series for the congruence subgroup $\Gamma_0(N)$. ($2$) then follows from the condition that $f$ is $p$-new. ($3$) holds because the Eisenstein series of $\Gamma_0(p)$ are not in the kernel of the two trace maps $$M_{k}(\Gamma_0(p)) \longrightarrow M_k(\mathrm{SL}_2(\mathbb{Z})).$$
\end{proof}

\begin{corollary}\label{tophoriso}
For $k>2$, the map $$\mathrm{MS}^{\Gamma}(\mathcal{O}(k,\mathcal{H}_p)) \longrightarrow H^1(\Gamma,\mathcal{O}(k,\mathcal{H}_p))$$ is an isomorphism.
\end{corollary}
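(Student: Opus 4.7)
The plan is to use the residue map to reduce to harmonic-cochain coefficients, apply Shapiro's lemma to express both sides as kernels of trace maps on $\Gamma_0(p)$-level modular symbols and cohomology, and then conclude using (a modular-symbol refinement of) the Eichler--Shimura theorem recalled just above.

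By Corollary \ref{boundedCor}, the residue map provides compatible isomorphisms
\[\mathrm{MS}^\Gamma(\mathcal{O}(k,\mathcal{H}_p)) \cong \mathrm{MS}^\Gamma(C^1_{har}(P_n)), \quad H^1(\Gamma,\mathcal{O}(k,\mathcal{H}_p)) \cong H^1(\Gamma, C^1_{har}(P_n)),\]
so it suffices to prove the statement with $C^1_{har}(P_n)$ in place of $\mathcal{O}(k,\mathcal{H}_p)$. Applying both $\mathrm{MS}^\Gamma(-)$ and $H^{*}(\Gamma,-)$ to the short exact sequence $0 \to C^1_{har}(P_n) \to C^1(P_n) \to C^0(P_n) \to 0$, using the vanishing $H^0(\Gamma, C^0(P_n)) = P_n^{\mathrm{SL}_2(\mathbb{Z})} \oplus P_n^{P\mathrm{SL}_2(\mathbb{Z})P^{-1}} = 0$ for $k > 2$, and invoking Shapiro's lemma for $C^1(P_n) \cong \mathrm{Ind}_{\Gamma_0(p)}^\Gamma P_n$ and $C^0(P_n) \cong \mathrm{Ind}_{\mathrm{SL}_2(\mathbb{Z})}^\Gamma P_n \oplus \mathrm{Ind}_{P\mathrm{SL}_2(\mathbb{Z})P^{-1}}^\Gamma P_n$, I obtain a commutative diagram with exact rows that identifies both $\mathrm{MS}^\Gamma(C^1_{har}(P_n))$ and $H^1(\Gamma, C^1_{har}(P_n))$ with the kernels of, respectively,
\[\mathrm{MS}^{\Gamma_0(p)}(P_n) \xrightarrow{\mathrm{tr}} \mathrm{MS}^{\mathrm{SL}_2(\mathbb{Z})}(P_n)^{\oplus 2}, \qquad H^1(\Gamma_0(p), P_n) \xrightarrow{\mathrm{tr}} H^1(\mathrm{SL}_2(\mathbb{Z}), P_n)^{\oplus 2},\]
with the comparison map induced by $\delta_{\Gamma_0(p)}:\mathrm{MS}^{\Gamma_0(p)}(P_n) \to H^1(\Gamma_0(p), P_n)$.

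For any congruence subgroup $\Gamma' \subseteq \mathrm{SL}_2(\mathbb{Z})$, the map $\delta_{\Gamma'}$ is an isomorphism on the cuspidal part; its kernel and cokernel are Eisenstein in origin, controlled by the $H^{*}$ of the cusp stabilizers acting on $P_n$. Under the hypothesis $k > 2$, every weight-$k$ Eisenstein class at level $p$ is an oldform coming from $\mathcal{E}_k(\mathrm{SL}_2(\mathbb{Z}))$ via the two degeneracy maps, and the $2\times 2$ matrix of traces on the basis $\{E_k(z), E_k(pz)\}$ is non-singular. Hence the pair of traces is injective on the Eisenstein component, both for cohomology (by Eichler--Shimura as stated above) and for modular symbols (by the parallel, Hecke-equivariant comparison). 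Consequently the kernels of the trace maps above are purely cuspidal on both sides, and on the cuspidal part $\delta_{\Gamma_0(p)}$ is an isomorphism by classical Eichler--Shimura, which yields the claim.

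The hard part is the precise matching of the Eisenstein and cuspidal decompositions between modular symbols and cohomology compatibly with the trace maps; this is essentially a modular-symbol version of the Eichler--Shimura isomorphism, and it is where the hypothesis $k > 2$ is used essentially. For $k = 2$ the $p$-new Eisenstein form $E_2(z) - pE_2(pz)$ does lie in the kernel of the pair of traces, giving a genuine obstruction to the statement in weight $2$.
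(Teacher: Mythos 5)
Your proof is correct, but it follows a more explicit and different route than the paper's. The paper's own proof is terse: it observes that, as Hecke modules, $\mathrm{MS}^\Gamma(\mathcal{O}(k,\mathcal{H}_p))$ and $H^1(\Gamma,\mathcal{O}(k,\mathcal{H}_p))$ are each isomorphic via the Eichler--Shimura theorem to the $p$-new cuspidal module $S_k^{p\text{-new}}\oplus \overline{S_k^{p\text{-new}}}$, and then asserts that the comparison map, being Hecke-equivariant, is an isomorphism (implicitly invoking multiplicity one and the action of the involution to rule out a kernel on any eigenspace). You instead work through the whole chain: the residue map, the long exact sequence of $0\to C^1_{har}(P_n)\to C^1(P_n)\to C^0(P_n)\to 0$, Shapiro's lemma to land in $\Gamma_0(p)$-level objects, the identification of both sides with kernels of the pair of trace maps, and then a direct analysis of $\delta_{\Gamma_0(p)}\colon \mathrm{MS}^{\Gamma_0(p)}(P_n)\to H^1(\Gamma_0(p),P_n)$ on cuspidal vs.\ Eisenstein components. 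Your treatment has the advantage of isolating precisely where the hypothesis $k>2$ enters (the invertibility of the $2\times 2$ trace matrix on the Eisenstein basis $\{E_k(z),E_k(pz)\}$ and hence the absence of a $p$-new Eisenstein contribution) and of making visible the obstruction in weight $2$ via $E_2(z)-pE_2(pz)$. The paper's argument is shorter but leaves the Eisenstein bookkeeping and the ``Hecke-equivariant hence isomorphism'' step to the reader; yours buys transparency at the cost of length. One small point worth tightening: the claim that ``on the cuspidal part $\delta_{\Gamma_0(p)}$ is an isomorphism by classical Eichler--Shimura'' is really a combination of Eichler--Shimura with the boundary long exact sequence (or Poincar\'e duality relating $H^1_c$ and $H^1$) identifying the kernel and cokernel of $\delta$ as Eisenstein; it would be worth a citation to, e.g., Ash--Stevens or Hida, as the paper does for the related vanishing results.
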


\begin{proof}
As Hecke-modules, both $\mathrm{MS}^{\Gamma}(\mathcal{O}(k,\mathcal{H}_p))$ and $H^1(\Gamma,\mathcal{O}(k,\mathcal{H}_p))$ are isomorphic to  $H^1(\Gamma_0(N), P_n) \cong S_k(\Gamma_0(N)) \oplus \overline{S_k(\Gamma_0(N))}$, and the map between them is Hecke-equivariant and is therefore an isomorphism. 
\end{proof}

\begin{lemma}
If $\Gamma_0$ is any congruence subgroup, then \begin{enumerate}
\item $\displaystyle{H^2(\Gamma_0,P_n) = 0}$,
\item $\displaystyle{H^1(\Gamma_0, \mathrm{MS}(P_n))} = 0$. 
\end{enumerate}
\end{lemma}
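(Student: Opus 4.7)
Both vanishings will follow from the fact that the (virtual) cohomological dimension of $\Gamma_0$ as a congruence subgroup of $\mathrm{SL}_2(\mathbb{Z})$ is $1$. For (1), I will pass to a torsion-free finite-index subgroup and use a transfer argument; for (2), I will feed (1) into the long exact sequence coming from the defining extension $0\to P_n\to\mathrm{Hom}(\mathbb{Z}[\mathbb{P}^1(\mathbb{Q})],P_n)\to\mathrm{MS}(P_n)\to 0$, use Shapiro's lemma on the middle term, and reduce to a surjectivity statement for restriction to cusp stabilizers.

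For (1), choose a torsion-free normal finite-index subgroup $\Gamma'\lhd\Gamma_0$; concretely the principal congruence subgroup $\Gamma(N)$ is torsion-free for any $N\geq 3$. Since $\Gamma'$ acts freely and properly on the contractible upper half-plane with a non-compact Riemann-surface quotient, that quotient is homotopy equivalent to a wedge of circles, so $\Gamma'$ is free and $\mathrm{cd}\,\Gamma'\leq 1$. In particular $H^2(\Gamma',P_n)=0$. Because $P_n$ is a $\mathbb{Q}$-vector space, $H^2(\Gamma_0,P_n)$ is uniquely divisible and so torsion-free, so the transfer identity $\mathrm{cor}\circ\mathrm{res}=[\Gamma_0:\Gamma']\cdot\mathrm{id}$ combined with the vanishing after restriction forces $H^2(\Gamma_0,P_n)=0$.

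For (2), apply $H^*(\Gamma_0,-)$ to the short exact sequence above. By (1) the long exact sequence terminates as
$$H^1(\Gamma_0,P_n)\longrightarrow H^1\bigl(\Gamma_0,\mathrm{Hom}(\mathbb{Z}[\mathbb{P}^1(\mathbb{Q})],P_n)\bigr)\longrightarrow H^1(\Gamma_0,\mathrm{MS}(P_n))\longrightarrow 0.$$
Decompose $\mathbb{P}^1(\mathbb{Q})=\bigsqcup_i \Gamma_0\cdot c_i$ into the finitely many cusp orbits with stabilizers $\Gamma_{c_i}$. Then $\mathrm{Hom}(\mathbb{Z}[\mathbb{P}^1(\mathbb{Q})],P_n)=\prod_i\mathrm{CoInd}_{\Gamma_{c_i}}^{\Gamma_0}P_n$, and Shapiro's lemma identifies the middle term with $\bigoplus_i H^1(\Gamma_{c_i},P_n)$. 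The statement reduces to showing that the restriction map
$$H^1(\Gamma_0,P_n)\longrightarrow\bigoplus_i H^1(\Gamma_{c_i},P_n)$$
is surjective.

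The main obstacle is exactly this cusp-restriction surjectivity, since each $H^1(\Gamma_{c_i},P_n)=P_n/(u_i-1)P_n\cong P_n/P_{n-1}$ is one-dimensional (where $u_i$ is a unipotent generator), so something nontrivial is required. I would address it via the Borel--Serre compactification $\overline{X}_{\Gamma_0}$: its relative-cohomology long exact sequence, together with (1) and $H^2(\partial\overline{X}_{\Gamma_0},\widetilde{P_n})=0$, identifies the cokernel of the restriction map with $H^2_c(X_{\Gamma_0},\widetilde{P_n})$. Poincar\'e--Lefschetz duality on the $2$-dimensional manifold with boundary $\overline{X}_{\Gamma_0}$ identifies this with the dual of $(P_n^\vee)^{\Gamma_0}\cong P_n^{\Gamma_0}$ (using $\mathrm{SL}_2$-self-duality of $P_n$), and this vanishes for $n>0$ because $\Gamma_0$ is Zariski-dense in $\mathrm{SL}_2$ and $P_n$ is an irreducible nontrivial $\mathrm{SL}_2$-representation. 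As a more classical alternative, one can invoke the Eichler--Shimura decomposition of $H^1(\Gamma_0,P_n)$: its Eisenstein summand maps isomorphically onto the boundary cohomology $\bigoplus_i H^1(\Gamma_{c_i},P_n)$ via the standard Eisenstein-series construction, again yielding the required lifts.
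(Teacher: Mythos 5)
Your proof is correct and takes a genuinely different route from the paper, which simply cites Hida (p.\ 162, Proposition 1) for part (1) and for part (2) invokes the identification of $H^1(\Gamma,\mathrm{MS}(P_n))$ with Hida's parabolic $H^2_P(\Gamma,P_n)$ (via Ash--Stevens and Hida again) and then cites the same Hida proposition. You instead give a self-contained topological/group-cohomological argument: for (1), passing to a torsion-free finite-index subgroup, which is free because it acts freely on $\mathcal{H}$ with non-compact surface quotient, and concluding via the transfer identity together with the fact that $H^*(\Gamma_0,P_n)$ is a $\mathbb{Q}$-vector space; for (2), applying $\Gamma_0$-cohomology to $0\to P_n\to\mathrm{Hom}(\mathbb{Z}[\mathbb{P}^1(\mathbb{Q})],P_n)\to\mathrm{MS}(P_n)\to 0$, using Shapiro to recognize the middle term as boundary cohomology, and reducing the claim to surjectivity of restriction to cusp stabilizers, which you establish via Lefschetz duality on the Borel--Serre compactification (or via the Eisenstein part of Eichler--Shimura). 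What your approach buys is transparency: it exposes that the content of the lemma is precisely the surjectivity of $H^1(\Gamma_0,P_n)\to\bigoplus_i H^1(\Gamma_{c_i},P_n)$, i.e.\ that all boundary classes are hit, which is the standard Eisenstein-cohomology fact. What the paper's approach buys is brevity and a clean pointer to the literature.

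Two small remarks. First, your duality step needs $n>0$: for $n=0$ the group $H^2_c(X_{\Gamma_0},\mathbb{Q})\cong(P_0)_{\Gamma_0}=\mathbb{Q}$ does not vanish (equivalently, the cusp-restriction map has a one-dimensional cokernel given by the degree map), so $H^1(\Gamma_0,\mathrm{MS}(P_0))\neq 0$. The paper states the lemma without this restriction, but all its applications (via Corollary \ref{tophoriso} and Proposition \ref{obstructionIso}) are for $k>2$, i.e.\ $n>0$, so your implicit hypothesis is harmless and in fact exposes a small imprecision in the paper's statement. Second, you do not actually need $H^2(\partial\overline{X}_{\Gamma_0},\widetilde{P_n})=0$ as an input: once $H^2(\overline{X}_{\Gamma_0},\widetilde{P_n})=H^2(\Gamma_0,P_n)=0$ is known from (1), the long exact sequence of the pair already identifies the cokernel of the restriction map with $H^2(\overline{X}_{\Gamma_0},\partial\overline{X}_{\Gamma_0};\widetilde{P_n})$ without any further input.
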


\begin{proof}
\begin{enumerate}
\item follows from \cite{Hida} p.162 Proposition 1. 
\item follows from the same proposition after identifying $H^1(\Gamma, \mathrm{MS}(P_n))$ with Hida's $H^2_P(\Gamma, P_n)$ for example by combining \cite{AshStevens} Proposition 4.2 and p. 363 Proposition 5 of \cite{Hida}. 
\end{enumerate}
\end{proof}

\begin{lemma}\label{HidaH2van}
The maps $$H^1(\Gamma, \mathcal{O}(k,\mathcal{H}_p) \rightarrow H^2(\Gamma, P_n)$$ and $$H^0(\Gamma,\mathrm{MS}(\mathcal{O}(k,\mathcal{H}_p)))\rightarrow H^1(\Gamma, \mathrm{MS}(P_n ))$$ are isomorphisms. 
\end{lemma}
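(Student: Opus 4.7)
The plan is to realize both maps as connecting homomorphisms of the long exact sequences associated to Breuil's short exact sequence of $\Gamma$-modules
$$0 \longrightarrow P_n(\mathbb{C}_p) \longrightarrow \mathcal{O}_\mathscr{L}(2-k, \mathcal{H}_p) \xrightarrow{\ d^{k-1}\ } \mathcal{O}(k, \mathcal{H}_p) \longrightarrow 0,$$
and to reduce each statement to a cohomological vanishing for the middle term. Taking $\Gamma$-cohomology directly exhibits the first map as the connecting homomorphism $H^1(\Gamma, \mathcal{O}(k, \mathcal{H}_p)) \to H^2(\Gamma, P_n)$. Because $\mathbb{D}_0$ is free as an abelian group (it is a subgroup of the free abelian group $\mathbb{Z}[\mathbb{P}^1(\mathbb{Q})]$), the functor $\mathrm{MS}(-) = \mathrm{Hom}(\mathbb{D}_0, -)$ is exact; applying it first yields an analogous short exact sequence of $\Gamma$-modules whose long exact sequence of $\Gamma$-cohomology has the second map of the lemma as its zeroth connecting homomorphism.

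Each connecting map is an isomorphism as soon as the cohomology of the middle term vanishes in the appropriate degrees: one needs $H^i(\Gamma, \mathcal{O}_\mathscr{L}(2-k, \mathcal{H}_p)) = 0$ for $i = 1, 2$, and $H^i(\Gamma, \mathrm{MS}(\mathcal{O}_\mathscr{L}(2-k, \mathcal{H}_p))) = 0$ for $i = 0, 1$. My strategy for these vanishings is to use the Mayer--Vietoris sequence attached to the amalgamated product decomposition $\Gamma = \mathrm{SL}_2(\mathbb{Z}) *_{\Gamma_0(p)} \mathrm{SL}_2(\mathbb{Z})$ coming from the action on $\mathcal{T}_p$. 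Since $\mathrm{SL}_2(\mathbb{Z})$ and $\Gamma_0(p)$ are virtually free and the coefficients are $\mathbb{C}_p$-vector spaces, their cohomology vanishes in degrees $\geq 2$, cutting every higher vanishing for $\Gamma$ down to a degree-one statement for these congruence subgroups.

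The main obstacle is to prove $H^1(\Gamma_0(p), \mathcal{O}_\mathscr{L}(2-k, \mathcal{H}_p)) = 0$ and its $\mathrm{MS}$-analogue. The guiding heuristic is that, in contrast to $\mathcal{O}(k, \mathcal{H}_p)$ (whose cohomology over $\Gamma_0(p)$ famously computes modular forms via Eichler--Shimura), the larger space $\mathcal{O}_\mathscr{L}(2-k, \mathcal{H}_p)$ already contains Coleman primitives of all rigid analytic functions, so every $1$-cocycle taking values there should be trivialized by an antiderivative. To make this rigorous I would invoke Breuil duality to transport the cohomology question to one with coefficients in locally analytic distributions on $\mathbb{P}^1(\mathbb{Q}_p)$, and then appeal to Schneider--Teitelbaum-style acyclicity of congruence subgroups acting on such distribution spaces. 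For the $\mathrm{MS}$-valued statements, once the rigid-analytic vanishings are in hand one can either run the argument in parallel or stratify $\mathcal{O}_\mathscr{L}(2-k, \mathcal{H}_p)$ by the Breuil filtration $F_\mathscr{L}^\bullet$ and reduce inductively to graded pieces, using the previous lemma's $H^1(\Gamma_0, \mathrm{MS}(P_n)) = 0$ as the base case.
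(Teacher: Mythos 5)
Your proposal misidentifies the map in the lemma, and the vanishing you rely on is actually false in general. The connecting homomorphism you propose to analyze is $\mathrm{Obs}_\mathscr{L}$, attached to the Breuil sequence $0\to P_n\to \mathcal{O}_\mathscr{L}(2-k)\to \mathcal{O}(k)\to 0$. But the map the paper intends (and uses downstream in Lemma \ref{lem: weight k cocycle} and Proposition \ref{obstructionIso}) is $\mathrm{Obs}_o$: the composite of the residue isomorphism $H^1(\Gamma,\mathcal{O}(k,\mathcal{H}_p))\cong H^1(\Gamma,C^1_{har}(P_n))$ with the connecting map of the tree sequence $0\to P_n\to C^0(P_n)\to C^1(P_n)\to 0$. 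These two maps do not coincide; Proposition \ref{deShalitIDproposition} shows $\mathrm{Obs}_{\mathscr{L}_1}-\mathrm{Obs}_{\mathscr{L}_2}=(\mathscr{L}_1-\mathscr{L}_2)\,\mathrm{Obs}_o$, so $\mathrm{Obs}_\mathscr{L}$ really does vary with $\mathscr{L}$.

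This matters because the vanishing you need, $H^1\bigl(\Gamma,\mathcal{O}_\mathscr{L}(2-k,\mathcal{H}_p)\bigr)=0$, does not hold for all $\mathscr{L}$. Since $H^1(\Gamma,P_n)=0$, the long exact sequence of the Breuil SES identifies $H^1(\Gamma,\mathcal{O}_\mathscr{L}(2-k))$ with $\ker \mathrm{Obs}_\mathscr{L}$. But when $\mathscr{L}$ equals the $\mathcal{L}$-invariant of a $p$-new eigenform $f\in S_k(\Gamma_0(p))$, the $f$-isotypic component of $H^1(\Gamma,\mathcal{O}(k,\mathcal{H}_p))$ lifts to a cocycle valued in Coleman primitives; the obstruction vanishes on that component and $\ker\mathrm{Obs}_\mathscr{L}\neq 0$. (This is precisely the exceptional-zero phenomenon underlying the de Shalit/Iovita--Spiess/Rotger--Seveso framework, and for example $p=2$, $k=8$ already produces such an $f$.) Your heuristic that Coleman primitives trivialize every $1$-cocycle confuses the exactness of the middle term (each $J(\gamma)$ has a primitive) with the vanishing of its $H^1$; the whole point of the theory is that the primitives assemble into a nontrivial cocycle class at special $\mathscr{L}$. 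Consequently the reduction you propose, via Mayer--Vietoris to $H^1(\Gamma_0(p),\mathcal{O}_\mathscr{L}(2-k,\mathcal{H}_p))=0$, is also targeting a false statement, and there is no Schneider--Teitelbaum-type acyclicity that rescues it.

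The paper's argument is unrelated to the Breuil sequence. It runs the long exact sequence of $0\to P_n\to C^0(P_n)\to C^1(P_n)\to 0$, kills $H^2(\Gamma,C^0(P_n))$ via Shapiro's lemma and the preceding lemma's vanishing $H^2(\mathrm{SL}_2(\mathbb{Z}),P_n)=0$, and then uses the Eichler--Shimura description (already established in the text) to see that $H^1(\Gamma,C^1_{har}(P_n))$ --- the $p$-new part --- is complementary inside $H^1(\Gamma,C^1(P_n))\cong H^1(\Gamma_0(p),P_n)$ to the image of $H^1(\Gamma,C^0(P_n))$, the $p$-old part. This gives the isomorphism for $\mathrm{Obs}_o$ directly, without ever needing a (false) vanishing of $H^1$ with $\mathcal{O}_\mathscr{L}$-coefficients. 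Your observation that $\mathbb{D}_0$ is free, so $\mathrm{MS}(-)$ is exact, is correct and is implicitly how the paper transports the same argument to the modular-symbol statement.
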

\begin{proof}
Consider the exact sequence:

$$0\rightarrow P_n\rightarrow C^0(P_n) \rightarrow C^1(P_n)\rightarrow 0.$$ Take the associated long exact sequences $$\cdots \rightarrow H^1(\Gamma,C^0(P_n)) \rightarrow H^1(\Gamma, C^1(P_n))\rightarrow H^2(\Gamma, P_n)\rightarrow H^2(\Gamma, C^0(P_n)) \rightarrow \cdots $$ 

Shapiro's lemma and Lemma \ref{HidaH2van} imply that $H^2(\Gamma, C^0(P_n))$ is trivial. Further, the inclusion of $H^1(\Gamma, C^1_{\text{har}}(P_n))$ into $H^1(\Gamma, C^1)$ is complimentary to the image of $H^1(\Gamma, C^0(P_n))$, and therefore $H^1(\Gamma, C^1_{har}(P_n))$ is isomorphic to $H^2(\Gamma, P_n)$.

The same argument proves the second assertion for modular symbols. 
\end{proof}

\begin{proposition}\label{obstructionIso}
The map $$H^1(\Gamma, \mathrm{MS}(P_n))\longrightarrow H^2(\Gamma,P_n)$$ is an isomorphism. 
\end{proposition}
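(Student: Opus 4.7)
The plan is to realize the map in the statement as the connecting homomorphism in the long exact sequence of $\Gamma$-cohomology attached to the short exact sequence \eqref{equation: deg0 dual}
\[
0 \longrightarrow P_n \longrightarrow \mathrm{Hom}(\mathbb{Z}[\mathbb{P}^1(\mathbb{Q})], P_n) \longrightarrow \mathrm{MS}(P_n) \longrightarrow 0,
\]
and then to establish the isomorphism by proving that the cohomology of the middle term vanishes in degrees $1$ and $2$. Since $\Gamma = \mathrm{SL}_2(\mathbb{Z}[1/p])$ acts transitively on $\mathbb{P}^1(\mathbb{Q})$ with stabilizer $\Gamma_\infty$ of the cusp $\infty$, the middle term is naturally identified with the coinduced module $\mathrm{CoInd}_{\Gamma_\infty}^\Gamma P_n$. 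Shapiro's lemma then yields $H^i(\Gamma, \mathrm{Hom}(\mathbb{Z}[\mathbb{P}^1(\mathbb{Q})], P_n)) \cong H^i(\Gamma_\infty, P_n)$, so the task reduces to showing $H^1(\Gamma_\infty, P_n) = H^2(\Gamma_\infty, P_n) = 0$.

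To compute these groups, I would decompose $\Gamma_\infty$ as the semidirect product $T_\infty \rtimes D$ of its unipotent radical $T_\infty \cong \mathbb{Z}[1/p]$ (translations) and its diagonal torus $D \cong \mathbb{Z}[1/p]^\times$, on which $D$ acts by scaling by squares. The Hochschild--Serre spectral sequence
\[
E_2^{p,q} = H^p(D, H^q(T_\infty, P_n)) \Longrightarrow H^{p+q}(\Gamma_\infty, P_n)
\]
is the natural tool: since $T_\infty$ is torsion-free abelian of rank one, $H^q(T_\infty, P_n) = 0$ for $q \geq 2$, so only $q = 0,1$ contribute. Each of $H^0(T_\infty, P_n)$ and $H^1(T_\infty, P_n)$ is a one-dimensional $\mathbb{C}_p$-vector space, and $D$ acts on each through a character determined by combining the weight-$(2-k)$ action on $P_n$ with the twist arising from the adjoint scaling of $D$ on $T_\infty$. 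The key input is that, for each of these characters, the element $p \in D$ acts by a scalar distinct from $1$, which makes $p - 1$ invertible on the coefficients and kills all $D$-cohomology with such twisted coefficients. The spectral sequence then collapses to zero in every positive total degree.

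An alternative route avoiding the direct cohomological computation is to combine the isomorphisms already established in the paper: Corollary \ref{tophoriso} provides $\mathrm{MS}^\Gamma(\mathcal{O}(k,\mathcal{H}_p)) \cong H^1(\Gamma, \mathcal{O}(k,\mathcal{H}_p))$, while Lemma \ref{HidaH2van} furnishes both $\mathrm{MS}^\Gamma(\mathcal{O}(k,\mathcal{H}_p)) \cong H^1(\Gamma, \mathrm{MS}(P_n))$ and $H^1(\Gamma, \mathcal{O}(k,\mathcal{H}_p)) \cong H^2(\Gamma, P_n)$; composing these three yields an isomorphism $H^1(\Gamma, \mathrm{MS}(P_n)) \cong H^2(\Gamma, P_n)$, which coincides with the connecting homomorphism in the statement by naturality of the LES construction applied to the morphism between \eqref{equation: deg0 dual} and the short exact sequence $0 \to P_n \to C^0(P_n) \to C^1(P_n) \to 0$ used in Lemma \ref{HidaH2van}. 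The principal obstacle in the direct approach is the careful bookkeeping of the characters of $D$ on $H^*(T_\infty, P_n)$, paying attention to the twist induced by conjugation on $T_\infty$; the principal obstacle in the composite approach is the diagram chase identifying the composite with the connecting homomorphism.
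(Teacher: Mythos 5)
Your ``alternative route'' is precisely the paper's proof: the commutative square with vertical arrows $H^0(\Gamma, \mathrm{MS}(\mathcal{O}(k,\mathcal{H}_p)))\to H^1(\Gamma, \mathrm{MS}(P_n))$ and $H^1(\Gamma, \mathcal{O}(k,\mathcal{H}_p))\to H^2(\Gamma,P_n)$ coming from Lemma~\ref{HidaH2van}, and the top arrow being the isomorphism of Corollary~\ref{tophoriso}. The diagram chase you flag (identifying the composite with the connecting map via naturality) is exactly what the paper is implicitly using, so on this route you are on solid ground.

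Your primary route, however, has a genuine gap. After passing to $\mathrm{CoInd}_{\Gamma_\infty}^\Gamma P_n$ via Shapiro, you assert that $H^q(T_\infty, P_n)=0$ for $q\geq 2$ ``since $T_\infty$ is torsion-free abelian of rank one.'' That inference is false: $T_\infty\cong \mathbb{Z}[1/p]$ is \emph{not} finitely generated, and non-finitely-generated torsion-free abelian groups of rank $n$ have cohomological dimension $n+1$, not $n$. In particular $\mathrm{cd}(\mathbb{Z}[1/p])=2$ (e.g.\ $H^2(\mathbb{Z}[1/p],\mathbb{Z})\neq 0$ via a $\varprojlim^1$ of the restriction maps $H^1(p^{-m}\mathbb{Z},\mathbb{Z})\to H^1(p^{-m+1}\mathbb{Z},\mathbb{Z})$, which are multiplication by $p$). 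So your Hochschild--Serre page has a potential $E_2^{0,2}$ contribution you have not ruled out. The claim you actually need --- $H^2(T_\infty, P_n)=0$ --- is true but requires using that $P_n$ is a $\mathbb{C}_p$-vector space: writing $\mathbb{Z}[1/p]=\varinjlim p^{-m}\mathbb{Z}$, the Milnor sequence gives $H^2=\varprojlim^1 H^1(p^{-m}\mathbb{Z},P_n)$, and since the transition maps are multiplication by $p$, which is \emph{invertible} on $\mathbb{C}_p$, the $\varprojlim^1$ vanishes. (Alternatively, one could argue directly that the $D$-character on $H^2(T_\infty,P_n)$ is also nontrivial, so $E_2^{0,2}$ dies for that reason.) Either way, the step as written is not justified; once this is fixed, your character bookkeeping on $E_2^{*,0}$ and $E_2^{*,1}$ --- the scalar $p^{-n}\neq 1$ on constants, and $p^{n+2}\neq 1$ on $H^1(T_\infty,P_n)$ after combining the coefficient action with the adjoint twist --- does give an alternative, more self-contained proof, with the advantage of not relying on Corollary~\ref{tophoriso} or the Eichler--Shimura input that feeds into it.
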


\begin{proof}
This follows from the commutative diagram
\begin{center}
\begin{tikzcd}
    H^0(\Gamma, \mathrm{MS}(\mathcal{O}(k,\mathcal{H}_p)))\ar[r] \ar[d] &H^1(\Gamma, \mathcal{O}(k,\mathcal{H}_p)) \ar[d]\\H^1(\Gamma, \mathrm{MS}(P_n)) \ar[r]&H^2(\Gamma,P_n).
\end{tikzcd}
\end{center}
Lemma \ref{HidaH2van} shows that the vertical arrows are isomorphisms and Corollary \ref{tophoriso} shows that the top horizontal arrow is an isomorphism.
\end{proof}
\subsection{RM-points and Binary Quadratic Forms}

Fix once and for all embeddings of $\overline{\mathbb{Q}}$ into $\mathbb{C}$ and $\mathbb{C}_p$.
\begin{definition}
An RM-point is a point $\tau \in \mathcal{H}_p$, such that $\mathbb{Q}(\tau)$ is a real-quadratic field. This necessarily implies that $p$ is either inert or ramified in $\mathbb{Q}(\tau)$. Throughout this paper we will restrict our attention to inert RM points. The same methods apply to ramified RM-points and will be treated in the author's thesis.
\end{definition}

To each inert RM-point, $\tau$, there is a unique binary quadratic form $$Q_\tau(x,y) = ax^2 + bxy + cy^2,$$ with $a,b,c\in \mathbb{Z}$ co-prime, such that $Q(\tau,1) = 0$ and $\tau$ is the ``positive root''; i.e. $$\tau = \frac{-b + \sqrt{b^2-4ac}}{2a}.$$
\begin{remark}
Here the sign of the roots is determined via the fixed embedding of $\overline{\mathbb{Q}}$ into $\mathbb{C}$.
\end{remark}
The discriminant of $\tau$ is $$D_\tau = \text{Disc}(Q_\tau) = b^2-4ac.$$ The discriminant factorizes as  $D_\tau = D_0 p^{2l}$, where $D_0$ is co-prime to $p$ and $l$ is the level of $\mathrm{red}_{p}(\tau)$. 

We denote by $(\tau,\overline{\tau})$ the geodesic in the complex upper half-plane $\mathcal{H}$ whose endpoints are $\tau$ and $\overline{\tau}$. Note that this geodesic is fixed by the action of the stabilizer of $\tau$ in $\Gamma$. The stabilizer subgroup of $\tau$ in $\Gamma$ is isomorphic to the group of units in the order $\mathcal{O}_{\tau} \subset \mathbb{Q}(\tau)$. There is a unique stabilizer, $\gamma_\tau$, which corresponds to a positive generator of $\mathcal{O}_\tau^\times$ (equivalently, $c\tau + d>0$), and such that the iterated action of the matrix $\gamma_\tau$ on $\mathbb{C}$ has $\tau$ as a stable fixed point.

For two oriented geodesics $g_1$ and $g_2$ on the complex upper half-plane, $\mathcal{H}$, we will denote by $g_1\cdot g_2$ their signed topological intersection number, as defined in \cite{DV3} p.2. For a pair of cusps $r,s\in\mathbb{P}^1(\mathbb{Q})$, we will denote by $(r,s)$ the oriented geodesic in the complex upper half-plane whose endpoints are $r$ and $s$.

\begin{definition}\label{def: orbits of rm points}
    For a fixed pair $(r,s)\in \mathbb{P}^1(\mathbb{Q})^2$, define the set $$\Sigma_{\tau}\{r,s\} = \left\{w\in \Gamma \cdot \tau \mid (w,\overline{w})\cdot (r,s)\neq 0 \right\},$$ and for every $l$, the set $\Sigma_\tau^{\leq l} \{r,s\} $ is the subset of $\Sigma_\tau\{r,s\}$ of RM-points whose reduction to $\mathcal{T}_{p}$ is of level at most $l$. 
\end{definition}

\begin{lemma}[\cite{DV1} p.41]\label{finiteRMLemma}
For each $\{r,s\}$, the set $\Sigma^{\leq l}_\tau\{r,s\}$ is finite.

\end{lemma}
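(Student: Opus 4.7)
The plan is to reduce the finiteness statement to a classical fact about intersections of geodesics on the modular orbifold $\mathrm{SL}_2(\mathbb{Z})\backslash\mathcal{H}$. The key leverage comes from passing from the non-discrete action of $\Gamma$ on $\mathcal{H}_p$ to the discrete action of a vertex stabilizer on the archimedean upper half-plane.

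\textbf{Step 1 (Reduction to fibers of $\mathrm{red}_p$).} First I would stratify $\Sigma_\tau^{\leq l}\{r,s\}$ by the reduction map. The subtree $\mathcal{T}_p^{\leq l}$ has only finitely many vertices, so the finiteness of $\Sigma_\tau^{\leq l}\{r,s\}$ reduces to showing that for each vertex $v\in\mathcal{T}_p^{\leq l}$, the fiber
\[S_v \defeq \{w\in\Gamma\cdot\tau : \mathrm{red}_p(w)=v,\ (w,\overline{w})\cdot(r,s)\neq 0\}\]
is finite.

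\textbf{Step 2 (Orbit structure of a fiber).} Assuming $S_v\neq\emptyset$, I would pick a basepoint $w_0\in S_v$. A standard argument shows that the set $\{w\in\Gamma\cdot\tau : \mathrm{red}_p(w)=v\}$ is a single orbit $\Gamma_v\cdot w_0$ under the stabilizer $\Gamma_v\defeq\mathrm{Stab}_\Gamma(v)$: any two preimages $\gamma_1\tau,\gamma_2\tau$ with the same reduction are related by $\gamma_2\gamma_1^{-1}\in\Gamma_v$. Since $\Gamma_v$ is conjugate in $\mathrm{GL}_2(\mathbb{Z}[1/p])$ to $\mathrm{SL}_2(\mathbb{Z})$, it acts properly discontinuously on the complex upper half-plane $\mathcal{H}$ via the fixed embedding of $\mathbb{Z}[1/p]$ into $\mathbb{R}$, with finite-volume quotient $\Gamma_v\backslash\mathcal{H}$ having a single cusp.

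\textbf{Step 3 (Closed geodesic vs.\ cuspidal geodesic).} The hyperbolic geodesic $G_{w_0}=(w_0,\overline{w_0})\subset\mathcal{H}$ is stabilized by the infinite cyclic group $\langle\gamma_{w_0}\rangle\subset\Gamma_v$, which acts cocompactly on it; hence $G_{w_0}$ projects to a compact closed geodesic $\overline{G}$ on $\Gamma_v\backslash\mathcal{H}$. On the other hand, since $r,s\in\mathbb{P}^1(\mathbb{Q})$ are cusps of $\Gamma_v$, the geodesic $(r,s)$ projects to a properly immersed geodesic $\overline{\gamma}$ running from cusp to cusp. Crucially, $\overline{\gamma}\neq\overline{G}$, for otherwise the noncompact curve $\overline{\gamma}$ would equal the compact curve $\overline{G}$.

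\textbf{Step 4 (Bounding the intersection).} Each $w\in S_v$ gives a $\Gamma_v$-translate $\gamma G_{w_0}$ crossing $(r,s)$, which descends to an intersection point of $\overline{G}$ with $\overline{\gamma}$ in $\Gamma_v\backslash\mathcal{H}$. This assignment is finite-to-one since the stabilizer of $(r,s)$ in $\Gamma_v$ is trivial (intersection of two distinct cusp stabilizers). Two geodesics in $\mathcal{H}$ meet in at most one point, so the intersection is transverse. A compact neighborhood of $\overline{G}$ meets only a compact piece of $\overline{\gamma}$: choose horocyclic cusp neighborhoods thin enough that $\overline{G}$ avoids them; then $\overline{\gamma}$ intersects $\overline{G}$ only inside the complement of these cusp neighborhoods, which is compact. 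A transverse intersection of two geodesics inside a compact set of a hyperbolic surface is finite, so $\overline{G}\cap\overline{\gamma}$ is finite and hence $S_v$ is finite. Combining over the finitely many vertices in $\mathcal{T}_p^{\leq l}$ gives the result.

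The main obstacle I anticipate is Step~4: making the separation between $\overline{G}$ and the cusp precise. This requires invoking the Margulis-lemma-style fact that compact subsets of a cusped hyperbolic surface are bounded away from the cusps, together with the observation that geodesics into a cusp have no self-intersections deep enough in the cusp. Once that separation is in hand, the rest is a routine discreteness argument on a finite-volume hyperbolic orbifold.
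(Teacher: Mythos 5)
Your approach is genuinely different from the paper's. The paper argues purely arithmetically: it observes that $(\tau,\overline\tau)\cdot(0,\infty)\neq 0$ iff the associated binary quadratic form $Q_\tau=ax^2+bxy+cy^2$ satisfies $ac<0$, and for a bounded discriminant (the bound on level bounds the power of $p$ dividing $\mathrm{Disc}$) there are only finitely many integer $(a,b,c)$ with $ac<0$; it then reduces the general $\{r,s\}$ to the case $\{0,\infty\}$ via a unimodular-path decomposition and $\mathrm{SL}_2(\mathbb{Z})$-translation. Your route trades that elementary counting for hyperbolic geometry on $\Gamma_v\backslash\mathcal{H}$. That buys conceptual clarity (it explains \emph{why} the count is finite — a compact closed geodesic can only meet a cuspidal geodesic finitely often, because the closed geodesic stays out of the cusps), at the cost of needing proper discontinuity and a Margulis-type separation; the paper's version is shorter, is effective (it gives explicit bounds), and avoids importing archimedean hyperbolic geometry.

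Steps 1--3 of your argument are correct and cleanly reduce the problem. Step 4 has a gap in the part that says the assignment $S_v\to\overline{G}\cap\overline{\gamma}$ is finite-to-one "since the stabilizer of $(r,s)$ in $\Gamma_v$ is trivial.'' Triviality of $\mathrm{Stab}_{\Gamma_v}(r,s)$ gives you that $(r,s)$ is not preserved by any nontrivial element, but it does not immediately give injectivity (or finite-to-one-ness) of the projection $(r,s)\to\overline{\gamma}$, nor of the map $w\mapsto\overline{x}_w$: two distinct translates $\gamma_1 G_{w_0},\gamma_2 G_{w_0}$ can meet $(r,s)$ at points that are $\Gamma_v$-equivalent without forcing any relation between $\gamma_1$ and $\gamma_2$, because the equivalence could arise from $\delta(r,s)$ crossing $(r,s)$ at a single point rather than from $\delta$ stabilizing $(r,s)$. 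The cleaner way to finish is to skip the factor-through-intersection-points device entirely: your horoball separation shows every $\gamma G_{w_0}$ that crosses $(r,s)$ must cross a fixed compact arc $K\subset(r,s)$; since $\mathrm{Stab}_{\Gamma_v}(w_0)$ acts cocompactly on $G_{w_0}$ with compact fundamental domain $L$, the condition $\gamma G_{w_0}\cap K\neq\emptyset$ forces $\gamma' L\cap K\neq\emptyset$ for some $\gamma'$ in the coset $\gamma\,\mathrm{Stab}_{\Gamma_v}(w_0)$, and proper discontinuity of $\Gamma_v$ bounds the number of such $\gamma'$, hence the number of cosets, hence $\#S_v$. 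With that repair Step 4 is solid and the proof goes through.
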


\begin{proof}
Finiteness of $\Sigma_{\tau}^{\leq l}\{ 0, \infty \}$ follows from the fact that $(\tau, \overline{\tau})\cdot (0, \infty)$ is non-zero if and only if $ac <0$ where $Q_\tau(x,y) = ax^2+bxy + cy^2$, and there are only finitely many choices of such $a,b,c$ for a fixed discriminant. For any pair $(r,s)$, the set $\Sigma_{\tau}^{\leq l}\{r,s\}$ can be written as a finite combination of translates of $\Sigma_\tau^{\leq l}\{r,s\}$, and is therefore finite.
\end{proof}

\begin{definition}[RM-divisors]\label{def: RM-divisors}
    An RM-divisor $\mathcal{D}$ is a finite formal sum of $\Gamma$-orbits of real quadratic points on $\mathcal{H}_p$, we will write such an RM-divisor as $$\sum_i m_i \cdot [\tau_i].$$
    Denote by $\mathcal{RM}$ the group of $RM$-divisors and $\mathcal{RM}^{in}$ the subgroup of inert RM-divisors; i.e. those where $p$ is inert in the field $\mathbb{Q}(\tau_i)$.
    We will denote by $\mathrm{Div}^\dagger(\mathcal{H}_p)$ the set of formal (possibly infinite) sums of points on $\mathcal{H}_p$ of the form $$D = \sum_{i\in I}a_i (z_i),$$ with $a_i\in \mathbb{Z}$ and $z_i \in \mathcal{H}_p$ such that the support of $D$  intersects each affinoid $\mathcal{H}_p^{\leq l}$ in a finite set, and the points $z_i$ are inert.
    
\end{definition}

For every inert RM-divisor, we can associate a formal divisor $$\mathrm{Div}_{k,\mathcal{D}} \in\mathrm{MS}^\Gamma(\mathrm{Div}^\dagger(\mathcal{H}_p)\otimes P_n(\mathbb{C}_p))$$ by $$\mathrm{Div}_{k, \mathcal{D}}\{r,s\} = \sum_i \sum_{w\in [\tau_i]}m_i \cdot (w)\otimes \left[(w,\overline{w})\cdot (r,s)\right] \frac{(T-w)^{n/2}(T-\overline{w})^{n/2}}{(w-\overline{w})^{n/2}} .$$
For every vertex, $v\in \mathcal{T}_p^0$, the intersection of a divisor with $\mathrm{red}^{-1}(v)$ is a finite divisor. The collection of the degrees of all such finite divisors can be assembled into a map $$\mathrm{deg}: \mathrm{Div}^\dagger(\mathcal{H}_p)\otimes P_n(\mathbb{C}_p) \longrightarrow  C_{p}^0(P_n(\mathbb{C}_p)).$$ The image of $\mathrm{Div}_{k,\mathcal{D}}$ under the degree map is  $$\mathrm{Deg}_{k,\mathcal{D}}\{r,s\}(v) \defeq \sum_i\sum_{\substack{w\in [\tau_i] \\ \mathrm{red}_{p}(w) = v }} m_i \left[(w,\overline{w})\cdot (r,s)\right] \frac{(T-w)^{n/2}(T-\overline{w})^{n/2}}{(w-\overline{w})^{n/2}}.$$

\begin{proposition}\label{ModularSymbolproposition} The symbols $\mathrm{Div}_{k,\mathcal{D}}$ and $ \mathrm{Deg}_{k,\mathcal{D}}$ are $\Gamma$-invariant modular symbols. 
\end{proposition}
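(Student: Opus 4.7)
The plan is to check three properties of $\mathrm{Div}_{k,\mathcal{D}}$ in turn---well-definedness of its values, the homomorphism property on $\mathbb{D}_0$, and $\Gamma$-invariance---and then to deduce the analogous assertions for $\mathrm{Deg}_{k,\mathcal{D}}$ by functoriality.

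For well-definedness, fix $\{r,s\}$; for each affinoid $\mathcal{H}_p^{\leq l}$, the restriction of the sum defining $\mathrm{Div}_{k,\mathcal{D}}\{r,s\}$ to points reducing to $\mathcal{T}_p^{\leq l}$ is finite because Lemma \ref{finiteRMLemma} says $(w,\bar w)\cdot (r,s)$ vanishes for all but finitely many $w\in\Gamma\cdot \tau_i$ of level $\leq l$. Hence the value lies in $\mathrm{Div}^\dagger(\mathcal{H}_p)\otimes P_n(\mathbb{C}_p)$. The homomorphism property $\mathrm{Div}_{k,\mathcal{D}}\{r,s\} + \mathrm{Div}_{k,\mathcal{D}}\{s,t\} = \mathrm{Div}_{k,\mathcal{D}}\{r,t\}$ (which is all one needs for descent to $\mathbb{D}_0$) is immediate term-by-term from the additivity $(w,\bar w)\cdot(r,s) + (w,\bar w)\cdot(s,t) = (w,\bar w)\cdot(r,t)$ of signed geodesic intersection numbers in the complex upper half-plane.

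For $\Gamma$-invariance I need $\mathrm{Div}_{k,\mathcal{D}}\{\gamma r,\gamma s\}\cdot\gamma = \mathrm{Div}_{k,\mathcal{D}}\{r,s\}$ for each $\gamma\in\Gamma$, where the diagonal right action on $\mathrm{Div}^\dagger(\mathcal{H}_p)\otimes P_n$ sends $(w')\otimes Q(T)$ to $(\gamma^{-1}w')\otimes(Q|\gamma)(T)$. Reparametrize the left-hand side by $w' = \gamma w$; since $\Gamma$ stabilizes each orbit $[\tau_i]$, the index set is unchanged. The intersection coefficient is invariant because $\gamma\in\mathrm{SL}_2(\mathbb{Z}[1/p])\subset\mathrm{SL}_2(\mathbb{R})$ acts on $\mathcal{H}$ by an orientation-preserving isometry, so $(\gamma w,\gamma\bar w)\cdot(\gamma r,\gamma s) = (w,\bar w)\cdot(r,s)$. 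The essential step, and the one with any real content, is the $\Gamma$-equivariance of the normalized polynomial factor:
\[
\frac{(T-\gamma w)^{n/2}(T-\overline{\gamma w})^{n/2}}{(\gamma w - \overline{\gamma w})^{n/2}}\,\bigg|\,\gamma \;=\; \frac{(T-w)^{n/2}(T-\bar w)^{n/2}}{(w-\bar w)^{n/2}}.
\]
This reduces to the elementary identities $(cT+d)(\gamma T - u) = (a-uc)(T-\gamma^{-1}u)$ for $u\in\overline{\mathbb{Q}_p}$, $\gamma w - \gamma\bar w = (w-\bar w)/((cw+d)(c\bar w + d))$, and $(a - (\gamma w)c)(cw+d) = \det(\gamma) = 1$; the Jacobian factors produced in passing from $T-\gamma w$ back to $T-w$ cancel precisely against the change in the denominator $w-\bar w$. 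I expect this bookkeeping---keeping the weight-$n$ action straight and not losing sign or normalization factors---to be the only place where an error is at all likely; no deeper input is needed.

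Finally, $\mathrm{Deg}_{k,\mathcal{D}}$ is the composition of $\mathrm{Div}_{k,\mathcal{D}}$ with the degree map $\mathrm{Div}^\dagger(\mathcal{H}_p)\otimes P_n\to C^0_p(P_n)$, which is $\Gamma$-equivariant because $\mathrm{red}_p$ is. The push-forward of a $\Gamma$-invariant modular symbol under a $\Gamma$-equivariant morphism of coefficient modules is again a $\Gamma$-invariant modular symbol, so the statement for $\mathrm{Deg}_{k,\mathcal{D}}$ requires no independent argument.
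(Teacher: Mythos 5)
Your proposal is correct and follows essentially the same route as the paper: the modular-symbol property from additivity of signed geodesic intersection numbers, $\Gamma$-invariance from the isometry invariance of the intersection pairing together with the weight-$n$ equivariance of the normalized factor $(T-w)^{n/2}(T-\bar w)^{n/2}/(w-\bar w)^{n/2}$, and the passage to $\mathrm{Deg}_{k,\mathcal{D}}$ by equivariance of the degree map. The only difference is that you spell out the elementary matrix identities behind the polynomial-factor equivariance, which the paper simply asserts; this is a reasonable amount of extra detail and introduces no new idea.
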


\begin{proof}
    It suffices to prove this proposition for $\mathrm{Div}_{k,\mathcal{D}}$ since $\mathrm{Deg}_{k,\mathcal{D}}$ is its image under the degree map, which is $\Gamma$-equivariant. 
    The fact that $\mathrm{Div}_{k,\mathcal{D}}$ defines a modular symbol follows immediately from the observation that the map $$\{r,s\} \mapsto (w,\overline{w})\cdot(r,s)$$ is a modular symbol. 

    To show the $\Gamma$-invariance, note that 
     $$\frac{(T-w)^{n/2}(T-\overline{w})^{n/2}}{(w-\overline{w})^{n/2}}\mid \gamma = \frac{(T-\gamma^{-1}w)^{n/2}(T-\gamma^{-1}\overline{w})^{n/2}}{(\gamma^{-1}w-\gamma^{-1}\overline{w})^{n/2}}.$$
    So for $\gamma\in \Gamma$, \begin{align*}&\mathrm{Div}_{k,\mathcal{D}}\{r,s\}\mid \gamma = \sum_i \sum_{w\in \Gamma\cdot \tau_i }m_i \cdot (\gamma^{-1}w)\otimes \left[(w,\overline{w})\cdot (r,s)\right] \frac{(T-\gamma^{-1}w)^{n/2}(T-\gamma^{-1}\overline{w})^{n/2}}{(\gamma^{-1}w-\gamma^{-1}\overline{w})^{n/2}} \\
     &=\sum_i\sum_{w\in \Gamma\cdot \tau_i } m_i \cdot (\gamma^{-1}w)\otimes \left[(\gamma^{-1}w,\gamma^{-1}\overline{w})\cdot (\gamma^{-1}r,\gamma^{-1}s)\right] \frac{(T-\gamma^{-1}w)^{n/2}(T-\gamma^{-1}\overline{w})^{n/2}}{(\gamma^{-1}w-\gamma^{-1}\overline{w})^{n/2}}\\
     &= \sum_i\sum_{w\in \Gamma\cdot \tau_i }m_i\cdot (w)\otimes \left[(w,\overline{w})\cdot (\gamma^{-1}r,\gamma^{-1}s)\right] \frac{(T-w)^{n/2}(T-\overline{w})^{n/2}}{(w-\overline{w})^{n/2}}\\
     &= \mathrm{Div}_{k,\mathcal{D}}\{\gamma^{-1}r,\gamma^{-1}s\}.\end{align*}
\end{proof}

\begin{lemma}\label{wt2degtriv}
    For $k=2$, the modular symbol $\mathrm{Deg}_{2,\tau}$ is trivial. 
    \end{lemma}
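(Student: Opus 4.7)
In weight $k=2$ we have $n=0$, so $P_n(\mathbb{C}_p)=\mathbb{C}_p$ with trivial $\Gamma$-action and, by Proposition~\ref{ModularSymbolproposition}, $\mathrm{Deg}_{2,\tau}$ lies in $\mathrm{MS}^{\Gamma}\!\bigl(C_p^0(\mathbb{C}_p)\bigr)$. My approach is to prove the stronger statement that this entire space of modular symbols is already trivial, by the same Shapiro/Frobenius-reciprocity mechanism the paper employed for edges in \eqref{equation: Shaprio iso edges}.

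First, I will observe that $\Gamma$ acts on $\mathcal{T}_p^0$ with exactly two orbits, whose stabilizers $G_{v_0}=\mathrm{SL}_2(\mathbb{Z})$ and $G_{v_1}=P^{-1}\mathrm{SL}_2(\mathbb{Z})P$ are each conjugate to $\mathrm{SL}_2(\mathbb{Z})$. This gives a canonical decomposition of $\Gamma$-modules
$$C_p^0(\mathbb{C}_p)\;\cong\;\mathrm{Ind}_{G_{v_0}}^{\Gamma}\mathbb{C}_p\;\oplus\;\mathrm{Ind}_{G_{v_1}}^{\Gamma}\mathbb{C}_p,$$
on which $\mathbb{C}_p$ is acted on trivially by each stabilizer. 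I then apply the adjunction $\mathrm{Hom}_{\mathbb{Z}[\Gamma]}\!\bigl(\mathbb{D}_0,\mathrm{Ind}_H^{\Gamma}N\bigr)=\mathrm{Hom}_{\mathbb{Z}[H]}(\mathbb{D}_0,N)$ summand by summand to the modular-symbol functor $\mathrm{MS}^{\Gamma}(-)=\mathrm{Hom}_{\mathbb{Z}[\Gamma]}(\mathbb{D}_0,-)$, obtaining
$$\mathrm{MS}^{\Gamma}\!\bigl(C_p^0(\mathbb{C}_p)\bigr)\;\cong\;\mathrm{MS}^{\mathrm{SL}_2(\mathbb{Z})}(\mathbb{C}_p)\,\oplus\,\mathrm{MS}^{\mathrm{SL}_2(\mathbb{Z})}(\mathbb{C}_p).$$

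The final step is to invoke Proposition~\ref{modularPeriodProp}, which identifies $\mathrm{MS}^{\mathrm{SL}_2(\mathbb{Z})}(\mathbb{C}_p)$ with $\mathbb{C}_p^{\dagger}=\{h\in\mathbb{C}_p:h+hS=0,\ h+hU+hU^{2}=0\}$. Under the trivial action these two relations collapse to $2h=0$ and $3h=0$, which force $h=0$ since $\mathbb{C}_p$ has characteristic zero. Both summands therefore vanish and the lemma follows. The only point I expect to require any care is bookkeeping around the non-finite-index inclusions $G_{v_i}\subset\Gamma$; however, the adjunction used is the same one implicitly invoked for edges to obtain \eqref{equation: Shaprio iso edges}, and it holds in full generality for any subgroup. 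As an independent sanity check, one can verify directly that $\mathrm{Deg}_{2,\tau}\{r,s\}(v_0)$ equals the algebraic intersection number on $\mathrm{SL}_2(\mathbb{Z})\backslash\mathcal{H}$ of the compact closed geodesic attached to $\tau$ with the image of the cusp-to-cusp geodesic $(r,s)$, and this vanishes because the underlying topological space is contractible, so any such proper arc can be homotoped off the compact geodesic through a neighbourhood of the single cusp.
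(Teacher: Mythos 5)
Your proof is correct, and it takes a genuinely different route from the paper's. The paper simply cites \cite{DV1}, Lemma 1.16, which is a geometric argument in the spirit of your closing ``sanity check'': one views $\mathrm{Deg}_{2,\tau}\{r,s\}(v)$ as a signed intersection number on $\mathrm{SL}_2(\mathbb{Z})\backslash\mathcal{H}$ between the compact geodesic attached to $\tau$ and the cusp-to-cusp arc $(r,s)$, and appeals to the vanishing of $S_2(\mathrm{SL}_2(\mathbb{Z}))$, equivalently to the contractibility of $Y(1)$, to conclude. Your main argument instead establishes the strictly stronger, purely algebraic fact that $\mathrm{MS}^{\Gamma}\!\bigl(C_p^0(\mathbb{C}_p)\bigr)=0$ outright: you reuse the paper's own (co)induction mechanism for the two vertex orbits (as in \eqref{equation: Shaprio iso vertices}) together with Proposition~\ref{modularPeriodProp}, and then observe that for the \emph{trivial} $\mathrm{SL}_2(\mathbb{Z})$-module $\mathbb{C}_p$ the two-term relation alone gives $2h=0$, hence $h=0$. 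This is self-contained, avoids the reference to \cite{DV1}, and also explains structurally \emph{why} the lemma must hold (the receiving space is already zero), at the cost of being less visibly tied to the modular-forms interpretation the paper highlights. Two minor remarks on your stated worry: (i) $C_p^0(M)$ is literally the space of \emph{all} $M$-valued functions on vertices, i.e.\ the coinduced module $\mathrm{Hom}_{\mathbb{Z}[H]}(\mathbb{Z}[\Gamma],M)$, which is exactly the situation in which the adjunction $\mathrm{Hom}_{\mathbb{Z}[\Gamma]}(\mathbb{D}_0,\mathrm{Coind}_H^{\Gamma}N)\cong\mathrm{Hom}_{\mathbb{Z}[H]}(\mathbb{D}_0,N)$ holds for arbitrary index, so the infinite-index inclusions cause no trouble; the paper's choice to write $\mathrm{Ind}$ rather than $\mathrm{Coind}$ in \eqref{equation: Shaprio iso edges} is a harmless abuse of notation that you inherit. (ii) For the second orbit one should either conjugate by $P=\mathrm{diag}(p,1)$ to reduce to $\mathrm{SL}_2(\mathbb{Z})$ or note that, since the coefficient module is trivial, $\mathrm{MS}^{G_{v_1}}(\mathbb{C}_p)\cong\mathrm{MS}^{\mathrm{SL}_2(\mathbb{Z})}(\mathbb{C}_p)$ canonically; you gesture at this by saying the stabilizers are conjugate, which is enough.
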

    
    \begin{proof}
    Details of the proof can be found in \cite{DV1}, Lemma 1.16. It is closely related to the fact that the space of cusp forms of level $1$ and weight $2$ is trivial. 
    \end{proof}
\subsection{Weight $2$ Rigid Meromorphic Cocycles}

For this section, we will denote by $\mathcal{M}$ the fraction field of $\mathcal{O}(\mathcal{H}_p)$ and $\mathcal{M}^\times$ its multiplicative group. 
\begin{definition}
A rigid meromorphic theta symbol is a modular symbol in $$\mathrm{MS}^\Gamma(\mathcal{M}^\times/\mathbb{C}_p^\times).$$

\end{definition}
Recall the construction of rigid meromorphic theta symbols in \cite{DV1}.
First define for $w\in \mathcal{H}_p$, the choice of rational function with a pole at $w$:  \begin{equation} \label{twFun}t_w(z)  = \begin{cases}z-w & \text{if }|w|\leq 1,\\ \frac{z}{w} - 1& \text{if }|w|>1. \end{cases}\end{equation}

\begin{theorem}[Darmon-Vonk, \cite{DV1}]
For $\tau$ an RM-point, The symbol defined by $$J^\times_{\tau}\{r,s\}(z) = \prod_{w\in \Gamma \cdot \tau} t_w(z)^{(r,s)\cdot (w, \overline{w})}$$ is a modular symbol in $MS^\Gamma\left( \mathcal{M}^\times/ \mathbb{C}_p^\times \right)$ whose logarithmic derivative is $$J_{2,\tau}\{r,s\}(z) = \sum_{w\in\Gamma\cdot \tau} (r,s)\cdot (w, \overline{w})\frac{1}{z-w}.$$

The set of rigid analytic modular symbols $\mathrm{MS}^\Gamma(\mathcal{O}(2,\mathcal{H}_p))$ along with the modular symbols $\{J_{2,\tau}\}_{\tau}$, where $\tau$ ranges over all RM-points span the vector space $\mathrm{MS}^\Gamma(\mathcal{M}(2,K)).$
\end{theorem}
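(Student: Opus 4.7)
My plan is to treat the two assertions separately: first build the cocycle-like symbol $J_\tau^\times$ and verify its properties, then deduce the spanning statement by a divisor-cancellation argument at the level of harmonic cocycles on $\mathcal{T}_p$ and $\mathcal{T}_K$.

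\textbf{Construction and convergence.} Fix $\{r,s\}$. The exponent $(r,s)\cdot(w,\overline{w})$ vanishes outside the countable set $\Sigma_\tau\{r,s\}$, so the product is really indexed over $\Sigma_\tau\{r,s\}$. To check convergence of $J_\tau^\times\{r,s\}$ as a rigid meromorphic function on $\mathcal{H}_p$, I would fix an affinoid $\mathcal{H}_p^{\leq l}$ and split the product according to $\Sigma_\tau^{\leq l}\{r,s\}$ (finite by Lemma \ref{finiteRMLemma}) and its complement. For $w$ with $\mathrm{red}_p(w)$ of level $>l$, the normalization of $t_w$ in \eqref{twFun} is designed so that $|t_w(z)-1|$ is controlled by the distance from $z$ to the disk around $w$; summing the resulting $p$-adic valuations over shells of fixed reduction level and using that the number of RM-points with a given discriminant on a shell grows only polynomially (in fact linearly in the level) will give locally uniform convergence of $\sum_w |t_w(z)-1|$, hence of the product to an element of $\mathcal{M}$.

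\textbf{Modular symbol and $\Gamma$-invariance.} Additivity in $\{r,s\}$ follows termwise from additivity of the topological intersection number $(r,s)\cdot(w,\overline{w})$ in its first argument. For $\Gamma$-invariance modulo constants I would substitute $w\mapsto\gamma^{-1}w$ in the product defining $J_\tau^\times\{\gamma r,\gamma s\}(\gamma z)$; the intersection exponents are $\Gamma$-invariant, and a direct computation shows that $t_{\gamma^{-1}w}(\gamma z)=c_{\gamma,w}\cdot t_w(z)$ for a nonzero scalar $c_{\gamma,w}\in\mathbb{C}_p^\times$. The only subtle point is that the correction scalars $c_{\gamma,w}$ together form a convergent product; this is checked by the same shell estimate as above. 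Passing to $\mathcal{M}^\times/\mathbb{C}_p^\times$ kills these scalars and gives the desired invariance.

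\textbf{Logarithmic derivative.} Once locally uniform convergence of the product is established, $\frac{d}{dz}\log$ commutes with the product. Since $t_w$ and $z-w$ differ by a nonzero constant, they have the same logarithmic derivative $1/(z-w)$, producing the claimed formula for $J_{2,\tau}\{r,s\}$. By Proposition \ref{ModularSymbolproposition} the right-hand side is already known to be a $\Gamma$-invariant $\mathcal{M}(2;K)$-valued modular symbol, which serves as an independent check.

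\textbf{Spanning.} For the final assertion I would reason at the level of annular residues. Every $m\in\mathrm{MS}^\Gamma(\mathcal{M}(2;K))$ has, for each $\{r,s\}$, a rigid meromorphic value whose polar divisor lies in $\mathbb{P}^1(K)\setminus\mathbb{P}^1(\mathbb{Q}_p)$, i.e.\ among inert points of $\mathcal{H}_p$. The $\Gamma$-invariance of $m$, together with the finiteness result of Lemma \ref{finiteRMLemma}, forces this polar divisor to be a finite combination of full $\Gamma$-orbits $[\tau_i]$ of inert RM-points, weighted by the modular symbol $\{r,s\}\mapsto(r,s)\cdot(w,\overline{w})$ (up to a scalar on each orbit, since the residue function on $\mathcal{T}_K$ must be $\Gamma$-equivariant and is determined on one edge per orbit). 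Subtracting the corresponding linear combination of the $J_{2,\tau_i}$ kills every pole of $m$, leaving a symbol in $\mathrm{MS}^\Gamma(\mathcal{O}(2,\mathcal{H}_p))$; this gives the claimed spanning.

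\textbf{Main obstacle.} The two nontrivial steps are the convergence estimates, which rely on a quantitative version of Lemma \ref{finiteRMLemma} counting RM-points of given discriminant by reduction level, and the classification of residue patterns in the spanning step, which is where the $\Gamma$-equivariance of the boundary measure on $\mathbb{P}^1(K)$ really gets used to reduce the polar data to one scalar per $\Gamma$-orbit.
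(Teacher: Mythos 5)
The statement you are trying to prove is quoted in the paper as a theorem of Darmon--Vonk and cited from \cite{DV1}; the paper itself offers no proof, so there is no internal argument to compare against. Evaluating your proposal on its own merits, two steps need repair.

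\textbf{Convergence.} Your convergence estimate rests on the claim that ``the number of RM-points with a given discriminant on a shell grows only polynomially (in fact linearly in the level).'' This does not parse as stated: the paper records that the discriminant of an RM-point factors as $D_0 p^{2l}$ with $l$ equal to its reduction level, so all RM-points of a fixed discriminant reduce to the \emph{same} level, and the shell count you appeal to is vacuous. What actually grows is the number of points in $\Sigma_\tau^{\leq l}\{r,s\} \setminus \Sigma_\tau^{\leq l-1}\{r,s\}$, which are points of varying discriminants $D_0 p^{2l'}$, and that count is not obviously polynomial. The mechanism that makes the construction converge is not a direct estimate on the infinite product but rather \emph{boundedness} of the associated harmonic residue function $\Phi_{2,\tau}$ on $\mathcal{T}_K$: for $k=2$ one has $n=0$, so $\Phi_{2,\tau}$ takes values in $\mathbb{Z}$, hence automatically lies in $C_{K,\mathrm{har}}^{1,\mathrm{int}}$ and is $p$-adically bounded. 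Convergence of the (logarithm of the) product is then exactly the Schneider--Teitelbaum (Vishik, Amice--Velu) lift of a bounded measure, and harmonicity on $\mathcal{T}_p^0$ is supplied by the triviality of $\mathrm{Deg}_{2,\tau}$ (Lemma \ref{wt2degtriv}). This is the route the paper itself takes in weight $k>2$ (Lemma \ref{STFin}, Definition \ref{def: deg 0 weight k cocycle}, Proposition \ref{weightkexpproposition}), and it is the replacement you need here. Your remarks on additivity, $\Gamma$-invariance modulo constants, and the logarithmic derivative are all fine once convergence is established by this means.

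\textbf{Spanning.} Your divisor-cancellation strategy is the right idea, but the step ``$\Gamma$-invariance and Lemma \ref{finiteRMLemma} force the polar divisor to be a finite combination of full $\Gamma$-orbits weighted by $(r,s)\cdot(w,\overline{w})$'' conceals the hard content. Discreteness of the polar set does force the support to be a union of $\Gamma$-orbits of RM-points, but it does not by itself force the residue at a pole $w$, as a function of $\{r,s\}$, to be a fixed multiple of the intersection-number modular symbol $(r,s)\cdot(w,\overline{w})$. That classification of possible residue patterns is precisely the Choie--Zagier input (\cite{CZ} Lemma 5), which the paper invokes for part (1) of its higher-weight analogue, Theorem \ref{classificationtheorem}; without it you have not ruled out residue functionals transforming in some other $\Gamma_w$-invariant way. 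Your sketch should either cite that lemma or reproduce its two- and three-term relation argument; the present phrasing ``is determined on one edge per orbit'' is not a substitute.
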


\begin{lemma}
Let $\tau$ be an inert RM-point and $r,s \in \mathbb{P}^1(\mathbb{Q})$. For an edge $e\in \mathcal{T}_K$ such that $U_e\cap \mathbb{P}^1(\mathbb{Q}_p) =\emptyset$, then the intersection $$U_e\cap \Sigma_\tau\{r,s\}$$ is finite.
\end{lemma}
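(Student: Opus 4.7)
The plan is to reduce to Lemma \ref{finiteRMLemma}, which establishes the finiteness of $\Sigma_{\tau}^{\leq l}\{r,s\}$ for each fixed level $l$. It therefore suffices to show that every element of $U_e \cap \Sigma_{\tau}\{r,s\}$ has level bounded by some uniform constant; in fact I will argue that they all share a single common reduction in $\mathcal{T}_p$.

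First I would observe that, since $\tau$ is inert, every $w \in \Gamma \cdot \tau$ lies in $\mathbb{P}^1(K) \setminus \mathbb{P}^1(\mathbb{Q}_p)$: the completion of $\mathbb{Q}(w)$ at the unique prime above $p$ is isomorphic to $K$. Because $K/\mathbb{Q}_p$ is unramified, $\mathrm{ord}_p(w-a) \in \mathbb{Z}$ for every $a \in \mathbb{Q}_p$, so by Definition \ref{def: reduction map} $\mathrm{red}_p(w)$ is always a vertex of $\mathcal{T}_p$ (never an edge).

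The main step is the geometric claim that $\mathrm{red}_p$ is constant on $U_e \cap \mathbb{P}^1(K)$. To prove it I would let $\tilde{U}_e \subset \mathbb{P}^1(\mathbb{C}_p)$ denote the closed disk naturally associated to the edge $e$, of radius $R$, so that $U_e = \tilde{U}_e \cap \mathbb{P}^1(K)$. For any $w_1, w_2 \in U_e$ and any $a \in \mathbb{P}^1(\mathbb{Q}_p)$, the hypothesis $U_e \cap \mathbb{P}^1(\mathbb{Q}_p) = \emptyset$ forces $a \notin \tilde{U}_e$, so $|w_1 - a| > R \geq |w_1 - w_2|$, and the ultrametric inequality yields $|w_1 - a| = |w_2 - a|$. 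Hence the minimal radius $r$ for which $B_{\mathbb{C}_p}(w, r) \cap \mathbb{P}^1(\mathbb{Q}_p)$ is a nonempty closed disk, together with that disk itself, are independent of $w \in U_e$. Applying Definition \ref{def: reduction map}, a single vertex $v^* \in \mathcal{T}_p$ serves as $\mathrm{red}_p(w)$ for every $w \in U_e$; geometrically, $v^*$ is the vertex of $\mathcal{T}_p$ at which the branch of $\mathcal{T}_K \setminus \mathcal{T}_p$ containing $e$ attaches.

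Writing $l^*$ for the level of $v^*$, one concludes $U_e \cap \Sigma_{\tau}\{r,s\} \subset \Sigma_{\tau}^{\leq l^*}\{r,s\}$, and Lemma \ref{finiteRMLemma} finishes the proof. The only genuine ingredient beyond bookkeeping is the ultrametric computation identifying $\mathrm{red}_p$ as constant on $\tilde{U}_e$; this is a technicality rather than a real obstacle, and once it is in hand the reduction to Lemma \ref{finiteRMLemma} is automatic.
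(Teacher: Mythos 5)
Your proof is correct and follows the same route the paper takes: both reduce the claim to Lemma \ref{finiteRMLemma} by observing that all of $U_e$ reduces under $\mathrm{red}_p$ to a single vertex of $\mathcal{T}_p$, hence to a bounded level. The only difference is that the paper simply asserts $\mathrm{red}_p(U_e) = v$ without justification, whereas you supply the ultrametric computation (and the unramified-extension remark explaining why the reduction is a vertex rather than an edge) that makes this explicit.
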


\begin{proof}

    This follows from Lemma \ref{finiteRMLemma}, since as a subset of $\mathcal{H}_p$, the set $U_e$ reduces to a single vertex: $$\mathrm{red}_p(U_e) = v,$$ for some vertex $v\in \mathcal{T}_p^0$.
\end{proof}

\begin{proposition}
Let $\tau$ be an inert RM-point. When restricted to $\mathcal{H}_K$, the function $J_{2,\tau}\{r,s\}\vert_{\mathcal{H}_K}$ is rigid analytic and its associated harmonic function, $$\Phi_{2,\tau}\in \mathrm{MS}^\Gamma(C_{K,har}^1(P_n)),$$ can be described as follows: 

\begin{itemize}
\item for $e\in \mathcal{T}_{p}$, $$\Phi_{2,\tau}\{r,s\}(e) = 0,$$
\item for an edge $e\in \mathcal{T}_K \backslash \mathcal{T}_{p}$ which is pointing away from the standard vertex   $$\Phi_{2,\tau}\{r,s\}(e) =  \sum_{w\in \Sigma_\tau\{r,s\}\cap U_e}\left[(w,\overline{w})\cdot (r,s)\right].$$

\end{itemize}

\end{proposition}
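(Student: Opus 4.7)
The plan is to establish three assertions in order: that $J_{2,\tau}\{r,s\}|_{\mathcal{H}_K}$ is rigid analytic, that its annular residue on each edge of $\mathcal{T}_K\setminus\mathcal{T}_p$ pointing away from $v_0$ equals the stated finite sum, and that this residue vanishes on every edge of $\mathcal{T}_p$. The decisive inputs will be: the inclusion $\Gamma\cdot\tau\subset\mathbb{P}^1(K)\setminus\mathbb{P}^1(\mathbb{Q}_p)$ when $\tau$ is inert, an elementary annular residue computation for $1/(z-w)$, and the vanishing of $\mathrm{Deg}_{2,\tau}$ guaranteed by Lemma \ref{wt2degtriv}.

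For the first assertion, every $\gamma\in\Gamma$ has rational entries, so $\Gamma\cdot\tau\subset \mathbb{P}^1(\mathbb{Q}(\tau))\subset \mathbb{P}^1(K)$; hence the poles of $J_{2,\tau}\{r,s\}$ all lie on the boundary $\mathbb{P}^1(K)$ of $\mathcal{H}_K$ and disappear on restriction. The convergence of the sum on each affinoid $\mathcal{H}_K^{\leq l}$ is a consequence of Lemma \ref{finiteRMLemma}: only finitely many summands have their poles reducing into the bounded subtree $\mathcal{T}_p^{\leq l}$, and the remaining terms expand uniformly into a convergent power series on $\mathcal{H}_K^{\leq l}$.

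For the second assertion, fix an oriented edge $e\in \mathcal{T}_K\setminus \mathcal{T}_p$ pointing away from $v_0$. Since $\infty\in \mathbb{P}^1(\mathbb{Q}_p)$ is an end whose $\mathcal{T}_K$-path from $v_0$ lies entirely inside $\mathcal{T}_p$, it cannot traverse $e$; this gives $\infty\in U_{\bar e}$. A direct Laurent expansion of $1/(z-w)$ on the annulus $\red_K^{-1}(e)$ yields $\mathrm{res}_e\bigl(c\,dz/(z-w)\bigr)=c$ when $w\in U_e$ and $0$ when $w\in U_{\bar e}$. Any $w\in U_e$ satisfies $\red_p(w)=s(e)$, a single $\mathcal{T}_p$-vertex, so by Lemma \ref{finiteRMLemma} the intersection $\Sigma_\tau\{r,s\}\cap U_e$ is finite, and termwise summation gives the advertised formula.

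The third assertion is the crux. Working with the rational truncations
\[
J_{2,\tau}^{\leq l}\{r,s\}(z)\defeq \sum_{w\in \Sigma_\tau^{\leq l}\{r,s\}}\frac{(r,s)\cdot (w,\bar w)}{z-w},
\]
and taking into account the residue $-(r,s)\cdot (w,\bar w)$ at $\infty$ of each summand, the annular residue on an oriented edge $e\in \mathcal{T}_p$ pointing away from $v_0$ equals
\[
\mathrm{res}_e\bigl(J_{2,\tau}^{\leq l}\{r,s\}\bigr)=\sum_{w\in \Sigma_\tau^{\leq l}\cap U_e}(r,s)\cdot (w,\bar w)\;-\;\mathbf{1}[\infty\in U_e]\sum_{w\in \Sigma_\tau^{\leq l}}(r,s)\cdot (w,\bar w).
\]
For $e\in \mathcal{T}_p$, membership $w\in U_e$ is equivalent to $\red_p(w)$ lying on the far side of $e$ inside $\mathcal{T}_p$, since the Galois action on $\mathcal{T}_K$ fixes $\mathcal{T}_p$ pointwise and the $\mathcal{T}_p$-portion of the $\mathcal{T}_K$-path from $v_0$ to any $K$-end is determined by the $\red_p$-reduction. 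Both sums on the right therefore regroup as sums of $\mathrm{Deg}_{2,\tau}\{r,s\}(v)$ over $v$ ranging over appropriate subsets of $\mathcal{T}_p^{\leq l}$, and each term vanishes by Lemma \ref{wt2degtriv}. The same regrouping applied to the tail $J_{2,\tau}\{r,s\}-J_{2,\tau}^{\leq l}\{r,s\}$, combined with linearity of annular residues, shows that the residue of the tail on $e$ also vanishes, so $\Phi_{2,\tau}\{r,s\}(e)=0$. The main obstacle is the bookkeeping around the pole at $\infty$ on those $\mathcal{T}_p$-edges lying on the path from $v_0$ to $\infty$, where one must split into the cases $\infty\in U_e$ and $\infty\in U_{\bar e}$; in both cases the triviality of $\mathrm{Deg}_{2,\tau}$ forces the cancellation.
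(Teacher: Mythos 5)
Your proposal is correct and follows the same strategy as the paper's own proof: truncate the Mittag--Leffler sum to $J^{\leq l}_{2,\tau}$, compute annular residues term by term, invoke Lemma~\ref{finiteRMLemma} for finiteness on edges in $\mathcal{T}_K\setminus\mathcal{T}_p$, invoke Lemma~\ref{wt2degtriv} for vanishing on $\mathcal{T}_p$, and pass to the limit. You are somewhat more careful than the paper in explicitly tracking the pole at $\infty$ contributed by each summand $dz/(z-w)$; the paper simply asserts that only the poles $w\in U_e$ contribute, which is true precisely because the triviality of $\mathrm{Deg}_{2,\tau}$ also forces $\sum_{w\in\Sigma^{\leq l}_\tau\{r,s\}}(w,\overline w)\cdot(r,s)=0$, killing the $\infty$-term --- the cancellation you exhibit case by case. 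This extra bookkeeping does not change the substance, so the two arguments are essentially identical.
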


\begin{proof}

Define the rational function on $\mathcal{H}_K$ given by  $$J^{\leq d}_{2,\tau}\{r,s\} = \sum_{w\in \Sigma^{\leq d}_\tau \{r,s\}}(r,s)\cdot(w,\overline{w}) \frac{1}{z-w}.$$ Let $\Phi_{2,\tau}^{\leq{d}}\{r,s\}$ be the residue of the function $J_{2,\tau}^{\leq d} $.

Given an oriented edge $e\in \mathcal{T}_K^1$ of level $l$ and which is pointing away from $v_0$. For $d > l$, the only elements of the sum defining $J_{2,\tau}^{\leq d}$ which contribute to the residue at $e$ are the RM-points $$w \in \Sigma_{\tau}^{\leq d}\{r,s\}\cap U_e.$$ Lemma \ref{wt2degtriv} implies that if $e\in \mathcal{T}_{p}^1$, then the sum of those residues contributing at $e$ is $0$, and so $\Phi^{\leq d}_{2,\tau}\{r,s\}(e) = 0$. If instead $U_e \cap \mathbb{P}^1(\mathbb{Q}_p) = \emptyset$, then by Lemma \ref{finiteRMLemma} the set $\Sigma_{\tau}\{r,s\}\cap U_e$ is finite and the residue is a finite sum
$$\Phi_{2,\tau}^{\leq d}\{r,s\} = \sum_{w\in \Sigma^{\leq d}_\tau\{r,s\}\cap U_e}\left[(w,\overline{w})\cdot (r,s)\right].$$ 

 The sequence of functions $$\left(\Phi_{2,\tau}^{\leq{d}}\{r,s\}\right)_d$$ converge to $\Phi_{2,\tau}\{r,s\}$ as $d$ tends to $\infty$. Also, $J_{2,\tau}^{\leq d}\{r,s\}$ converges to $J_{2,\tau}\{r,s\}$; thus the residue of $J_{2,\tau}$ is equal to $\Phi_{2,\tau}$
\end{proof}
For an RM-divisor, $\mathcal{D} = \sum_i m_i [\tau_i],$ we will write $$J^\times_\mathcal{D} = \prod _i  \left(J_{\tau_i}^\times\right)^{m_i}.$$
Given a rigid meromorphic theta cohomology class in $H^1(\Gamma, \mathcal{M}^\times/ \mathbb{C}_p^\times)$, we consider lifting the class to a rigid meromorphic cohomology class in $H^1(\Gamma, \mathcal{M}^\times).$ It is not always possible to find such a lift. The RM-divisors for which such a lift exists are analogous to principal divisors on an algebraic curve.

\begin{definition}
Given a rigid meromorphic cohomology class $[\mathcal{J}] \in H^1(\Gamma, \mathcal{M}^\times)$, let $\mathcal{J}$ be a representative meromorphic cocycle $\mathcal{J}$. The value of $\mathcal{J}$ at an RM-point $\tau$ is defined by $$\mathcal{J}[\tau] = \mathcal{J}(\gamma_\tau)(\tau).$$ This value is independent of the choice of representative.
\end{definition}

\subsection{Higher Weight Cocycles}
We now generalize this construction of meromorphic cocycles to higher weight. In doing so, we will replace the weight-$2$ module with $\mathcal{O}(k,\mathcal{H}_K)$.

Fix an even weight $k > 2$.

\begin{definition} \label{def: rm edge function}
For every inert RM-divisor $\mathcal{D}$, define the function $\Phi_{k,\mathcal{D}}\{r,s\} \in C_K^1(P_n)$ as follows: For an edge $e\in \mathcal{T}^1_K\backslash \mathcal{T}^1_{p}$, such that $U_e\cap \mathbb{P}^1(\mathbb{Q}_p) =\emptyset$ let  $$\Phi_{k,\mathcal{D}}\{r,s\}(e) =  \sum_i\sum_{w\in \Sigma_{\tau_i}\{r,s\}\cap U_e}m_i\left[(w,\overline{w})\cdot (r,s)\right]\frac{(T-w)^{n/2}(T-\overline{w})^{n/2}}{(w-\overline{w})^{n/2}},$$ and for $e \in \mathcal{T}^1_{p}$:  $$\Phi_{k,\mathcal{D}}(e) =0 .$$

Otherwise, the function is defined by $\Phi_{k,\mathcal{D}}\{r,s\}(\overline{e}) = -\Phi_{k,\mathcal{D}}\{r,s\}(e)$.
\end{definition}

\begin{proposition}
    For an RM-divisor $\mathcal{D}$, the map $$\{r,s\} \mapsto \Phi_{k,\mathcal{D}}\{r,s\}$$ is a $\Gamma$-invariant modular symbol.
\end{proposition}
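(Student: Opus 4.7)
The plan is to mirror the structure of the proof of Proposition \ref{ModularSymbolproposition}, adapted from the divisor setting on $\mathcal{H}_p$ to the edge setting on the tree $\mathcal{T}_K$. There are three things to verify in order: that the displayed formula yields a well-defined element of $C_K^1(P_n)$ for each $\{r,s\}$, that the assignment $\{r,s\}\mapsto \Phi_{k,\mathcal{D}}\{r,s\}$ is additive on $\mathbb{D}_0$, and finally that it is $\Gamma$-invariant.

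For the first point, I would note that each edge $e\in\mathcal{T}_K^1\setminus\mathcal{T}_p^1$ admits exactly one orientation with $U_e\cap\mathbb{P}^1(\mathbb{Q}_p)=\emptyset$ (the one pointing away from $v_0$), while the other orientation has $U_e\supset\mathbb{P}^1(\mathbb{Q}_p)$. So specifying the value in the former case and then extending by the antisymmetry $c(\bar e)=-c(e)$ produces a consistent element of $C_K^1(P_n)$; edges of $\mathcal{T}_p^1$ are assigned $0$ on both orientations, also consistent. Finiteness of the sum defining $\Phi_{k,\mathcal{D}}\{r,s\}(e)$ on the relevant edges follows from Lemma \ref{finiteRMLemma}: since $U_e\cap\mathbb{P}^1(\mathbb{Q}_p)=\emptyset$, the image $\mathrm{red}_p(U_e)$ consists of a single vertex of $\mathcal{T}_p$ of bounded level $l$, so $\Sigma_{\tau_i}\{r,s\}\cap U_e\subset\Sigma_{\tau_i}^{\leq l}\{r,s\}$, which is finite.

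For the modular symbol property, I would observe (as in the proof of Proposition \ref{ModularSymbolproposition}) that for each fixed RM-point $w$ the map $\{r,s\}\mapsto (w,\overline{w})\cdot(r,s)$ is additive on $\mathbb{D}_0$. Taking the finite $\mathbb{Z}$-linear combination defining $\Phi_{k,\mathcal{D}}\{r,s\}(e)$ preserves this additivity, edge by edge.

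The final and main step is $\Gamma$-invariance, where I would compute $(\Phi_{k,\mathcal{D}}\cdot\gamma)\{r,s\}(e)=\Phi_{k,\mathcal{D}}\{\gamma r,\gamma s\}(\gamma e)\cdot\gamma$ by the same change of variables $w=\gamma w'$ that appears in the proof of Proposition \ref{ModularSymbolproposition}. The key inputs are: (i) the orbit $\Gamma\cdot\tau_i$ is $\Gamma$-stable; (ii) the map $e\mapsto U_e$ is $\Gamma$-equivariant, so $w\in U_{\gamma e}\iff\gamma^{-1}w\in U_e$; (iii) since $\Gamma\subset\mathrm{SL}_2(\mathbb{Q})$ acts on $\mathcal{H}$ and commutes with complex conjugation on $\mathbb{Q}(\tau_i)$, we have $\gamma\overline{w'}=\overline{\gamma w'}$ and the topological intersection pairing satisfies $(\gamma w',\gamma\overline{w'})\cdot(\gamma r,\gamma s)=(w',\overline{w'})\cdot(r,s)$; (iv) the polynomial factor transforms according to the computation already carried out for $\mathrm{Div}_{k,\mathcal{D}}$, namely
\[\frac{(T-w)^{n/2}(T-\overline{w})^{n/2}}{(w-\overline{w})^{n/2}}\,\Big|\,\gamma\;=\;\frac{(T-\gamma^{-1}w)^{n/2}(T-\gamma^{-1}\overline{w})^{n/2}}{(\gamma^{-1}w-\gamma^{-1}\overline{w})^{n/2}}.\]
After reindexing, both sides agree. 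The only place where something genuinely new has to be checked compared to Proposition \ref{ModularSymbolproposition} is the compatibility with the case distinction in Definition \ref{def: rm edge function}, i.e.\ that $\gamma$ preserves the partition $\mathcal{T}_K^1=\mathcal{T}_p^1\sqcup(\mathcal{T}_K^1\setminus\mathcal{T}_p^1)$. This holds because $\Gamma$ acts on $\mathbb{Q}_p$-lattices in $\mathbb{Q}_p^2$, hence the inclusion $i:\mathcal{T}_p\hookrightarrow\mathcal{T}_K$ is $\Gamma$-equivariant, and the subtree $\mathcal{T}_p$ is $\Gamma$-stable; in particular the value $0$ on $\mathcal{T}_p^1$ is $\Gamma$-equivariant. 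I expect the bookkeeping of the polynomial action under the change of variables to be the most error-prone part, but since the identity above has already been verified in Proposition \ref{ModularSymbolproposition}, the argument reduces to the reindexing calculation.
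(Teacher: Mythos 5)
Your proof is correct and follows the same approach as the paper, which simply states ``The proof is exactly the same as for Proposition \ref{ModularSymbolproposition}.'' You have fleshed out the details the paper leaves implicit---well-definedness via the orientation dichotomy on $\mathcal{T}_K^1\setminus\mathcal{T}_p^1$, finiteness via Lemma \ref{finiteRMLemma}, and the $\Gamma$-stability of the partition $\mathcal{T}_K^1=\mathcal{T}_p^1\sqcup(\mathcal{T}_K^1\setminus\mathcal{T}_p^1)$---all of which is consistent with the intended argument.
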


\begin{proof}
The proof is exactly the same as for proposition \ref{ModularSymbolproposition}.
\end{proof}
As opposed to the situation in weight $2$, these functions are not necessarily harmonic. From their definition, it follows that $\nabla \Phi_{k,\mathcal{D}}$ is supported on $\mathcal{T}^0_{p}$.

\begin{proposition}
$$\nabla (\Phi_{k,\mathcal{D}}) = \mathrm{Deg}_{k,\mathcal{D}}.$$ 
\end{proposition}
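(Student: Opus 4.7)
The strategy is a direct edge-by-edge verification, reducing to a combinatorial statement about the embedding $\mathcal{T}_p \hookrightarrow \mathcal{T}_K$ and its relation to the reduction map on inert RM-points. I first verify vanishing on $\mathcal{T}_K^0 \setminus \mathcal{T}_p^0$ (the assertion preceding the proposition), and then match values on $\mathcal{T}_p^0$ with $\mathrm{Deg}_{k,\mathcal{D}}$.

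\emph{Step 1 (vanishing off $\mathcal{T}_p^0$).} Fix $v \in \mathcal{T}_K^0 \setminus \mathcal{T}_p^0$ and let $e^- = (v,v')$ be the unique edge from $v$ pointing towards $v_0$. Every edge $e$ with $s(e)=v$ lies in $\mathcal{T}_K^1 \setminus \mathcal{T}_p^1$ since $v \notin \mathcal{T}_p^0$, and for the edges $e \neq e^-$, $e$ points away from $v_0$ with $U_e \cap \mathbb{P}^1(\mathbb{Q}_p) = \emptyset$ (because the convex subtree $\mathcal{T}_p$ cannot be re-entered). Using that the disks $\{U_{(v,u)}\}_{u \neq v'}$ partition $U_{\overline{e^-}} = U_{(v',v)}$, one obtains
$$\sum_{e \neq e^-,\, s(e)=v} \Phi_{k,\mathcal{D}}\{r,s\}(e) \;=\; \Phi_{k,\mathcal{D}}\{r,s\}(\overline{e^-}),$$
while the antisymmetry $\Phi_{k,\mathcal{D}}\{r,s\}(e^-) = -\Phi_{k,\mathcal{D}}\{r,s\}(\overline{e^-})$ cancels this contribution. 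Hence $\nabla \Phi_{k,\mathcal{D}}\{r,s\}(v) = 0$.

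\emph{Step 2 (matching $\mathrm{Deg}_{k,\mathcal{D}}$ on $\mathcal{T}_p^0$).} Fix $v \in \mathcal{T}_p^0$. Among the $p^2+1$ edges leaving $v$ in $\mathcal{T}_K$, the $p+1$ edges lying in $\mathcal{T}_p^1$ contribute $0$ by definition of $\Phi_{k,\mathcal{D}}$. The remaining $p^2 - p$ edges satisfy $t(e) \in \mathcal{T}_K^0 \setminus \mathcal{T}_p^0$; they point away from $v_0$, and $U_e \cap \mathbb{P}^1(\mathbb{Q}_p) = \emptyset$ since any non-backtracking path through such an $e$ exits $\mathcal{T}_p$ permanently. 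Therefore
$$\nabla\Phi_{k,\mathcal{D}}\{r,s\}(v) \;=\; \sum_{\substack{s(e)=v \\ t(e)\notin\mathcal{T}_p^0}} \sum_{i} \sum_{w \in \Sigma_{\tau_i}\{r,s\} \cap U_e} m_i \left[(w,\overline{w})\cdot(r,s)\right] \frac{(T-w)^{n/2}(T-\overline{w})^{n/2}}{(w-\overline{w})^{n/2}}.$$

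\emph{Step 3 (reduction identifies orbits).} The key geometric input is the identification: for an inert RM-point $w$ (whose $\mathcal{T}_K$-end is a non-backtracking path $v_0 \to v_1 \to \cdots$ in $\mathcal{T}_K$), $\mathrm{red}_p(w)$ equals the last vertex along this path that lies in $\mathcal{T}_p^0$. This follows because if $\mathrm{disc}(w) = D_0 p^{2l}$ with $p \nmid D_0$, then on the one hand the $\mathcal{T}_p$-reduction of $w$ is a vertex of level $l$, and on the other hand $w \bmod \pi_K^j$ lies in $\mathbb{Z}_p/p^j\mathbb{Z}_p$ precisely for $j \leq l$, so the path stays in $\mathcal{T}_p$ up to level $l$ and leaves at step $l+1$. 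Consequently, the disks $U_e$ for $e$ ranging over the edges in $\mathcal{T}_K^1 \setminus \mathcal{T}_p^1$ with $s(e)=v$ partition
$$\{w \in \mathbb{P}^1(K)\setminus\mathbb{P}^1(\mathbb{Q}_p) : \mathrm{red}_p(w) = v\},$$
and since the $(w,\overline{w})\cdot(r,s)$ factor vanishes on $\Gamma\cdot\tau_i \setminus \Sigma_{\tau_i}\{r,s\}$, the double sum in Step~2 reassembles into
$$\sum_{i} \sum_{\substack{w \in \Gamma\cdot\tau_i \\ \mathrm{red}_p(w)=v}} m_i \left[(w,\overline{w})\cdot(r,s)\right] \frac{(T-w)^{n/2}(T-\overline{w})^{n/2}}{(w-\overline{w})^{n/2}} \;=\; \mathrm{Deg}_{k,\mathcal{D}}\{r,s\}(v).$$

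\emph{Main obstacle.} The routine combinatorics of Step~1 and the polynomial factor bookkeeping are immediate; the substantive point is the identification in Step~3 of $\mathrm{red}_p(w)$ with the point of departure of $w$'s geodesic from $\mathcal{T}_p$. This is the one place where the specific arithmetic of inert RM-points (the valuation of the discriminant) enters, and it is what ensures the geometric disks $U_e$ correctly enumerate the RM-orbit points reducing to $v$.
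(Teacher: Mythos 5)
Your proof is correct and takes essentially the same approach as the paper's: split into vertices on and off $\mathcal{T}_p^0$, use the disk partition $U_{\overline{e^-}} = \bigcup_{e \neq e^-,\, s(e)=v} U_e$ to see that $\nabla\Phi_{k,\mathcal{D}}$ vanishes off $\mathcal{T}_p^0$, and at $v\in\mathcal{T}_p^0$ reassemble the sum over the $p^2-p$ edges leaving $\mathcal{T}_p$ at $v$ into $\mathrm{Deg}_{k,\mathcal{D}}(v)$ via the identification $\mathrm{red}_p(w)=v$ iff $w\in U_{e_i}$ for some such edge. Your Step 3 fills in a justification for that reduction identification (tying it to the factorization of the discriminant and the departure point of $w$'s $\mathcal{T}_K$-path from $\mathcal{T}_p$), which the paper asserts without elaboration, but the underlying argument is the same.
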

\begin{proof}
For any vertex $v\in \mathcal{T}_K^0\backslash \mathcal{T}_p^0$, there is a unique edge, $e_{v,0}$, which starts at $v$ and points towards the standard vertex and the remaining edges, $e_{v,1},\cdots, e_{v,p^2}$ that start at $v$ satisfy $U_{e_i}\cap \mathbb{P}^1(\mathbb{Q}_p) = \emptyset$, and similarly $U_{\overline{e_{v,0}}} \cap \mathbb{P}^1(\mathbb{Q}_p) = \emptyset$. Since $$U_{\overline{e_{v,0}}} = \bigcup_{i=1}^{p^2}U_{e_i},$$ we see that $$\nabla(\Phi_{k,\mathcal{D}})(v) = 0.$$

Now if $v\in \mathcal{T}^0_p$, then there are exactly $p^2-p$ edges $e_1,\cdots, e_{p^2-p}$ starting at $v$ and satisfying $U_{e_i}\cap \mathbb{P}^1(\mathbb{Q}_p) = \emptyset $. The result follows since an RM-point, $\tau \in \mathcal{H}_p$, reduces to $v$: $$\mathrm{red}_{p}(\tau) =v $$ if and only if $v\in U_{e_i}$ for some $i$.

\end{proof}

\begin{proposition}

    For every inert RM-divisor $\mathcal{D}$ and pair $r,s$, the function $\Phi_{k,\mathcal{D}}\{r,s\}$ is integral; i.e. $$\Phi_{k,\mathcal{D}}\{r,s\}\in C^{1,int}_K(P_n(\mathbb{C}_p)).$$
\end{proposition}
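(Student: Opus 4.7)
The plan is first to unpack the assertion. By the finiteness lemma (Lemma~\ref{finiteRMLemma}, together with the observation that $U_e\cap\Sigma_{\tau_i}\{r,s\}$ is finite whenever $U_e\cap\mathbb{P}^1(\mathbb{Q}_p)=\emptyset$), the value $\Phi_{k,\mathcal{D}}\{r,s\}(e)$ vanishes identically on $e\in\mathcal{T}_p^1$ and on every $e\in\mathcal{T}_K^1\setminus\mathcal{T}_p^1$ is a finite $\mathbb{Z}$-linear combination of polynomials
\[
P_w(T) := \frac{(T-w)^{n/2}(T-\bar w)^{n/2}}{(w-\bar w)^{n/2}} = \frac{Q_w(T,1)^{n/2}}{D_w^{n/4}},
\]
where $Q_w = aX^2+bXY+cY^2$ is the primitive integral binary quadratic form of $w$ and $D_w = D_0\, p^{2l_0}$ with $\gcd(D_0,p)=1$. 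Since $p$ is inert in $\mathbb{Q}(\sqrt{D_0})$, $\sqrt{D_0}\in\mathcal{O}_K^\times$ and $D_w^{n/4}$ has $p$-adic valuation exactly $nl_0/2$ with unit part in $\mathcal{O}_K^\times$. Hence $P_w\in P_{n,v}$ is equivalent to: the coefficients of $Q_w^{n/2}$, in a basis dual to a basis of the lattice at $v$, have $p$-valuation $\ge nl_0/2$. It therefore suffices to prove this for each $w\in U_e$ individually.

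Next I normalize. Using the swap $\sigma\in\mathrm{SL}_2(\mathbb{Z})\subset\Gamma$ sending $T$ to $-1/T$, a short computation shows that $P_w\mid\sigma = P_{\sigma^{-1}w}$ for the weight $2-k$ right action, so by $\Gamma$-equivariance of the integral structure I may assume $|w|\le 1$, i.e.\ $w\in\mathcal{O}_K$. Then $|w+\bar w|, |w\bar w|\le 1$ and primitivity of $Q_w$ force $v_p(a)=0$, so $v_p(w-\bar w) = v_p(\sqrt{D_w}/a) = l_0$. The hypothesis $e\notin\mathcal{T}_p^1$, combined with the fact that the path from $v_0$ to the end $w$ in $\mathcal{T}_K$ stays in $\mathcal{T}_p$ for exactly its first $l_0$ steps before entering $\mathcal{T}_K\setminus\mathcal{T}_p$, forces both endpoints $v_i$ of $e$ to lie at distances $d_i\ge l_0$ along this path. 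Each such $v_i$ is represented by the lattice $L_{d_i} = \{(x,y)\in\mathcal{O}_K^2 : x\equiv wy\pmod{p^{d_i}}\}$, with basis $f_1=(w,1)$ and $f_2=(p^{d_i},0)$.

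The crucial step is a one-line polarization in this basis. From $Q_w(f_1)=aw^2+bw+c=0$, $Q_w(f_2)=ap^{2d_i}$, and $Q_w(f_1+f_2)=ap^{d_i}(w-\bar w+p^{d_i})$, polarization yields
\[
Q_w(Xf_1+Yf_2) \;=\; a\,p^{d_i}(w-\bar w)\,XY \;+\; a\,p^{2d_i}\,Y^2,
\]
whose three coefficients have $p$-valuations $\infty$, $d_i+l_0$, and $2d_i$, each $\ge 2l_0$ because $d_i\ge l_0$. Consequently every coefficient of $Q_w^{n/2}$ in the basis dual to $(f_1,f_2)$ has $p$-valuation $\ge (n/2)(d_i+l_0)\ge nl_0$, and dividing by $D_w^{n/4}$ of valuation $nl_0/2$ leaves $P_w$ with coefficients in $\mathcal{O}_K$ in that basis. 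Hence $P_w\in P_{n,v_i}$ for $i=1,2$, and thus $P_w\in P_{n,e}$. The main obstacle is the bookkeeping: verifying that the explicit lattice bases above really correspond to the definition $P_{n,gv_0}=P_{n,v_0}\mid g^{-1}$ under the weighted right $\mathrm{GL}_2(K)$-action on $P_n$, and checking that the $\sigma$-reduction preserves integrality at $e$; once those conventions are pinned down, the valuation estimate above completes the proof.
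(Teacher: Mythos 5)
Your conclusion is correct, but the route differs from the paper's, and your translation of the integrality condition misses a normalization factor. The paper's proof is compact: it exhibits the matrix $M_a=\mathrm{diag}(p^a,1)\bigl(\begin{smallmatrix}1 & -\bar w\\ 1 & -w\end{smallmatrix}\bigr)$, notes that $M_a$ maps the $a$-th vertex on the path from $\mathrm{red}_p(w)$ toward $w$ onto $v_0$, and that $T^{n/2}\mid M_a=(-1)^{n/2}P_w$ where $P_w=(T-w)^{n/2}(T-\bar w)^{n/2}/(w-\bar w)^{n/2}$; since $T^{n/2}$ has integer coefficients, the claim is immediate with no valuation estimates at all. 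You carry out the same pull-back-to-$v_0$ idea, but with the less felicitous matrix $g=\bigl(\begin{smallmatrix}w & p^d\\ 1 & 0\end{smallmatrix}\bigr)$ coming from the explicit lattice basis, so you must then polarize $Q_w$ in that basis and track valuations. You also need the $|w|\le1$ normalization via $S$ and the primitivity fact $v_p(a)=0$, neither of which the paper's choice requires. What your version buys is concreteness: the explicit lattice picture and polarization make the $p$-adic geometry visible, which may be useful elsewhere, but it is genuinely more work than the paper's one-line computation.

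The point to tighten: you assert that $P_w\in P_{n,v}$ is equivalent to the coefficients of $Q_w^{n/2}$ in the dual lattice basis having valuation $\ge nl_0/2$. This drops the $\det(g)^{-n/2}$ twist built into the weight $2-k$ right action that defines $P_{n,g\cdot v_0}=P_{n,v_0}\vert g^{-1}$. With your $g$, $\det g=-p^d$, so the twist raises the required threshold to $(n/2)(d+l_0)$. Your polarization bound produces exactly $(n/2)(d+l_0)$, so the conclusion is still correct --- but with no slack at all, contrary to what the chain $(n/2)(d+l_0)\ge nl_0$ followed by ``subtract $nl_0/2$'' suggests. You flagged the bookkeeping of the weighted action as the remaining obstacle; this determinant twist is exactly what needs pinning down, and once it is, your estimate is tight rather than generous.
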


\begin{proof}
    Let $w$ be an inert RM-point, and $e\in \mathcal{T}^1_K$ an edge such that $U_e\cap \mathbb{P}^1(\mathbb{Q}_p) = \emptyset$ and $w\in U_e$. It suffices to show that $$\frac{(T-w)^{n/2}(T-\overline{w})^{n/2}}{(w-\overline{w})^{n/2}} \in P_{n,e}.$$ 
    Let $v_{w}\in \mathcal{T}^0_{p}$ be the reduction of $w$ in $\mathcal{T}_{p}$. Then $e$ lies on the infinite path $(v_w, v_{w,1}, v_{w,2} ,\cdots)$ in $\mathcal{T}_{K}$ which begins at $v_w$ and ends at $w$. We will show that for every vertex $v_{w,a}$, we have $$\frac{(T-w)^{n/2}(T-\overline{w})^{n/2}}{(w-\overline{w})^{n/2}} \in P_{n,v_{w,a}}.$$ 
    First, consider the matrix $\begin{pmatrix} 1&-\overline{w} \\ 1 & -w \end{pmatrix}$; this matrix sends $w\mapsto \infty$, $\overline{w}\mapsto 0$ and $v_w \mapsto v_0$. Moreover, $$T^{n/2}|\begin{pmatrix} 1&-\overline{w} \\ 1 & -w \end{pmatrix} = (-1)^{n/2}\frac{(T-w)^{n/2}(T-\overline{w})^{n/2}}{(w-\overline{w})^{n/2}}.$$ But then, the product $\begin{pmatrix} p^a&0\\0&1  \end{pmatrix}\cdot \begin{pmatrix} 1&-\overline{w} \\ 1 & -w \end{pmatrix} $ sends $v_{w,a}$ to $v_0$ and maps $$T^{n/2}\mapsto (-1)^{n/2}\frac{(T-w)^{n/2}(T-\overline{w})^{n/2}}{(w-\overline{w})^{n/2}}.$$
\end{proof}

\begin{definition}\label{def: deg 0 weight k cocycle}
If $\mathrm{Deg}_{k,\mathcal{D}}$ vanishes, then we say that $\mathcal{D}$ is of strong degree zero. In this case, $\Phi_{k,\mathcal{D}}$ defines a modular symbol valued in bounded harmonic measures and applying the Schneider-Teitelbaum lift, we obtain a modular symbol $$J_{k,\mathcal{D}}\defeq \mathrm{ST}(\Phi_{k,\mathcal{D}})\in\mathrm{MS}^{\Gamma}(\mathcal{O}(k,\mathcal{H}_K)).$$ In fact, $J_{k,\mathcal{D}}$ belongs to $\mathrm{MS}^\Gamma(\mathcal{M}(k; K))$.
\end{definition}

\begin{proposition} \label{weightkexpproposition}
    Let $\mathcal{D} = \sum_i m_i [\tau_i]$ be an RM-divisor of strong degree zero. Then
    $$J_{k,\mathcal{D}}\{r,s\}(z) = \sum_i\sum_{w\in \Sigma_{\tau_i}\{r,s\}}m_i \left[(w,\overline{w})\cdot (r,s)\right] d^{n+1}\left(\frac{(z-w)^{n/2}(z-\overline{w})^{n/2}}{(w-\overline{w})^{n/2}} \log(z-w)\right).$$
\end{proposition}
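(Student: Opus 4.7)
The strategy is to exploit the definition $J_{k,\mathcal{D}}\{r,s\} = \mathrm{ST}(\Phi_{k,\mathcal{D}}\{r,s\})$ and compute the Schneider--Teitelbaum lift explicitly by identifying $\Phi_{k,\mathcal{D}}\{r,s\}$ as the measure attached to a specific polynomial-valued divisor on $\mathcal{H}_p$, then applying Lemma \ref{STFin} term by term.

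Concretely, I would view $\Phi_{k,\mathcal{D}}\{r,s\}$ as a $P_n(\mathbb{C}_p)$-valued measure $\mu$ on $\mathbb{P}^1(K)$ via $\mu(U_e) = \Phi_{k,\mathcal{D}}\{r,s\}(e)$ and identify it with the measure attached to the locally finite divisor
$$\mathcal{D}'\{r,s\} \defeq \sum_i \sum_{w\in \Sigma_{\tau_i}\{r,s\}} m_i\,\bigl[(w,\overline{w})\cdot(r,s)\bigr]\, (w)\otimes \frac{(T-w)^{n/2}(T-\overline{w})^{n/2}}{(w-\overline{w})^{n/2}}.$$
Indeed, for any edge $e$ deep enough that $U_e$ isolates a single RM-point $w$, inspection of Definition \ref{def: rm edge function} shows that $\Phi_{k,\mathcal{D}}\{r,s\}(e)$ is exactly the polynomial coefficient attached to $w$ in $\mathcal{D}'\{r,s\}$. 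Lemma \ref{finiteRMLemma} guarantees local finiteness on each affinoid $\mathcal{H}_p^{\leq l}$, and the strong degree zero hypothesis $\mathrm{Deg}_{k,\mathcal{D}} = 0$ ensures that $\mu$ has total mass zero so that its Schneider--Teitelbaum lift exists. Truncating to the finite sub-divisor supported on $\Sigma_{\tau_i}^{\leq l}\{r,s\}$ allows a direct application of Lemma \ref{STFin}, and Lemma \ref{explInteglemma} then identifies the Schneider--Teitelbaum lift of a single weighted atom at $w$ with the rational function $d^{n+1}\bigl(P_w(z)\log(z-w)\bigr)$, yielding the claimed formula at each finite level.

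The main obstacle is the passage to the limit $l\to\infty$: although Lemma \ref{STFin} is stated for a finite divisor, the full sum over $\Sigma_{\tau_i}\{r,s\}$ is infinite and must be controlled in the topology of $\mathcal{M}(k;K)$. Convergence on each affinoid $\mathcal{H}_p^{\leq l}$ follows from the integrality of $\Phi_{k,\mathcal{D}}\{r,s\}$ in $C^{1,\mathrm{int}}_K(P_n)$ established in the preceding proposition, which supplies the uniform bounds on the tails of the sum needed to control the Schneider--Teitelbaum integral. Up to the normalization factor of $1/n!$ appearing in Lemma \ref{STFin} (absorbed into the choice of $P_w$ in the statement), the resulting expression is exactly the right-hand side of the proposition.
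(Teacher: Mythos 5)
Your proof takes essentially the same route as the paper's: truncate $\Phi_{k,\mathcal{D}}\{r,s\}$ to the finite piece supported on $\Sigma_{\tau_i}^{\leq l}\{r,s\}$, apply Lemma \ref{STFin}, and pass to the limit using continuity of the Schneider--Teitelbaum map (the paper cites boundedness where you cite integrality; these amount to the same thing, since $C^{1,b}_{K,har}(P_n)$ is by definition the base change of $C^{1,\mathrm{int}}_{K,har}(P_n)$). One small correction: you are right to flag the $1/n!$ appearing in Lemma \ref{STFin} --- the paper's own proof silently drops it --- but your suggestion that it is ``absorbed into the choice of $P_w$'' is not tenable, since $P_w = (T-w)^{n/2}(T-\overline{w})^{n/2}/(w-\overline{w})^{n/2}$ is pinned down by Definition \ref{def: rm edge function}; the factor is a genuine normalization discrepancy between Lemma \ref{STFin} and the statement of Proposition \ref{weightkexpproposition}.
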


\begin{proof}
We can truncate $\Phi_{k,\mathcal{D}}$ by $$\Phi^{\leq l}_{k,\mathcal{D}}\{r,s\}(e) =  \sum_i\sum_{w\in \Sigma^{\leq l}_{\tau_i}\{r,s\}\cap U_e}m_i\left[(w,\overline{w})\cdot (r,s)\right]\frac{(T-w)^{n/2}(T-\overline{w})^{n/2}}{(w-\overline{w})^{n/2}}.$$
The boundedness of $\Phi_{k,\mathcal{D}}$ implies that $$\lim_{l \to \infty} \Phi_{k,\mathcal{D}}^{\leq l}\{r,s\}  = \Phi_{k,\mathcal{D}}\{r,s\}.$$ By  Lemma \ref{STFin} we have $$\mathrm{ST}\left(\Phi_{k,\mathcal{D}}^{\leq l}\{r,s\}\right) = \sum_i\sum_{w\in \Sigma_{\tau_i}^{\leq l}\{r,s\}}m_i \left[(w,\overline{w})\cdot (r,s)\right] d^{n+1}\left(\frac{(z-w)^{n/2}(z-\overline{w})^{n/2}}{(w-\overline{w})^{n/2}} \log(z-w)\right),$$
but the Schneider-Teitelbaum lift is a topological isomorphism, and so the limit of the sum above converges and equals $\mathrm{ST}(\Phi_{k,\mathcal{D}})$ as $l$ tends to $\infty$.

\end{proof}

\begin{remark}
If we take the divisor $\mathcal{D} = [\tau] + (-1)^{n/2} [\overline{\tau}]$, then $\mathcal{D}$ is of strong degree zero since for every RM-point, $\mathrm{red}_{p}(w) = \mathrm{red}_{p}(\overline{w})$. It follows from the identity $$d^{n+1}\left(\frac{(z-w)^{n/2}(z-\overline{w})^{n/2}}{(w-\overline{w})^{n/2}}\log\left(\frac{z-w}{z-\overline{w}}\right) \right) = (n!)^2\frac{(w-\overline{w})^{k/2}}{(z-w)^{k/2}(z-\overline{w})^{k/2}}$$ that $J_{k,\mathcal{D}}$ is equal to the meromorphic symbols denoted by $\Phi_\tau$ in Negrini's thesis \cite{NegThesis} and in \cite{Neg1}, Section 1.
\end{remark}

\begin{theorem}\label{classificationtheorem}
\begin{enumerate}
\item Let $J\in\mathrm{MS}^\Gamma(\mathcal{M}(k,\mathcal{H}_p))$ be a weight $k$ rigid meromorphic modular symbol, then for each $r,s$ the divisor of $J\{r,s\}$ is supported on (not necessarily inert) RM-points, and the residues $$\mathrm{res}_w J\{r,s\}(z)(T-z)^n \; dz$$ are multiples of $$\frac{(T-w)^{n/2}(T-\overline{w})^{n/2}}{(w-\overline{w})^{n/2}}.$$ If the divisor is supported on inert RM-points then $\mathrm{Div}(J) = \mathrm{Div}_{k,\mathcal{D}}$ for some RM-divisor $\mathcal{D}$.
\item For any inert RM-divisor $\mathcal{D}$, there exists a rigid meromorphic modular symbol $J\in\mathrm{MS}^\Gamma(\mathcal{M}(k;K))$ such that $$\mathrm{Div}(J) = \mathrm{Div}_{k,\mathcal{D}}.$$ 
\end{enumerate}
\end{theorem}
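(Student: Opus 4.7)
For part (1), my plan is to analyze each pole $w$ of $J\{r,s\}$ via the local Laurent expansion at $w$ and the $\Gamma$-invariance of $J$. The residue $P_w(T) := \mathrm{res}_w\bigl(J\{r,s\}(z)(T-z)^n\,dz\bigr) \in P_n$ transforms $\Gamma$-equivariantly as $\{r,s\}$ and $w$ vary. Since $J\{r,s\}$ is rigid meromorphic, each affinoid $\mathcal{H}_p^{\leq l}$ contains only finitely many poles; combined with the $\Gamma$-action, this constrains each $w$ to have a nontrivial $\Gamma$-stabilizer, and an orbit analysis mirroring the weight $2$ argument of Darmon--Vonk identifies $w$ as an RM-point whose stabilizer is generated by a hyperbolic $\gamma_w \in \Gamma$ with eigenvalue $\lambda$. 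For such $w$, the eigenspace calculation $(T-w)^i(T-\overline{w})^{n-i}\vert\gamma_w = \lambda^{n-2i}(T-w)^i(T-\overline{w})^{n-i}$ shows that the $\gamma_w$-fixed subspace of $P_n$ is one-dimensional and spanned by $(T-w)^{n/2}(T-\overline{w})^{n/2}/(w-\overline{w})^{n/2}$. The $\gamma_w$-equivariance of $P_w$ combined with the integrality of the residue (in the lattice $P_{n,v}$ preserved by $\gamma_w$) and the boundedness inherited from the Schneider--Teitelbaum picture forces $P_w$ into this one-dimensional subspace; the resulting scalar coefficient at each orbit representative assembles into the RM-divisor $\mathcal{D}$ with $\mathrm{Div}(J) = \mathrm{Div}_{k,\mathcal{D}}$.

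For part (2), the strong-degree-zero case is immediate from Definition \ref{def: deg 0 weight k cocycle} and Proposition \ref{weightkexpproposition}. For a general RM-divisor, the obstacle is that $\nabla\Phi_{k,\mathcal{D}} = \mathrm{Deg}_{k,\mathcal{D}}$ need not vanish, so $\Phi_{k,\mathcal{D}}$ is not harmonic and does not admit a direct Schneider--Teitelbaum lift. My plan is to produce a $\Gamma$-invariant bounded modular symbol $\psi \in \mathrm{MS}^\Gamma(C^1_K(P_n))$ supported only on the ``fringe'' edges of $\mathcal{T}_K^1 \setminus \mathcal{T}_p^1$ incident to vertices of $\mathcal{T}_p^0$, satisfying $\nabla\psi = -\mathrm{Deg}_{k,\mathcal{D}}$. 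Then $\Phi_{k,\mathcal{D}} + \psi$ is $\Gamma$-invariant, harmonic on $\mathcal{T}_K$, bounded, stabilizes to zero on every path contained in $\mathcal{T}_p$, and retains the correct stabilization polynomial on each path to an inert RM-orbit (since $\psi$ contributes only at the fringe and vanishes further out). Its Schneider--Teitelbaum lift, extended to $\mathcal{H}_p$ via Lemma \ref{merextLem}, produces the desired $J \in \mathrm{MS}^\Gamma(\mathcal{M}(k;K))$ with $\mathrm{Div}(J) = \mathrm{Div}_{k,\mathcal{D}}$.

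The main technical obstacle is the construction of the $\Gamma$-invariant correction $\psi$. A non-equivariant lift exists trivially, since at each $v \in \mathcal{T}_p^0$ there are $p^2 - p$ fringe edges available to distribute $-\mathrm{Deg}_{k,\mathcal{D}}(v)$. Upgrading to a $\Gamma$-invariant modular symbol reduces to the vanishing of a cohomology class, which by Proposition \ref{obstructionIso} and Lemma \ref{HidaH2van} is controlled by $H^2(\Gamma,P_n) = 0$. Boundedness of $\psi$ should follow from the integrality of $\mathrm{Deg}_{k,\mathcal{D}}$ together with the uniform count of $p^2 - p$ fringe edges per vertex; ensuring that the chosen fringe distribution respects the $\Gamma$-action on $\mathcal{T}_p^0$ coherently is where I expect the most delicate bookkeeping to lie.
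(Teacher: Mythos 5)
For part (1), you sketch an argument from scratch via stabilizer and eigenspace analysis, whereas the paper cites Choie--Zagier Lemma 5 as presented in \cite{DV1} Lemma 1.19. Your sketch captures the right underlying ideas (a nontrivial hyperbolic stabilizer forces an RM-point, and the eigenvalue-one eigenspace of $\gamma_w$ acting on $P_n$ is one-dimensional, spanned by $(T-w)^{n/2}(T-\overline{w})^{n/2}$), but leaves the key steps --- why each pole $w$ must have a nontrivial stabilizer in $\Gamma$, and why the residue polynomial must land in the $\gamma_w$-fixed subspace --- as unfilled claims; these are precisely what the cited lemma supplies, so your route is in the same spirit but substantially less complete.

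For part (2), there is a genuine gap in where you place the correction. You propose a $\psi$ supported on the fringe edges $e=(v,w)$ with $v\in\mathcal{T}_p^0$ and $w\in\mathcal{T}_K^0\backslash\mathcal{T}_p^0$, but then at the far vertex $w$ one has $\nabla\psi(w)=\psi(\overline{e})=-\psi(e)\neq 0$ (all other edges at $w$ carry $\psi=0$), so $\Phi_{k,\mathcal{D}}+\psi$ fails to be harmonic at $w$, where $\Phi_{k,\mathcal{D}}$ alone was already harmonic. Propagating the correction further outward to repair this would destroy the stabilization-to-zero along paths in $\mathcal{T}_K\backslash\mathcal{T}_p$ that Lemma \ref{merextLem} needs for the meromorphic extension. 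The paper instead takes $\chi\in\mathrm{MS}^\Gamma(C_p^1(P_n))$, supported on $\mathcal{T}_p^1$; since $\Phi_{k,\mathcal{D}}$ vanishes on $\mathcal{T}_p^1$ and $\chi$ vanishes on $\mathcal{T}_K^1\backslash\mathcal{T}_p^1$, the two have disjoint support, the harmonicity condition at $\mathcal{T}_p^0$ becomes exactly $\nabla\chi=-\mathrm{Deg}_{k,\mathcal{D}}$, and no new failures of harmonicity arise elsewhere. The existence of a $\Gamma$-invariant such $\chi$ follows from surjectivity of $\nabla:\mathrm{MS}^\Gamma(C_p^1(P_n))\longrightarrow\mathrm{MS}^\Gamma(C_p^0(P_n))$, identified via Shapiro's lemma with the corestriction $\mathrm{MS}^{\Gamma_0(p)}(P_n)\longrightarrow(\mathrm{MS}^{\mathrm{SL}_2(\mathbb{Z})}(P_n))^2$. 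Finally, your appeal to $H^2(\Gamma,P_n)=0$ is incorrect: by Lemma \ref{HidaH2van}, $H^2(\Gamma,P_n)\cong H^1(\Gamma,\mathcal{O}(k,\mathcal{H}_p))$, which is nonzero whenever $p$-new cusp forms exist; the vanishing result in the paper concerns congruence subgroups $\Gamma_0$, not the Ihara group $\Gamma=\mathrm{SL}_2(\mathbb{Z}[1/p])$.
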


\begin{proof}
The first part follows from Choie-Zagier \cite{CZ} Lemma 5 as presented in \cite{DV1} Lemma 1.19.

For the second part, we would like to find a harmonic function in $C^1_{K,har}(P_n)$ associated to $\mathcal{D}$. Note that $\Phi_{k,\mathcal{D}}$ is automatically harmonic at every vertex in $v\in \mathcal{T}_K^0\backslash \mathcal{T}_p^0$. We only need to ``fix'' its harmonicity on $\mathcal{T}_p^0$.
It suffices to find some $\chi \in \mathrm{MS}^{\Gamma}(C_p^1(P_n))$ such that $\Phi_{k,\mathcal{D}} + \chi $ is harmonic. That is, $$\nabla\chi\{r,s\} = -\mathrm{Div}_{k,\mathcal{D}}\{r,s\}.$$
By definition, there exists such a $\chi$ if and only if $\mathrm{Div}_{k,\mathcal{D}}$ is in the image of $$MS^\Gamma(C_{p}^1(P_n(\mathbb{C}_p))\xrightarrow{\nabla} MS^\Gamma(C_{p}^0(P_n(\mathbb{C}_p)).$$ But note that $\nabla$ in fact surjective. This can be seen by identifying it with the co-restriction map $$\mathrm{MS}^{\Gamma_0(p)}\left(P_n(\mathbb{C}_p)\right)\longrightarrow  \left(\mathrm{MS}^{\mathrm{SL}_2(\mathbb{Z})}\left(P_n(\mathbb{C}_p)\right)\right)^2,$$
which is known to be surjective.

By the Schneider-Teitelbaum lift we can now define a modular symbol $$J_{\mathcal{D}}:= \mathrm{ST}(\Phi_{k,\mathcal{D}} + \chi)\in \mathrm{MS}^\Gamma(\mathcal{O}(k,\mathcal{H}_K)),$$ and by Lemma \ref{merextLem} the functions $J_{\mathcal{D}}\{r,s\}$ extend uniquely to meromorphic functions on $\mathcal{H}_p$, and therefore $J_{\mathcal{D}}$ extends to a modular symbol in $\mathrm{MS}^\Gamma(\mathcal{M}(k,\mathcal{H}_p))$.
\end{proof}

\begin{remark}
The same idea can also be used to treat ramified RM-points as well, although the combinatorics of the Bruhat--Tits tree are slightly different. This treatment will be presented in the author's forthcoming thesis.
\end{remark}

\section{Obstruction to lifting and the Green's function}
In the multiplicative setting there is an exact sequence $$H^1(\Gamma, \mathcal{M}^\times)\longrightarrow H^1(\Gamma, \mathcal{M}^\times/\mathbb{C}_p^\times) \longrightarrow H^2(\Gamma, \mathbb{C}_p^\times), $$ where the map to $H^2(\Gamma, \mathbb{C}_p^\times)$ is the obstruction to lifting a theta cocycle class to a meromorphic cocycle class. In higher weight, we work instead with the additive counterpart to this obstruction map. That is, we further compose with a homomorphism, $$\mathbb{C}_p^\times \rightarrow \mathbb{C}_p$$ to obtain a compatible collection of additive obstruction maps. In what follows, we will define the morphisms $$H^1(\Gamma, \mathcal{M}(k, \mathcal{H}_p; K)) \xrightarrow{\mathrm{Obs}_{\mathscr{L}}} H^2(\Gamma, P_n(\mathbb{C}_p))$$ for each $\mathscr{L}\in \mathbb{C}_p$, 
and a corresponding one to the $p$-adic valuation 
$$H^1(\Gamma, \mathcal{M}(k, \mathcal{H}_p; K)) \xrightarrow{\mathrm{Obs}_o} H^2(\Gamma, P_n(\mathbb{C}_p)). $$

\begin{definition}[$\log_{\mathscr{L}}$ obstruction] \label{def: log obstruction}
For every $\mathscr{L}\in\mathbb{C}_p$ the $\Gamma$-cohomology of the short exact sequence \begin{equation}\label{lesL}0 \longrightarrow  P_n(\mathbb{C}_p)\longrightarrow  \mathcal{O}_\mathscr{L}(2-k,\mathcal{H}_F) \longrightarrow  \mathcal{O}(k,\mathcal{H}_F)\longrightarrow  0\end{equation} gives rise to a $\delta$-map $$H^1(\Gamma,\mathcal{O}(k,\mathcal{H}_F)) \longrightarrow  H^2(\Gamma, P_n(\mathbb{C}_p))$$ which we denote by $\mathrm{Obs}_{\mathscr{L}}$.
\end{definition}

\begin{definition}[$\mathrm{ord}_p$ obstruction]
 Define $C_{F,har}^0(P_n(\mathbb{C}_p))$ to be the set of functions $f: \mathcal{T}^0_F \longrightarrow  P_n(\mathbb{C}_p)$ such that for every vertex, $v$, we have $$ \sum_{\substack{v'\in \mathcal{T}_F^0\\(v,v')\in \mathcal{T}_F^1}}\left(f(v)-f(v') \right) = 0.$$
 There is an analogous short exact sequence to (\ref{lesL}) for the $p$-adic valuation. \begin{equation}\label{ordSES}0\longrightarrow  P_n(\mathbb{C}_p)\longrightarrow  C^0_{har}(P_n(\mathbb{C}_p))\longrightarrow  C^1_{har}(P_n(\mathbb{C}_p))\longrightarrow  0,\end{equation}
where the map  $C^0_{har}(P_n(\mathbb{C}_p))\longrightarrow  C^1_{har}(P_n(\mathbb{C}_p))$ is given by $$f \mapsto \left((v,v')\mapsto f(v)-f(v')\right).$$
The long exact sequence in group cohomology from (\ref{ordSES}) gives a map $$\delta:H^1(\Gamma,C^1_{har}(P_n(\mathbb{C}_p)))\longrightarrow  H^2(\Gamma, P_n(\mathbb{C}_p)).$$ Precomposing with the residue map provides us with the obstruction $$\mathrm{Obs}_{o}: H^1(\Gamma, \mathcal{O}(k,\mathcal{H}_F))\longrightarrow  H^2(\Gamma, P_n).$$

\end{definition}

The map $\mathrm{Obs}_o$ can be described more explicitly as follows: Given a cocycle $$\varphi \in H^1(\Gamma, C^1_{har}(P_n(\mathbb{C}_p))),$$ a particular choice of a lift to $C_{har}^0(P_n)$ is given by the map $$\gamma \in \Gamma \mapsto \left( v\mapsto \sum_{e\in\text{path}(v_0,v)}\phi(\gamma)(e)\right),$$ which is not necessarily a cocycle. The $\delta$-map to $H^2(\Gamma,P_n)$ with this choice of lift yields the $2$-cocycle $$\mathrm{Obs}_{o}(\varphi)(\gamma_1, \gamma_2) = \sum_{e\in \text{path}(v_0,\gamma_2^{-1}v_0)}\varphi(\gamma_1)(e)|\gamma_2.$$
\begin{proposition}
\label{deShalitIDproposition}

For any $\mathscr{L}_1,\mathscr{L}_2\in\mathbb{C}_p$ the various obstruction maps are related by
$$\mathrm{Obs}_{\mathscr{L}_1} - \mathrm{Obs}_{\mathscr{L}_2} = (\mathscr{L}_1 - \mathscr{L}_2) \mathrm{Obs}_{o}.$$
\end{proposition}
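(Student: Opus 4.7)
The plan is to verify the identity at the cochain level by selecting compatible set-theoretic lifts through the two defining short exact sequences and comparing their coboundaries explicitly. Fix a cocycle $J \in Z^1(\Gamma, \mathcal{O}(k, \mathcal{H}_F))$. For each $\gamma \in \Gamma$ and each branch $\mathscr{L} \in \mathbb{C}_p$, choose a Coleman primitive $F_{\mathscr{L}}(\gamma) \in \mathcal{O}_{\mathscr{L}}(2-k, \mathcal{H}_F)$ with $d^{k-1} F_{\mathscr{L}}(\gamma) = J(\gamma)$. Because the polynomial coefficients $P_{\gamma, a}(z) \in P_n(\mathbb{C}_p)$ appearing in the logarithmic terms of such a primitive are determined by the residues of $J(\gamma)$ via Lemma \ref{explInteglemma}, they are identical for the two branches $\mathscr{L}_1$ and $\mathscr{L}_2$. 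Arrange the choices so that the analytic and rational parts of $F_{\mathscr{L}_1}(\gamma)$ and $F_{\mathscr{L}_2}(\gamma)$ coincide; then on each affinoid
\[
F_{\mathscr{L}_1}(\gamma) - F_{\mathscr{L}_2}(\gamma) = (\mathscr{L}_1 - \mathscr{L}_2)\sum_a P_{\gamma, a}(z)\,\mathrm{ord}_p(z-a),
\]
using the identity $\log_{\mathscr{L}_1}(u) - \log_{\mathscr{L}_2}(u) = (\mathscr{L}_1 - \mathscr{L}_2)\,\mathrm{ord}_p(u)$.

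The central combinatorial observation is that $\mathrm{ord}_p(z-a)$ is constant on each affinoid $\mathrm{red}_F^{-1}(v)$, with value
\[
d_{v,a} = \#\{e \in \mathrm{path}(v_0, v) : a \in U_e\},
\]
where $a \in \mathbb{P}^1(F)$ is viewed as an end of $\mathcal{T}_F$. Swapping the order of summation gives
\[
\sum_a P_{\gamma, a}(z)\,d_{v,a} = \sum_{e \in \mathrm{path}(v_0, v)}\,\sum_{a \in U_e} P_{\gamma, a}(z),
\]
and after identifying the polynomial variable $z$ with the formal variable of $P_n(\mathbb{C}_p)$, the inner sum matches (up to the overall normalization dictated by Lemma \ref{explInteglemma}) the residue $\varphi(\gamma)(e)$ of $J(\gamma)(z)(T-z)^n\,dz$ along $e$, where $\varphi = \mathrm{res}(J) \in Z^1(\Gamma, C^1_{har}(P_n))$. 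Consequently
\[
F_{\mathscr{L}_1}(\gamma) - F_{\mathscr{L}_2}(\gamma)\big|_{\mathrm{red}_F^{-1}(v)} = (\mathscr{L}_1 - \mathscr{L}_2)\,\tilde\varphi(\gamma)(v),
\]
where $\tilde\varphi(\gamma)(v) = \sum_{e \in \mathrm{path}(v_0, v)}\varphi(\gamma)(e)$ is precisely the cochain in $C^0_{har}(P_n)$ used in the explicit formula for $\mathrm{Obs}_o$ given immediately after its definition.

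Taking coboundaries affinoid by affinoid, the difference of the two $2$-cocycles lands in $P_n(\mathbb{C}_p) = \ker(d^{k-1})$, and
\[
\mathrm{Obs}_{\mathscr{L}_1}(J) - \mathrm{Obs}_{\mathscr{L}_2}(J) = \delta F_{\mathscr{L}_1} - \delta F_{\mathscr{L}_2} = (\mathscr{L}_1 - \mathscr{L}_2)\,\delta \tilde\varphi = (\mathscr{L}_1 - \mathscr{L}_2)\,\mathrm{Obs}_o(J),
\]
which descends to the claimed equality in cohomology. The main technical subtlety is that $F_{\mathscr{L}_1} - F_{\mathscr{L}_2}$ is not itself a log-rigid-analytic function, but only a collection of polynomials parametrized by affinoids (the values of $\mathrm{ord}_p(z-a)$ jump across edges). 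One must verify that the $\Gamma$-action on $\mathcal{O}_{\mathscr{L}}(2-k, \mathcal{H}_F)$ restricts affinoid-wise to the natural action on $P_n(\mathbb{C}_p)$ inherited from its embedding, so that the identification with $\tilde\varphi$ is genuinely $\Gamma$-equivariant and the resulting $2$-cocycle in $P_n$ is independent of the affinoid chosen. Once this compatibility is checked, the three equalities above hold as literal identities of $P_n$-valued $2$-cocycles and the proposition follows.
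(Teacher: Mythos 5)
Your proposal follows the same overall strategy as the paper's proof: fix branch-specific lifts of the cocycle, compare them as 1-cochains, and identify the resulting coboundary with $(\mathscr{L}_1 - \mathscr{L}_2)\,\mathrm{Obs}_o$ via the explicit vertex-sum lift of the residue cocycle. The main difference is purely in how the local step is handled: the paper defers the identity
\[
\bigl(F^{\mathscr{L}_1}(z_1) - F^{\mathscr{L}_1}(z_2)\bigr) - \bigl(F^{\mathscr{L}_2}(z_1) - F^{\mathscr{L}_2}(z_2)\bigr) = (\mathscr{L}_1-\mathscr{L}_2)\,\varphi_f(e)
\]
to de Shalit (Lemma \ref{deShalitlemma}, i.e.\ \cite{dSBounded} Lemma 4.3), then telescopes along the path $v_0 \to \gamma_2^{-1} v_0$ after subtracting the coboundary of $\gamma\mapsto J^{\mathscr{L}_1}(\gamma)(\tau)-J^{\mathscr{L}_2}(\gamma)(\tau)$. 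You instead reprove a global version of that lemma inline, by pinning down both lifts so their analytic and rational parts agree and then computing $F_{\mathscr{L}_1}(\gamma)-F_{\mathscr{L}_2}(\gamma)$ directly on each affinoid from the identity $\log_{\mathscr{L}_1}(u)-\log_{\mathscr{L}_2}(u) = (\mathscr{L}_1-\mathscr{L}_2)\,\mathrm{ord}_p(u)$ together with the combinatorics of $\mathrm{ord}_p(z-a)$ on the tree. Both routes work; yours is more self-contained but also more delicate, while the paper's version offloads the delicate local bookkeeping to a citation.

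One place your inline derivation is imprecise is the claim $\mathrm{ord}_p(z-a)=d_{v,a}$ with $d_{v,a}=\#\{e\in\mathrm{path}(v_0,v):a\in U_e\}$. This is right for $a\in\mathbb{Z}_p$, but for representatives $a$ near $\infty$ (those with $|a|>1$, which necessarily appear in $\mathcal{R}_l$ for $l\ge 2$) there is an $a$-dependent offset $\mathrm{ord}_p(z_0-a)<0$ for $z_0$ in the standard affinoid; the correct statement is $\mathrm{ord}_p(z-a)-\mathrm{ord}_p(z_0-a)=d_{v,a}$. The offset is $v$-independent, so it contributes a genuine $1$-cochain $\gamma\mapsto c_\gamma\in P_n$ whose coboundary is then subtracted (this is exactly the role of the coboundary $\gamma\mapsto J^{\mathscr{L}_1}(\gamma)(\tau)-J^{\mathscr{L}_2}(\gamma)(\tau)$ in the paper's argument), so the conclusion survives; but as written your cochain-level identity is not literally correct, only correct modulo coboundaries, which is in fact all that is needed. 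The $\Gamma$-equivariance compatibility you flag at the end is a real point but is the same verification implicitly carried out in de Shalit's lemma, so neither approach escapes it.
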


We will apply the following lemma of de Shalit: 

\begin{lemma}[\cite{dSBounded} Lemma 4.3 ]
\label{deShalitlemma}
For any $f\in \mathcal{O}(k,\mathcal{H}_F)$, let $e = (v_1,v_2)\in \mathcal{T}_F^1$ be an edge. Fix two lifts $F^{\mathscr{L}_1}\in \mathcal{O}_{\mathscr{L}_1}\otimes P_n$ and $F^{\mathscr{L}_2}\in \mathcal{O}_{\mathscr{L}_2}\otimes P_n$ of $f(z)(T-z)^{n}dz$, and let $\varphi_f \in C_{har}$ be the harmonic function associated to $f$. Then for any $z_i \in \mathrm{red}^{-1}(v_i)$ $$(F^{\mathcal{L}_1}(z_1) - F^{\mathcal{L}_1}(z_2)) - (F^{\mathcal{L}_2}(z_1) - F^{\mathcal{L}_2}(z_2))  = (\mathscr{L}_1 - \mathscr{L}_2)\varphi_f(e).$$
\end{lemma}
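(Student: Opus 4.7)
The plan is to localize the analysis on the annulus $A := \red_F^{-1}(e)$ corresponding to $e = (v_1, v_2)$, where the dependence on the branch of $p$-adic logarithm concentrates. Set $\omega := f(z)(T-z)^n\,dz$ and $G := F^{\mathscr{L}_1} - F^{\mathscr{L}_2}$, the pointwise difference of the two primitives regarded as a $P_n(\mathbb{C}_p)$-valued function on $\mathcal{H}_F$. The lemma then asserts $G(z_1) - G(z_2) = (\mathscr{L}_1 - \mathscr{L}_2)\varphi_f(e)$.

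First I would pick a uniformizer $u$ of $A$ compatible with the orientation of $e$, with $\mathrm{ord}_p(u)$ interpolating between $0$ at one boundary and $1$ at the other (the precise matching fixed by the sign convention $\varphi_f(e) = -\varphi_f(\overline{e})$). Expanding
\begin{equation*}
  \omega = \sum_{i \in \mathbb{Z}} c_i(T)\, u^i\, du,\qquad c_i(T) \in P_n(\mathbb{C}_p),
\end{equation*}
the coefficient $c_{-1}(T)$ is the annular residue of $\omega$ at $e$ and is identified with $\varphi_f(e)$ by the definition of the residue map. Any Coleman primitive of $\omega$ on $A$ with branch $\log_{\mathscr{L}}$ has the shape
\begin{equation*}
  \sum_{i \neq -1}\frac{c_i(T)}{i+1}\, u^{i+1} + \varphi_f(e)\log_{\mathscr{L}}(u) + K(T),
\end{equation*}
for some $K(T) \in P_n(\mathbb{C}_p)$. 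Applied to the global $F^{\mathscr{L}_1}|_A$ and $F^{\mathscr{L}_2}|_A$, their Laurent parts and the logarithmic coefficient $\varphi_f(e)$ agree, since both are determined by $\omega$ alone. Subtracting and invoking the identity $\log_{\mathscr{L}_1}(x) - \log_{\mathscr{L}_2}(x) = (\mathscr{L}_1 - \mathscr{L}_2)\mathrm{ord}_p(x)$ on $\mathbb{C}_p^\times$ yields
\begin{equation*}
  G(z) = (\mathscr{L}_1 - \mathscr{L}_2)\,\varphi_f(e)\,\mathrm{ord}_p(u(z)) + K_0, \qquad z \in A,
\end{equation*}
for some $K_0 \in P_n(\mathbb{C}_p)$ independent of $z$.

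Next I would check that $G$ is in fact constant on each vertex affinoid $\red_F^{-1}(v_i)$. On such an affinoid, both $F^{\mathscr{L}_j}$ expand via $\sum_a P_a(T)\log_{\mathscr{L}_j}(z-a)$ with the same polynomial coefficients $P_a$ (these are determined by the residues of $\omega$, not by the branch), and each difference $\log_{\mathscr{L}_1}(z-a) - \log_{\mathscr{L}_2}(z-a) = (\mathscr{L}_1-\mathscr{L}_2)\mathrm{ord}_p(z-a)$ is constant because $\mathrm{ord}_p(z-a)$ depends only on the vertex $v$ for $z \in \red_F^{-1}(v)$. Analytically continuing the annular formula across each boundary into the adjacent affinoid, the constant value of $G$ on $\red_F^{-1}(v_i)$ is the limit of the annular expression, equal to $K_0$ at one boundary of $A$ and to $K_0 + (\mathscr{L}_1 - \mathscr{L}_2)\varphi_f(e)$ at the other. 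Subtracting the two constants gives $G(z_1) - G(z_2) = (\mathscr{L}_1 - \mathscr{L}_2)\varphi_f(e)$, which is the lemma.

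The main obstacle is the orientation bookkeeping: verifying that the choice of annular coordinate $u$ compatible with $e = (v_1, v_2)$ yields the annular residue precisely as $+\varphi_f(e)$ (rather than its negative) and that the matching of the two boundary values $\mathrm{ord}_p(u) \in \{0,1\}$ with $v_1, v_2$ produces the correct sign in the final subtraction. This is best settled by a direct check in a one-pole example such as $f(z) = 1/(z-a)$, after which the convention propagates to general $f$ by the antisymmetry $\varphi_f(e) = -\varphi_f(\overline{e})$. Once the sign is settled, the remainder of the argument is a direct consequence of the explicit shape of Coleman primitives on annuli and of the scalar identity for the difference of two branches of $p$-adic logarithm.
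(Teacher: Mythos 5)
The paper offers no proof of this lemma at all: it is quoted verbatim from de Shalit (\cite{dSBounded}, Lemma 4.3), so there is no internal argument to compare against. Your reconstruction is correct and is essentially de Shalit's own argument: the difference $G=F^{\mathscr{L}_1}-F^{\mathscr{L}_2}$ is locally constant on vertex affinoids because the $\log$-coefficients of a Coleman primitive are determined by the residues of $\omega$ alone, and the jump across the annulus $\red_F^{-1}(e)$ is governed by $\log_{\mathscr{L}_1}(u)-\log_{\mathscr{L}_2}(u)=(\mathscr{L}_1-\mathscr{L}_2)\,\mathrm{ord}_p(u)$ applied to the single logarithmic term with coefficient $\varphi_f(e)$. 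The only loose end you flag, the orientation convention matching $\mathrm{ord}_p(u)\in\{0,1\}$ with $(v_1,v_2)$, is genuinely the only thing left to pin down, and your proposed one-pole sanity check (e.g.\ $f(z)=1/(z-a)$ across the standard edge) settles it.
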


\begin{proof}{Of proposition \ref{deShalitIDproposition}}

Following de Shalit's proof (\cite{dSBounded} Proposition 4.2), fix $\tau\in \mathcal{H}_p$ with $\mathrm{red}_K(\tau) = v_0$ and fix two lifts $$J^{\mathscr{L}_i}:\Gamma \longrightarrow  \mathcal{O}_{\mathcal{L}_i}\otimes P_n(\mathbb{C}_p),\; i\in \{1,2\},$$ of  the cocycle $\gamma \mapsto J(\gamma)(z)(T-z)^n\;dz.$ The value of the $2$-cocycle $$\mathrm{Obs}_\mathscr{L_1}(J)(\gamma_1,\gamma_2) -\mathrm{Obs}_\mathscr{L_2}(J)(\gamma_1,\gamma_2)$$ is given by  \begin{align*}&\left(J^{\mathscr{L}_1}(\gamma_1\gamma_2)(\tau)-\left(J^{\mathscr{L}_1}(\gamma_1)\mid \gamma_2\right)(\tau) - J^{\mathscr{L}_1}(\gamma_2)(\tau)\right)\\ 
-&\left(  J^{\mathscr{L}_2}(\gamma_1\gamma_2)(\tau)-\left(J^{\mathscr{L}_2}(\gamma_1)\mid \gamma_2\right)(\tau) - J^{\mathscr{L}_2}(\gamma_2)(\tau)\right).
\end{align*}
The function $F:\Gamma \rightarrow  P_n(\mathbb{C}_p)$ defined by $$F(\gamma) \defeq J^{\mathscr{L}_1}(\gamma)(\tau) - J^{\mathscr{L}_2}(\gamma)(\tau) $$ produces a co-boundary $$ \gamma_1,\gamma_2 \mapsto F(\gamma_1 \gamma_2)(\tau) - F(\gamma_1)|\gamma_2 - F(\gamma_2).$$
By subtracting this co-boundary, we obtain a co-homologous cocycle to $\mathrm{Obs}_{\mathscr{L}_1} - \mathrm{Obs}_{\mathscr{L}_2}$ given by $$\gamma_1, \gamma_2\mapsto \left[\left(J^{\mathscr{L}_1}(\gamma_1)(\gamma_2\tau)  - J^{\mathscr{L}_1}(\gamma_1)(\tau)\right) - \left(J^{\mathscr{L}_2}(\gamma_1)(\gamma_2\tau)  - J^{\mathscr{L}_2}(\gamma_1)(\tau)\right) \right] \mid \gamma_2.$$

Let $(w_1, \cdots, w_h)$ be the path on $\mathcal{T}_K$ connecting $w_1$ and $\gamma w_h$ and pick a collection of points $(z_1,\cdots, z_h)$ such that $z_1 = \tau$, $z_h = \gamma \tau$ and for all $i$, $\mathrm{red}_K(z_i) = w_i$, then applying Lemma \ref{deShalitlemma} to the telescoping sum $$\sum_{i=1}^{h-1}\left[\left(J^{\mathscr{L}_1}(\gamma_1)(z_{i+1})  - J^{\mathscr{L}_1}(\gamma_1)(z_i)\right) - \left(J^{\mathscr{L}_2}(\gamma_2)(z_{i+1})  - J^{\mathscr{L}_2}(\gamma_1)(z_i)\right) \right] \mid \gamma_2,$$
we obtain $$ \left(\mathrm{Obs}_{\mathscr{L}_1} - \mathrm{Obs}_{\mathscr{L}_2}\right)(J)(\gamma_1,\gamma_2) + \text{ (co-boundary) }=\sum_{i=1}^{h-1} \varphi(\gamma_1)((v_i,v_{i+1}))|\gamma_2 = \mathrm{Obs}_{o}(J)(\gamma_1,\gamma_2).$$

\end{proof}

\begin{lemma}
 Let $\mathcal{D}$ be an $RM$-divisor. If $\mathrm{Deg}_{k,\mathcal{D}}$ is trivial, then so is $\mathrm{Obs}_{o}(J_{k,\mathcal{D}}).$
\end{lemma}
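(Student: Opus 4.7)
The plan is to exploit the explicit formula for $\mathrm{Obs}_o$ displayed just above the lemma, together with the fact that the residue of $J_{k,\mathcal{D}}$ is zero on every edge of $\mathcal{T}_p$. The argument is essentially a one-line cancellation once the pieces are lined up.

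First I would unpack $J_{k,\mathcal{D}}$ under the hypothesis. Since $\mathrm{Deg}_{k,\mathcal{D}}$ is trivial, the divisor $\mathcal{D}$ is of strong degree zero, so by Definition \ref{def: deg 0 weight k cocycle} we have $J_{k,\mathcal{D}} = \mathrm{ST}(\Phi_{k,\mathcal{D}})$; no correction term $\chi$ is needed. The Schneider--Teitelbaum theorem states that $\mathrm{ST}$ is inverse to taking annular residues, so the edge-function of $J_{k,\mathcal{D}}\{r,s\}$ is precisely $\Phi_{k,\mathcal{D}}\{r,s\}$, and the latter vanishes on every $e \in \mathcal{T}_p^1$ by construction (Definition \ref{def: rm edge function}). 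Converting $J_{k,\mathcal{D}}$ from a modular symbol to a $1$-cocycle in the standard way, by fixing $r_0 \in \mathbb{P}^1(\mathbb{Q})$ and setting $\varphi(\gamma) = J_{k,\mathcal{D}}\{r_0, \gamma r_0\}$, the associated harmonic residue $\phi(\gamma)(e) = \Phi_{k,\mathcal{D}}\{r_0, \gamma r_0\}(e)$ still vanishes for every $e \in \mathcal{T}_p^1$.

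Next I would substitute into the explicit formula recorded in the paper,
$$\mathrm{Obs}_o(J_{k,\mathcal{D}})(\gamma_1,\gamma_2) \;=\; \sum_{e \in \mathrm{path}(v_0,\, \gamma_2^{-1}v_0)} \Phi_{k,\mathcal{D}}\{r_0,\gamma_1 r_0\}(e)\,\big|\,\gamma_2.$$
Because $\Gamma = \mathrm{SL}_2(\mathbb{Z}[1/p])$ preserves the subtree $\mathcal{T}_p \subset \mathcal{T}_K$, both endpoints $v_0$ and $\gamma_2^{-1}v_0$ belong to $\mathcal{T}_p^0$, and since $\mathcal{T}_p$ is convex in $\mathcal{T}_K$ the geodesic connecting them lies entirely in $\mathcal{T}_p^1$. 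Every summand is therefore zero, so $\mathrm{Obs}_o(J_{k,\mathcal{D}})$ is the zero $2$-cocycle, and in particular its class in $H^2(\Gamma,P_n)$ is trivial.

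There is no genuine obstacle in this argument; the only step worth recording briefly is the convexity of the embedding $\mathcal{T}_p \hookrightarrow \mathcal{T}_K$, which follows from the fact that it arises from the $\mathcal{O}_K$-base change of $\mathbb{Z}_p$-lattices and is an isometric embedding of trees. Everything else is the direct application of Definitions \ref{def: rm edge function} and \ref{def: deg 0 weight k cocycle} and the explicit description of $\mathrm{Obs}_o$ given just before the lemma.
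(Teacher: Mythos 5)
Your proof is correct and follows essentially the same route as the paper's: both arguments rest on the observation that $\Phi_{k,\mathcal{D}}$ (the residue of $J_{k,\mathcal{D}}$) vanishes on every edge of $\mathcal{T}_p^1$, combined with the explicit expression for $\mathrm{Obs}_o$ as a sum over the path from $v_0$ to $\gamma v_0$, which lies entirely in $\mathcal{T}_p$. You merely supply a bit more scaffolding than the paper's one-paragraph proof, by explicitly invoking Definition \ref{def: deg 0 weight k cocycle} to note that no correction term $\chi$ appears, and by spelling out why the path stays in $\mathcal{T}_p$ (subtree convexity); both are sound and match the intent of the original.
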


\begin{proof}
    Let $\mathcal{D}$ be an $RM$-divisor, then $J_{k,\mathcal{D}}$ is chosen so that for every edge $e\in \mathcal{T}_p^1$ the residues vanish, that is for every $\gamma \in \Gamma$ we have $$\mathrm{res}\left(J_{k,\mathcal{D}}(\gamma)\right)(e) = 0.$$
    But, notice that the path $(v_0,\gamma v_0)$ lies completely in $\mathcal{T}_p^1$, so from the definition of $\mathrm{Obs}_{o}$ as a sum over the path $(v_0, \gamma v_0)$ it is immediate that $\mathrm{Obs}_{o}(J_{k,\mathcal{D}})$ is trivial.
\end{proof}

\begin{lemma}\label{lem: weight k cocycle}
    For every RM-divisor $\mathcal{D}$, there exists a unique cohomology class in $ H^1(\Gamma, \mathcal{M}(k;K))$, which we will denote $J_{k,\mathcal{D}}$, such that $\mathrm{div}(J_{k,\mathcal{D}}) = \mathrm{Div}_{k,\mathcal{D}}$ and  $\mathrm{Obs}_{o}(J_{k,\mathcal{D}}) = 0 $.
\end{lemma}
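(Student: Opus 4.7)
The strategy is to produce a cohomology class with the correct divisor using Theorem \ref{classificationtheorem}, correct it by a suitable holomorphic cocycle to kill $\mathrm{Obs}_o$, and invoke the isomorphism of Lemma \ref{HidaH2van} to get uniqueness. The key structural observation is that classes in $H^1(\Gamma, \mathcal{M}(k;K))$ with a prescribed divisor form a torsor over $H^1(\Gamma, \mathcal{O}(k,\mathcal{H}_p))$, on which $\mathrm{Obs}_o$ acts as an isomorphism onto $H^2(\Gamma, P_n(\mathbb{C}_p))$.

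For existence, I would start with the modular symbol $J^0 \in \mathrm{MS}^\Gamma(\mathcal{M}(k;K))$ produced by part (2) of Theorem \ref{classificationtheorem}, satisfying $\mathrm{Div}(J^0) = \mathrm{Div}_{k,\mathcal{D}}$. Passing to cohomology via the $\delta$-map of the discussion around (\ref{equation: deg0 dual}) yields a class $[J^0] \in H^1(\Gamma, \mathcal{M}(k;K))$ with the same divisor. The obstruction $\mathrm{Obs}_o([J^0]) \in H^2(\Gamma, P_n(\mathbb{C}_p))$ may be nonzero, but since Lemma \ref{HidaH2van} identifies $\mathrm{Obs}_o \colon H^1(\Gamma, \mathcal{O}(k,\mathcal{H}_p)) \xrightarrow{\sim} H^2(\Gamma, P_n(\mathbb{C}_p))$ as an isomorphism, there is a unique holomorphic class $[J'] \in H^1(\Gamma, \mathcal{O}(k,\mathcal{H}_p))$ with $\mathrm{Obs}_o([J']) = -\mathrm{Obs}_o([J^0])$. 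Setting $J_{k,\mathcal{D}} := [J^0] + [J']$ gives a class whose divisor remains $\mathrm{Div}_{k,\mathcal{D}}$ (adding a holomorphic cocycle introduces no poles) and whose obstruction vanishes. For uniqueness, if two classes $J_1$ and $J_2$ both satisfy the hypotheses, then their difference has trivial divisor, hence lies in $H^1(\Gamma, \mathcal{O}(k,\mathcal{H}_p))$, and has vanishing $\mathrm{Obs}_o$; injectivity from Lemma \ref{HidaH2van} then forces $J_1 = J_2$.

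The main subtlety is showing that a meromorphic cohomology class with trivial divisor genuinely comes from a holomorphic cocycle. This requires examining the long exact sequence associated to $0 \to \mathcal{O}(k,\mathcal{H}_p) \to \mathcal{M}(k;K) \to \mathcal{M}(k;K)/\mathcal{O}(k,\mathcal{H}_p) \to 0$ and identifying the divisor map with the connecting homomorphism at the level of cohomology, so that $\mathrm{div}$ is unambiguously defined on cohomology classes. A secondary, essentially bookkeeping point, already handled in the proof of Lemma \ref{HidaH2van}, is to verify that the isomorphism there agrees with $\mathrm{Obs}_o$ as defined via the harmonic sequence (\ref{ordSES}); this reduces to the vanishing of $H^2(\Gamma, C^0(P_n))$ coming from Shapiro's lemma together with Hida's theorem.
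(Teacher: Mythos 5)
Your proof is correct and matches the paper's strategy exactly: pick an arbitrary cohomology class with the prescribed divisor via Theorem \ref{classificationtheorem}, then shift by the unique holomorphic class in $H^1(\Gamma,\mathcal{O}(k,\mathcal{H}_p))$ that cancels the obstruction, using that $\mathrm{Obs}_o$ restricts to an isomorphism onto $H^2(\Gamma,P_n(\mathbb{C}_p))$. You spell out the uniqueness step more explicitly than the paper and flag the (genuine, but routine) subtlety about how $\mathrm{div}$ and $\mathrm{Obs}_o$ behave on the level of cohomology classes; otherwise the argument is the same, with your citation of Lemma~\ref{HidaH2van} arguably pointing more directly to the needed isomorphism than the paper's reference to Proposition~\ref{obstructionIso}.
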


\begin{proof}
By Theorem \ref{classificationtheorem}, pick an arbitrary cohomology class $J'_{k,\mathcal{D}}$ which has divisor $\mathcal{D}$. The map $\mathrm{Obs}_{o}:H^1(\Gamma, \mathcal{O}(k, \mathcal{H}_p)) \longrightarrow  H^2(\Gamma,P_n)$ is an isomorphism by Proposition \ref{obstructionIso}. Therefore there exists a unique cohomology class $J \in H^1(\Gamma, \mathcal{O}(k, \mathcal{H}_{p}))$ such that $\mathrm{Obs}_{o}(J) = \mathrm{Obs}_{o}(J'_{k,\mathcal{D}})$. Therefore, $$\mathrm{Obs}_{o}(J'_{k,\mathcal{D}} - J) = 0,$$ and the divisor of $J'_{k,\mathcal{D}} - J$ is $\mathrm{Div}_{k, \mathcal{D}}$.
\end{proof}

\begin{lemma} \label{wt-ncoLem}
If $\mathcal{D}$ is of degree $0$, then for each $r,s$ define a function $J_{k,\mathcal{D}}^\mathscr{L}$ by $$J_{k,\mathcal{D}}^{\mathscr{L}}\{r,s\}(z) = \left\langle J_{k,\mathcal{D}}, l_z(t)\right\rangle_M,$$ with $$l_z(t) = \begin{cases}-\frac{1}{n!}(t-z)^n \log_{\mathscr{L}}(t-z) & \text{if } |t|\leq1 \\ -\frac{1}{n!}(t-z)^n \log_{\mathscr{L}}\left(1-\frac{z}{t}\right) & \text{if }  |t|>1 \end{cases} \; \; \in \Sigma(k,K).$$ Then $J_{k,\mathcal{D}}^\mathscr{L}\{r,s\}$ belongs to $\mathcal{M}_\mathscr{L}(2-k; K)$ and satisfies $d^{n+1}\mathcal{J}^\mathscr{L}_{k,\mathcal{D}}\{r,s\} = J_{k,\mathcal{D}}\{r,s\}$.
\end{lemma}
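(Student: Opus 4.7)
The plan is to establish the two conclusions separately: the derivative identity $d^{n+1} J^{\mathscr{L}}_{k,\mathcal{D}}\{r,s\} = J_{k,\mathcal{D}}\{r,s\}$ by a direct computation on $l_z(t)$, and then the regularity $J^{\mathscr{L}}_{k,\mathcal{D}}\{r,s\}(z) \in \mathcal{O}_{\mathscr{L}}(2-k,\mathcal{H}_K)$ via the residue interpretation of the Morita pairing. Since $\mathcal{M}_{\mathscr{L}}(2-k;K)$ consists precisely of those functions in $\mathcal{O}_{\mathscr{L}}(2-k,\mathcal{H}_K)$ whose $(n+1)$-st derivative lies in $\mathcal{M}(k;K)$, the two facts together give $J^{\mathscr{L}}_{k,\mathcal{D}}\{r,s\} \in \mathcal{M}_{\mathscr{L}}(2-k;K)$.

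\textbf{Derivative identity.} The heart of the argument is the local calculation $d^{n+1}_z l_z(t) = \tfrac{1}{t-z}$ valid throughout $K$. In the region $|t|\leq 1$, applying Lemma \ref{explInteglemma} with $P(z)=(t-z)^n$ and $\tau=t$ (using $\log_{\mathscr{L}}(t-z)=\log_{\mathscr{L}}(z-t)$, since $\log_{\mathscr{L}}(-1)=0$), only the $i=n$ term survives, yielding $d^{n+1}_z((t-z)^n\log_{\mathscr{L}}(t-z)) = -n!/(t-z)$, hence $d^{n+1}_z l_z(t) = 1/(t-z)$. In the region $|t|>1$, the $p$-adic homomorphism identity $\log_{\mathscr{L}}(1-z/t) = \log_{\mathscr{L}}(t-z)-\log_{\mathscr{L}}(t)$ decomposes $l_z(t)$ as $-\tfrac{(t-z)^n}{n!}\log_{\mathscr{L}}(t-z) + \tfrac{(t-z)^n}{n!}\log_{\mathscr{L}}(t)$; the second summand is a polynomial of degree $n$ in $z$ (times a $z$-independent factor), so its $(n+1)$-st derivative in $z$ vanishes, and the first gives $1/(t-z)$ by the previous case. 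Interchanging $d^{n+1}_z$ with the Morita pairing --- justified by continuity of the pairing and the analytic dependence of $l_z$ on $z$ on each affinoid $\mathcal{H}_K^{\leq l}$ --- and applying the weight-$k$ evaluation identity $\langle f(w), \tfrac{1}{t-z}\rangle_M = f(z)$ stated after \eqref{moritaover}, we obtain $d^{n+1}_z J^{\mathscr{L}}_{k,\mathcal{D}}\{r,s\}(z) = J_{k,\mathcal{D}}\{r,s\}(z)$.

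\textbf{Regularity.} For the second claim I would use that $J_{k,\mathcal{D}}\{r,s\}$ is a weight-$k$ rigid meromorphic function whose poles lie in a $\Gamma$-orbit of RM-points in $\mathbb{P}^1(K)$, with the explicit description coming from Proposition \ref{weightkexpproposition} (in the strong degree-$0$ case) or its cohomological refinement from Lemma \ref{lem: weight k cocycle}. Computing $\langle J_{k,\mathcal{D}}\{r,s\}, l_z(t)\rangle_M$ as a sum of residues over the poles $w'\in\mathbb{P}^1(K)$, each contribution $\mathrm{res}_{w'}\bigl(J_{k,\mathcal{D}}\{r,s\}(w)\,l_z(w)\,dw\bigr)$ combines the local expression $-\tfrac{(w-z)^n}{n!}\log_{\mathscr{L}}(w-z)$ of $l_z$ (analytic in $w$ near $w'$) with the principal part of $J_{k,\mathcal{D}}\{r,s\}$ at $w'$ (of order $\leq k/2$). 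Unpacking the residue produces a polynomial $P_{w'}(z)$ of degree $\leq n$ times $\log_{\mathscr{L}}(z-w')$, plus a rational function in $z$ with a pole of order $\leq k/2$ at $w'$. Summing over all $w'$ and verifying convergence affinoid-by-affinoid --- using the boundedness of $\Phi_{k,\mathcal{D}}\{r,s\}$ afforded by Schneider--Teitelbaum together with Lemma \ref{STFin} --- yields the required element of $\mathcal{O}_{\mathscr{L}}(2-k,\mathcal{H}_K)$.

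\textbf{Main obstacle.} The delicate part is the regularity analysis: one must confirm that (i) no anomalous $P(z)\log_{\mathscr{L}}(z)$-term arises near $z=\infty$ --- which is precisely what the alternate formula $l_z(t) = -\tfrac{(t-z)^n}{n!}\log_{\mathscr{L}}(1-z/t)$ in the region $|t|>1$ is designed to avoid, since it involves no $\log_{\mathscr{L}}(t)$; (ii) the sum of residues converges on each affinoid $\mathcal{H}_K^{\leq l}$, for which the degree-$0$ hypothesis on $\mathcal{D}$ supplies the cancellations; and (iii) the logarithmic coefficients $P_{w'}$ are genuinely polynomials of degree $\leq n$, matching the local shape permitted by $\mathcal{O}_{\mathscr{L}}(2-k,\mathcal{H}_K)$. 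By contrast the derivative identity, once the clean formula $d^{n+1}_z l_z(t) = 1/(t-z)$ is in hand, is an essentially formal consequence of Lemma \ref{explInteglemma}.
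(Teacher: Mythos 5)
Your proof is correct, and the overall strategy — unwind the Morita pairing via the local expansion of $l_z(t)$ near each boundary point — is the paper's; but your treatment of the derivative identity is a genuinely cleaner route. You compute $d^{n+1}_z l_z(t) = \frac{1}{t-z}$ once and for all (the $|t|\leq 1$ case via Lemma~\ref{explInteglemma} with $P(z)=(t-z)^n$, $\tau=t$, where only the $i=n$ term survives since $P$ vanishes to order $n$ at $t$; the $|t|>1$ case via $\log_{\mathscr{L}}(1-z/t)=\log_{\mathscr{L}}(t-z)-\log_{\mathscr{L}}(t)$, the second summand being degree $n$ in $z$), and then invoke the evaluation identity $\langle f,\tfrac{1}{t-\tau}\rangle_M = f(\tau)$ for weight $k$. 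The paper instead writes out the full Mittag--Leffler expansion of $\langle J_{k,\mathcal{D}}, l_z\rangle_M$ — which is also what certifies membership in $\mathcal{O}_{\mathscr{L}}(2-k,\mathcal{H}_K^{\leq l})$ — and then differentiates the expansion; so it handles regularity and the derivative identity in one pass. Your split version buys a shorter derivative argument but leans on interchanging $d^{n+1}_z$ with the pairing; the tidiest justification for that is essentially to retreat to the paper's argument: fix an affinoid for $z$, expand $l_z$ near each $a\in\mathcal{R}_l$, and differentiate the formal residue sum term by term.

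Two small imprecisions in your regularity paragraph, neither fatal. First, the Morita residue sum runs over all representatives $a\in\mathcal{R}_l\subset\mathbb{P}^1(K)$, not just the RM-points that are poles of the meromorphic extension of $J_{k,\mathcal{D}}$; for every such $a$ the coefficient of $\log_{\mathscr{L}}(z-a)$ produced is $-\frac{1}{n!}\sum_{i=1}^{n+1}b_{a,i}\binom{n}{i-1}(a-z)^{n-i+1}$ — a polynomial of degree $\leq n$ — so the required shape holds uniformly, not only near the poles. Second, $J^{\mathscr{L}}_{k,\mathcal{D}}\{r,s\}$ has no genuine pole at the RM-points: the order-$\leq k/2 = n/2+1$ pole of $J_{k,\mathcal{D}}$ is entirely absorbed after $(n+1)$-fold antidifferentiation (since $n/2+1\leq n+1$), leaving only the $P(z)\log_{\mathscr{L}}(z-w')$ singularity permitted by $\mathcal{O}_{\mathscr{L}}(2-k,\mathcal{H}_K)$; so ``plus a rational function with a pole of order $\leq k/2$'' should be ``plus a convergent power-series tail.''
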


\begin{proof}
This follows from writing out the Mittag-Leffler expansion of $J_{k,\mathcal{D}}^{\mathscr{L}}\{r,s\}(z)$.
For a point $t\in \mathbb{P}^1(K)$ close enough to $a\in \mathcal{R}_l$, a point of the set of representatives of level $l$, we can write $$(t-z)^n\log_{\mathscr{L}}(t-z) = (t-z)^n\log_\mathscr{L}(z-a) + (t-z)^n \log_{\mathscr{L}}\left(1-\frac{t-a}{z-a}\right) $$

$$ = (t-z)^n\log_{\mathscr{L}}(z-a) -  (t-z)^n\sum_{i=1}^\infty \frac{(t-a)^i}{i(z-a)^i}.$$  
The Morita pairing is then computed by integrating the various factors $(t-a)^i$ as described in \cite{MTT}, \cite{Orton} or \cite{DT}. Differentiating this expansion with respect to $z$, we obtain exactly the expression for $$-n!\left\langle J_{k,\mathcal{D}}, \frac{1}{t-z}\right\rangle_M= -n!J_{k,\mathcal{D}}(z).$$
\end{proof}

\begin{remark}
The choice of function $l_z(t)$ is informed by the identity $$\frac{d^{n+1}}{dz^{n+1}}\left((t-z)^n\log(t-z)\right) = \frac{(-1)^{n-1}n!}{(t-z)}.$$
\end{remark}

\begin{remark}
    From the its expression as an infinite sum, one is tempted to find a primitve of $J_{k,\mathcal{D}}$ term by term and add them in a sum of the form $$\sum_{w\in \Sigma_{\tau}\{r,s\}}[(w,\overline{w})\cdot (r,s)]\frac{(z-w)^{n/2}(z-\overline{w})^{n/2}}{(w-\overline{w})^{n/2}}\log_{\mathscr{L}}(z-w).$$
    However, this sum does not converge. It only converges ``up to polynomials". That is, on every affinoid, the coefficients of a Mittag-Leffler expansion of this sum converge except for the terms of the form $c_iz^i$ where $0\leq i \leq n$. 

\end{remark}
\begin{lemma} \label{expl-nLem}
If $\mathcal{D}$ is of degree $0$, then for each $r,s$ the infinite sum $$ \frac{1}{n!}\sum_i \sum_{w\in \Sigma_{\tau_i}\{r,s\}}m_i[(w,\overline{w}).(r,s)] \left( \frac{(z-w)^{n/2}(z-\overline{w})^{n/2}}{(w-\overline{w})^{n/2}}\log_{\mathscr{L}}(t_w(z))+p_w(z)\right)$$   converges to $J_{k,\mathcal{D}}^{\mathscr{L}}\{r,s\}(z)$. Where $t_w(z)$ is as in \emph{(\ref{twFun})} and if $|w|\leq 1$ then $$p_w(z) = -\sum_{r=1}^{n/2}\left(\sum_{i=1}^{r-1}\left(\frac{(-1)^{n-i}\binom{n}{i}}{(r-i)}\right)\frac{(-1)^{r}\binom{n-r}{n/2-r}(w-\overline{w})^{r-n/2}}{\binom{n}{n/2}}(z-w)^{n-r}\right),$$
and otherwise 
$$p_w(z) = -\sum_{r=1}^{n/2}\left(\sum_{i=1}^{r-1}\left(\frac{(-1)^{n-i}\binom{n}{i}}{(r-i)}\right)\frac{(-1)^{r}\binom{n-r}{n/2-r}(w-\overline{w})^{r-n/2}}{\binom{n}{n/2}}(z-w)^{n-r}\right)$$
$$ +\sum_{i=n/2+1}^n(-1)^i\binom{n}{i} \left(\sum_{j=1}^{i-n/2} \frac{(-1)^j(-1)^{n-i+j}\binom{i-j}{i-j-n/2}(w-\overline{w})^{n/2-i+j}}{\binom{n}{n/2}jw^j} \right)(z-w)^i.$$
\end{lemma}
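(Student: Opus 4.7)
The plan is to compute the Morita pairing that defines $J^{\mathscr{L}}_{k,\mathcal{D}}\{r,s\}(z)$ in Lemma \ref{wt-ncoLem} by expanding $J_{k,\mathcal{D}}$ term by term using Proposition \ref{weightkexpproposition}, and then to identify each individual contribution with the corresponding summand in the statement. In particular, the convergence assertion will fall out of the same computation as the identification of the limit.

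By Proposition \ref{weightkexpproposition}, $J_{k,\mathcal{D}}\{r,s\}$ is the limit of its truncated partial sums in $\mathcal{O}^b(k,\mathcal{H}_K)$, and the Morita pairing $\langle\cdot,l_z(t)\rangle_M$ is continuous on this bounded subspace. Interchanging sum and pairing yields
\[
J^{\mathscr{L}}_{k,\mathcal{D}}\{r,s\}(z) = \sum_i \sum_{w\in\Sigma_{\tau_i}\{r,s\}} m_i\,[(w,\overline w)\cdot(r,s)]\,\bigl\langle d^{n+1}(P_w(t)\log(t-w)),\,l_z(t)\bigr\rangle_M,
\]
where $P_w(z):=(z-w)^{n/2}(z-\overline w)^{n/2}/(w-\overline w)^{n/2}$. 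It then suffices to identify each Morita pairing above with $\tfrac{1}{n!}\bigl(P_w(z)\log_{\mathscr{L}}(t_w(z))+p_w(z)\bigr)$.

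The key computation is the evaluation of one such pairing. By Lemma \ref{explInteglemma}, $d^{n+1}(P_w(t)\log(t-w)) = \sum_{i=n/2}^n(-1)^i(n-i)!\,P_w^{(i)}(w)/(t-w)^{n+1-i}$, whose only poles in $\mathbb{P}^1(K)$ lie at $t=w$, so the pairing reduces to a residue at $t=w$ together with a possible contribution at $t=\infty$ (the latter vanishing for $|w|\leq 1$ and accounting for the extra terms when $|w|>1$). For $|w|\leq 1$, substituting $u=t-w$ into $l_z(t)=-(t-z)^n\log_{\mathscr{L}}(t-z)/n!$ and expanding
\[
l_z(w+u) = -\tfrac{1}{n!}(u+(w-z))^n\Bigl(\log_{\mathscr{L}}(w-z) + \sum_{j\ge 1}\tfrac{(-1)^{j-1}u^j}{j(w-z)^j}\Bigr),
\]
combined with $P_w^{(i)}(w)=i!\binom{n/2}{i-n/2}/(w-\overline w)^{i-n/2}$, the extracted Taylor coefficients collapse (via a Vandermonde-type identity analogous to the one used in the proof of Lemma \ref{explInteglemma}) to $\tfrac{1}{n!}P_w(z)\log_{\mathscr{L}}(z-w)$ plus a polynomial correction in $(z-w)$ of degree at most $n-1$ whose coefficients are precisely the $|w|\leq 1$ formula for $p_w$. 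For $|w|>1$, the piecewise branch $l_z(t)=-(t-z)^n\log_{\mathscr{L}}(1-z/t)/n!$ is used, and the splitting $\log_{\mathscr{L}}(1-z/t)=\log_{\mathscr{L}}(t-z)-\log_{\mathscr{L}}(t)$ produces both the replacement of $\log_{\mathscr{L}}(z-w)$ by $\log_{\mathscr{L}}(t_w(z))$ (via the shift $-\log_{\mathscr{L}}(w)$) and, from residues involving $\log_{\mathscr{L}}(t)$ at $t=\infty$, the extra polynomial terms $(z-w)^i$ for $n/2<i\le n$ in the second branch of $p_w$.

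The main obstacle is the combinatorial identification just sketched: verifying that the binomial sums arising from the Taylor coefficients of $(t-z)^n\log(t-z)$ about $t=w$, weighted by $P_w^{(i)}(w)$ and assembled according to the Morita residue formula, collapse into the precise closed form of $p_w(z)$ recorded in the lemma. This requires manipulating the inner sum $\sum_i(-1)^{n-i}\binom{n}{i}/(r-i)$ together with the binomial ratios $\binom{n-r}{n/2-r}/\binom{n}{n/2}$, and separating the resulting polynomial-in-$(z-w)$ piece into the two regimes $|w|\leq 1$ and $|w|>1$. Once these identities are pinned down, the continuity of the Morita pairing together with the term-by-term evaluation simultaneously yields convergence of the series in the statement and its equality with $J^{\mathscr{L}}_{k,\mathcal{D}}\{r,s\}(z)$.
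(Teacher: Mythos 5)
Your proposal takes essentially the same approach as the paper: both reduce to Proposition \ref{weightkexpproposition}, compute the Morita pairing with $l_z$ term by term by expanding $(t-z)^n\log_{\mathscr{L}}(t-z)$ locally around each pole $w$, and pass to the limit through the topological continuity of the Schneider--Teitelbaum lift and the pairing. The paper's proof is even terser (it just cites Proposition \ref{weightkexpproposition} and records the local expansion to be integrated), but the underlying residue calculation, and the combinatorial verification it leaves implicit, coincide with what you describe.
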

\begin{proof}

    The proof is identical to that of proposition \ref{weightkexpproposition} except we have to expand the functions of the form $$(t-z)^n\log_\mathscr{L}(t-z) = (t-z)^n\log_{\mathscr{L}}(z-w) + (t-z)^n\log_{\mathscr{L}}\left(1-\frac{t-w}{z-w}\right).$$ and integrate them in a neighborhood of $\tau$.
\end{proof}

\subsection{Duals and the Abel-Jacobi Map}
The short exact sequence $$ 0 \longrightarrow \Sigma(k,F)\longrightarrow \Sigma_{\mathscr{L}}(k,F)\longrightarrow P_n(\mathbb{C}_p)\longrightarrow 0$$ leads to a long exact sequence in $\Gamma$-homology \begin{equation}\label{homology} \cdots \leftarrow H_1(\Gamma, \Sigma_{\mathscr{L}}(k,F)) \leftarrow H_1(\Gamma, \Sigma(k,F)) \leftarrow H_2(\Gamma,P_n(\mathbb{C}_p))\leftarrow \cdots.  \end{equation}

For every $\mathscr{L}$, let $i_{\mathscr{L}}$ be the inclusion \begin{align*}i_{\mathscr{L}}:\; &\mathrm{Div}(\mathcal{H}_p)\otimes P_n \rightarrow \Sigma_{\mathscr{L}}(k,\mathbb{Q}_p)\\  &(x)\otimes P \mapsto P(t)\log_{\mathscr{L}}\left(t-x \right).\end{align*}
The map $i_{\mathscr{L}}$ maps $\mathrm{Div}_0(\mathcal{H}_p)\otimes P_n$ to $\Sigma(k, \mathbb{Q}_p)$. 
The combination of $i_\mathscr{L}$ with the Morita pairing defines a pairing $$\langle\; ,\; \rangle_{M}: H^1(\Gamma,\mathcal{O}(k,\mathcal{H}_p))\times H_1(\Gamma, \mathrm{Div}_0(\mathcal{H}_p))\longrightarrow \mathbb{C}_p. $$ 
The Breuil pairing extends this pairing to $$\langle \; , \; \rangle_{\mathscr{L}}: H^1(\Gamma,\mathcal{M}_{\mathscr{L}}(2-k; K))\times H_1(\Gamma, \mathrm{Div}(\mathcal{H}_p))\longrightarrow \mathbb{C}_p.$$
Further, there is also a pairing for the $p$-adic valuation $$\langle \; , \; \rangle_{o}:H^1(\Gamma,\mathcal{O}(k,\mathcal{H}_p))\times H_1(\Gamma, \mathrm{Div}_0(\mathcal{H}_p^\dagger)\otimes P_n(\mathbb{C}_p))\longrightarrow  \mathbb{C}_p,$$ 
which is induced from a pairing $$\mathcal{O}(k,\mathcal{H}_p)\times \mathrm{Div}_0(\mathcal{H}_p^\dagger)\otimes P_n \longrightarrow \mathbb{C}_p,$$ that is defined as follows:
Given $f \in \mathcal{O}(k,\mathcal{H}_p)$, its associated harmonic function $\phi_f\in C_{\text{har}}(P_n)$, and for $$D = \sum_i \tau_i\otimes p_i \in \mathrm{Div}_0(\mathcal{H}_p^\dagger)\otimes P_n,$$ set $$\mathrm{red}(D) \defeq \sum_i w_i\otimes p_i$$ to be the reduction of $D$ in $\mathrm{Div}_0(\mathcal{T}_p^0)\otimes P_n$. Then the pairing between $f$ and $D$ is equal to the sum $$\langle f, D \rangle _o \defeq \sum_i\sum_{e\in \text{path}(v_0,w_i)}\langle \phi_f(e),p_i\rangle_{P_n}.$$
This pairing can be extended to $$\langle \cdot , \cdot \rangle _o :C^0_{har}(P_n(\mathbb{C}_p))\times  \mathrm{Div}(\mathcal{H}_p) \longrightarrow  \mathbb{C}_p$$ as follows: If $\phi \in C^0_{har}(P_n(\mathbb{C}_p))$ and $D\in \mathrm{Div}(\mathcal{H}_p)\otimes P_n$ again write $\mathrm{red}(D) = \sum_i w_i\otimes p_i$ then set $$\langle \phi,D \rangle_o \defeq \sum_i \langle\phi(w_i), p_i\rangle_{P_n}.$$

\begin{definition}

    For $\sigma$ an RM-point whose fundamental stabilizer is $\gamma_\sigma\in \Gamma$. The Stark-Heegner cycle associated to $\sigma$ is $$y_{k,\sigma} = \gamma_\sigma\otimes (\sigma)\otimes \frac{(T-\sigma)^{n/2}(T-\overline{\sigma})^{n/2}}{(\sigma - \overline{\sigma})^{n/2}} \in H_1(\Gamma, \mathrm{Div}(\mathcal{H}_p)\otimes P_n(\mathbb{C}_p)).$$
    Given a cocycle $\mathcal{J}\in H^1(\Gamma,\mathcal{O}_\mathscr{L}(k, \mathcal{H}_{p}))$ we will denote the pairing $\langle \mathcal{J}, y_{k,\sigma}\rangle_B$ by $$\mathcal{J}[\sigma].$$ This pairing is given by $$\left\langle \mathcal{J}(\gamma_\sigma)(z), \frac{(t-\sigma)^{n/2}(t-\overline{\sigma})^{n/2}}{(\sigma - \overline{\sigma})^{n/2}}\log_{\mathscr{L}}(t-\sigma)\right\rangle _B.$$

\end{definition}

\begin{proposition}
\label{commDiag}
The pairings described above are related by the following commutative diagrams
\begin{center}
\begin{tikzcd}
H^1(\Gamma,\mathcal{O}_\mathscr{L}(2-k))^\vee  & H^1(\Gamma, \mathcal{O}(k))^\vee \ar[l,"(d^{n+1})^\vee"] & H^2(\Gamma, P_n(\mathbb{C}_p))^\vee \ar[l,"\mathrm{Obs}_\mathscr{L}^\vee"] \\ 

H_1(\Gamma, \mathrm{Div}(\mathcal{H}_p)\otimes P_n(\mathbb{C}_p)) \ar[u] & H_1(\Gamma, \mathrm{Div}_0(\mathcal{H}_p)\otimes P_n(\mathbb{C}_p)) \ar[l] \ar[u] & H_2(\Gamma,P_n(\mathbb{C}_p))\ar[l] \ar[u]
\end{tikzcd}

\end{center}

\vspace{0.2in}
and

\begin{center}
\vspace{0.2in}

\begin{tikzcd}
    H^1(\Gamma,C_{har}^0(\mathcal{P}_n(\mathbb{C}_p)))^\vee & H^1(\Gamma, C_{har}^1(P_n(\mathbb{C}_p)))^\vee\ar[l]  & H^2(\Gamma, P_n(\mathbb{C}_p))^\vee \ar[l, "\mathrm{Obs}_{o}^\vee"] \\ 
    H_1(\Gamma, \mathrm{Div}(\mathcal{H}_p)\otimes P_n(\mathbb{C}_p)) \ar[u]& H_1(\Gamma, \mathrm{Div}_0(\mathcal{H}_p)\otimes P_n(\mathbb{C}_p)) \ar[l] \ar[u]& H_2(\Gamma, P_n(\mathbb{C}_p)) \ar[l] \ar[u]
    \end{tikzcd}
\end{center}
\end{proposition}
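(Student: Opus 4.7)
The plan is to view both diagrams as instances of a single fact: if a short exact sequence of $\Gamma$-modules on the cohomology side pairs compatibly, term-by-term, with a short exact sequence on the homology side, then the induced cap products fit into a ``dual'' commuting ladder. I will therefore (i) identify the two short exact sequences responsible for each diagram, (ii) check the chain-level compatibility of the three pairings (left, middle, right columns), and (iii) invoke naturality of the connecting homomorphism in the resulting paired long exact sequences.

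For the first diagram, the cohomology side is
\[
0 \longrightarrow P_n(\mathbb{C}_p) \longrightarrow \mathcal{O}_\mathscr{L}(2-k,\mathcal{H}_p) \xrightarrow{d^{n+1}} \mathcal{O}(k,\mathcal{H}_p) \longrightarrow 0
\]
from (\ref{lesL}), whose $\delta$-map is $\mathrm{Obs}_\mathscr{L}$. On the homology side I would take
\[
0 \longrightarrow \mathrm{Div}_0(\mathcal{H}_p) \otimes P_n \longrightarrow \mathrm{Div}(\mathcal{H}_p) \otimes P_n \xrightarrow{\deg \otimes \mathrm{id}} P_n(\mathbb{C}_p) \longrightarrow 0,
\]
so that the bottom row of the diagram is literally the associated homology long exact sequence. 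The map $i_\mathscr{L}$ carries this second sequence into $0 \to \Sigma(k) \to \Sigma_\mathscr{L}(k) \to P_n \to 0$, and the three induced pairings — Morita on $\mathcal{O}(k)\times\Sigma(k)$, Breuil on $\mathcal{O}_\mathscr{L}(2-k)\times \Sigma_\mathscr{L}(k)$, and the polynomial pairing on $P_n\times P_n$ — are mutually compatible: the needed identity $\langle \mathcal{G},i_\mathscr{L}(D_0)\rangle_B = \langle d^{n+1}\mathcal{G}, i_\mathscr{L}(D_0)\rangle_M$ for $D_0 \in \mathrm{Div}_0 \otimes P_n$ is precisely Proposition \ref{breuilPropertiesProp}(2). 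Both squares of the first diagram then commute by the standard snake-lemma argument applied to this compatible pair of sequences.

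For the second diagram I would run the same argument with (\ref{ordSES}) in place of (\ref{lesL}) on the cohomology side and the same divisor sequence on the homology side. The compatibility I need now is that, after passing through the residue map, $\langle \cdot, \cdot\rangle_o$ on $C_{har}^0(P_n) \times \mathrm{Div}(\mathcal{H}_p)\otimes P_n$ descends to $\langle \cdot,\cdot\rangle_o$ on $C_{har}^1(P_n) \times \mathrm{Div}_0 \otimes P_n$ and, on the kernel $P_n$, matches the polynomial pairing. This is essentially baked into the path-sum definition: for a degree-zero divisor expressed as a telescoping combination of adjacent-vertex differences, the sum of $\phi(v_i)$-values on the $C_{har}^0$ side collapses into a sum of $\phi(v_i)-\phi(v_{i+1})$-values along the edges of the connecting path, which is exactly the edge pairing used on the $C_{har}^1$ side.

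The subtle point, and the step I expect to consume the most care, is the ord case, where the sign and orientation conventions governing ``edge pointing towards $v_0$'' in the path-sum formula for $\mathrm{Obs}_o$ displayed just above the proposition must be matched against the orientation chosen on the path in the definition of $\langle \cdot,\cdot\rangle_o$. Once a single base vertex and consistent edge orientation are fixed — the same pair already used in the proof of Proposition \ref{deShalitIDproposition} — the verification becomes mechanical and both diagrams fall out simultaneously.
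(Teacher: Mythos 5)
The paper states Proposition~\ref{commDiag} without proof, so there is no argument in the text to compare against; your proposal is the natural one and I believe it is correct in outline.

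Two points deserve more care. First, your assertion that ``the three induced pairings are mutually compatible'' in the first diagram actually packages two separate chain-level identities. You verify only one of them, namely Proposition~\ref{breuilPropertiesProp}(2), which relates the Breuil pairing on $\mathcal{O}_{\mathscr{L}}(2-k)\times\Sigma_{\mathscr{L}}(k)$ to the Morita pairing on $\mathcal{O}(k)\times\Sigma(k)$. The second required identity is that the pairing induced on the kernel/cokernel pair $P_n(\mathbb{C}_p)\times P_n(\mathbb{C}_p)$ --- obtained by restricting the first slot of the Breuil pairing to $\ker d^{n+1}=P_n(\mathbb{C}_p)$ and lifting the second slot along $i_{\mathscr{L}}$ from the quotient --- coincides with the pairing $\langle\cdot,\cdot\rangle_{P_n}$ that feeds the rightmost vertical arrow. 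This induced pairing is well-defined precisely because of your first compatibility, and a short computation using Proposition~\ref{breuilPropertiesProp}(3) together with Lemma~\ref{explInteglemma} shows it agrees with $\langle\cdot,\cdot\rangle_{P_n}$ only up to a nonzero scalar of the shape $n!$; that scalar must either be absorbed into the definition of the rightmost vertical map or recorded, since otherwise the right-hand square commutes only up to scaling. Second, your flag of the sign and orientation bookkeeping in the ord diagram is warranted: the lift $\gamma\mapsto\bigl(v\mapsto\sum_{e\in\mathrm{path}(v_0,v)}\phi(\gamma)(e)\bigr)$ used in the paper's explicit description of $\mathrm{Obs}_{o}$ and the map $C^0_{har}(P_n)\to C^1_{har}(P_n)$ defined by $f\mapsto\bigl((v,v')\mapsto f(v)-f(v')\bigr)$ carry opposite conventions, so the telescoping computation in your last paragraph picks up an overall sign that needs to be tracked against the sign conventions in the definitions of $\langle\cdot,\cdot\rangle_o$. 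With these two points handled, both diagrams commute by the standard paired snake-lemma argument, exactly as you describe.
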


\begin{proposition}
The map $H_2(\Gamma, P_n(\mathbb{C}_p))\longrightarrow  H^2(\Gamma, P_n(\mathbb{C}_p))^\vee$ is an isomorphism.
\end{proposition}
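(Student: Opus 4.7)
The plan is to compare the Mayer--Vietoris long exact sequences in group cohomology and homology that arise from the action of $\Gamma$ on the contractible tree $\mathcal{T}_p$. Bass--Serre theory realizes $\Gamma$ as the amalgamated product $\mathrm{SL}_2(\mathbb{Z}) *_{\Gamma_0(p)} \mathrm{SL}_2(\mathbb{Z})$, so the augmented cellular chain complex of $\mathcal{T}_p$ (combined with Shapiro's lemma, exactly as in the proof of Lemma \ref{HidaH2van}) produces six-term sequences relating $H^\ast(\Gamma, P_n)$ to $H^\ast(\mathrm{SL}_2(\mathbb{Z}), P_n)^2$ and $H^\ast(\Gamma_0(p), P_n)$, and dually on the homology side. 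Both $\mathrm{SL}_2(\mathbb{Z})$ and $\Gamma_0(p)$ are virtually free, so they have virtual cohomological dimension $1$; since $\mathbb{C}_p$ has characteristic zero, finite-order torsion in the stabilizers contributes nothing, and $H^i(\mathrm{SL}_2(\mathbb{Z}), P_n) = H^i(\Gamma_0(p), P_n) = H_i(\mathrm{SL}_2(\mathbb{Z}), P_n) = H_i(\Gamma_0(p), P_n) = 0$ for all $i \geq 2$. Extracting the relevant portions of the two long exact sequences yields
\begin{equation*}
H^2(\Gamma, P_n) \cong \mathrm{coker}\bigl( H^1(\mathrm{SL}_2(\mathbb{Z}), P_n)^2 \to H^1(\Gamma_0(p), P_n) \bigr)
\end{equation*}
and
\begin{equation*}
H_2(\Gamma, P_n) \cong \ker\bigl( H_1(\Gamma_0(p), P_n) \to H_1(\mathrm{SL}_2(\mathbb{Z}), P_n)^2 \bigr).
\end{equation*}

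Next I would invoke the self-duality of $P_n$ (via the perfect pairing $\langle \cdot, \cdot \rangle_{P_n}$ introduced earlier) together with the cap product to produce perfect pairings
\begin{equation*}
H^1(\mathrm{SL}_2(\mathbb{Z}), P_n) \times H_1(\mathrm{SL}_2(\mathbb{Z}), P_n) \to \mathbb{C}_p,
\end{equation*}
and similarly for $\Gamma_0(p)$. This is a form of Poincaré--Lefschetz duality for the open modular curves of level $1$ and $p$; equivalently, it can be extracted from the Eichler--Shimura decomposition combined with the Petersson pairing on cusp forms. A standard compatibility---adjointness of restriction and corestriction under cap product---then shows that under these identifications the map $H^1(\mathrm{SL}_2(\mathbb{Z}), P_n)^2 \to H^1(\Gamma_0(p), P_n)$ appearing in Mayer--Vietoris is the transpose of $H_1(\Gamma_0(p), P_n) \to H_1(\mathrm{SL}_2(\mathbb{Z}), P_n)^2$.

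Combining these ingredients produces a commutative square
\begin{equation*}
\begin{tikzcd}[column sep=large]
H_1(\Gamma_0(p), P_n) \ar[r] \ar[d, "\cong"] & H_1(\mathrm{SL}_2(\mathbb{Z}), P_n)^2 \ar[d, "\cong"] \\
H^1(\Gamma_0(p), P_n)^\vee \ar[r] & \bigl(H^1(\mathrm{SL}_2(\mathbb{Z}), P_n)^2\bigr)^\vee
\end{tikzcd}
\end{equation*}
whose vertical maps are isomorphisms; the induced map on kernels is exactly the natural pairing $H_2(\Gamma, P_n) \to H^2(\Gamma, P_n)^\vee$, and it is therefore an isomorphism by the five lemma. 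The main technical point to verify is the compatibility of the cap product at the $\Gamma$ level with the Mayer--Vietoris decomposition at the subgroup level; this reduces to naturality of cap product under induction/restriction, which is standard but requires tracing through the connecting homomorphisms carefully.
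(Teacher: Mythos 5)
Your conclusion is correct, but you have taken a substantially longer route than the paper, which disposes of this in a single line by citing the Universal Coefficient Theorem. Concretely, what that citation means here is: because $\mathbb{C}_p$ is a field, $\mathrm{Hom}_{\mathbb{C}_p}(-, \mathbb{C}_p)$ is exact and so commutes with passage to homology of a complex; applying the tensor-hom adjunction to a projective resolution of $\mathbb{Z}$ over $\mathbb{Z}\Gamma$ then gives a canonical isomorphism $H_i(\Gamma, M)^\vee \cong H^i(\Gamma, M^\vee)$ for any $\mathbb{C}_p[\Gamma]$-module $M$. Taking $M = P_n(\mathbb{C}_p)$, using the self-duality $P_n \cong P_n^\vee$ furnished by $\langle\cdot,\cdot\rangle_{P_n}$, and using finite-dimensionality of these groups to pass through a double dual, one gets $H_2(\Gamma, P_n) \cong H^2(\Gamma, P_n)^\vee$ immediately. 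Your Bass--Serre / Mayer--Vietoris route is a genuine detour: the perfect pairings you invoke in step 5 between $H^1(G, P_n)$ and $H_1(G, P_n)$ for $G = \mathrm{SL}_2(\mathbb{Z})$ or $\Gamma_0(p)$ are themselves consequences of precisely this same UCT argument applied to $G$, so you are applying the duality at the subgroup level and then laboriously reassembling it via Mayer--Vietoris, when you could apply it once to $\Gamma$. Appealing to Poincar\'e--Lefschetz duality or the Petersson pairing is also unnecessary and slightly misleading here: the restriction/corestriction adjointness you need in step 6 is automatic for the linear-algebraic evaluation pairing, whereas a geometric pairing would carry the extra burden of being compared with evaluation before that adjointness could be invoked. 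Finally, the compatibility you defer as a ``technical point to verify'' --- that the comparison map $H_2 \to (H^2)^\vee$ matches the transpose map between the kernel and the dualized cokernel after applying Mayer--Vietoris --- is exactly the part that requires real care, and it evaporates entirely in the direct approach.
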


\begin{proof}
This follows from the Universal Coefficient Theorem.
\end{proof}

\begin{lemma}
    The map $$H_1(\Gamma, \mathrm{Div}_0(\mathcal{H}_p)\otimes P_n(\mathbb{C}_p)) \longrightarrow  H_1(\Gamma, \mathrm{Div}(\mathcal{H}_p)\otimes P_n(\mathbb{C}_p))$$ is surjective.
    \end{lemma}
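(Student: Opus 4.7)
The plan is to apply the long exact sequence in $\Gamma$-homology associated to the short exact sequence of $\Gamma$-modules
$$0 \longrightarrow \mathrm{Div}_0(\mathcal{H}_p) \otimes P_n(\mathbb{C}_p) \longrightarrow \mathrm{Div}(\mathcal{H}_p) \otimes P_n(\mathbb{C}_p) \xrightarrow{\deg} P_n(\mathbb{C}_p) \longrightarrow 0,$$
which arises from tensoring the degree augmentation $\mathrm{Div}(\mathcal{H}_p) \twoheadrightarrow \mathbb{Z}$ with the flat $\mathbb{C}_p$-module $P_n$. The resulting exact piece
$$H_1(\Gamma, \mathrm{Div}_0 \otimes P_n) \longrightarrow H_1(\Gamma, \mathrm{Div} \otimes P_n) \xrightarrow{d_*} H_1(\Gamma, P_n)$$
shows that surjectivity of the first map is equivalent to the vanishing $d_* = 0$.

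To establish $d_* = 0$, I would decompose $\mathrm{Div}(\mathcal{H}_p) = \bigoplus_{[\tau]} \mathbb{Z}[\Gamma \cdot \tau]$ as a direct sum over $\Gamma$-orbits, use the projection formula to identify $\mathbb{Z}[\Gamma \cdot \tau] \otimes P_n \cong \mathrm{Ind}_{\Gamma_\tau}^\Gamma P_n$, and apply Shapiro's lemma to obtain
$$H_1(\Gamma, \mathrm{Div}(\mathcal{H}_p) \otimes P_n) \cong \bigoplus_{[\tau]} H_1(\Gamma_\tau, P_n).$$
Orbits with finite stabilizer (generic points, elliptic points, and any CM points lying in $\mathcal{H}_p$) contribute nothing, since $|\Gamma_\tau|$ is invertible in $\mathbb{C}_p$. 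The only nontrivial contributions come from RM orbits, where $\Gamma_\tau/\{\pm I\}$ is infinite cyclic generated by the hyperbolic fundamental stabilizer $\gamma_\tau$, and $H_1(\Gamma_\tau, P_n) \cong P_n^{\gamma_\tau}$. Each such class, for $P \in P_n^{\gamma_\tau}$, is represented in $H_1(\Gamma, \mathrm{Div} \otimes P_n)$ by the Stark--Heegner-type cycle $\gamma_\tau \otimes \tau \otimes P$, and its image under $d_*$ is $\gamma_\tau \otimes P \in H_1(\Gamma, P_n)$.

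The key observation is that the conjugate $\bar\tau$ is also fixed by the hyperbolic element $\gamma_\tau$, so $(\tau - \bar\tau) \otimes P$ is a $\gamma_\tau$-invariant element of $\mathrm{Div}_0(\mathcal{H}_p) \otimes P_n$ and
$$\gamma_\tau \otimes (\tau - \bar\tau) \otimes P \in Z_1(\Gamma, \mathrm{Div}_0 \otimes P_n)$$
provides a natural degree-zero lift. Its image in $H_1(\Gamma, \mathrm{Div} \otimes P_n)$ decomposes according to the Shapiro factors corresponding to the orbits $[\tau]$ and $[\bar\tau]$; using the relation $\gamma_{\bar\tau} = \gamma_\tau^{-1}$ together with the standard homology identity $\gamma^{-1} \otimes Q = -\gamma \otimes Q$ for $Q \in P_n^\gamma$, one shows that these paired lifts span the RM-contribution summands. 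Consequently $d_*$ annihilates every generator of $H_1(\Gamma, \mathrm{Div} \otimes P_n)$, yielding the desired surjectivity. The main obstacle is the careful Shapiro-lemma bookkeeping for the two cases $[\tau] = [\bar\tau]$ (where an element $w \in \Gamma$ swapping the fixed points and satisfying $w\gamma_\tau w^{-1} = \gamma_\tau^{-1}$ enters the computation) and $[\tau] \neq [\bar\tau]$ (where the paired orbit must be treated explicitly), but I expect this bookkeeping to be routine once the fundamental stabilizer conventions are fixed.
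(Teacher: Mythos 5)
The paper's proof is remarkably short: it applies the long exact sequence in $\Gamma$-homology to the short exact sequence $0\to\mathrm{Div}_0\otimes P_n\to\mathrm{Div}\otimes P_n\to P_n\to 0$ and then observes that the target $H_1(\Gamma,P_n(\mathbb{C}_p))$ of the boundary map is \emph{zero}, citing $H^1(\Gamma,P_n)=0$ from \cite{RS} Lemma 3.10 together with the Universal Coefficient Theorem. Surjectivity is then automatic. You take the same first step (the LES, reducing to $d_*=0$), which is correct, but you then try to prove $d_*=0$ by directly exhibiting degree-zero lifts of a generating set of $H_1(\Gamma,\mathrm{Div}\otimes P_n)$, and this is where the argument breaks.

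The gap is in the claim that the cycles $\gamma_\tau\otimes(\tau-\bar\tau)\otimes P$ ``span the RM-contribution summands.'' They do not. If you carry out the Shapiro bookkeeping you deferred, using $\gamma_{\bar\tau}=\gamma_\tau^{-1}$ and $\gamma^{-1}\otimes Q=-\gamma\otimes Q$ as you propose, you find that when $[\tau]\ne[\bar\tau]$ the image of $\gamma_\tau\otimes(\tau-\bar\tau)\otimes P$ in the two Shapiro factors $P_n^{\gamma_\tau}\oplus P_n^{\gamma_{\bar\tau}}$ is the diagonal vector $(P,P)$; the companion lift built from $\bar\tau$ gives the same class, so only the diagonal is reached, not the full direct sum. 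When $[\tau]=[\bar\tau]$ with $w\tau=\bar\tau$ and $w\gamma_\tau w^{-1}=\gamma_\tau^{-1}$, one has $w^2\in\Gamma_\tau$, so $w$ acts as an involution on $P_n^{\gamma_\tau}$, and the lift lands on $(1+w^{-1})P$, which only fills the $+1$-eigenspace. Either way the explicit lifts miss part of the RM contribution. A further sanity check: your argument nowhere uses the vanishing of $H_1(\Gamma,P_n)$, yet the surjectivity is \emph{equivalent} to the boundary map into $H_1(\Gamma,P_n)$ being zero; any proof that works purely from the $\gamma_\tau\otimes(\tau-\bar\tau)$ cycles would therefore ``prove'' surjectivity even for a hypothetical group where $H_1(\cdot,P_n)\ne 0$, which cannot be right. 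The fix is to do what the paper does and invoke $H_1(\Gamma,P_n(\mathbb{C}_p))=0$; that single vanishing replaces all of the Shapiro case analysis.
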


\begin{proof}
The result is a consequence of the vanishing of $H_1(\Gamma, P_n(\mathbb{C}_p))$, which follows since $H^1(\Gamma, P_n)$ vanishes by \cite{RS} Lemma 3.10 and the Universal Coefficient Theorem provides a perfect pairing between $H_1(\Gamma,P_n)$ and $H^1(\Gamma,P_n)$.
\end{proof}

\begin{corollary}\label{Unitarycorollary}
Let $\mathcal{U} = \ker \left( H_1(\Gamma,\mathrm{Div}_0(\mathcal{H}_p)\otimes P_n(\mathbb{C}_p)) \longrightarrow  H^1(\Gamma, C_{har}^1(P_n(\mathbb{C}_p)))\right)$, then
$$H_1(\Gamma,\mathrm{Div}_0(\mathcal{H}_p)\otimes P_n(\mathbb{C}_p)) = \delta (H_2(\Gamma, P_n(\mathbb{C}_p))) \oplus \mathcal{U}.$$

\end{corollary}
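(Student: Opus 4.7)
The plan is to combine the commutative diagram of Proposition~\ref{commDiag} with the fact that both the obstruction map $\mathrm{Obs}_o$ and the Universal Coefficient pairing are isomorphisms. Let me denote by $\pi: H_1(\Gamma, \mathrm{Div}_0(\mathcal{H}_p) \otimes P_n(\mathbb{C}_p)) \to H^1(\Gamma, C^1_{har}(P_n(\mathbb{C}_p)))^\vee$ the map induced by the pairing $\langle \cdot, \cdot\rangle_o$, so that $\mathcal{U} = \ker(\pi)$ by definition. The connecting homomorphism $\delta: H_2(\Gamma, P_n(\mathbb{C}_p)) \to H_1(\Gamma, \mathrm{Div}_0(\mathcal{H}_p) \otimes P_n(\mathbb{C}_p))$ is the one arising from the $\Gamma$-homology long exact sequence of the tautological short exact sequence
\[
0 \longrightarrow \mathrm{Div}_0(\mathcal{H}_p) \otimes P_n(\mathbb{C}_p) \longrightarrow \mathrm{Div}(\mathcal{H}_p) \otimes P_n(\mathbb{C}_p) \xrightarrow{\,\mathrm{deg}\,} P_n(\mathbb{C}_p) \longrightarrow 0,
\]
which is precisely the sequence that produces the bottom row of the second diagram of Proposition~\ref{commDiag}.

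The first key step is to chase that diagram to obtain the identity $\pi \circ \delta = \mathrm{Obs}_o^\vee \circ \alpha$, where $\alpha: H_2(\Gamma, P_n(\mathbb{C}_p)) \xrightarrow{\sim} H^2(\Gamma, P_n(\mathbb{C}_p))^\vee$ is the Universal Coefficient isomorphism stated in the preceding proposition. By Lemma~\ref{HidaH2van} (combined with Corollary~\ref{boundedCor} to replace $\mathcal{O}(k,\mathcal{H}_p)$ by $C^1_{har}(P_n)$), the obstruction map $\mathrm{Obs}_o$ is an isomorphism; hence so is its transpose $\mathrm{Obs}_o^\vee$. Therefore $\pi \circ \delta$ is an isomorphism from $H_2(\Gamma, P_n(\mathbb{C}_p))$ onto the whole dual $H^1(\Gamma, C^1_{har}(P_n(\mathbb{C}_p)))^\vee$.

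From this identification the direct sum decomposition follows formally. Injectivity of $\pi \circ \delta$ yields $\delta(H_2(\Gamma, P_n(\mathbb{C}_p))) \cap \mathcal{U} = 0$: any element of the intersection has the form $\delta(y)$ with $\pi(\delta(y)) = 0$, forcing $y = 0$ and hence $\delta(y) = 0$. Surjectivity of $\pi \circ \delta$ implies in particular that $\pi$ itself is surjective; so given any $x \in H_1(\Gamma, \mathrm{Div}_0(\mathcal{H}_p) \otimes P_n(\mathbb{C}_p))$, I can choose $y \in H_2(\Gamma, P_n(\mathbb{C}_p))$ with $\pi(\delta(y)) = \pi(x)$ and write $x = \delta(y) + (x - \delta(y))$, with $x - \delta(y) \in \ker(\pi) = \mathcal{U}$.

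The main obstacle is the key step: verifying on the chain level that the diagram of Proposition~\ref{commDiag} genuinely encodes the identity $\pi \circ \delta = \mathrm{Obs}_o^\vee \circ \alpha$. This amounts to checking that the pairing $\langle \cdot, \cdot \rangle_o$ intertwines the homological connecting map for the degree sequence with the cohomological connecting map for $0 \to P_n \to C^0_{har}(P_n) \to C^1_{har}(P_n) \to 0$, i.e.\ that the natural pairing between the two long exact sequences is compatible. Once this compatibility is established, the remainder of the argument is purely formal homological algebra.
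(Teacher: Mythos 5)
Your proof is correct, and it fills in the natural argument that the paper omits for this corollary. The identity $\pi\circ\delta = \mathrm{Obs}_o^\vee\circ\alpha$ that you single out as the key step is exactly the commutativity of the right-hand square in the second diagram of Proposition~\ref{commDiag}, which the paper states as a proposition; you may therefore cite it directly, so the caution in your final paragraph about a chain-level verification is unnecessary for purposes of this corollary (it would only become an issue if one also had to supply the proof of Proposition~\ref{commDiag}). Once $\pi\circ\delta$ is an isomorphism, your formal argument --- $\mathrm{im}(\delta)\cap\ker(\pi)=0$ from injectivity, and $x=\delta(y)+(x-\delta(y))$ with $y=(\pi\circ\delta)^{-1}\pi(x)$ from surjectivity --- is the standard ``$g\circ f$ iso implies $B=\mathrm{im}(f)\oplus\ker(g)$'' and is handled correctly. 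Two small remarks: (i) the corollary's statement writes the target of the defining map of $\mathcal{U}$ as $H^1(\Gamma, C^1_{har}(P_n(\mathbb{C}_p)))$ rather than its linear dual, which is evidently a typo, and your reading of this map as the pairing-induced $\pi$ landing in the dual is the only one consistent with the diagram of Proposition~\ref{commDiag}; (ii) for the isomorphy of $\mathrm{Obs}_o$ you cite Lemma~\ref{HidaH2van} together with Corollary~\ref{boundedCor}, whereas the paper itself later invokes Proposition~\ref{obstructionIso} for the same fact --- both are sound, since Proposition~\ref{obstructionIso} is deduced from Lemma~\ref{HidaH2van}.
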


\begin{definition} 
It follows from Corollary \ref{Unitarycorollary} that for each $D\in H_1(\Gamma, \mathrm{Div}(\mathcal{H}_p)\otimes P_n(\mathbb{C}_p))$ there exists a unique unitary divisor $D^\sharp \in \mathcal{U}$ which maps to $D$.
\end{definition}

We would like to slightly extend the pairings described above to pair a meromorphic cocycle $\mathcal{J} \in H^1(\Gamma, \mathcal{M}_{\mathscr{L}}(2-k,\mathcal{H}_p;K))$ with a divisor that is supported away from the poles of $\mathcal{J}$. This is analagous to evaluating multiplicative rigid cocycles at RM-points. 

\begin{definition} 
Given $f\in \mathcal{O}(k,\mathcal{H}_K)^b$ and $\tau\in \mathbb{P}^1(K)\backslash\{\infty\}$. We say that $f$ has no residues near $\tau$ if there exists an open neighborhood $U\subset \mathbb{P}^1(K)$ of $\tau$ such that for every edge $e$ with $U_e\subset U$, the residue vanishes: $$\mathrm{res}(f)(e) = 0.$$ 
We also say in this case that $f$ has no residues in $U$. For $F\in \mathcal{O}_\mathscr{L}(2-k,\mathcal{H}_F)$ we say $F$ has no residues near $\tau$ if $d^{n+1}F$ has no residues near $\tau$. 
Finally, we extend these definitions to finite divisors, to say that $f$ has no residues on a divisor $D$ on $\mathrm{Div}(\mathbb{P}^1(F))$ if $f$ has no residues near each point in the support of $D$. 
\end{definition}

\begin{proposition}
If $f\in \mathcal{O}_\mathscr{L}(2-k,\mathcal{H}_K)$, $\tau \in \mathbb{P}^1(K)\backslash \{\infty \}$ and $P\in P_n(\mathbb{C}_p)$ such that $f$ has no residue at $\tau$, then the pairing $$\langle f,\alpha_{\tau\otimes P, U}(t)\rangle_B $$ does not depend on the choice of $U$. Where $U$ is any neighborhood of $\tau$ on which $f$ has no residues and $\alpha_{\tau\otimes P, U}$ is defined by $$\alpha_{\tau\otimes P,U}(t) = \begin{cases}0 & \text{if } t\in U, \\ P(t)\log_\mathscr{L}(t-\tau) & \text{otherwise}. \end{cases}$$
\end{proposition}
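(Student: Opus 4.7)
The plan is to establish independence by comparing the pairings for two nested neighborhoods, then reduce the general case to this via intersection. Suppose $U_1 \subseteq U_2$ are both neighborhoods of $\tau$ on which $f$ has no residues (we may take $U_2$ disjoint from $\infty$). A direct case analysis on $t \in U_1$, $t \in U_2 \setminus U_1$, and $t \notin U_2$ gives
\[
h(t) := \alpha_{\tau \otimes P, U_1}(t) - \alpha_{\tau \otimes P, U_2}(t) = P(t)\log_\mathscr{L}(t-\tau)\,\chi_{U_2 \setminus U_1}(t).
\]
This vanishes on $U_1 \ni \tau$ and outside $U_2$, so the singularities of $\log_\mathscr{L}(t-\tau)$ at $\tau$ and of the log-polynomial at $\infty$ are both avoided. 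On $U_2 \setminus U_1$ (which is disjoint from both $\tau$ and $\infty$), $h$ agrees with the locally analytic $P(t)\log_\mathscr{L}(t-\tau)$. Hence $h$ is locally analytic on all of $\mathbb{P}^1(K)$ with no logarithmic contribution near $\infty$, so $h \in C(k,K) \subseteq \Sigma(k,K)$.

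Since $h$ lands in the smaller space $\Sigma(k, K)$, Proposition \ref{breuilPropertiesProp}(2) gives
\[
\langle f, h \rangle_B = \langle d^{n+1}f, h \rangle_M,
\]
so the task reduces to showing this Morita pairing vanishes. I would decompose $U_2 \setminus U_1$ as a finite disjoint union of basic compact opens $U_{e_1}, \ldots, U_{e_m}$ for edges $e_i \in \mathcal{T}_K^1$, and correspondingly write $h = \sum_i h_i$ with $\mathrm{supp}(h_i) \subseteq U_{e_i}$. By linearity, it suffices to prove $\langle d^{n+1}f, h_i\rangle_M = 0$ for each $i$.

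For each $i$, the hypothesis that $f$ has no residues in $U_2$ forces $\mathrm{res}_{e'}(d^{n+1}f) = 0$ for every edge $e'$ with $U_{e'} \subseteq U_{e_i}$. Picking $l$ large enough that $U_{e_i}$ is a union of maximal disks $B^-(a, l)$, one expands $d^{n+1}f$ in Mittag-Leffler at such $a$ and $h_i$ in Taylor series at $a$, and the Morita sum reduces to local contributions over these $a$. I expect these contributions to rearrange, via a residue-theorem argument on the Bruhat--Tits tree in the spirit of the Schneider--Teitelbaum correspondence, into a linear combination of annular residues of $d^{n+1}f$ on sub-edges of $U_{e_i}$ — each of which vanishes by hypothesis. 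Finally, for arbitrary valid $U$ and $U'$ (not necessarily nested), one compares both with $U \cap U'$, which is also a valid neighborhood.

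The principal obstacle is making precise this last reduction: showing that the localized Morita contribution over the compact open $U_{e_i}$ is determined purely by annular edge residues inside $U_{e_i}$. In the bounded setting this is essentially the statement that the Schneider--Teitelbaum measure attached to a bounded harmonic function with vanishing edge residues in a region is identically zero on that region, so its pairing with any locally analytic function supported there is zero. Extending this principle from measures to the distributional pairing of the possibly unbounded $d^{n+1}f \in \mathcal{O}(k, \mathcal{H}_K)$ — i.e.\ tracking higher moments of the boundary distribution against Taylor coefficients of $h_i$ — is where the careful combinatorial analysis of the Mittag-Leffler coefficients on the tree must be carried out.
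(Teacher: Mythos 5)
Your approach is structurally the same as the paper's: reduce to the Morita pairing via part~(2) of Proposition~\ref{breuilPropertiesProp} applied to the difference $h = \alpha_{\tau\otimes P, U_1} - \alpha_{\tau\otimes P, U_2}$, which lies in $\Sigma(k,K)$ because its log-singularity at $\tau$ cancels and it vanishes near $\infty$, and then argue the Morita pairing vanishes. (You consider nested $U_1\subseteq U_2$ and reduce the general case via intersection; the paper handles two arbitrary $U_1,U_2$ directly by noting $h$ is supported on their symmetric difference. These are equivalent.) The paper's proof, however, stops there: it simply writes $\langle d^{k-1}f, h\rangle_M = 0$ with no further justification. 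You go further and correctly isolate exactly why this last step is not automatic, which is the most valuable part of your write-up.

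The ``principal obstacle'' you flag is a genuine subtlety that the paper's proof elides. The Morita pairing realizes $d^{k-1}f$ as a \emph{distribution} on locally analytic functions, not merely a measure, so vanishing of all annular residues of $d^{k-1}f$ on edges inside $U$ (i.e.\ the ``no residues near $\tau$'' hypothesis) does not by itself kill $\langle d^{k-1}f, h\rangle_M$ for $h$ supported on $U$ --- unbounded $d^{k-1}f$ can carry ``higher moment'' data past what the edge residues see, as your example-hunting in the last paragraph correctly anticipates. The missing ingredient that closes the gap is boundedness. In this paper, the proposition is applied only when $d^{k-1}f$ lies in $\mathcal{M}(k;K)$, which by its construction via Lemma~\ref{merextLem} sits inside $\mathcal{O}^b(k,\mathcal{H}_K)$. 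For bounded $d^{k-1}f$ the Schneider--Teitelbaum correspondence identifies it with a bounded $P_n$-valued measure whose value on $U_e$ is precisely the annular residue at $e$; if those residues all vanish for edges inside $U$, the measure vanishes on $U$, and the Morita pairing against any locally analytic $h$ supported on $U$ is the integral of $h$ against that vanishing measure, hence zero. So your proposed resolution --- invoke the bounded-measure interpretation and reduce to a decomposition into $U_{e_i}$'s --- is correct, and you have correctly located what needs to be said to make both your own and the paper's terse proof actually close. You should simply \emph{add} the hypothesis (or remark) that $d^{k-1}f$ is bounded, which holds in all applications, and then the rest of your argument goes through without needing the more delicate ``track higher moments'' analysis you were worried about.
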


\begin{proof}
Given any two such neighborhoods, $U_1$, and $U_2$,  the difference $$\alpha_{\tau\otimes p,U_1}-\alpha_{\tau \otimes P,U_2}$$ belongs to  $\Sigma(k,K)$ and is supported on $U_1\cup U_2$, then by (2) of Proposition \ref{breuilPropertiesProp} $$\langle f,\alpha_{\tau\otimes P,U_1} - \alpha_{\tau\otimes P, U_2}\rangle_B = \langle d^{k-1}f, \alpha_{\tau\otimes P,U_1} - \alpha_{\tau\otimes P, U_2} \rangle_M = 0.$$ 
\end{proof}

We will still use the notation $\langle f, P(t)\log_{\mathscr{L}}(t-\tau)\rangle_B$ to refer to $\langle f, \alpha_{\tau\otimes P,U}\rangle_B $ for some choice of open set $U$. 

The $p$-adic valuation pairing can also be similarly extended: If $f\in \mathcal{O}(k,\mathcal{H}_F)$ has no residues near a divisor, $$D = \sum_i (\tau_i)\otimes p_i \in \mathrm{Div}_0(\mathbb{P}^1(K))\otimes P_n(\mathbb{C}_p).$$ Then for each $i$ the sum $$\sum_{e\in \text{path}(v_0, \tau_i)} \langle \phi_f(e),p_i\rangle$$ is finite since $\phi_f(e)$ is eventually $0$ as the edges approach $\tau_i$.  So we set $$\langle f, D\rangle_{o} \defeq \sum_i \sum_{e\in\text{path}(v_0,\tau_i)} \langle \phi_f(e),p_i\rangle.$$

\begin{definition}
For a function $\phi \in C^0_{har}(P_n)$ we will say that it has no residues near $\tau$ if there exists an open neighborhood $U$ of $\tau$, such that for every $w\in U$, and for every path $(w_0, w_1, \cdots)$ which tends to $w$ the value, $\phi(w_i)$, is eventually constant, and we will denote this constant value by $\phi(w)$. In which case, we define $$\langle \phi ,(\tau)\otimes P\rangle_o \defeq \langle \phi(\tau), P \rangle_{P_n}.$$
\end{definition}

\begin{lemma}
\label{coboundaryLem}
Fix $$\sum_i \gamma_i \otimes \sum_j \left(\sigma_{ij}\otimes P_{ij}\right) \in Z_1(\Gamma, \mathrm{Div}(\mathcal{H}_p^\dagger)\otimes P_n)$$ and $F\in \mathcal{O}_{\mathscr{L}}(2-k,\mathcal{H}_K)$ such that $\sigma_{ij}\in \mathbb{P}^1(K)$, and for each $i$, the translated function $F\vert \gamma_i$ has no residues near $\sigma_{ij}$ for all $j$. Then $$\sum_i \left\langle  f\vert \gamma_i , \sum_j \sigma_{ij}\otimes P_{ij}\right\rangle _B= \sum_i \left\langle f, \sum_j \sigma_{ij}P_{ij}\right\rangle _B .$$

\end{lemma}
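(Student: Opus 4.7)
The plan is to interpret the claimed identity as the vanishing of the coboundary $\gamma \mapsto F|\gamma - F$ paired with the $1$-cycle $\sum_i \gamma_i \otimes D_i$, where $D_i \defeq \sum_j \sigma_{ij}\otimes P_{ij}$. Concretely, the identity is equivalent to
\[
\sum_i \langle F|\gamma_i - F,\, D_i\rangle_B \;=\; 0,
\]
which is the standard fact that coboundaries annihilate cycles under a compatible $\Gamma$-equivariant pairing. The function $\gamma \mapsto F|\gamma - F$ is exactly the differential of the $0$-cochain $F\in C^0(\Gamma,\mathcal{O}_\mathscr{L}(2-k,\mathcal{H}_K))$, so the content of the lemma is really that the extended Breuil pairing is well-behaved with respect to the (co)boundary maps.

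The key input is the $\Gamma$-equivariance of the extended pairing: for every $\gamma\in\Gamma$ and divisor $D = \tau\otimes P$ at which both sides are defined,
\[
\langle F|\gamma,\, D\rangle_B \;=\; \langle F,\, D\cdot\gamma^{-1}\rangle_B.
\]
Writing the left side as $\langle F|\gamma, \alpha_{\tau\otimes P, U}\rangle_B$ for a neighbourhood $U$ of $\tau$ on which $F|\gamma$ has no residues, the $\mathrm{GL}_2(K)$-equivariance of the Breuil pairing (Theorem~\ref{thm: Breuil Duality}) moves $\gamma$ across to yield $\langle F, \alpha_{\tau\otimes P, U}|\gamma^{-1}\rangle_B$. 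Using the explicit transformation rule for $C_\mathscr{L}(k,K)$, one expresses $\alpha_{\tau\otimes P, U}|\gamma^{-1}$ as $\alpha_{\gamma\tau\otimes(P|\gamma^{-1}),\,\gamma U}$ plus a correction $h$ arising from the logarithmic factor $\log_\mathscr{L}(\det\gamma/(c't+d')^2)$ in the slash action. The pure-polynomial part of $h$ lies in $\Sigma(k, K)$, and for this part property~(2) of Proposition~\ref{breuilPropertiesProp} reduces $\langle F, h\rangle_B$ to the Morita pairing $\langle d^{n+1}F, h\rangle_M$, which vanishes once $U$ is chosen so that $d^{n+1}F$ is regular where $h$ is supported.

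With equivariance in hand, the proof finishes by
\[
\sum_i \langle F|\gamma_i, D_i\rangle_B \;=\; \sum_i \langle F, D_i\cdot\gamma_i^{-1}\rangle_B \;=\; \Big\langle F,\, \sum_i D_i\cdot\gamma_i^{-1}\Big\rangle_B,
\]
and the hypothesis $\sum_i \gamma_i \otimes D_i \in Z_1(\Gamma, \mathrm{Div}(\mathcal{H}_p^\dagger)\otimes P_n)$ is precisely the relation $\sum_i\bigl(D_i\cdot\gamma_i^{-1} - D_i\bigr) = 0$, so $\sum_i D_i\cdot\gamma_i^{-1} = \sum_i D_i$, yielding $\langle F,\sum_i D_i\rangle_B = \sum_i\langle F, D_i\rangle_B$.

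The main obstacle is controlling the correction term $h$ in the equivariance calculation: the slash action introduces not just rational functions but extra polynomial-times-$\log_\mathscr{L}$ contributions with singularities at $\gamma(\infty)$, so one cannot conclude $\langle F, h\rangle_B = 0$ term by term via the Morita reduction alone. The cleanest route around this is to prove the equivariance and the cycle-condition collapse simultaneously at the level of the full sum over $i$: the logarithmic pieces of the various $h$'s assemble into a single $\Gamma$-equivariant object on the boundary whose pairing with $F$ vanishes by the cycle condition, effectively realizing the adjointness $\langle \delta F, c\rangle = \langle F, \partial c\rangle$ for the extended pairing at the chain level.
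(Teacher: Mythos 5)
Your approach is the paper's approach: express the pairing via the cut-off functions $\alpha$, move each $\gamma_i$ across by $\mathrm{GL}_2$-equivariance of the Breuil pairing, and then invoke the cycle condition. The paper's proof is exactly this chain of equalities, with the crucial middle step being the identity $\alpha_{\sigma_{ij},P_{ij},U_{ij}}\mid\gamma_i^{-1} = \alpha_{\gamma_i\sigma_{ij},\,P_{ij}\mid\gamma_i^{-1},\,\gamma_iU_{ij}}$ in $\Sigma_\mathscr{L}(k,K)$.

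The obstacle you flag, however, is a phantom: the ``correction'' $h$ arising from the $\log_\mathscr{L}(\det\gamma/(c't+d')^2)$ factor in the slash action cancels on the nose, so there is no term to control and no need for the hand-wavy ``assemble the $h$'s into a $\Gamma$-equivariant object'' step. Concretely, write $\gamma^{-1} = \begin{pmatrix}a'&b'\\c'&d'\end{pmatrix}$ with determinant $1$ and set $\tilde P(t) = (c't+d')^n P(\gamma^{-1}t)$. Near $\infty$, the function $\alpha_{\tau\otimes P,U}(t) = P(t)\log_\mathscr{L}(t-\tau)$ has polynomial part $-P/2$ in the sense of \textnormal{(\ref{logpoly})}, so the slash-action correction is $(c't+d')^n\cdot(-P(\gamma^{-1}t)/2)\cdot(-2\log_\mathscr{L}(c't+d')) = \tilde P(t)\log_\mathscr{L}(c't+d')$. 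On the other hand, factoring the Möbius map inside the logarithm gives $\gamma^{-1}t-\tau = \dfrac{(a'-\tau c')(t-\gamma\tau)}{c't+d'}$, so $(c't+d')^n P(\gamma^{-1}t)\log_\mathscr{L}(\gamma^{-1}t-\tau) = \tilde P(t)\log_\mathscr{L}(a'-\tau c') + \tilde P(t)\log_\mathscr{L}(t-\gamma\tau) - \tilde P(t)\log_\mathscr{L}(c't+d')$. The $\pm\tilde P(t)\log_\mathscr{L}(c't+d')$ terms cancel against the slash-action correction; what remains beyond $\tilde P(t)\log_\mathscr{L}(t-\gamma\tau)$ is the scalar multiple $\tilde P(t)\log_\mathscr{L}(a'-\tau c')$ of a degree-$\le n$ polynomial, which is zero in $\Sigma_\mathscr{L}(k,K) = C_\mathscr{L}(k,K)/P_n(\mathbb{C}_p)$. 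Thus the equivariance identity for $\alpha$ is exact, and the final collapse is just the cycle condition $\sum_i(D_i\cdot\gamma_i^{-1}-D_i) = 0$ together with the independence of the pairing from the choice of neighbourhoods, precisely as you intended.
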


\begin{proof}
Unpacking the definitions we have 
\begin{align*}
    &\sum_i \left\langle  f\vert \gamma_i , \sum_j \sigma_{ij}\otimes P_{ij}\right\rangle
    = \sum_i \left\langle f\vert \gamma_i , \sum_j \alpha_{\sigma_{ij}, P_{ij}, U_{ij}}\right\rangle_B \\ 
    =& \sum_i \left\langle f,\sum_j \alpha_{\sigma_{ij}, P_{ij}, U_{ij}}\vert \gamma_i^{-1}\right\rangle_B
    = \sum_i \left\langle f,\sum_j \alpha_{\gamma_i \sigma_{ij}, P_{ij}\vert \gamma_i^{-1}, \gamma_i U_{ij}} \right\rangle_B\\ 
    =& \sum_i \left\langle f,\sum_j \alpha_{\sigma_{ij},P_{ij},U_{ij}'}\right\rangle_B.
\end{align*}
The final equality follows from the condition on the divisor belonging to $ Z_1(\Gamma, \mathrm{Div}(\mathcal{H}_p^\dagger)\otimes P_n)$.
\end{proof}

\begin{definition}
    It follows from Lemma \ref{coboundaryLem} that the extension of the Breuil pairing to meromorphic functions does not depend on the choice of cocycle class since it vanishes on coboundaries. The value $$\left\langle\mathcal{J}(\gamma_\tau), \frac{(t-\sigma)^{n/2}(t-\overline{\sigma})^{n/2}}{(\sigma-\overline{\sigma})^{n/2}}\log_\mathscr{L}(t-\sigma)\right\rangle _B$$ depends only on the cocycle class in $\mathcal{J} \in H^1(\Gamma,\mathcal{O}_{\mathscr{L}}(2-k,\mathcal{H}_K))$. We will denote this value by $\mathcal{J}[\tau]$

    Similarly, for $J\in H^1(\Gamma, \mathcal{O}(k, \mathcal{H}_K))$ and a degree $0$ divisor $D \in H_1(\Gamma, \mathrm{Div}_0(\mathcal{H}_p)\otimes P_n)$. We will denote the associated pairing by $J[D]$.
\end{definition}

\subsection{Higher Green's Functions}\label{sec: higher Green's Functions}

Recall that $\mathcal{RM}^{\text{in}}$ denotes the free $\mathbb{Z}$-module of orbits of $RM$-points on $\mathcal{H}_p$ which are inert. For any $\mathcal{D} \in \mathcal{RM}^{\text{in}}$ we have obtained a uniquely defined cocycle $$J_{k,\mathcal{D}}\in H^1(\Gamma,\mathcal{O}(k,\mathcal{H}_K)),$$ whose divisor is $\mathrm{Div}_{k,\mathcal{D}}$ and such that $\mathrm{Obs}_o(J_{k,D})$ is trivial. We also associate to each such divisor $\mathcal{D}$ a homology class $$y_{k,\mathcal{D}} \in H_1(\Gamma, \mathrm{Div}(\mathcal{H}_p^\dagger)\otimes P_n),$$ and by Lemma \ref{Unitarycorollary} a unique unitary lift $$y_{k,D}^\sharp \in \mathcal{U} \subset H_1(\Gamma, \mathrm{Div}_0(\mathcal{H}_p^\dagger)\otimes P_n).$$
\begin{definition}
For a pair of inert RM-divisors $\mathcal{D_1},\mathcal{D}_2 \in \mathcal{RM}^{\text{in}}$ define the higher Green's function by $$G_k(\mathcal{D}_1,\mathcal{D}_2) = J_{k,\mathcal{D}_1}[y_{k,\mathcal{D}_2}^\sharp].$$
\end{definition} 

\begin{proposition}\label{prop: principal values}
If there exists a lift $\mathcal{J}_{k,\mathcal{D}_1}^\mathscr{L}\in H^1(\Gamma, \mathcal{O}_{\mathscr{L}}(2-k,\mathcal{H}_K))$ such that $$d^{k-1}\mathcal{J}^{\mathscr{L}}_{k,\mathcal{D}_1} = J_{k,\mathcal{D}_1},$$ then $$G_{k}(\mathcal{D}_1,\mathcal{D}_2) = J_{k,\mathcal{D}_1}[y_{k,\mathcal{D}_2}^\sharp]=  \mathcal{J}^{\mathscr{L}}_{k,\mathcal{D}_1}\left[y_{k,\mathcal{D}_2}\right].$$
\end{proposition}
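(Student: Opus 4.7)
The first equality in the conclusion is the definition of $G_k(\mathcal{D}_1,\mathcal{D}_2)$, so the substance lies in the second equality. Setting $\mathcal{J} := \mathcal{J}^{\mathscr{L}}_{k,\mathcal{D}_1}$ and $J := J_{k,\mathcal{D}_1} = d^{k-1}\mathcal{J}$, the plan is to exhibit both sides as the intermediate Breuil pairing $\langle \mathcal{J}, y^{\sharp}_{k,\mathcal{D}_2}\rangle_B$ via two independent routes.

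First, I will show that $J[y^{\sharp}_{k,\mathcal{D}_2}] = \langle \mathcal{J}, y^{\sharp}_{k,\mathcal{D}_2}\rangle_B$. Since $y^{\sharp}_{k,\mathcal{D}_2}$ has degree zero, its image under the logarithmic embedding $i_{\mathscr{L}}$ lies in $\Sigma(k,\mathbb{Q}_p)\subset \Sigma_{\mathscr{L}}(k,\mathbb{Q}_p)$: the coefficient of $\log_{\mathscr{L}}(t)$ at infinity coming from the Taylor expansion $\log_{\mathscr{L}}(t-\tau) = \log_{\mathscr{L}}(t) + O(t^{-1})$ at each point cancels because the divisor is degree zero. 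Property (2) of Proposition \ref{breuilPropertiesProp}, applied representative-by-representative, then gives
$$\langle \mathcal{J}, y^{\sharp}_{k,\mathcal{D}_2}\rangle_B = \langle d^{k-1}\mathcal{J}, y^{\sharp}_{k,\mathcal{D}_2}\rangle_M = \langle J, y^{\sharp}_{k,\mathcal{D}_2}\rangle_M = J[y^{\sharp}_{k,\mathcal{D}_2}].$$
This identity is stated for analytic cocycles, but extends to the meromorphic/punctured setting because the extended pairings were both defined using the same test functions $\alpha_{\tau\otimes P, U}$ (see Lemma \ref{coboundaryLem} and the discussion preceding it), and the identity $\langle \mathcal{J}, \alpha\rangle_B = \langle d^{k-1}\mathcal{J}, \alpha\rangle_M$ continues to hold pointwise when $\alpha\in \Sigma(k)$ (which is the case on the support of a degree-zero divisor).

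Second, I will show $\langle \mathcal{J}, y^{\sharp}_{k,\mathcal{D}_2}\rangle_B = \mathcal{J}[y_{k,\mathcal{D}_2}]$. The unitary lift $y^{\sharp}_{k,\mathcal{D}_2} \in H_1(\Gamma, \mathrm{Div}_0(\mathcal{H}_p^\dagger)\otimes P_n(\mathbb{C}_p))$ is by construction a preimage of $y_{k,\mathcal{D}_2} \in H_1(\Gamma, \mathrm{Div}(\mathcal{H}_p^\dagger)\otimes P_n(\mathbb{C}_p))$ under the natural inclusion $\mathrm{Div}_0\otimes P_n \hookrightarrow \mathrm{Div}\otimes P_n$. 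Since $i_{\mathscr{L}}$ is defined by the same formula $(\tau)\otimes P\mapsto P(t)\log_{\mathscr{L}}(t-\tau)$ on both groups, the Breuil pairing with $\mathcal{J}$ is natural under this inclusion, giving the stated equality.

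Combining the two steps yields $J[y^{\sharp}_{k,\mathcal{D}_2}] = \mathcal{J}[y_{k,\mathcal{D}_2}]$. The main technical point is verifying that property (2) of Proposition \ref{breuilPropertiesProp} survives the passage to the extended meromorphic setting; this is essentially a formal check using Lemma \ref{coboundaryLem}, since the relevant derivative identity is local on $\mathbb{P}^1(K)$ and holds on each $\alpha_{\tau\otimes P, U}$ piece lying in $\Sigma(k)$. Everything else is an instance of the commutative diagram of Proposition \ref{commDiag}, adapted to allow meromorphic cocycles paired with divisors supported away from their poles.
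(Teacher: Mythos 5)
Your proof is correct and follows the same route as the paper: the paper's one-line proof simply cites the commutativity of the left square of the diagram in Proposition \ref{commDiag}, and your argument is precisely a verification of that commutativity, factoring through $\langle \mathcal{J}, y^{\sharp}_{k,\mathcal{D}_2}\rangle_B$, observing that $i_{\mathscr{L}}$ sends degree-zero divisors into $\Sigma(k)$ so that Proposition \ref{breuilPropertiesProp}(2) applies, and using naturality of the Breuil pairing under the inclusion $\mathrm{Div}_0\otimes P_n \hookrightarrow \mathrm{Div}\otimes P_n$. You supply the details the paper leaves implicit, but the underlying ingredients are identical.
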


\begin{proof}
    This follows from the commutativity of the diagram in Proposition \ref{commDiag}.
\end{proof}

\section{Conjectures and Computations}

\subsection{Efficient computations of principal values}

In this section, we will describe the computations of values $$G_{k}(\mathcal{D}_1,\mathcal{D}_2),$$ where $\text{Deg}_{k, \mathcal{D}_1}$ is trivial and $\mathcal{D}_1$ is principal; that is, for every $\mathscr{L}\in \mathbb{C}_p$, there exists $\mathcal{J}_{k,\mathcal{D}_1}^{\mathscr{L}} \in H^1(\Gamma,\mathcal{M}_{\mathscr{L}}(2-k ; K))$ such that $$d^{k-1}\mathcal{J}_{k,\mathcal{D}_1}^{\mathscr{L}} = J_{k,\mathcal{D}_1}.$$ 

We follow a similar approach to that of Darmon-Vonk \cite{DV1} Section 3.5  to efficiently compute rigid meromorphic cocycles. The main difference in this higher weight analogue is that we work with an additive version corresponding to the logarithm of their multiplicative cocycles. 

\begin{proposition} 
    For weight $k>2$, if there exists a lift $\mathcal{J}_{k,\mathcal{D}_1}^{\mathscr{L}}\in H^1(\Gamma,\mathcal{O}_{\mathscr{L}}(2-k,\mathcal{H}_K))$, then there is a $\Gamma$-invariant modular symbol in $MS^\Gamma(\mathcal{O}_{\mathscr{L}}(2-k,\mathcal{H}_K))$ whose associated cocycle is $\mathcal{J}^\mathscr{L}_{k,\mathcal{D}_1}$.
\end{proposition}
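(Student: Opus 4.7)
The plan is to use the short exact sequence of $\Gamma$-modules
$$0 \longrightarrow P_n(\mathbb{C}_p) \longrightarrow \mathcal{O}_\mathscr{L}(2-k, \mathcal{H}_K) \xrightarrow{d^{k-1}} \mathcal{O}(k, \mathcal{H}_K) \longrightarrow 0$$
and perform a diagram chase that combines two long exact sequences. Since $\mathbb{D}_0$ is a subgroup of the free abelian group $\mathbb{Z}[\mathbb{P}^1(\mathbb{Q})]$, it is itself free, so the functor $\mathrm{MS}(-) = \mathrm{Hom}(\mathbb{D}_0, -)$ is exact. Applying it and then taking $\Gamma$-cohomology yields a long exact sequence
$$\cdots \longrightarrow \mathrm{MS}^\Gamma(\mathcal{O}_\mathscr{L}(2-k)) \longrightarrow \mathrm{MS}^\Gamma(\mathcal{O}(k)) \xrightarrow{\delta_1} H^1(\Gamma, \mathrm{MS}(P_n)) \longrightarrow \cdots$$
which sits alongside the usual
$$\cdots \longrightarrow H^1(\Gamma, P_n) \longrightarrow H^1(\Gamma, \mathcal{O}_\mathscr{L}(2-k)) \longrightarrow H^1(\Gamma, \mathcal{O}(k)) \xrightarrow{\delta_2} H^2(\Gamma, P_n) \longrightarrow \cdots .$$
The comparison map $m \mapsto (\gamma \mapsto m\{r, \gamma r\})$ from $\mathrm{MS}^\Gamma(M)$ to $H^1(\Gamma, M)$ is natural in $M$ and defines a morphism between these two $\delta$-functors; combined with Corollary \ref{tophoriso} (in its $\mathcal{H}_K$-version) and Proposition \ref{obstructionIso}, it produces the commutative diagram with exact rows
\begin{center}
\begin{tikzcd}[column sep=small]
\mathrm{MS}^\Gamma(\mathcal{O}_\mathscr{L}(2-k)) \ar[r] \ar[d,"b"] & \mathrm{MS}^\Gamma(\mathcal{O}(k)) \ar[r,"\delta_1"] \ar[d,"\sim"] & H^1(\Gamma, \mathrm{MS}(P_n)) \ar[d,"\sim"] \\
H^1(\Gamma, \mathcal{O}_\mathscr{L}(2-k)) \ar[r] & H^1(\Gamma, \mathcal{O}(k)) \ar[r,"\delta_2"] & H^2(\Gamma, P_n).
\end{tikzcd}
\end{center}

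The diagram chase then proceeds as follows. Given $[\mathcal{J}_{k,\mathcal{D}_1}^\mathscr{L}] \in H^1(\Gamma, \mathcal{O}_\mathscr{L}(2-k))$, its image under $d^{k-1}$ in $H^1(\Gamma, \mathcal{O}(k))$ is the class $[J_{k, \mathcal{D}_1}]$. By the right vertical isomorphism, this class is represented by a unique $\Gamma$-invariant modular symbol $M \in \mathrm{MS}^\Gamma(\mathcal{O}(k, \mathcal{H}_K))$. The exactness of the bottom row gives $\delta_2[J_{k,\mathcal{D}_1}] = 0$; by commutativity of the right square and injectivity of the right vertical map, $\delta_1(M) = 0$, so $M$ lifts to some $\widetilde{M} \in \mathrm{MS}^\Gamma(\mathcal{O}_\mathscr{L}(2-k, \mathcal{H}_K))$. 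The two classes $b(\widetilde{M})$ and $[\mathcal{J}_{k,\mathcal{D}_1}^\mathscr{L}]$ then have the same image in $H^1(\Gamma, \mathcal{O}(k))$, so their difference lies in the image of $H^1(\Gamma, P_n) \to H^1(\Gamma, \mathcal{O}_\mathscr{L}(2-k))$. Since $H^1(\Gamma, P_n) = 0$ (invoked already in the proof following Corollary \ref{Unitarycorollary}), we conclude $b(\widetilde M) = [\mathcal{J}_{k,\mathcal{D}_1}^\mathscr{L}]$, so $\widetilde M$ is the desired $\Gamma$-invariant modular symbol.

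The main technical point will be verifying the commutativity of the right square, i.e.\ the compatibility of the modular symbol $\delta$-map $\delta_1$ with $\delta_2$ via the comparison maps of Corollary \ref{tophoriso} and Proposition \ref{obstructionIso}. This is fundamentally a naturality statement for connecting homomorphisms of $\delta$-functors, but making the argument fully explicit requires tracking the comparison map on cocycle and modular-symbol representatives through the snake-lemma construction of both $\delta$-maps. A secondary point is to confirm that Corollary \ref{tophoriso} and Proposition \ref{obstructionIso} go through for coefficients in $\mathcal{H}_K$ rather than $\mathcal{H}_p$; the Eichler–Shimura and Shapiro-lemma steps are robust under this base change since they only exploit the action of $\Gamma$, but this should be recorded carefully.
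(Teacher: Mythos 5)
Your argument is correct and reconstructs exactly what the paper intends when it cites Proposition~\ref{obstructionIso}: a four-lemma / diagram chase across the two connecting maps coming from $0\to P_n\to\mathcal{O}_{\mathscr{L}}(2-k)\to\mathcal{O}(k)\to 0$, in modular symbols and in group cohomology, combined with $H^1(\Gamma,P_n)=0$.

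Two remarks, the first of which simplifies your chase and dissolves your flagged concerns. You do not actually need the middle vertical isomorphism (the $\mathcal{H}_K$-version of Corollary~\ref{tophoriso}): by Theorem~\ref{classificationtheorem} and the proof of Lemma~\ref{lem: weight k cocycle}, the class $[J_{k,\mathcal{D}_1}]$ is \emph{by construction} the class of an explicit $\Gamma$-invariant modular symbol in $\mathrm{MS}^\Gamma(\mathcal{M}(k;K))$, so you may take $M$ to be that symbol directly rather than deriving its existence from a general isomorphism over $\mathcal{H}_K$. All you then need from the vertical maps is the injectivity of the right-hand one, which is Proposition~\ref{obstructionIso}. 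Second, your worry about whether Proposition~\ref{obstructionIso} ``goes through for $\mathcal{H}_K$'' is unfounded: that proposition is a statement purely about $P_n$-coefficients ($H^1(\Gamma,\mathrm{MS}(P_n))\to H^2(\Gamma,P_n)$) and never refers to $\mathcal{H}_p$ or $\mathcal{H}_K$, so it applies as stated. The only genuine technical point that remains is the one you correctly isolated: the compatibility of the two connecting maps through the comparison map $\mathrm{MS}^\Gamma(-)\to H^1(\Gamma,-)$, which is a straightforward check on cocycle representatives once the other reductions are in place.
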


\begin{proof}
This follows directly from Proposition \ref{obstructionIso}.
\end{proof}
We will also denote this $\Gamma$-invariant modular symbol by $\mathcal{J}_{k,\mathcal{D}_1}^{\mathscr{L}}.$

Our first aim is to compute $\mathcal{J}_{k,\mathcal{D}_1}^\mathscr{L}\{0,\infty\}$, which by Proposition \ref{modularPeriodProp} determines the modular symbol. Recall the functions $J_{k,\mathcal{D}_1}^{\mathscr{L}}$ defined in Lemma \ref{wt-ncoLem}. Since the functions $J_{k,
\mathcal{D}_1}^\mathscr{L}\{0,\infty\}$ and $\mathcal{J}_{k,
\mathcal{D}_1}^\mathscr{L}\{0,\infty\}$ are both primitives of $J_{k,\mathcal{D}_1}\{0,\infty\}$, the difference $$J_{k,\mathcal{D}_1}^{\mathscr{L}}\{0,\infty\} - \mathcal{J}_{k,\mathcal{D}_1}^{\mathscr{L}}\{0,\infty\} $$ is a polynomial of degree at most $k-2$. We will compute $J_{k,\mathcal{D}_1}^\mathscr{L}\{0,\infty\}$ and use the two-term and three-term relations to adjust it by a polynomial to obtain $\mathcal{J}_{k,\mathcal{D}_1}^\mathscr{L}\{0,\infty\}$. Finally, the function $J_{k,\mathcal{D}_1}^\mathscr{L}\{0,\infty\}$ is determined by its restriction to the standard affinoid of $\mathcal{H}_p$. So we will compute it as a sum of a Mittag--Leffler expansion on the standard affinoid and a finite combination of products of polynomials and logarithms.

By Lemma \ref{expl-nLem} the function we wish to compute is $$ \lim_{j\to \infty}\sum_i\sum_{w\in \Sigma_{\tau_i}^{\leq j}\{0,\infty\}} m_i [(w,\overline{w})\cdot (0, \infty)]\left(\frac{(z-w)^{n/2}(z-\overline{w})^n/2}{(w-\overline{w})^n/2} \log_0(t_w(z)) + p_w(z)\right).$$ 
In theory, one could simply compute this limit. In practice, the size of $\Sigma_{\tau_i}^{\leq j}\{0,\infty\}$ grows exponentially, and quickly becomes infeasible to compute. Recall that $\Phi_{k,\mathcal{D}}$ is the harmonic function associated to $J_{k,\mathcal{D}}$. We will instead define several truncations of $\Phi_{k,\mathcal{D}}$ which we can recursively generate. Define

$$\Phi_{k,\mathcal{D},l}\{r,s\}(e) \defeq \sum_i\sum_{\substack{w\in \Sigma_{\tau_i}\{r,s\}\cap U_e\\\text{level}(w) = l}}m_i [(w,\overline{w})\cdot (r,s)]\frac{(T-w)^{n/2}(T-\overline{w})^{n/2}}{(w-\overline{w})^{n/2}}, $$
and
$$\Phi_{k,\mathcal{D},l}^{(a)}\{r,s\}(e) \defeq \sum_i\sum_{\substack{w\in \Sigma_{\tau_i}\{r,s\}\cap U_e\\\text{level}(w) = l\\w\equiv a\mod p}}m_i [(w,\overline{w})\cdot (r,s)]\frac{(T-w)^{n/2}(T-\overline{w})^{n/2}}{(w-\overline{w})^{n/2}} .$$

Note that $$\Phi_{k,\mathcal{D}} = \sum_{l=0}^\infty \Phi_{k,\mathcal{D},l},$$

 and for $l\geq1$ there is a decomposition $$\Phi_{k,\mathcal{D},l} = \sum_{a\in \mathbb{P}^1(\mathbb{F}_p)}\Phi_{k,\mathcal{D},l}^{(a)}.$$

\begin{definition}
Define the involution $\varpi$ on RM divisors by $$\varpi : [\tau] \rightarrow - [p\tau]$$
\end{definition}

\begin{remark}
The union of the two orbits $[\tau]$ and $[p\tau]$ is the $\mathrm{GL}_2^+\left(\mathbb{Z}[1/p]\right)$-orbit of $\tau$.
\end{remark}

\begin{lemma} \label{measureRecur}
    If $a\in \mathbb{F}_p$, then 
    $$\Phi_{k,-\varpi\mathcal{D},l+1}^{(a)}\{0,\infty\} = \sum_{b\in \mathbb{F}_p}\left(\Phi_{k,\mathcal{D},l}^{(b)}\mid \begin{pmatrix}p & a \\ 0 & 1 \end{pmatrix}\right)\{0,\infty\},$$ 
and     

$$\Phi_{k,-\varpi\mathcal{D},l+1}^{(\infty)}\{0,\infty\} = \left(\Phi_{k,\mathcal{D},l}^{(\infty)}\mid \begin{pmatrix}1 & 0 \\ 0& p \end{pmatrix}\right)\{0,\infty\} +  \sum_{b\in \mathbb{F}_p^\times}\left(\Phi_{k,\mathcal{D},l}^{(b)}\mid \begin{pmatrix}1 & 0 \\ 0 & p \end{pmatrix}\right)\{0,\infty\}.$$
\end{lemma}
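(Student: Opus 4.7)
The plan is a direct verification that matches the two sides term-by-term through an explicit bijection of indexing RM-points. The guiding principle is that the matrices $\gamma_a := \begin{pmatrix} p & a \\ 0 & 1 \end{pmatrix}$ (for $a \in \mathbb{F}_p$) together with $\gamma_\infty := \begin{pmatrix} 1 & 0 \\ 0 & p \end{pmatrix}$ are the $p+1$ scaling matrices that realize the level-raising maps on the Bruhat--Tits tree $\mathcal{T}_K$: one checks directly on the level of lattices that $\gamma_a \cdot v_0$ is the level-$1$ neighbour of $v_0$ in direction $a$, while $\gamma_a^{-1} \cdot v_0$ is the level-$1$ neighbour in direction $\infty$. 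Consequently $\gamma_a$ moves a vertex at level $l$ whose geodesic to $v_0$ does not begin through $v_\infty$ to a vertex at level $l+1$ whose geodesic to $v_0$ begins in direction $a$; the excluded case $b = \infty$ is precisely what is omitted from the first sum, and the analogous dichotomy for $\gamma_\infty$ accounts for the splitting in the second sum.

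From this I obtain a set-theoretic bijection $w' \mapsto w := \gamma_a w' = pw' + a$ between (the disjoint union over $b \in \mathbb{F}_p$ of) the RM-points $w' \in \Gamma \cdot \tau_i$ at level $l$ with $w' \equiv b \bmod p$, and the RM-points $w \in \Gamma \cdot (p\tau_i)$ at level $l+1$ with $w \equiv a \bmod p$. That $w \in [p\tau_i]$ follows from the observation that, for any $\gamma \in \Gamma$, the matrix $\gamma_a \gamma \begin{pmatrix} p & 0 \\ 0 & 1 \end{pmatrix}^{-1}$ lies in $\Gamma$, since its determinant is $1$. Under this bijection I will check: (a) the polynomial $\tfrac{(T-w')^{n/2}(T-\overline{w'})^{n/2}}{(w'-\overline{w'})^{n/2}}$ is sent by the weight-$n$ right action of $\gamma_a$ to $\tfrac{(T-w)^{n/2}(T-\overline{w})^{n/2}}{(w-\overline{w})^{n/2}}$, by the identity recorded in the proof of Proposition \ref{ModularSymbolproposition} together with compatibility with $\det(\gamma_a) = p$; (b) the edge condition $w \in U_e$ on the left corresponds to $w' \in U_{\gamma_a^{-1} e}$ on the right, by $\mathrm{GL}_2$-equivariance of the reduction map, which is exactly the edge at which the inner value of $\Phi_{k,\mathcal{D},l}^{(b)}\{a,\infty\}$ appears after unfolding the right-action $(\cdot \mid \gamma_a)(e)$; and (c) the signed topological intersection $(w,\overline{w})\cdot(0,\infty)$ matches the weight $(w',\overline{w'})\cdot(a,\infty)$ appearing in $\Phi_{k,\mathcal{D},l}^{(b)}\{a,\infty\}$, using the $\gamma_a$-invariance of intersection numbers and the identity $\gamma_a \cdot \{0,\infty\} = \{a,\infty\}$.

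Summing over $b \in \mathbb{F}_p$ then reconstitutes the left-hand side. The case $a = \infty$ is handled in exactly the same way, with $\gamma_\infty$ in place of $\gamma_a$: the bijection now pairs $w \equiv \infty \bmod p$ at level $l+1$ with $w' \not\equiv 0 \bmod p$ at level $l$, and the two summands in the second displayed formula correspond respectively to $w' \equiv b \in \mathbb{F}_p^\times$ and $w' \equiv \infty$. The main challenge is purely one of bookkeeping: one must simultaneously track four right actions of $\mathrm{GL}_2(\mathbb{Q}_p)$---on the modular-symbol variables $\{r,s\}$, on the edges of $\mathcal{T}_K$, on the polynomial module $P_n(\mathbb{C}_p)$, and on the signed topological intersection numbers---and verify that the determinant factors $\det(\gamma_a) = p$ entering via the weight-$k$ normalisation cancel cleanly across them.
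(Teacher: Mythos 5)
Your proof plan follows exactly the same route as the paper's: it is the bijection $w' \mapsto w := \gamma_a w' = pw'+a$ (equivalently $w = $ the level-$(l+1)$ point, $w' = \tfrac{w-a}{p}$ the level-$l$ point), which is precisely the observation that the paper's proof records, together with the analogous dichotomy for $\gamma_\infty$. The level and mod-$p$ congruence analysis in the first paragraph is correct, and the observation that $\gamma_a \gamma \left(\begin{smallmatrix}p&0\\0&1\end{smallmatrix}\right)^{-1} \in \Gamma$ is the right way to see that the bijection respects the $\Gamma$-orbits $[\tau_i]$ and $[p\tau_i]$.

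However, the three verifications (a), (b), (c) are not mutually consistent with the stated bijection $w = \gamma_a w'$, and (a) and (c) as written are incorrect. For (a): the identity recorded in the proof of Proposition~\ref{ModularSymbolproposition} reads
\[
\frac{(T-w')^{n/2}(T-\overline{w'})^{n/2}}{(w'-\overline{w'})^{n/2}}\,\Big|\,\gamma_a
\;=\;
\frac{(T-\gamma_a^{-1}w')^{n/2}(T-\gamma_a^{-1}\overline{w'})^{n/2}}{(\gamma_a^{-1}w'-\gamma_a^{-1}\overline{w'})^{n/2}},
\]
so the weight-$(2-k)$ action of $\gamma_a$ sends the polynomial attached to $w'$ to the one attached to $\gamma_a^{-1}w'$, \emph{not} to the one attached to $\gamma_a w' = w$; with $w'=\gamma_a^{-1}w$ this gives $\gamma_a^{-2}w$, which is the wrong point. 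For (c): invariance of topological intersection numbers under the M\"obius action gives
\[
(w,\overline{w})\cdot(0,\infty) \;=\; (\gamma_a^{-1}w,\gamma_a^{-1}\overline{w})\cdot(\gamma_a^{-1}\cdot 0,\,\gamma_a^{-1}\cdot\infty) \;=\; (w',\overline{w'})\cdot(-a/p,\,\infty),
\]
\emph{not} $(w',\overline{w'})\cdot(a,\infty)$; the paper's own proof records exactly the condition $\tfrac{w-a}{p}>-\tfrac{a}{p}>\tfrac{\overline w-a}{p}$, i.e.\ the geodesic $(-a/p,\infty)$, consistent with $\gamma_a^{-1}\{0,\infty\}=\{-a/p,\infty\}$ rather than $\gamma_a\{0,\infty\}=\{a,\infty\}$. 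In fact both (a) and (c) would hold if you set $w'=\gamma_a w$ rather than $w=\gamma_a w'$, whereas (b) requires $w=\gamma_a w'$ — so as it stands the three checks pull in opposite directions. This is exactly the kind of cancellation-sensitive bookkeeping you flag at the end as the main difficulty, and it needs to be nailed down before the verification closes: fix one convention for how $\gamma$ acts on $\{r,s\}$, on edges $e$, and on $P_n(\mathbb{C}_p)$ in the unfolding $(\Phi^{(b)}_{k,\mathcal{D},l}\mid\gamma_a)\{0,\infty\}(e)$, and then carry the same direction through all three of (a), (b), (c).
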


\begin{proof}
Observe that an RM-point $w$ is congruent to $a\neq \infty$,  has level $l+1$ and satisfies $$w> 0 > \overline{w}$$ if and only if $\frac{w-a}{p}$ is not congruent to $\infty \mod p$, is of level $l$ and satisfies $$\frac{w-a}{p}> -\frac{a}{p} > \frac{\overline{w}-a}{p}.$$

Similarly, if $w \equiv \infty \mod p$, then $w$ has level $l$ and satisfies $w> 0> \overline{w}$ if and only if $pw$ is not congruent to $0 \mod p$, has level $l+1$ and satisfies $pw > 0 > p\overline{w}$.
\end{proof}

Define the functions associated with these truncations $$F_{k,\mathcal{D},l} \defeq \left\langle \Phi_{k,\mathcal{D},{l+1}},\;  (t-z)^n \log_0(l_z(t)) \right\rangle,$$ 
and
$$F_{k,\mathcal{D},l}^{(a)} \defeq \left\langle \Phi_{k,\mathcal{D},{l+1}}^{(a)},\;  (t-z)^n \log_0(l_z(t)) \right\rangle.$$
Then $$J_{k,\mathcal{D}}^{\mathscr{L}} = \sum_{l=0}^{\infty} F_{k,\mathcal{D},l}$$
and for $l\geq 1$ we have $$ F_{k,\mathcal{D},l} = \sum_{a\in \mathbb{P}^1(\mathbb{F}_p)}F_{k,\mathcal{D},l}^{(a)}$$
The level $0$ function $F_{k,\mathcal{D},0}\{r,s\}$ can be written as $$F_{k,\mathcal{D},0}\{r,s\} = \sum_i \sum_{w\in\Sigma_{\tau_i}\{r,s\} } m_i [(w,\overline{w})\cdot (r,s)]\frac{(z-w)^{n/2}(z-\overline{w})^{n/2}}{(w-\overline{w})^{n/2}}\log_0(z-w) + P_0(z)$$
where $P_0$ is some polynomial of degree at most $n$. 
Then Lemma \ref{measureRecur} implies 

\begin{proposition} \label{recurrenceProp}
    \begin{equation} \label{equation: reccurence 1} F_{k,-\varpi \mathcal{D},l+1}^{(a)}\{0,\infty\} - \sum_{b\in\mathbb{F}_p} \left(F_{k,\mathcal{D},l}^{(b)} \mid  \begin{pmatrix}p & a \\ 0 & 1 \end{pmatrix}\right)\{0,\infty\} \end{equation}

    and 

    \begin{equation} \label{equation: reccurence 2}F_{k,-\varpi \mathcal{D}, l+1}^{(\infty)}\{0,\infty\} - \left(F_{k,\mathcal{D},l}^{(\infty)}\mid \begin{pmatrix}1 & 0 \\ 0 & p \end{pmatrix}\right)\{0,\infty\} - \sum_{b = 1}^{p-1} \left(F_{k,\mathcal{D},l}^{(b)}\mid \begin{pmatrix}1 & 0 \\ 0 & p \end{pmatrix}\right)\{0,\infty\} \end{equation}

    are polynomials of degree at most $n$.

\end{proposition}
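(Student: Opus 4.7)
The plan is to apply the surjection $d^{n+1} \colon \mathcal{O}_\mathscr{L}(2-k,\mathcal{H}_K) \to \mathcal{M}(k;K)$ to both differences in \eqref{equation: reccurence 1} and \eqref{equation: reccurence 2} and verify that the results vanish. Since the kernel of $d^{n+1}$ on this space is exactly $P_n(\mathbb{C}_p)$ by Breuil's exact sequence, the two differences will then automatically lie in $P_n(\mathbb{C}_p)$, which is the desired conclusion.

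The first step would be to prove an analogue of Lemma \ref{wt-ncoLem} for the partial modular symbols: for every $a \in \mathbb{P}^1(\mathbb{F}_p)$, every level $l$, and every $\{r,s\}$,
$$d^{n+1} F_{k,\mathcal{D},l}^{(a)}\{r,s\}(z) \;=\; -n!\cdot \mathrm{ST}\bigl(\Phi_{k,\mathcal{D},l+1}^{(a)}\{r,s\}\bigr)(z),$$
and similarly for $F_{k,\mathcal{D},l}\{r,s\}$ in terms of $\Phi_{k,\mathcal{D},l+1}$. This follows by exactly the computation in Lemma \ref{wt-ncoLem}, since the Morita pairing against $(t-z)^n\log_0(l_z(t))$ is by construction a Coleman primitive whose $(n+1)$-st derivative is the Morita pairing against the kernel $-n!/(t-z)$ (using the identity noted after Lemma \ref{wt-ncoLem}), and the latter is precisely the defining integral for the Schneider--Teitelbaum lift of the partial measure $\Phi_{k,\mathcal{D},l+1}^{(a)}\{r,s\}$. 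The partial measure is bounded and supported in the appropriate disks, so convergence is handled as in Proposition \ref{weightkexpproposition}.

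The second step exploits the $\mathrm{GL}_2(\mathbb{Q}_p)$-equivariance of $d^{n+1}$ (intertwining the weight-$(2-k)$ and weight-$k$ actions) and of the Schneider--Teitelbaum lift. Writing $\gamma_a = \left(\begin{smallmatrix} p & a \\ 0 & 1 \end{smallmatrix}\right)$ for $a \in \mathbb{F}_p$ and $\gamma_\infty = \left(\begin{smallmatrix} 1 & 0 \\ 0 & p \end{smallmatrix}\right)$, equivariance combined with the first step yields
$$d^{n+1}\bigl(F_{k,\mathcal{D},l}^{(b)} \mid \gamma\bigr)\{0,\infty\} \;=\; -n!\cdot \mathrm{ST}\bigl(\Phi_{k,\mathcal{D},l+1}^{(b)} \mid \gamma\bigr)\{0,\infty\}$$
for the relevant $\gamma$. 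Summing over $b$ and invoking the measure identities of Lemma \ref{measureRecur} reduces the $d^{n+1}$ of \eqref{equation: reccurence 1} (resp.\ \eqref{equation: reccurence 2}) to $-n!$ times the Schneider--Teitelbaum lift of the difference $\Phi_{k,-\varpi\mathcal{D},l+2}^{(a)}\{0,\infty\} - \sum_b (\Phi_{k,\mathcal{D},l+1}^{(b)} \mid \gamma)\{0,\infty\}$, which vanishes identically. Hence the $(n+1)$-st derivative of the difference is zero, so the difference is in $P_n(\mathbb{C}_p)$.

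The main obstacle here is bookkeeping rather than substance: one has to track carefully the level shift between $\Phi_{k,\mathcal{D},l}$ and $F_{k,\mathcal{D},l}$ appearing in the definition of $F$, verify that the sign and scalar conventions in the Morita pairing, the weight action, and the Schneider--Teitelbaum normalization line up, and confirm that the partial sums indeed give modular symbols valued in $\mathcal{O}_\mathscr{L}(2-k,\mathcal{H}_K)$ so that Breuil's exact sequence can be applied. Once these verifications are in place, the statement reduces cleanly to the combinatorial recurrence of Lemma \ref{measureRecur}, and the same argument treats both \eqref{equation: reccurence 1} and \eqref{equation: reccurence 2} uniformly.
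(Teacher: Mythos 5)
Your proposal is correct and fills in exactly the argument the paper leaves implicit when it writes "Then Lemma \ref{measureRecur} implies." The key identity $d^{n+1} F_{k,\mathcal{D},l}^{(a)} = -n!\,\mathrm{ST}(\Phi_{k,\mathcal{D},l+1}^{(a)})$, the $\mathrm{GL}_2(\mathbb{Q}_p)$-equivariance of $d^{n+1}$ and of the Schneider--Teitelbaum integral (noting $\det(\gamma)^{(2-k)/2}\cdot\det(\gamma)^{n+1}=\det(\gamma)^{k/2}$ so the determinant twists cancel), and the kernel description from Breuil's exact sequence are exactly the ingredients needed, and you track the $l \mapsto l+1$ shift in the definition of $F$ correctly so that the substitution into Lemma \ref{measureRecur} lands on the right levels. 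One small point worth being explicit about when you write it up: the partial edge-functions $\Phi_{k,\mathcal{D},l+1}^{(a)}$ are not harmonic (only the total $\Phi_{k,\mathcal{D}}$ is), so they do not literally lie in the domain of $\mathrm{ST}$ as a map on $C^{1,b}_{K,\mathrm{har}}$; what you actually use is that each such truncation is a bounded measure on $\mathbb{P}^1(K)$ supported away from $\mathbb{P}^1(\mathbb{Q}_p)$, and the integral $\int (t-z)^{-1}\,d\mu$ and its pairing against $(t-z)^n\log_0(l_z(t))$ both converge and satisfy the same $d^{n+1}$ identity by the computation of Lemma \ref{STFin}. With that caveat noted, the argument is sound and matches the paper's intended route.
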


Proposition \ref{recurrenceProp} allows us to compute $J_{k,\mathcal{D}}^\mathscr{L}\{0,\infty\} $ modulo polynomials by recursively computing $F_{k,\mathcal{D},l}^{(a)}\{0,\infty\}$. Note that the elements $F_{k,\mathcal{D},l}^{(b)}\{-\frac{a}{p},\infty\}$ can be computed modulo polynomials by finding a unimodular path between the cusps $-\frac{a}{p}$ to $\infty$, and then using the $\mathrm{SL}_2(\mathbb{Z})$-invariance of $\{F_{k,\mathcal{D},l}^{(a)}\}_a$.

Once we have computed $J_{k,\mathcal{D}_1}^\mathscr{L}$ modulo the addition of a polynomial we can use the two term relation,
 $$ \mathcal{J}^\mathscr{L}_{k,\mathcal{D}}\{0,\infty\} |S + \mathcal{J}^{\mathscr{L}}_{k,\mathcal{D}}\{0,\infty\} = 0,$$  and the three term relation, $$\mathcal{J}^{\mathscr{L}}_{k,\mathcal{D}}\{0,\infty\} |U^2+\mathcal{J}^{\mathscr{L}}_{k,\mathcal{D}}\{0,\infty\} |U + \mathcal{J}^{\mathscr{L}}_{k,\mathcal{D}}\{0,\infty\} = 0,$$ to find a polynomial which fixes the $\Gamma$-invariance. This amounts to solving a set of simultaneous linear equations.

\begin{example}\label{polyfixEx}
When $k = 4$ and $n=2$. We have computed a function $$F(z) = \mathcal{J}^\mathscr{L}_{4,\mathcal{D}}\{0,\infty\}(z) + P(z),$$ for some quadratic polynomial $P = az^2 + bz + c$.  Now applying the two-term and three-term relations we obtain $$F +  (F\vert S)  = P + (P \vert S) = (a+c)(z^2+1),$$ and $$F + (F\vert U)  + (F\vert U^2) = P +  (P \vert U) + (P \vert U^2)  = (2a+b+2c)(z^2-z+1).$$
This determines the polynomial $P$ up to multiples of $z^2 - 1$.

But we note that the coboundary in $P_n$ given by $$\gamma \mapsto (1\mid \gamma) - 1$$ sends $S$ to $z^2-1$, therefore the ambiguity of a multiple of $z^2-1$ is a coboundary and does not affect the evaluation.
\end{example}

\begin{algorithm}
\label{algorithm}
\begin{itemize} 
    \mbox{}
\setlength\itemsep{0.5em}
\item \textbf{Step 1.} By computing reduced binary quadratic forms in $\mathrm{SL}_2(\mathbb{Z})$-equivalence classes, compute $\Phi_{k,\mathcal{D}_1,0}\{0,\infty\}$ and store $F_{k,\mathcal{D}_1,0}\{0,\infty\}$ as a tuple of pairs $(w,P)$ of points on $\mathcal{H}_p$ and polynomials which record the data of functions that are sums of $$P(z)\log_{\mathscr{L}}(z-w),$$ where $P$ is some polynomial of degree at most $k-2$, and $w$ an RM-point.

\item \textbf{Step 2.} Similarly to the previous step, compute the $(p+1)$ functions $$F_{k,\mathcal{D}_1,1}^{(a)}\{0,\infty\}$$ and store them to sufficient precision as a Mittag--Leffler expansion on the standard affinoid.
\item \textbf{Step 3.} Use the recursion equations \emph{(\ref{equation: reccurence 1})}and \emph{(\ref{equation: reccurence 2})} to compute $$F_{k,\mathcal{D}_1,l}^{(a)}$$ modulo addition of a polynomial to a sufficiently high level, and sum the resulting functions. 
\item \textbf{Step 4.} Use the two and three-term relations to fix the polynomial part and obtain $$\mathcal{J}_{k,\mathcal{D}_1}^{\mathscr{L}}\{0,\infty\}$$ as in Example \ref{polyfixEx}.
\item \textbf{Step 5.} The function $$\mathcal{J}^{\mathscr{L}}_{k,\mathcal{D}_1}(\gamma_\sigma)$$ is equal to $$\mathcal{J}_{k,\mathcal{D}_1}^{\mathscr{L}}\{ 0,\gamma_\sigma \cdot 0\},$$ and can be computed by finding a unimodular sequence $$0 = s_1, \; s_2, \; \cdots ,\;  s_{r-1},\; s_r = \sigma \cdot 0$$ and matrices $\gamma_i \in \mathrm{SL}_2(\mathbb{Z})$ such that $\gamma_i\cdot \{ 0,\infty \} = \{s_i, s_{i+1}\}$ and applying the $\Gamma$-invariance of the modular symbol $\mathcal{J}^{\mathscr{L}}_{k,\mathcal{D}_1}$. $$\mathcal{J}_{k,\mathcal{D}_1}^{\mathscr{L}}(\gamma_\sigma) = \sum_i \mathcal{J}_{k,\mathcal{D}_1}^{\mathscr{L}}\{0,\infty\} \mid {\gamma_i}$$
\item \textbf{Step 6.} The value of $$\mathcal{J}^{\mathscr{L}}_{k,\mathcal{D}_1}[y_{\mathcal{D}_2}]$$ is then given by the sum \begin{equation}\label{eqn-pairing}
\sum_{j=0}^{n/2} \frac{(-1)^i \binom{n-i}{m-i}}{i!\binom{n}{m}}\frac{\left(\mathcal{J}^{\mathscr{L}}_{k,\mathcal{D}_1}(\gamma_\sigma)\right)^{(j)}(\sigma)}{(\sigma - \overline{\sigma})^{n/2-j}}.
\end{equation}
\end{itemize}
\end{algorithm}

\begin{remark}
Note that the expression in (\ref{eqn-pairing}) is equal to formally applying iterations of the raising operator $\delta_{-i}$ and then evaluating at $\sigma$: $$\delta_{2-k}^{(k-2)/2}\left(\mathcal{J}^{\mathscr{L}}_{k,\mathcal{D}_1}(\gamma_i)\right)(\sigma),$$ where $$\delta_i = \frac{\mathrm{d}}{\mathrm{d}z} + \frac{i}{z-\overline{z}}.$$
\end{remark}
\subsection{Results and algebraicity conjectures}
We have implemented Algorithm \ref{algorithm} in the computer algebra system SageMath. The program and instructions on using it are available on the author's website. 

If $p = 3$ and $k=4$, then the space of newforms $S_4(\Gamma_0(3))^{p-new}$ is trivial. This implies that for every RM-divisor $\mathcal{D}$, there is a unique lift of $J_{k,\mathcal{D}}$ to $$\mathcal{J}^{\mathscr{L}}_{k,\mathcal{D}} \in \mathrm{MS}^\Gamma(\mathcal{O}_{\mathcal{L}}(-2,\mathcal{H}_K)), $$ and also $$\mathcal{J}_{k,\mathcal{D}}^o \in \mathrm{MS}^\Gamma(C^0_{K,har}(P_n)).$$

\begin{example}
Let $$\mathcal{D}_1 = \left(9[\phi] - 2 [\sqrt{5}]\right) + \varpi \left(9[\phi] - 2 [\sqrt{5}]\right).$$
Then we can verify that $\mathrm{Deg}_{4,\mathcal{D}_1}$ is trivial, and therefore we can use \ref{algorithm} to compute $$\mathcal{J}_{4,\mathcal{D}_1}^\mathscr{L}$$ and its values. 

We find that up to 145 digits of $3$-adic accuracy 
\begin{equation}
\mathcal{J}_{4,\mathcal{D}_1}^{0}[4\phi] = \frac{1}{40}\log_0\left(\left(\frac{i+2}{i-2}\right)^5\left(\frac{i+5}{i-5} \right)^{-26}\left(\frac{i+4}{i-4}\right)^{-63}\left(\frac{i+6}{i-6}\right)^{-17} \right),
\end{equation}

and 

\begin{align}
    \begin{split}
    \mathcal{J}_{4,\mathcal{D}_1}^{0}[7\phi] = \frac{1}{140}\log_0\left(\left(\frac{\sqrt{-7}+1}{\sqrt{-7}-1} \right)^{-2241} \left(\frac{\sqrt{-7}+4}{\sqrt{-7}-4} \right)^{-167}\right. \\ \left. \times \left(\frac{\sqrt{-7}+17}{\sqrt{-7}-17}\right)^8 \left(\frac{\sqrt{-7}+6}{\sqrt{-7}-6} \right)^{16}\left(\frac{\sqrt{-7}+23}{\sqrt{-7}-23} \right)^{-64} \right).
    \end{split}
\end{align}
\end{example}

\begin{example}
    Let $$\mathcal{D}_1 = \left(7 [\sqrt{2}] - 4[2\sqrt{2}] \right) + \varpi\left(7[\sqrt{2}] - 4[2\sqrt{2}]  \right).$$ Again, $\mathrm{Deg}_{4, \mathcal{D}_1}$ is trivial. Up to 145 digits of $3$-adic accuracy we have 
    \begin{equation}
        \mathcal{J}_{4,\mathcal{D}_1}^0[4\sqrt{2}] = \frac{1}{64}\log_0\left(\left(\frac{i +2 }{i-2} \right)^{-208} \left(\frac{i+5}{i-5} \right)^{-30} \left( \frac{i+3}{i-3}\right)^{-91} \right).
    \end{equation}
    \begin{equation}
        \mathcal{J}_{4,\mathcal{D}_1}^0[\phi] = \frac{1}{\sqrt{160}}\log_0\left( \left(\frac{\sqrt{-10}+ 4}{\sqrt{-10}-4} \right)^2 \right).
    \end{equation}
\end{example}

\begin{conjecture}\label{algConj}
Let $$\mathcal{D}_1 = \sum_i m_i [\tau_i]$$ be a principal RM-divisor, and $\sigma$ an RM-point. Denote by $d_{1i}$ the discriminant of $\tau_i$ and $d_2$ the discriminant of $\sigma$. Then there exist algebraic numbers $\alpha_i$ such that for every $\mathscr{L}\in \mathbb{C}_p$ we have
$$ \mathcal{J}_{k,\mathcal{D}_1}^{\mathscr{L}}[\sigma] = \sum_i |d_{1i}d_2|^{(2-k)/4}\log_\mathscr{L}(\alpha_i),$$ and $$\mathcal{J}^o_{k,\mathcal{D}_1} = \sum_i |d_{1i}d_2|^{(2-k)/4}\mathrm{ord}_p(\alpha_i),$$ where $\alpha_i$ lies in the compositum $H_{\tau_i} \cdot  H_\sigma$.
\end{conjecture}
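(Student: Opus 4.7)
Conjecture \ref{algConj} appears to lie well beyond reach by purely algebraic methods for arbitrary weight $k>2$; Zhang's weight $2$-treatment of the classical imaginary-quadratic analogue was already conditional on deep Bloch--Beilinson type statements, and the real-quadratic setting lacks a moduli interpretation. Two complementary strategies seem worth pursuing, modelled respectively on the recent Darmon--Vonk proof in weight $2$ and on the geometric picture predicted by Rotger--Seveso.

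\emph{First strategy: $p$-adic deformation from weight two.} When $k=2$ and $\mathcal{D}_1$ is principal, one has $G_2(\mathcal{D}_1,\sigma)=\log_p(J_p(\mathcal{D}_1,\sigma))$, and the recently announced Darmon--Vonk theorem yields the conjecture at this weight. The plan is to produce a $p$-adic family $\{\mathcal{J}_{k,\mathcal{D}_1}^{\mathscr{L}}\}_k$ varying over a neighbourhood of $k=2$ in weight space, by embedding the boundary measure $\Phi_{k,\mathcal{D}_1}$ in a Coleman-type family of harmonic cocycles whose Schneider--Teitelbaum lifts deform the weight-$2$ construction. One then tries to show that $\mathcal{J}_{k,\mathcal{D}_1}^{\mathscr{L}}[\sigma]$ is the specialization of a rigid-analytic function on weight space that takes algebraic values (in the fixed compositum $H_{\tau_i}\cdot H_\sigma$) on a dense set of classical weights, bootstrapping from $k=2$. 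Because the algebraic target field is weight-independent, there is some hope that rigidity forces the $G_k$-values themselves to be algebraic at all classical weights, though this is the step for which I do not have a convincing mechanism.

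\emph{Second strategy: matching with conjectural Stark--Heegner cycles.} The plan is to show that the Breuil pairing $\langle \mathcal{J}_{k,\mathcal{D}_1}^{\mathscr{L}}, y_{k,\sigma}\rangle_B$ computes the $p$-adic Abel--Jacobi image of a Rotger--Seveso class paired against $y_{k,\sigma}$, so that one is really computing a $p$-adic height of conjectural cycles $y_{k,\tau_i}^{\mathrm{alg}}$ and $y_{k,\sigma}^{\mathrm{alg}}$ on the Kuga--Sato variety $\mathcal{Y}_k$. The principality hypothesis on $\mathcal{D}_1$, captured arithmetically by the vanishing of $\mathrm{Obs}_{\mathscr{L}}(J_{k,\mathcal{D}_1})$, should then translate into the existence of a rational equivalence $\sum_i m_i\, y_{k,\tau_i}^{\mathrm{alg}}=\mathrm{div}\bigl(\sum_j c_j(f_j,W_j)\bigr)$, after which the Arakelov-style collapse of the height pairing yields $\sum_j c_j \log_{\mathscr{L}}(f_j(\sigma\cdot W_j))$, an element of the desired compositum. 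The ordinary statement would follow from the valuations of the same rational functions.

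\emph{Main obstacle.} The second strategy presupposes the very existence of Stark--Heegner cycles, which is the open problem this paper is oriented towards; the first strategy sidesteps this but must overcome the fact that $p$-adic interpolation does not a priori preserve algebraicity, so one really needs a compatible family of algebraic incarnations at infinitely many classical weights. The most realistic partial target is probably a conditional theorem analogous to Zhang's in \cite{zhang}: assuming Bloch--Beilinson for $\mathcal{Y}_k$ and non-degeneracy of the $p$-adic height pairing, deduce Conjecture \ref{algConj} by transcribing Zhang's argument into the $p$-adic setting, with the Breuil duality playing the role of the archimedean Green's pairing and with the obstruction maps $\mathrm{Obs}_{\mathscr{L}}, \mathrm{Obs}_o$ providing the principality input.
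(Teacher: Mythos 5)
The statement under review is a conjecture, and the paper does not supply a proof of it; the support offered in the paper consists of numerical computations (the examples immediately preceding the conjecture, verified to $145$ digits of $3$-adic precision) together with the analogy to the Gross--Zagier algebraicity conjecture and the announced Darmon--Vonk theorem in weight $2$. Your proposal correctly recognizes that no proof is presently available and sketches two plausible strategies---$p$-adic interpolation from $k=2$, and matching the Breuil-pairing values with $p$-adic Abel--Jacobi images of conjectural Stark--Heegner cycles in the Rotger--Seveso framework---both of which are consistent with the picture the paper develops and with the ``indirect evidence'' framing in the introduction. Since neither you nor the paper proves the statement, there is no argument to compare.

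One small factual correction: you describe Zhang's result in \cite{zhang} as a ``weight $2$-treatment'' of the imaginary-quadratic analogue, but Zhang's conditional theorem concerns higher-weight Heegner cycles ($k>2$), where the archimedean contribution to the height is $\tfrac{1}{2}G_{k/2}(\tau,\sigma)$; the weight-$2$ algebraicity for Heegner points is classical CM theory and does not require Bloch--Beilinson input. Your later reference to transcribing Zhang's argument for the Kuga--Sato variety $\mathcal{Y}_k$ is accurate, so this is presumably a slip in phrasing rather than a misunderstanding.
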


\subsection{Prime factorization}

Keeping the same notation as in Conjecture \ref{algConj}, we will now give a conjectural prime factorization for the algebraic numbers appearing in Conjecture \ref{algConj}. In particular, for a prime $q\neq p$ a prime number we will give a conjectural value for $$ |d_{1i}d_2|^{(2-k)/4}\mathrm{ord}_q(\alpha_i).$$

Let $\mathcal{O}$ be an order of discriminant $D$ in a real-quadratic field where both $p$ and $q$ are not split. Let $B/\mathbb{Q}$ be the quaternion algebra ramified only at $p$ and $q$, and let $\mathcal{R}$ be a choice of maximal order in $B$. Then there is a bijection $$ \Gamma \backslash \mathcal{H}_p^D \cong \Sigma(\mathcal{O},\mathcal{R}),$$ where $\Sigma(\mathcal{O},\mathcal{R})$ is the set of $\mathcal{R}_1^\times$ conjugacy classes of oriented optimal embeddings of $\mathcal{O}$ into $\mathcal{R}$. Note that such a bijection is not canonical. We will fix bijections which are compatible with the action of the class group. See \cite{DV1} Section 3.4 for the definitions and more details.

\begin{definition}
    For any two distinct optimal embeddings of quadratic orders $$\varphi_i : \mathcal{O}_i \rightarrow \mathcal{R},\; i\in\{1,2\},$$ we define $$[\varphi_1, \varphi_2]_{q}\defeq \mathrm{max }\; t\geq 1\;  \vert \; \varphi_1(\mathcal{O}_1) , \varphi_2(\mathcal{O}_2) \text{ have the same image in } \mathcal{R}/q^{t-1}\mathcal{R}.$$
\end{definition}

\begin{conjecture} \label{conj: factorization}
    Keeping the same notation as Conjecture \ref{algConj}, let $\varphi_{1i}$ be embeddings of the orders $\mathcal{O}_{\tau_i}$ associated to $[\tau_i]$ and let $\varphi_2$ be an embedding of $\mathcal{O}_{\sigma}$ associated to $\sigma$. Then for a prime $q \neq p$ which is not split in either order $\mathcal{O}_{\tau_i}$ or $\mathcal{O}_{\sigma}$ and does not divide any of $d_{1i}$ or $d_2$, there is an embedding of $H_{\tau_i}\cdot H_\sigma$ in $\overline{\mathbb{Q}_q}$ such that 
    $$|d_{1i}d_2|^{(2-k)/4} \mathrm{ord}_q(\alpha_i) = \sum_{\gamma \in\gamma_2^\mathbb{Z} \backslash  \mathcal{R}_1^\times / \gamma_1^\mathbb{Z}} [\gamma\varphi_{1i} \gamma^{-1}, \varphi_2]_q\cdot [(\gamma \tau_i, \overline{\gamma \tau_i}) \cdot (\sigma, \overline{\sigma})] \cdot \left\langle P_{\gamma \tau_1}, P_{\tau_2} \right\rangle _{P_n}.$$ Otherwise, if $q$ is split in one of the two orders then $$\mathrm{ord}_q(\alpha_i) = 0.$$
\end{conjecture}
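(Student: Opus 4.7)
The plan is to mirror Darmon--Vonk's recently announced factorization strategy for weight-$2$ real-quadratic singular moduli, adapted to higher weight and working conditionally on Conjecture~\ref{algConj}. The basic observation is that because $q$ is non-split in both $\mathcal{O}_{\tau_i}$ and $\mathcal{O}_\sigma$, the primes $p$ and $q$ play symmetric roles relative to the quadratic fields involved: the definite quaternion algebra $B$ ramified precisely at $\{p,q\}$, together with a maximal order $\mathcal{R}$, organizes the optimal embeddings at both places via the Jacquet--Langlands correspondence and Cerednik--Drinfeld uniformization.

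First I would build a $q$-adic analogue of the whole construction of Section~3, swapping the roles of $p$ and $q$ throughout: the Bruhat--Tits tree $\mathcal{T}_q$ replaces $\mathcal{T}_p$, the Ihara group becomes $\mathrm{SL}_2(\mathbb{Z}[1/q])$, and since the RM-divisor is still inert at $q$, the construction of $\Phi_{k,\mathcal{D}_1}$, of $J_{k,\mathcal{D}_1}^{(q)} \in H^1(\Gamma^{(q)},\mathcal{M}(k;K_q))$, and of a Green's pairing $G_k^{(q)}$ goes through verbatim. The working hypothesis, which one then tries to prove, is that under the embedding of $H_{\tau_i}\cdot H_\sigma$ into $\overline{\mathbb{Q}_q}$ furnished by Conjecture~\ref{algConj},
$$|d_{1i}d_2|^{(2-k)/4}\,\mathrm{ord}_q(\alpha_i) \;=\; G_k^{(q),\,o}(\tau_i,\sigma),$$
where $G_k^{(q),o}$ is the order-valued companion to $G_k^{(q)}$ built from $\mathrm{Obs}_o$ in the $q$-adic setting (in full analogy with the decomposition $\mathcal{J}^{\mathscr{L}} = \mathcal{J}^{o}\cdot\mathscr{L} + (\text{analytic part})$ of Proposition~\ref{deShalitIDproposition}).

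Next I would unpack $G_k^{(q),o}$ combinatorially. Because the residue of $J_{k,\mathcal{D}_1}^{(q)}$ vanishes on edges of $\mathcal{T}_q$ whose ends meet $\mathbb{P}^1(\mathbb{Q}_q)$, the pairing with $y_{k,\sigma}^\sharp$ reduces to a sum over edges on the geodesic path in $\mathcal{T}_q$ joining the two fixed points of $\gamma_\sigma$. Each edge contributes exactly when the path passes through the reduction of a $\Gamma^{(q)}$-translate of some $\tau_i$, and the coefficient is dictated by the $P_n$-valued polynomial factor $(T-w)^{n/2}(T-\overline{w})^{n/2}/(w-\overline{w})^{n/2}$ evaluated against the corresponding factor for $\sigma$ via $\langle\cdot,\cdot\rangle_{P_n}$. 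Strong approximation together with the bijection $\Gamma\backslash\mathcal{H}_p^D \cong \Sigma(\mathcal{O},\mathcal{R})$ recalled in the text identifies the resulting double coset sum with $\gamma_2^{\mathbb{Z}}\backslash \mathcal{R}_1^\times/\gamma_1^{\mathbb{Z}}$, and the local intersection multiplicity at $q$ of two paths on $\mathcal{T}_q$ is precisely $[\gamma\varphi_{1i}\gamma^{-1},\varphi_2]_q$ (two embeddings whose images agree modulo $q^{t-1}\mathcal{R}$ produce paths sharing exactly $t$ common edges on $\mathcal{T}_q$). The orientation sign $[(\gamma\tau_i,\overline{\gamma\tau_i})\cdot(\sigma,\overline{\sigma})]$ is the topological intersection of the two geodesics on $\mathcal{H}_q$, matched with its archimedean cousin via the $p,q$-symmetry of $B$. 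Finally, when $q$ splits in one of the orders, every optimal embedding factors through a maximal subfield where $q$ is already split, and the intersection multiplicities $[\cdot,\cdot]_q$ force the whole sum to vanish, recovering the second clause.

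The main obstacle is twofold. Most seriously, Conjecture~\ref{algConj} is itself open, so the objects $\alpha_i$ whose $q$-adic valuations one wishes to describe are conditional; any proof has to simultaneously \emph{produce} the algebraic $\alpha_i$ and \emph{factor} them, rather than factoring something already known to exist. Second, even granting algebraicity, identifying $\mathrm{ord}_q(\alpha_i)$ with $G_k^{(q),o}(\tau_i,\sigma)$ requires a higher-weight $p$--$q$ switch that extends Darmon--Vonk's argument beyond their multiplicative weight-$2$ framework; the Schneider--Teitelbaum and Breuil duality machinery has to be deployed uniformly at both primes, and the polynomial coefficients $P_w\in P_n(\mathbb{C}_p)$ must be tracked compatibly under conjugation by $\mathcal{R}_1^\times$ so that the pairing $\langle P_{\gamma\tau_1},P_{\tau_2}\rangle_{P_n}$ emerges canonically rather than as an ad-hoc correction. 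This bookkeeping of the local system $P_n$ on the quaternionic side — essentially a higher-weight refinement of the intersection theory on the Shimura set $\Sigma(\mathcal{O},\mathcal{R})$ — is where I expect the bulk of the difficulty to lie.
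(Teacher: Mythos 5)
The statement you are addressing is labeled a \emph{Conjecture} in the paper, and the paper offers no proof of it; the surrounding discussion is entirely speculative and conditional on Conjecture \ref{algConj}, which is itself open. There is therefore no proof in the paper against which to compare your proposal. What you have written is a strategy sketch for an open problem, and you are (correctly) explicit about this, so there is no pretense of completeness to audit.

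That said, a few remarks on the sketch itself. The overall outline --- a $q$-adic replay of Section~3 with $\Gamma^{(q)}=\mathrm{SL}_2(\mathbb{Z}[1/q])$ and $\mathcal{T}_q$, the working hypothesis $|d_{1i}d_2|^{(2-k)/4}\mathrm{ord}_q(\alpha_i)=G_k^{(q),o}(\tau_i,\sigma)$, and the combinatorial unpacking of $G_k^{(q),o}$ via paths on $\mathcal{T}_q$ matched to double cosets through $\Sigma(\mathcal{O},\mathcal{R})$ --- is consistent with the heuristics the paper gestures at (the $p$--$q$ symmetry of the definite quaternion algebra $B$ and Darmon--Vonk's weight-$2$ factorization strategy), and the two obstacles you name (conditionality on algebraicity; the higher-weight $p$--$q$ switch with the local system $P_n$) are the genuine ones. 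One imprecision is worth correcting: in the final clause, when $q$ splits in one of $\mathcal{O}_{\tau_i}$ or $\mathcal{O}_\sigma$, the right-hand side vanishes not because the local intersection multiplicities $[\cdot,\cdot]_q$ conspire to cancel, but because the corresponding quadratic order does not embed into $B$ at all (since $B\otimes\mathbb{Q}_q$ is a division algebra and a split order cannot embed into it), so the double coset sum is over an empty index set. This is the cleaner and standard explanation, and it is also what the conjecture as stated implicitly relies on. Finally, note that the hypothesis "$q\nmid d_{1i}d_2$" is what forces $q$ to be inert (rather than merely non-split) in both orders, which is precisely the condition needed for your $q$-adic construction of $\Phi_{k,\mathcal{D}_1}$ to go through verbatim; you should state this explicitly, since the paper's constructions are carried out only for inert RM-points.
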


\bibliographystyle{alpha} 
\bibliography{mybib}{}
\end{document}